\newcommand{\ve}{\varepsilon}
\newcommand{\cA}{\mathcal{A}}
\newcommand{\cC}{\mathcal{C}}
\newcommand{\cF}{\mathcal{F}}
\newcommand{\cH}{\mathcal{H}}
\newcommand{\cI}{\mathcal{I}}
\newcommand{\cM}{\mathcal{M}}
\newcommand{\cP}{\mathcal{P}}
\newcommand{\cS}{\mathcal{S}}
\newcommand{\cZ}{\mathcal{Z}}
\newcommand{\bC}{\mathbb{C}}
\newcommand{\bQ}{\mathbb{Q}}\newcommand{\bR}{\mathbb{R}}
\newcommand{\bZ}{\mathbb{Z}}
  \newcommand{\mat}[1]{\ensuremath{
\left[\begin{matrix}#1
\end{matrix}\right]
}}
\tikzset{help lines/.style={step=#1cm,very thin, color=gray},
help lines/.default=.5} % draws a grid spaced #1 cm
\newtheorem{theorem}{Theorem}[section]
\newtheorem{lemma}[theorem]{Lemma}
\newtheorem{proposition}[theorem]{Proposition}
\newtheorem{corollary}[theorem]{Corollary}
\theoremstyle{definition}
\newtheorem{definition}[theorem]{Definition}
\newtheorem{inductive lemma}[theorem]{Inductive Lemma}
\theoremstyle{remark}
\newtheorem{problem}[theorem]{Problem}
\newtheorem{remark}[theorem]{Remark}
\newtheorem{example}[theorem]{Example}
\newtheorem{question}[theorem]{Question}
\newtheorem{warning}[theorem]{Warning}
\title{A Legendrian Turaev torsion via generating families}
\dedicatory{To our teachers Yasha Eliashberg and Allen Hatcher}
\author{Daniel  \'Alvarez-Gavela}
\address[D. \'Alvarez-Gavela]{Department of Mathematics, Princeton University, Princeton, 086540, NJ, USA}
\thanks{DAG was supported by NSF Grant No. DMS-1638352 and by the Simons Foundation.}
 \author{Kiyoshi Igusa}
 \address[K. Igusa]{Department of Mathematics, Brandeis University, PO Box 9110, Waltham, MA, 02454-9110, USA}
\thanks{KI is supported by the Simons Foundation.}
\begin{document}

% For Duke: 
% We construct Legendrian links in the 1-jet space of an oriented surface which we call mesh Legendrians. We exhibit pairs of such links which cannot be distinguished using any natural Legendrian invariant, but which can be distinguished using a new Legendrian invariant coming from Turaev torsion.

% We introduce a Legendrian invariant for such links built out of the Turaev torsion of generating families. We then exhibit pairs of nonisotopic Legendrian links 

\begin{abstract}
We introduce a Legendrian invariant built out of the Turaev torsion of generating families. This invariant is defined for a certain class of Legendrian submanifolds of 1-jet spaces, which we call of Euler type. We use our invariant to study mesh Legendrians: a family of 2-dimensional Euler type Legendrian links whose linking pattern is determined by a bicolored trivalent ribbon graph. The Turaev torsion of mesh Legendrians is related to a certain monodromy of handle slides, which we compute in terms of the combinatorics of the graph. As an application, we exhibit pairs of Legendrian links in the 1-jet space of any orientable closed surface which are formally equivalent, cannot be distinguished by any natural Legendrian invariant, yet are not Legendrian isotopic. These examples appeared in a different guise in the work of the second author with J. Klein on pictures for $K_3$ and the higher Reidemeister torsion of circle bundles.
 \end{abstract}
 
 \maketitle

%%%%%%%%%%%%%%%%%%%%

%\end{document}
 
 \onehalfspacing
 \tableofcontents
 
\section{Introduction}\label{Section: introduction}

\subsection{Main results}\label{Subsection: main results}
 
 In this article we use Turaev torsion, a refinement of Reidemeister torsion, to define an invariant of a certain.class of Legendrians in 1-jet spaces. We call this invariant {\it Legendrian Turaev torsion}. As an application, we exhibit peculiar pairs of Legendrian links in the 1-jet space of any closed orientable surface. Indeed, from our structural results on mesh Legendrians \ref{euler}, \ref{reidemeister}, \ref{calculation} and \ref{turaev} we deduce the following.
 
 \begin{corollary}\label{corollarypairs}
For any closed orientable surface $\Sigma$ there exist infinitely many distinct pairs of Legendrian links $\Lambda_\pm \subset J^1(\Sigma)$ such that
\begin{itemize}
\item[(a)] $\Lambda_+$ and $\Lambda_-$ are equivalent as formal Legendrian links.
\item[(b)] $\Lambda_+$ cannot be distinguished from $\Lambda_-$ by any natural Legendrian invariant.
\item[(c)] $\Lambda_+$ is not Legendrian isotopic to $\Lambda_-$.
\end{itemize}
\end{corollary}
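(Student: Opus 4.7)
The plan is to realize the pairs $\Lambda_\pm$ as mesh Legendrians $\Lambda(\Gamma_\pm)$ associated to bicolored trivalent ribbon graphs $\Gamma_\pm \subset \Sigma$, with $\Gamma_+$ and $\Gamma_-$ chosen to differ by a local modification inside a disc that realizes a nontrivial Steinberg-type relation --- precisely the $K_3(\bZ)$ pictures of Igusa and Klein mentioned in the abstract. All three conclusions then reduce to combinatorial properties of this local move, combined with the structural results on mesh Legendrians cited in the statement.

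For part (a), I would check directly from the definition of the mesh construction that the underlying smooth link and Maslov class of $\Lambda(\Gamma_\pm)$ are unchanged by the picture move; since formal Legendrian isotopy in a $1$-jet space is governed by these data via Gromov's $h$-principle for formal Legendrian immersions, formal equivalence follows. For part (b), I would construct generating families $F_\pm$ for $\Lambda(\Gamma_\pm)$ that are equivalent up to stabilization and fibrewise diffeomorphism; by Theorem \ref{reidemeister} their Reidemeister torsions then coincide, and any natural invariant --- including the Chekanov--Eliashberg DGA, generating family homology, and microlocal sheaf categories --- must agree on $\Lambda_+$ and $\Lambda_-$, because any such invariant factors through the generating family datum up to this level of equivalence.

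For part (c), I would invoke Theorem \ref{turaev} to identify the Legendrian Turaev torsion of $\Lambda(\Gamma_\pm)$ with the combinatorial monodromy of handle slides computed in Theorem \ref{calculation} directly from $\Gamma_\pm$, and then check that this combinatorial quantity changes by a nontrivial class across the picture move. The Turaev torsion thus separates $\Lambda_+$ from $\Lambda_-$, ruling out Legendrian isotopy. Infinitely many distinct pairs are produced either by applying the construction along a sequence of graphs of increasing complexity on $\Sigma$, or by inserting several copies of the local picture into a fixed background and using an additivity argument for the torsion to separate the resulting classes.

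The main obstacle will be part (b): formulating ``natural Legendrian invariant'' precisely and then exhibiting a structural equivalence between $\Lambda_+$ and $\Lambda_-$ that forces every such invariant to agree. The strategy is to argue that naturality with respect to ambient contactomorphisms and formal Legendrian isotopies forces any invariant to factor through the generating family up to stable fibrewise equivalence --- and then to observe that the Turaev refinement of Reidemeister torsion is precisely what is lost at this level of equivalence, so the Legendrian Turaev torsion is by design the first invariant with a chance of detecting the distinction.
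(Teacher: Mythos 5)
Your proposal diverges from the paper's proof in a way that produces a genuine gap, concentrated exactly where you flag the difficulty: part (b).

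The paper does not attempt any abstract argument that ``natural Legendrian invariants factor through the generating family datum.'' Such a claim is neither true nor needed, and the strategy you sketch for it (stable fibrewise equivalence of generating families forcing agreement of \emph{every} natural invariant) would not go through: there is no reason a pseudo-holomorphic-curve invariant or a microlocal sheaf invariant should factor through a generating family in any useful sense, let alone through one only defined up to stabilization. What the paper does instead is make (b) true \emph{by construction}: fix an orientation-reversing diffeomorphism $\phi\colon\Sigma\to\Sigma$, set $\Lambda_+=\Lambda_{G^+}$ where $G^+$ is a trivalent ribbon graph with all vertices colored $+$, and set $\Lambda_-=\Phi(\Lambda_+)$ with $\Phi=j^1(\phi)$. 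One then identifies $\Lambda_-$ with the mesh Legendrian $\Lambda_{H^-}$ for $H=\phi(G)$ (the ribbon graph with reversed cyclic orders) colored all $-$. By Definition \ref{def: naturality}, any natural invariant is invariant under $j^1$-lifts, so (b) holds tautologically. Your ``local modification inside a disc realizing a nontrivial Steinberg-type relation'' is a different construction of $\Lambda_\pm$ altogether; it is not clear it produces a pair related by a $j^1$-lift, so you would be left with a genuinely hard version of (b) that the paper sidesteps entirely. (Steinberg relations in this setting correspond to double handle slides that \emph{equate} words of elementary operations --- they are the relations the torsion calculation mods out by, not a mechanism for producing inequivalent Legendrians.)

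Once the pair is set up as above, (c) follows from $w(G^+)=n$, $w(H^-)=-n$ and Corollary \ref{turaev} for $n\geq 3$, and the infinitely many pairs come from letting $n$ range over integers $\geq 3$ (graphs with $2n$ vertices), not from an additivity argument for torsion. Your route to (c) via Theorems \ref{calculation} and \ref{turaev} is on the right track, but you have not explained why $w$ changes sign across your proposed local move, whereas in the paper's construction this sign flip is immediate. For (a), the paper verifies directly (Section \ref{formal triviality}) that both $\Lambda_+$ and $\Lambda_-$ are formal unlinks, so they are trivially formally equivalent; this replaces your appeal to preservation of the smooth link and Maslov class under the local move, which you do not actually verify and which would require a separate argument.
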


\begin{remark} The pairs $\Lambda_\pm$ are all links of 2-dimensional Legendrian spheres. Each pair is obtained using the same underlying trivalent graph $G \subset \Sigma$, but one has all vertices colored positive and the other all negative. The precise construction will be given below in this introduction, after we introduce mesh Legendrians. For the time being we make some remarks about the stated properties.\begin{itemize} 
\item[(a)] In particular $\Lambda_+$ and $\Lambda_-$ are smoothly isotopic, but since the $\Lambda_{\pm}$ are links of 2-dimensional spheres  this is equivalent to the a priori stronger condition of being formally isotopic. In fact, for our pairs both $\Lambda_+$ and $\Lambda_-$ are formal unlinks and this is easy to verify explicitly. 
\item[(b)] By construction there is a diffeomorphism $\phi: \Sigma \to \Sigma$ (not isotopic to the identity) whose 1-jet lift $\Phi:J^1(\Sigma) \to J^1(\Sigma)$ sends $\Lambda_+$ to $\Lambda_-$. Hence $\Lambda_+$ cannot be distinguished from $\Lambda_-$ using any Legendrian invariant which is natural in the sense that it is preserved by strict contactomorphisms of the form $\Phi=j^1(\phi)$. See Section \ref{naturality} for further discussion of naturality.
\item [(c)] We distinguish $\Lambda_+$ from $\Lambda_-$ by showing that their Legendrian Turaev torsions are not equal. It follows that the Legendrian Turaev torsion is not a natural invariant.\end{itemize}
\end{remark}

Before discussing Legendrian Turaev torsion we recall the generating family construction. Consider the 1-jet space $J^1(B)=T^*B \times \bR$ of a closed manifold $B$ and the front projection $\pi: J^1(B)  \to J^0(B)$, where $J^0(B)=B \times \bR$. This is the product of the cotangent bundle projection $T^*B \to B$ and the identity on the $\bR$ factor. Let $F \to W \to B$ be a fibre bundle of closed, connected, orientable manifolds and denote by $F_b$ the fibre over $b \in B$. We view a function $f:W \to \bR$ as a family of functions $f_b:F_b \to \bR$. 

\begin{definition} The Cerf diagram of $f$ is the subset $\Sigma_f=\{ (b,z) :  \text{$z$ is a critical value of $f_b$} \} \subset B \times \bR$. 
\end{definition} 

Consider the graph of the differential $\Gamma(df) \subset T^*W$, which is a (graphical) Lagrangian submanifold. Suppose that the fibrewise derivative of $f$ satisfies the generic condition $\partial_F f \pitchfork 0 $. Then $ \{ \partial_F f = 0 \} \subset \Gamma(df)$ is an isotropic submanifold contained in the coisotropic subbundle $E \subset T^*W$ of covectors with zero fibrewise derivative. The symplectic reduction $E \to T^*B$ restricts to a Lagrangian immersion $\{ \partial_F f= 0 \} \to T^*B$, which in turn lifts to a Legendrian immersion $\{ \partial_Ff  = 0 \} \to J^1(B)$ via the function $f$ itself. Generically this Legendrian immersion is an embedding.

%so that the Cerf diagram $\Sigma_f$ is transversely cut out. 
 
\begin{definition} We say that a Legendrian submanifold $\Lambda \subset J^1(B)$ is generated by $f:W \to \bR$ when $\partial_F f \pitchfork 0 $ and $\{ \partial_F f = 0 \} \to J^1 (B)$ is an embedding with image $\Lambda$. 
\end{definition}

\begin{remark} 
Note that in particular the Cerf diagram of $f$ is the front of $\Lambda$, i.e. $\Sigma_f=\pi(\Lambda)$. 
\end{remark}

Let $R$ be a commutative ring and $U(R)$ its group of units. Let $\rho: \pi_1W \to U(R)$ be a representation. Denote by $H^*(F;R^\rho)$ the cohomology of $F$ with the twisted $R$ coefficients given by the restriction of $\rho$ to $\pi_1F$ and assume that $H^*(F;R^\rho)$ is trivial. Then we can consider the Reidemeister torsion of $F$ with respect to $\rho$, which is an element of $U(R) / \pm \rho(\pi_1F)$. Following Turaev, by choosing an Euler structure on $F$ it is possible to lift the Reidemeister torsion to a finer invariant, the Turaev torsion, which is an element of $U(R) / \pm 1$. 

In this article we show that the global geometry of %a generating family $f$ for $\Lambda \subset J^1(B)$ on a fibre bundle $F \to W \to B$ 
$\Lambda$ can sometimes be used to distinguish a preferred class of Euler structures on $F$. More precisely, we consider the following situation.

\begin{definition}
A Legendrian submanifold $\Lambda \subset J^1(B)$ is said to be of Euler type if each connected component $\Lambda_i$ of $\Lambda$ is simply connected and the projection to the base $\Lambda_i \to B$ has degree zero.
\end{definition}

\begin{definition}
A torsion pair consists of a fibre bundle $F \to W \to B$ of closed, connected, orientable manifolds and a representation $\rho:\pi_1W \to U(R)$ such that $H^*(F;R^\rho)=0$ and $\rho(\pi_1F)=\rho(\pi_1W)$.
\end{definition}

If $\Lambda \subset J^1(B)$ is an Euler Legendrian and $(W,\rho)$ is a torsion pair, then to each generating family $f$ for $\Lambda$ on an even stabilization of $W$ we will assign a certain Turaev torsion of $F$ by computing its $\rho$-twisted cohomology using the Morse theory of $f|_F$. We denote by $T(\Lambda, W, \rho)$ the resulting set of Turaev torsions for varying $f$ but fixed $\Lambda$, $W$ and $\rho$. 

\begin{definition} The subset $T(\Lambda, W, \rho) \subset U(R) / \pm1$ is called the Legendrian Turaev torsion. \end{definition}

Each element of $T(\Lambda,W,\rho)$ maps to the Reidemeister torsion of $F$ with respect to $\rho$ under the natural homomorphism $U(R) / \pm 1 \to U(R) / \pm \rho(\pi_1 F)$. So the Legendrian $\Lambda$ selects certain lifts of the Reidemeister torsion, which we assemble into a Legendrian invariant $T(\Lambda, W, \rho)$.  The invariance property is the following.
\begin{theorem} $T(\Lambda_1,W ,\rho)=T(\Lambda_2, W, \rho)$ whenever $\Lambda_1$ and $\Lambda_2$ are Legendrian isotopic.
\end{theorem}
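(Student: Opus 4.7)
The strategy is to lift a Legendrian isotopy between $\Lambda_1$ and $\Lambda_2$ to a path of generating families on a common even stabilization of $W$, and then to show that the Turaev torsion is invariant along such a path. This will establish $T(\Lambda_1,W,\rho)\subseteq T(\Lambda_2,W,\rho)$; the reverse inclusion is symmetric. In particular it suffices to show that if $f_0$ and $f_1$ are endpoints of a generic smooth path $\{f_t\}$ of generating families, each generating the corresponding Legendrian $\Lambda_t$ in a Legendrian isotopy, then $f_0$ and $f_1$ compute the same element of $U(R)/\pm 1$.

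First, given an isotopy $\{\Lambda_t\}_{t\in[0,1]}$ and a generating family $f:W'\to\bR$ for $\Lambda_0$, where $W'\to B$ is an even stabilization of $W$, I would appeal to the generating family existence and uniqueness theory in the style of Chekanov--Chaperon to extend $f$ to a smooth one-parameter family $f_t:W''\to\bR$ on a possibly larger even stabilization $W''$, with each $f_t$ generating $\Lambda_t$. The representation $\rho$ pulls back along $W''\to W$ to a representation $\rho''$, and the pair $(W'',\rho'')$ is again a torsion pair since the stabilizing fibres are contractible. Moreover, even stabilization does not alter the element of $T(\Lambda,W,\rho)$ computed by $f$, because adding a nondegenerate quadratic factor of even index changes the fibrewise twisted Morse complex by an acyclic summand whose Turaev torsion is $+1$.

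Next I would analyze the family $\{f_t\}$ via Cerf theory. After a generic perturbation rel endpoints, the path decomposes into open intervals on which the fibrewise restrictions $f_t|_{F_b}$ have a fixed Morse structure, separated by finitely many singular moments at which either a birth/death of a pair of fibrewise critical points occurs or a fibrewise gradient flow line connects two critical points of the same Morse index (a handle slide). Between singular moments the twisted Morse complex varies by a continuous change of basis and its twisted torsion is locally constant. At a birth or death, the complex is modified by an acyclic two-term summand, contributing trivially. At a handle slide, the change of basis is by an elementary matrix whose off-diagonal entry is $\pm\rho''(\gamma)$ for a loop $\gamma\in\pi_1 W''$ obtained from the slide disk together with a return path inside $\Lambda$; the hypothesis $\rho(\pi_1 F)=\rho(\pi_1 W)$ of a torsion pair implies that this entry already lies in $\rho''(\pi_1 F)$, so the torsion is unchanged in $U(R)/\pm\rho(\pi_1 F)$.

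The delicate point, and what I expect to be the main obstacle, is to promote invariance from $U(R)/\pm\rho(\pi_1F)$ to the finer group $U(R)/\pm 1$. This is exactly where the Euler-type hypothesis on $\Lambda$ enters. Because each component of $\Lambda$ is simply connected and projects to $B$ with degree zero, the canonical Euler structure on $F$ assigned to $f$ can be described via a matching of fibrewise critical points along branches of the front $\pi(\Lambda)$, and this matching varies continuously along the Cerf path; combined with the torsion pair condition, it forces the ambiguity contributed at each handle slide to lie in $\pm 1$ rather than in $\pm\rho(\pi_1F)$. Verifying this cancellation carefully for each generic bifurcation type is the heart of the argument; once established, concatenating the local invariance statements over $t\in[0,1]$ yields equality of Turaev torsions and hence of the sets $T(\Lambda_i,W,\rho)$.
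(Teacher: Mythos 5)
Your high-level strategy matches the paper's: lift the Legendrian isotopy to a path of generating families on a common stabilization and show the fibre Turaev torsion is invariant along the path via a Cerf-theoretic bifurcation analysis. However, there is a genuine confusion in your account of where the delicate $\pm\rho(\pi_1 F)$ ambiguity lives, and a packaging difference worth flagging.

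The confusion: you attribute the potential $\pm\rho(\pi_1 F)$-ambiguity to handle slides, writing that the torsion pair hypothesis \emph{and} the Euler-type hypothesis are needed to keep handle slides from contributing nontrivial factors. But a handle slide acts on the twisted incidence matrix by an elementary row or column operation, which has determinant $1$; it therefore never changes $\det(\partial^\rho+\delta^\rho)$ at all, not merely in $U(R)/\pm\rho(\pi_1F)$. The bifurcation that actually threatens the refinement from $U(R)/\pm\rho(\pi_1 F)$ to $U(R)/\pm 1$ is a Morse birth/death, where a row and column are adjoined to or deleted from the matrix with a diagonal entry of the form $\pm\rho(\gamma)$ for a loop $\gamma$ obtained from the short connecting trajectory capped by the reference paths. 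The compatibility condition for weak Euler structures (condition~(b) in Definition~\ref{Def: weak euler compatible}) forces this $\gamma$ to be contractible at the birth/death instant, so the entry is $\pm 1$, and this is where the simple-connectedness of each component of $\Lambda$ (hence of each component $C_i$ of the fibrewise critical locus) is essential. Your proof plan never articulates this, and the role of the paths built into a weak Euler structure — which are what turn the loop $\gamma$ into something controllable — is absent.

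On the packaging: rather than perturb the lifted path $\{f_t\}$ to make each $t\mapsto f_t|_{F_b}$ a generic Cerf path (which would break the property that $f_t$ generates $\Lambda_t$), the paper forms the trace $\Lambda\subset J^1([0,1]\times B)$ generated by the single family $F:[0,1]\times \text{stab}^{2k}(W)\to\bR$, then invokes two prior results: the independence of the fibre Turaev torsion of $F$ from the point $(t,b)\in[0,1]\times B$ (Lemma~\ref{familyturaev}, where a generic path is chosen in the connected base, not a perturbation of the function), and the restriction functoriality (Proposition~\ref{functoriality}) for $\{0\}\times B$ and $\{1\}\times B$, together with stability under even stabilization (Proposition~\ref{Proposition: turaev stability}). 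This separates the Cerf-theoretic heavy lifting cleanly into a self-contained lemma and reduces the theorem itself to three formal applications of already-proven statements.
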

We prove this theorem using the homotopy lifting property for generating families. More generally, Legendrian Turaev torsion exhibits functorial behavior with respect to a certain class of Legendrian cobordisms, see Section \ref{sec: invariant}. We use our Legendrian Turaev torsion to study a family of Euler type Legendrians called mesh Legendrians, which we introduce next.

\begin{definition} A ribbon graph $G$ is a finite connected graph where each vertex is equipped with a cyclic ordering of the half-edges incident to the vertex.
\end{definition}

Every ribbon graph $G$ can be fattened to an oriented surface with boundary $S_G$, replacing vertices by disks and replacing edges by thin rectangles, which are pasted to the disks according to the corresponding cyclic orderings. Let $\Sigma_G$ be the closed oriented surface obtained by attaching a 2-cell to each boundary component of $S_G$, see Figure \ref{FigureFatgraph}. By construction $G$ comes equipped with an embedding $G \subset \Sigma_G$ such that the orientation of $\Sigma_G$ determines the ribbon structure of $G$.

\begin{figure}[htbp]
\begin{center}
\begin{tikzpicture}[scale=.75]
\clip (-8,-1)rectangle(9,4);
%\draw[help lines=1,thick] (-8,-1) grid (9,4);
% graph G on left
\begin{scope}[xshift=-6cm]
\coordinate (A) at (0,1);
\coordinate (B) at (1,2);
\coordinate (C) at (2.5,1.5);
\coordinate (E) at (2,-2);
\coordinate (D) at (5,2);
\coordinate (F) at (-2,3);
\coordinate (G) at (1,4);
\coordinate (X) at (1.03,.05);
\coordinate (Y) at (-2.5,-2);
\coordinate (Z) at (2,0);
\coordinate (W) at (-1.5,3);
\draw (W) node{$G:$};
\draw[fill] (A) circle[radius=3pt];
\draw[fill] (B) circle[radius=3pt];
\draw[fill] (C) circle[radius=3pt];
\draw[very thick] (A)..controls (Y) and (Z)..(C);
\draw[fill,color=white] (X) circle[radius=3pt];
\draw[very thick] (A)--(B)--(C)..controls (D) and (E) ..(A)..controls (F) and (G)..(B);
\end{scope}
% surface S_G in middle
\begin{scope}
\coordinate (A) at (0,1);
\coordinate (B) at (1,2);
\coordinate (C) at (2.5,1.5);
\coordinate (A1) at (-.3,1);
\coordinate (A2) at (0,1.3);
\coordinate (A3) at (0.3,1);
\coordinate (A4) at (0,.7);
\coordinate (B1) at (.7,2);
\coordinate (B2) at (1,2.3);
\coordinate (B3) at (1.3,2.1);
\coordinate (B4) at (1,1.7);
\coordinate (C1) at (2.2,1.3);
\coordinate (C2) at (2.5,1.7);
\coordinate (C3) at (2.8,1.7);
\coordinate (C4) at (2.7,1.2);
\coordinate (E) at (2.5,-2.5);
\coordinate (D) at (6,2);
\coordinate (E1) at (2,-1.6);
\coordinate (D1) at (5.3,2);
\coordinate (Y) at (-2.5,-2);
\coordinate (Z) at (2,0);
\coordinate (Y1) at (-2,-1.5);
\coordinate (Z1) at (2,0.5);
\coordinate (X1) at (.8,.2);
\coordinate (X2) at (1.25,-.2);
\draw[thick] (A2)--(B1) (A3)--(B4) (B3)--(C2) (B4)--(C1);
\draw[thick] (A1)..controls (-3,3) and (.8,5)..(B2);
\draw[thick] (A2)..controls (-2,3) and (.5,4)..(B1);
\draw[thick] (A1)..controls (Y) and (Z)..(C4);
\draw[thick] (A4)..controls (Y1) and (Z1)..(C1);
\draw[fill,color=white] (X1) circle[radius=.35cm];
\draw[fill,color=white] (X2) circle[radius=.4cm];
\draw[thick] (A4)..controls (E) and (D)..(C3); %%%%%%
\draw[thick] (A3)..controls (E1) and (D1)..(C4); %%%%%%
\draw[fill,color=white] (A)circle[radius=.4cm];
\draw[fill,color=white] (B)circle[radius=.4cm];
\draw[fill,color=white] (C)circle[radius=.4cm];
\draw (A)circle[radius=.4cm];
\draw (B)circle[radius=.4cm];
\draw (C)circle[radius=.4cm];
\coordinate (W) at (-2,3);
\draw (W) node{$S_G:$};
\end{scope}
% closed surface Sigma_G on right
\begin{scope}[xshift=6cm,yshift=.5cm]
\coordinate (W) at (-1,2.5);
\draw (W) node{$\Sigma_G:$};
\coordinate (A) at (1,1);
\coordinate (B) at (1,0);
\draw[thick] (A) ellipse[x radius=1.5cm,y radius=.8cm];
\begin{scope}
\clip  (1,4) circle[radius=3cm];
\draw[thick] (1,.3)circle[radius=.9cm];
\end{scope}
\clip (B) circle[radius=1.4cm];
\draw[thick] (1,4) circle[radius=3cm];
\end{scope}
\end{tikzpicture}
\caption{A ribbon graph ($G$, left) can be fattened into an oriented surface with boundary ($S_G$, middle) in an essentially unique way. The closed surface ($\Sigma_G$, right) is given by attaching 2-cells to each boundary component of $S_G$. In this case $\Sigma_G$ has genus zero because $\chi(\Sigma_G)=\chi(G)+2=3-5+2=0$ since two $2$ cells were added.
}
\label{FigureFatgraph}
\end{center}
\end{figure}
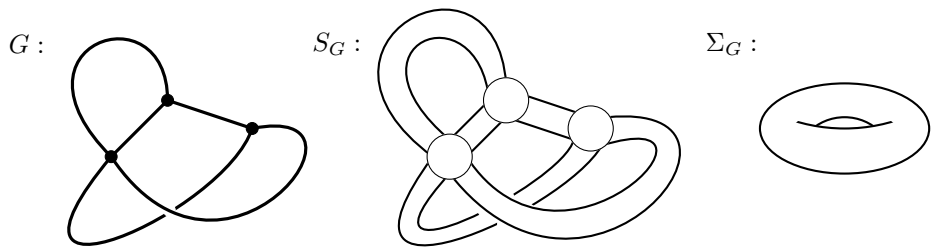

Suppose now that $G$ is a ribbon graph which is trivalent and bicolored, so each vertex only has three half-edges incident to it and is labeled with a decoration by the symbol $+$ or $-$. We emphasize that the labels are arbitrary; in particular we do not assume that the bicolored graph is bipartite. To each such $G$ we associate a Legendrian submanifold $\Lambda_G \subset J^1(\Sigma_G)$ in the following way. 

\textbf{(1) Faces.} For each face $F_j$ of $\Sigma_G \setminus G$ we have a connected component $\Lambda^j_G$ of $\Lambda_G$, which is a standard Legendrian unknot, i.e. a flying saucer in the front projection. Explicitly, the front $\pi(\Lambda^j_G) \subset J^0(\Sigma_G)=\Sigma_G \times \bR$ consists of two parallel copies of $F_j$ which meet at cusps along a copy of $\partial F_j$ which is slightly pushed out along the outwards pointing normal to $\partial F_j$, see Figure \ref{FigureLens}. % (picture of the flying saucer $\Lambda^j_G$, with the underlying face $F_j$ beneath it, just like in your later pictures)}. 
Note that the projections of the $\Lambda^j_G$ to $\Sigma_G$ overlap, and indeed cover $\Sigma_G$ with their interiors.

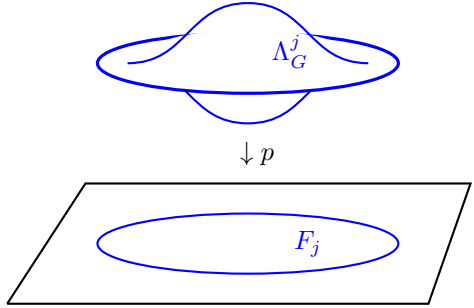
\begin{figure}[htbp]
\begin{center}
\begin{tikzpicture}[scale=.8]
%\draw[help lines=1,thick] (-5,-2) grid (5,2);
%\foreach \x in {-8,-6,...,8}\draw (\x,0) node{\x};\foreach \y in {-6,-4,...,8}\draw (0,\y) node{\y};
%\coordinate (A) at (0,0);
%\draw[thick,color=blue] (A) ellipse [x radius=2.8cm,y radius=2.1cm];
% bottom
\begin{scope}
\draw[thick,color=blue] (-2,0)..controls (-1,0) and (-1,-1)..(0,-1); % bottom
\draw[thick,color=blue] (2,0)..controls (1,0) and (1,-1)..(0,-1);
\draw[fill,color=white] (0,0) ellipse [x radius=2.5cm,y radius=.5cm];
\end{scope}
% top
\begin{scope}
\draw[very thick,color=blue] (0,0) ellipse [x radius=2.5cm,y radius=.5cm];
\draw[fill,color=white] (0,0.05) ellipse[x radius=1.8cm, y radius=.5cm];
\end{scope}
\draw[thick,color=blue] (-2,0)..controls (-1,0) and (-1,1)..(0,1); % top
\draw[thick,color=blue] (2,0)..controls (1,0) and (1,1)..(0,1);
\draw[color=blue] (0.7,0.2) node{$\Lambda_G^j$};
% plane
\draw (0.15,-1.5) node{$\downarrow p$};
\begin{scope}[yshift=-4cm]
\draw[thick,color=blue] (0,1) ellipse [x radius=2.5cm,y radius=.5cm] (1,1) node{$F_j$};
 \draw[thick] (-4,0)--(3,0)--(3.7,2)--(-2.7,2)--(-4,0);
\end{scope}
\end{tikzpicture}
\caption{Front projection of a standard Legendrian unknot.}
\label{FigureLens}
\end{center}
\end{figure}

\textbf{(2) Edges.} Along each edge $E$ of $G$ the Legendrians $\Lambda^{i}_G$ and $\Lambda^{j}_G$ corresponding to the faces $F_i$ and $F_j$ whose boundary contains $E$ are linked; as in the 1-dimensional Legendrian clasp but multiplied by a trivial factor in the $E$ direction, see Figure \ref{FigureE}. Note that a face could meet itself along an edge (i.e. $F_i=F_j$), and this is allowed. 

\textbf{(3) Vertices.} Finally, at each vertex $V$ of $G$ the cusp loci of the front projections of the $\Lambda^F_G$ spiral around each other with a chirality that depends on the sign $+$ or $-$ of the decoration, see Figure \ref{FigureT}. 

\begin{remark} Hence $\Lambda_G$ is always a link of unknotted Legendrian spheres. The front of each sphere has cusps along its equator and has no other caustics.  \end{remark}

\begin{definition}
We call $\Lambda_G \subset J^1(\Sigma_G)$ the mesh Legendrian associated to $G$.
\end{definition}

\begin{example}
See Figure \ref{Fig: start} for an illustration of the mesh Legendrian $\Lambda_G$ corresponding to the bicolored trivalent ribbon graph $G$ shown in Figure \ref{Fig: 11}.
\end{example}

%\end{document}
%
\begin{figure}[htbp]
\begin{center}
\begin{tikzpicture}[scale=.75]
\coordinate (E) at (0.3,1.1);
\coordinate (A1) at (-1.8,1);
\coordinate (A2) at (-1.6,5.2);
\coordinate (A3) at (-1.9,3);
\coordinate (B1) at (3.1,1);
\coordinate (B2) at (3,3.5);
\coordinate (C) at (0.2,2.5);
\draw (C) node{$\downarrow p$};
\draw[color=gray!50!black] (E) node{$E$};
\draw (A1) node{$F_i$};
\draw (A2) node{$\Lambda_G^i$};
%\draw (A3) node{$\Lambda_G^i$};
\draw[color=blue] (B1) node{$F_j$};
\draw[color=blue] (B2) node{$\Lambda_G^j$};
\begin{scope}%[xshift=1cm] % rectangle on surface
	\draw[very thick] (1.8,0)--(3,2);
	\draw[thick] (-3,0)--(1.8,0)--(3,2)--(-1.4,2)--(-3,0);
	\draw[very thick, color=blue] (-1.8,-.05)--(-.3,1.95);
	\draw[thick,color=gray!70!black,dotted] (0,0)--(1.35,2);
	\draw[thick,color=blue] (-1.8,-.05)--(3,-.05)--(4.1,1.95)--(-.3,1.95);%--(-1.8,-.05);
\end{scope}
\begin{scope}[yshift=4.2cm,xshift=1.3cm] % top back edge of Legendrian
\clip (-3,1.42)rectangle (3,2);
	\draw[thick] (-2.7,1.9) ..controls (-1,2) and (0.75,1)..(1.7,1);
	\draw[thick,color=blue] (2.8,1.9) ..controls (1,2) and (-.75,1)..(-1.6,1);
\end{scope}
\begin{scope}[yshift=4.2cm,xshift=1.3cm] % bottom back right edge of Legendrian
\clip (1.2,0)rectangle (2.8,.4);
	\draw[thick,color=blue] (2.8,0.1) ..controls (1,0) and (-.75,1)..(-1.6,1);
\end{scope}
\begin{scope}[yshift=4.2cm,xshift=1.3cm]% hidden back edge
	\draw[thin, dashed] (-2.7,.1) ..controls (-1,0) and (0.75,1)..(1.7,1);
	\draw[thin, dashed] (-2.7,1.9) ..controls (-1,2) and (0.75,1)..(1.7,1);
	\draw[thin, dashed,color=blue] (2.8,0.1) ..controls (1,0) and (-.75,1)..(-1.6,1);
	\draw[thin, dashed,color=blue] (2.8,1.9) ..controls (1,2) and (-.75,1)..(-1.6,1);
\end{scope}
\begin{scope}[yshift=2.5cm] % main part of Legendrian
	\draw[thick] (-1.4,1.8)--(-3,0) ..controls (-1,0) and (0.75,1)..(1.75,1);
	\draw[thick] (-1.4,3.6)--(-3,2) ..controls (-1,2) and (0.75,1)..(1.75,1);
	\draw[very thick] (1.75,1)--(2.47,1.98);
	\draw[dashed] (1.75,1)--(3,2.7);
	\draw[thick,color=blue] (4.1,1.8)--(3,0) ..controls (1,0) and (-.75,1)..(-1.75,1);
	\draw[thick,color=blue] (4.1,3.6)--(3,2) ..controls (1,2) and (-.75,1)..(-1.75,1);
	\draw[very thick, color=blue] (-1.75,1)--(-1.13,1.74);
	\draw[dashed,color=blue] (-1.75,1)--(-.3,2.7);
	\draw[thick,color=gray!70!black,dotted] (0,1.41)--(1.35,3.11);
	\draw[thick,color=gray!70!black,dotted] (0.02,.64)--(0.45,1.2);
	\draw[color=gray!70!black,dotted] (0.5,1.26)--(1.35,2.31);
%(0,.61)
\end{scope}
\end{tikzpicture}
\caption{Two components $\Lambda_G^i$, $\Lambda_G^j$ are linked over the edge $E$.}
\label{FigureE}
\end{center}
\end{figure}
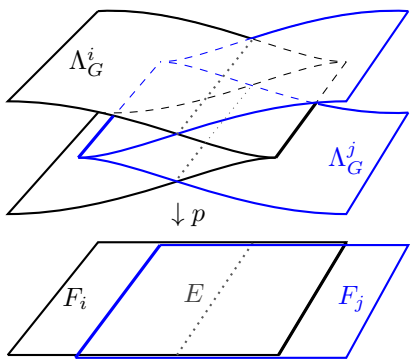
%
%\end{document}

%
\begin{figure}[htbp]
\begin{center}
\begin{tikzpicture}[yscale=.75]
%\draw[help lines=.1,color=red] (-6,0) grid (6,8);
%\draw[help lines=1,thick] (-6,0) grid (6,8);
%\foreach \x in {-7,-5,...,7}\draw (\x,0) node{\x};\foreach \y in {2,4,6}\draw (0,\y) node{\y};
\coordinate (A) at (-1,6.8);
\coordinate (B) at (1,6.8);
\coordinate (C) at (-.6,4.8);
\coordinate (D) at (.45,3.5);
\coordinate (P) at (-1.4,3.3);
\coordinate (W1) at (-0.02,4.51);
\coordinate (W2) at (0.09,4.52);
\draw (A) node{$\Lambda_G^i$};
\draw[color=blue] (B) node{$\Lambda_G^j$};
\draw[color=green!70!black] (C) node{$\Lambda_G^k$};
\draw[color=green!70!black] (D) node{$\Lambda_G^k$};
\draw (P) node{$p\downarrow $};
\coordinate (K0) at (0,0);
\coordinate (I0) at (-2,2.4);
\coordinate (J0) at (2,2.4);
\coordinate (K1) at (.5,0);
\coordinate (K2) at (0,.5);
\coordinate (K3) at (-.5,0);
\coordinate (KI) at (-1.25,.875);
\coordinate (I1) at (-2,1.75);
\coordinate (I2) at (-1.5,2.15);
\coordinate (I3) at (-1.5,2.75);
\coordinate (IJ) at (0,2.75);
\coordinate (J1) at (1.5,2.75);
\coordinate (J2) at (1.5,2.15);
\coordinate (J3) at (2,1.75);
\coordinate (JK) at (1.25,.875);
\coordinate (C) at (0,1.75);
\draw[fill,color=gray] (C) circle[radius=1pt];
%\begin{scope}
\draw[dotted,thick,color=gray] (KI)--(C)--(IJ) (C)--(JK);
%\end{scope}
\draw (I0) node{$F_i$};
\draw[color=blue] (J0) node{$F_j$};
\draw[color=green!70!black] (K0) node{$F_k$};
\coordinate (B1) at (-.75,.29);
\coordinate (Ba) at (.25,1.29);
\coordinate (Bb) at (1,2);
\coordinate (B2) at (1,2.75);
\begin{scope}%black curve on base, left half
\clip (-1,0)rectangle (0.25,1.5);
\draw[very thick] (B1)..controls (Ba) and (Bb).. (B2);
\end{scope}
\coordinate (L1) at (.75,.29);
\coordinate (La) at (-.25,1.29);
\coordinate (Lb) at (-1,2);
\coordinate (L2) at (-1,2.75);
\draw[very thick,color=blue] (L1)..controls (La) and (Lb).. (L2);  %blue curve on base
\coordinate (G1) at (-1.75,1.45);
\coordinate (Ga) at (-.5,2.5);
\coordinate (Gb) at (.5,2.5);
\coordinate (G2) at (1.75,1.45);
\draw[very thick,color=green!80!black] (G1)..controls (Ga) and (Gb).. (G2); % green curve on base
\begin{scope}%black curve on base, right half
\clip (0.2,1)rectangle (1.2,2.75);
\draw[very thick] (B1)..controls (Ba) and (Bb).. (B2);
\end{scope}
%\coordinate (A) at (0,0);
%\draw[thick,color=blue] (A) ellipse [x radius=2.8cm,y radius=2.1cm];
\begin{scope} % on surface
\draw[ thick,color=green!80!black] (JK)--(K1)--(K2)--(K3)--(KI);
\draw[ thick] (KI)--(I1)--(I2)--(I3)--(IJ);
\draw[ thick, color=blue] (IJ)--(J1)--(J2)--(J3)--(JK);
\end{scope}
\coordinate (C1) at (0,5.7);
\coordinate (C2) at (0,5);
\coordinate (IK1) at (-1.22,4.84);
\coordinate (IK2) at (-1.17,4);
\coordinate (IJ1) at (0,6.93);
\coordinate (JK1) at (1.2,4.825);
\coordinate (JK2) at (1.2,4.05);
\draw[fill,color=gray] (C1) circle[radius=1pt];
\draw[thick,color=gray,dotted] (C1)--(IJ1) (IK1)--(C1)--(JK1);
\begin{scope}
\clip (JK2) circle[radius=.3cm];
\draw[thick,color=gray,dotted] (IK2)--(C2)--(JK2);
\end{scope}
\begin{scope}
\clip (IK2) circle[radius=.25cm];
\draw[thick,color=gray,dotted] (IK2)--(C2)--(JK2);
\end{scope}
%\draw[color=red] (JK2) circle[radius=2pt];
%
\begin{scope}[yshift=3.7cm]
\draw[very thin,dashed,color=green!70!black] (-1.75,1.45)..controls (-.5,2.5) and (.5,2.5).. (1.75,1.45); 
\clip (-1.75,1.45) circle[radius=.37cm];
\draw[very thick,color=green!70!black] (-1.75,1.45)..controls (-.5,2.5) and (.5,2.5).. (1.75,1.45); % left part green curve on Legendrian
\end{scope}
\begin{scope}[yshift=3.69cm]
\clip (1.75,1.45) circle[radius=.38cm];
\draw[very thick,color=green!70!black] (-1.75,1.45)..controls (-.5,2.5) and (.5,2.5).. (1.75,1.45); % right part green curve on Legendrian
\end{scope}
\begin{scope}[yshift=3cm] % left front edge of Legendrian: black
\draw[thick] (-2,1.75).. controls (-1.25,.875) and (-1,1)..(-.75,.69);% add.4
\draw[thick] (-2,3.25).. controls (-1.25,2.375) and (-1,1)..(-.75,.69);
\begin{scope}
\draw[very thin,dashed] (-.76,.69)..controls (.25,1.69) and (1,2.5).. (1,3.25);
\clip (-.4,1) circle[radius=.77cm];
\draw[very thick] (-.76,.69)..controls (.25,1.69) and (1,2.5).. (1,3.25);
\end{scope}
\begin{scope}[rotate=45]
\draw[fill,color=white] (W1) rectangle (W2);
\end{scope}
\draw[thick] (-2,1.75)--(-1.65,2.02);%(-1.5,2.15)--(-1.5,2.4);
\draw[thick] (-2,3.25)--(-1.5,3.65)--(-1.5,4.25);
\draw[very thin,dashed] (-1.5,4.25)..controls (0,4.25)and(.25,3.7)..(1,3.25);
\clip (-1.6,3.93) rectangle (0,4.3);
\draw[thick] (-1.5,4.25)..controls (0,4.25)and(.25,3.7)..(1,3.25);
\end{scope}
\begin{scope}[yshift=4.5cm] % blue corner, top
\draw[thick,color=blue] (1.5,2.75)--(1.5,2.15)--(2,1.75);
\end{scope}
\begin{scope}[yshift=3cm] % blue corner, bottom
\draw[thick,color=blue] (1.67,2.02)--(2,1.75);
\end{scope}
\begin{scope}[yshift=3cm]
\draw[very thin,dashed,color=blue] (.76,.79)..controls (-.25,1.79) and (-1,2.5).. (-1,3.25);
\clip (.4,1.2) circle[radius=.65cm];
\draw[very thick,color=blue] (.76,.79)..controls (-.25,1.79) and (-1,2.5).. (-1,3.25);
\end{scope}
\begin{scope}[yshift=3cm] % front edge of Legendrian: green
\draw[thick, color=green!70!black] (0,.5)--(-.5,0)..controls (-1.25,.875) and ( -1.5,1.85)..(-1.75,2.15);
\draw[thick, color=green!70!black] (0,.5)--(.5,0)..controls (1.25,.875) and ( 1.5,1.85)..(1.75,2.15);
\draw[thick, color=green!70!black] (0,1.7)--(-.5,1.2)..controls (-1.25,2.075) and ( -1.5,1.85)..(-1.75,2.15);
\draw[thick, color=green!70!black] (0,1.7)--(.5,1.2)..controls (1.25,2.075) and ( 1.5,1.85)..(1.75,2.15);
\end{scope}
\begin{scope}[yshift=3cm] % right front edge of Legendrian: blue
\draw[thick,color=blue] (2,1.75).. controls (1.25,.875) and (1,1.1)..(.75,.79);% add.4
\draw[thick,color=blue] (2,3.25).. controls (1.25,2.375) and (1,1.1)..(.75,.79);
\draw[very thin,dashed,color=blue] (1.5,4.25)..controls (0,4.25)and(-.25,3.7)..(-1,3.25);
\clip (1.6,3.93) rectangle (0,4.3);
\draw[thick,color=blue] (1.5,4.25)..controls (0,4.25)and(-.25,3.7)..(-1,3.25);
\end{scope}
\end{tikzpicture}
\caption{Components $\Lambda_G^i$, $\Lambda_G^j$, $\Lambda_G^k$ spiral positively around a positive vertex in $G$. The $i$th cusp line (black) goes under the $j$th cusp line (blue).}
\label{FigureT}
\end{center}
\end{figure}
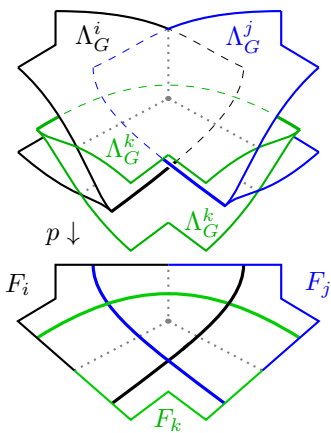

\begin{figure}[htbp] % theta shaped ribbon graph
\begin{center}
\begin{tikzpicture}[scale=.8]

\coordinate (A) at (-1.3,0);
\coordinate (B) at (1.3,0);
\coordinate (T1) at (-.8,2);
\coordinate (T2) at (.8,2);
\coordinate (F1) at (-.8,-2);
\coordinate (F2) at (.8,-2);
\draw[fill] (A) circle[radius=1.3mm];
\draw[fill] (B) circle[radius=1.3mm];
\draw[thick] (B)--(A)..controls (T1) and (T2)..(B)..controls (F2)
and (F1)..(A);
\end{tikzpicture}
\caption{A bicolored trivalent ribbon graph with two vertices labeled with the same color.}
\label{Fig: 11}
\end{center}
\end{figure}
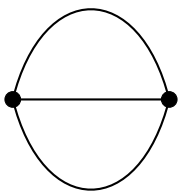
%

%%%%%%%%%%%%%%%%%%%%
  %
\begin{figure}[htbp]
\begin{center}
\begin{tikzpicture}[scale=.7]
\begin{scope}[yshift=-2cm]
% HHHHHHHHH

\begin{scope}[scale=.7, yshift=16.5cm, xshift=15cm]
\begin{scope}[scale=.95,xshift=-.35cm] % (c) right lens (red) tip
	\draw[very thick, color=blue] (-4.5,-0.4)..controls (-4,-0.4) and (-3,-1.45)..(-1,-1.25);%--(0,1.25);
	\draw[very thick, color=blue] (-4.5,-0.4)..controls (-4,-0.4) and (-3,1)..(-1,1.25);%--(0,1.25);
	\begin{scope}[xshift=-2cm] % (c) right lens (red) tip
		\draw[very thick, color=blue] (5.2,.4)..controls (4.5,.4) and (3,-1.25)..(1,-1.25);%--(0,1.25);
		\draw[very thick, color=blue] (5.2,.4)..controls (4.5,.4) and (3,1.55)..(1,1.25);%--(0,1.25);
	\end{scope}
\end{scope}
\begin{scope}[xshift=-4.5cm] % (c) left lens (black) tip
	\draw[very thick] (-1.5,1.25)..controls (0,1.25) and (1,0)..(2,0) ;
	\draw[very thick] (-1.5,-1.25)..controls (0,-1.25) and (1,0)..(2,0) ;
	\begin{scope}[xshift=1cm]
		\draw[very thick] (4,0)..controls (5,0) and (6,1.25)..(7.5,1.25);
		\draw[very thick] (4,0)..controls (5,0) and (6,-1.25)..(7.5,-1.25);
	\end{scope}
\end{scope}
\begin{scope}%[scale=1.1]%[xshift=4cm] % (c) right lens (blue) tip
	\draw[very thick, color=red] (-5,0)..controls (-4,0) and (-3,1.25)..(-1,1.25);%--(0,1.25);
	\draw[very thick, color=red] (-5,0)..controls (-4,0) and (-3,-1.25)..(-1,-1.25);%--(0,1.25);
	\begin{scope}[xshift=-2cm] % (c) right lens (blue) tip
		\draw[very thick, color=red] (5,0)..controls (4,0) and (3,1.25)..(1,1.25);%--(0,1.25);
		\draw[very thick, color=red] (5,0)..controls (4,0) and (3,-1.25)..(1,-1.25);%--(0,1.25);
	\end{scope}
\end{scope}
\end{scope}
% HHHHHHHHH

%GGGGGGGGGG
%\begin{scope}[yshift=-17cm,xshift=15cm]
\begin{scope}[xshift=7.7cm,scale=.7,yshift=-16.2cm]% 
\draw[very thick] (-2,1)--(8,1);
\draw[very thick] (-2,-1)--(8,-1) ;
\end{scope} 
%GGGGGGGGGG

% FFFFFFFFFF
\begin{scope}[scale=.7, yshift=-11.8cm,xshift=15cm]

\begin{scope}[xshift=-4cm] % (c) left lens (black) tip
\draw[very thick] (-2,1.25)..controls (0,1.25) and (1,1)..(3,1)..controls (5,1) and (6,1.25)..(8,1.25);
\draw[very thick] (-2,-1.25)..controls (0,-1.25) and (1,-1)..(3,-1)..controls (5,-1) and (6,-1.25)..(8,-1.25) ;
\end{scope}
\begin{scope}[scale=.4,xshift=-2cm] % (c) right lens (blue) tip
\draw[very thick, color=red] (-5,0)..controls (-4,0) and (-3,1.25)..(-1,1.25);%--(0,1.25);
\draw[very thick, color=red] (-5,0)..controls (-4,0) and (-3,-1.25)..(-1,-1.25);%--(0,1.25);
\begin{scope}[xshift=-2cm] % (c) right lens (blue) tip
\draw[very thick, color=red] (5,0)..controls (4,0) and (3,1.25)..(1,1.25);%--(0,1.25);
\draw[very thick, color=red] (5,0)..controls (4,0) and (3,-1.25)..(1,-1.25);%--(0,1.25);
\end{scope}
\end{scope}
\end{scope}
%FFFFFFFFFF

% EEEEEEEEEE
\begin{scope}[scale=.7, yshift=-7.2cm,xshift=15cm]

\begin{scope}[xshift=-4cm] % (c) left lens (black) tip
\draw[very thick] (-2,1.25)..controls (0,1.25) and (1,.4)..(3,.4)..controls (5,.4) and (6,1.25)..(8,1.25);
\draw[very thick] (-2,-1.25)..controls (0,-1.25) and (1,-.4)..(3,-.4)..controls (5,-.4) and (6,-1.25)..(8,-1.25) ;
\end{scope}

\begin{scope}[scale=.7,xshift=-.5cm] % (c) right lens (blue) tip
\draw[very thick, color=red] (-5,0)..controls (-4,0) and (-3,1.25)..(-1,1.25);%--(0,1.25);
\draw[very thick, color=red] (-5,0)..controls (-4,0) and (-3,-1.25)..(-1,-1.25);%--(0,1.25);
\end{scope}
\begin{scope}[scale=.7,xshift=-2.5cm] % (c) right lens (blue) tip
\draw[very thick, color=red] (5,0)..controls (4,0) and (3,1.25)..(1,1.25);%--(0,1.25);
\draw[very thick, color=red] (5,0)..controls (4,0) and (3,-1.25)..(1,-1.25);%--(0,1.25);
\end{scope}
\end{scope}
% EEEEEEEEEE

% DDDDDDDDDD
\begin{scope}[scale=.7, yshift=-2.7cm,xshift=15cm]
\begin{scope}[xshift=-4cm] % (c) left lens (black) tip
\draw[very thick] (-2,1.25)..controls (0,1.25) and (1,0)..(2,0) (4,0)..controls (5,0) and (6,1.25)..(8,1.25);
\draw[very thick] (-2,-1.25)..controls (0,-1.25) and (1,0)..(2,0) (4,0)..controls (5,0) and (6,-1.25)..(8,-1.25);
\end{scope}
\begin{scope}%[xshift=4cm] % (c) right lens (blue) tip
\draw[very thick, color=red] (-5,0)..controls (-4,0) and (-3,1.25)..(-1,1.25);%--(0,1.25);
\draw[very thick, color=red] (-5,0)..controls (-4,0) and (-3,-1.25)..(-1,-1.25);%--(0,1.25);
\end{scope}

\begin{scope}[xshift=-2cm] % (c) right lens (blue) tip
\draw[very thick, color=red] (5,0)..controls (4,0) and (3,1.25)..(1,1.25);%--(0,1.25);
\draw[very thick, color=red] (5,0)..controls (4,0) and (3,-1.25)..(1,-1.25);%--(0,1.25);
\end{scope}
\end{scope}
% DDDDDDDDDD

% CCCCCCCCCCC
\begin{scope}[scale=.7, yshift=2cm,xshift=15cm]
\begin{scope}[scale=.25,xshift=-2.8cm] % (c) right lens (red) tip
\draw[very thick, color=blue] (-5,0)..controls (-4,0) and (-3,1.25)..(-1,1.25);%--(0,1.25);
\draw[very thick, color=blue] (-5,0)..controls (-4,0) and (-3,-1.25)..(-1,-1.25);%--(0,1.25);
\begin{scope}[xshift=-2cm] % (c) right lens (red) tip
\draw[very thick, color=blue] (5,0)..controls (4,0) and (3,1.25)..(1,1.25);%--(0,1.25);
\draw[very thick, color=blue] (5,0)..controls (4,0) and (3,-1.25)..(1,-1.25);%--(0,1.25);
\end{scope}
\end{scope}
\begin{scope}[xshift=-4.5cm] % (c) left lens (black) tip
\draw[very thick] (-1.5,1.25)..controls (0,1.25) and (1,0)..(2,0) ;
\draw[very thick] (-1.5,-1.25)..controls (0,-1.25) and (1,0)..(2,0) ;
\begin{scope}[xshift=1cm]
\draw[very thick] (4,0)..controls (5,0) and (6,1.25)..(7.5,1.25);
\draw[very thick] (4,0)..controls (5,0) and (6,-1.25)..(7.5,-1.25);
\end{scope}
\end{scope}
\begin{scope}%[xshift=4cm] % (c) right lens (blue) tip
\draw[very thick, color=red] (-5,0)..controls (-4,0) and (-3,1.25)..(-1,1.25);%--(0,1.25);
\draw[very thick, color=red] (-5,0)..controls (-4,0) and (-3,-1.25)..(-1,-1.25);%--(0,1.25);
\end{scope}

\begin{scope}[xshift=-2cm] % (c) right lens (blue) tip
\draw[very thick, color=red] (5,0)..controls (4,0) and (3,1.25)..(1,1.25);%--(0,1.25);
\draw[very thick, color=red] (5,0)..controls (4,0) and (3,-1.25)..(1,-1.25);%--(0,1.25);
\end{scope}
\end{scope}
% CCCCCCCCCC

% BBBBBBBBBBBB
\begin{scope}[scale=.7, yshift=6.8cm,xshift=15cm]
\begin{scope}[scale=.25,xshift=-4cm] % (c) right lens (red) tip
\coordinate (A) at (-5.5,2.4);
\coordinate (B) at (-4.5,2.5);
\coordinate (Ar) at (5.5,-1.5);
\coordinate (Br) at (4.5,-1.6);
\coordinate (Ap) at (-5.5,1.5);
\coordinate (Bp) at (-4.5,1.6);
\coordinate (Arp) at (5.5,-2.4);
\coordinate (Brp) at (4.5,-2.5);
\coordinate (R) at (6.5,-1.5);
\coordinate (L) at (-6.5,1.5);

%\draw[very thick, color=red] (L) circle[radius=3pt];
%\draw[very thick, color=red] (6.5,1.5) circle[radius=3pt];
%\draw[very thick, color=red] (0,0) circle[radius=3pt];
\draw[very thick, color=blue] (L)..controls (Ap) and (Bp)..(0,-0.7)..controls (Brp) and (Arp)..(R);
\draw[very thick, color=blue] (L)..controls (A) and (B)..(0,.7)..controls (Br) and (Ar)..(R);
%(-6,-1.5)..controls (-4.5,-2) and (-3.5,-2)..(-2.5,-1)..controls (-1.5,0) and (-.5,2)..(.5,1.25)..controls (3.5,2) and (4.5,2)..(6,1.5);%--(0,1.25);
%\draw[very thick, color=red] (-5,0)..controls (-4,0) and (-3,-1.25)..(-1,-1.25);%--(0,1.25);
\end{scope}
\begin{scope}[xshift=-4.5cm] % (c) left lens (black) tip
\draw[very thick] (-1.5,1.25)..controls (0,1.25) and (1,0)..(2,0) ;
\draw[very thick] (-1.5,-1.25)..controls (0,-1.25) and (1,0)..(2,0) ;
\begin{scope}[xshift=1cm]
\draw[very thick] (4,0)..controls (5,0) and (6,1.25)..(7.5,1.25);
\draw[very thick] (4,0)..controls (5,0) and (6,-1.25)..(7.5,-1.25);
\end{scope}
\end{scope}
\begin{scope}%[xshift=4cm] % (c) right lens (blue) tip
\draw[very thick, color=red] (-5,0)..controls (-4,0) and (-3,1.25)..(-1,1.25);%--(0,1.25);
\draw[very thick, color=red] (-5,0)..controls (-4,0) and (-3,-1.25)..(-1,-1.25);%--(0,1.25);
\end{scope}

\begin{scope}[xshift=-2cm] % (c) right lens (blue) tip
\draw[very thick, color=red] (5,0)..controls (4,0) and (3,1.25)..(1,1.25);%--(0,1.25);
\draw[very thick, color=red] (5,0)..controls (4,0) and (3,-1.25)..(1,-1.25);%--(0,1.25);
\end{scope}
\end{scope}
% BBBBBBBBBBBB

%   AAAAAAAAAAAA

\begin{scope}[scale=.7, yshift=11.6cm,xshift=15cm]
\begin{scope}[scale=.8,xshift=-.4cm] % (c) right lens (red) tip
	\draw[very thick, color=blue] (-5,0)..controls (-4,0) and (-3,1.25)..(-1,1.25);%--(0,1.25);
	\draw[very thick, color=blue] (-5,0)..controls (-4,0) and (-3,-1.25)..(-1,-1.25);%--(0,1.25);
	\begin{scope}[xshift=-2cm] % (c) right lens (blue) tip
		\draw[very thick, color=blue] (5,0)..controls (4,0) and (3,1.25)..(1,1.25);%--(0,1.25);
		\draw[very thick, color=blue] (5,0)..controls (4,0) and (3,-1.25)..(1,-1.25);%--(0,1.25);
	\end{scope}
\end{scope}
\begin{scope}[xshift=-4.5cm] % (c) left lens (black) tip
\draw[very thick] (-1.5,1.25)..controls (0,1.25) and (1,0)..(2,0) ;
\draw[very thick] (-1.5,-1.25)..controls (0,-1.25) and (1,0)..(2,0) ;
\begin{scope}[xshift=1cm]
\draw[very thick] (4,0)..controls (5,0) and (6,1.25)..(7.5,1.25);
\draw[very thick] (4,0)..controls (5,0) and (6,-1.25)..(7.5,-1.25);
\end{scope}
\end{scope}
\begin{scope}%[xshift=4cm] % (c) right lens (blue) tip
\draw[very thick, color=red] (-5,0)..controls (-4,0) and (-3,1.25)..(-1,1.25);%--(0,1.25);
\draw[very thick, color=red] (-5,0)..controls (-4,0) and (-3,-1.25)..(-1,-1.25);%--(0,1.25);
\end{scope}

\begin{scope}[xshift=-2cm] % (c) right lens (blue) tip
\draw[very thick, color=red] (5,0)..controls (4,0) and (3,1.25)..(1,1.25);%--(0,1.25);
\draw[very thick, color=red] (5,0)..controls (4,0) and (3,-1.25)..(1,-1.25);%--(0,1.25);
\end{scope}
\end{scope}
% AAAAAAAAAAA
\end{scope}
%%%%%%%%%%%%%%%%%%%%

% hhhhhhhhhh

\begin{scope}[scale=.7, yshift=-19cm]
\begin{scope}[scale=1]%,xshift=-.35cm] % (c) right lens (red) tip
	\draw[very thick, color=blue] (-5,0)..controls (-4,0) and (-3,1.25)..(-1,1.25);%--(0,1.25);
	\draw[very thick, color=blue] (-5,0)..controls (-4,0) and (-3,-1.25)..(-1,-1.25);%--(0,1.25);
	\begin{scope}[xshift=-2cm] % (c) right lens (blue) tip
		\draw[very thick, color=blue] (5,0)..controls (4,0) and (3,1.25)..(1,1.25);%--(0,1.25);
		\draw[very thick, color=blue] (5,0)..controls (4,0) and (3,-1.25)..(1,-1.25);%--(0,1.25);
	\end{scope}
\end{scope}
\begin{scope}[xshift=-4.5cm] % (c) left lens (black) tip
\draw[very thick] (-1.5,-1.25)..controls (0,-1.25) and (1,0)..(2,0) ;
\draw[very thick] (-1.5,1.25)..controls (0,1.25) and (1,0)..(2,0) ;
\begin{scope}[xshift=1cm]
\draw[very thick] (4,0)..controls (5,0) and (6,1.25)..(7.5,1.25);
\draw[very thick] (4,0)..controls (5,0) and (6,-1.25)..(7.5,-1.25);
\end{scope}
\end{scope}
\begin{scope}[scale=.95,xshift=-.35cm]%[xshift=4cm] % (c) right lens (blue) tip
	\draw[very thick, color=red] (-4.5,0.4)..controls (-4,0.4) and (-3,1.45)..(-1,1.25);%--(0,1.25);
	\draw[very thick, color=red] (-4.5,0.4)..controls (-4,0.4) and (-3,-1)..(-1,-1.25);%--(0,1.25);
	\begin{scope}[xshift=-2cm] % (c) right lens (red) tip
		\draw[very thick, color=red] (5.2,-.4)..controls (4.5,-.4) and (3,1.25)..(1,1.25);%--(0,1.25);
		\draw[very thick, color=red] (5.2,-.4)..controls (4.5,-.4) and (3,-1.55)..(1,-1.25);%--(0,1.25);
	\end{scope}\end{scope}

\end{scope}

% hhhhhhhhhh

% ggggggggggg
\begin{scope}[scale=.7,yshift=-14.5cm]
\begin{scope}[scale=.8,xshift=-.4cm] % (c) right lens (red) tip
\draw[very thick, color=red] (-5,0)..controls (-4,0) and (-3,1.25)..(-1,1.25);%--(0,1.25);
\draw[very thick, color=red] (-5,0)..controls (-4,0) and (-3,-1.25)..(-1,-1.25);%--(0,1.25);
\begin{scope}[xshift=-2cm] % (c) right lens (red) tip
\draw[very thick, color=red] (5,0)..controls (4,0) and (3,1.25)..(1,1.25);%--(0,1.25);
\draw[very thick, color=red] (5,0)..controls (4,0) and (3,-1.25)..(1,-1.25);%--(0,1.25);
\end{scope}
\end{scope}
\begin{scope}[xshift=-4.5cm] % (c) left lens (black) tip
\draw[very thick] (-1.5,1.25)..controls (0,1.25) and (1,0)..(2,0) ;
\draw[very thick] (-1.5,-1.25)..controls (0,-1.25) and (1,0)..(2,0) ;
\begin{scope}[xshift=1cm]
\draw[very thick] (4,0)..controls (5,0) and (6,1.25)..(7.5,1.25);
\draw[very thick] (4,0)..controls (5,0) and (6,-1.25)..(7.5,-1.25);
\end{scope}
\end{scope}
\begin{scope}%[xshift=4cm] % (c) right lens (blue) tip
\draw[very thick, color=blue] (-5,0)..controls (-4,0) and (-3,1.25)..(-1,1.25);%--(0,1.25);
\draw[very thick, color=blue] (-5,0)..controls (-4,0) and (-3,-1.25)..(-1,-1.25);%--(0,1.25);
\end{scope}

\begin{scope}[xshift=-2cm] % (c) right lens (blue) tip
\draw[very thick, color=blue] (5,0)..controls (4,0) and (3,1.25)..(1,1.25);%--(0,1.25);
\draw[very thick, color=blue] (5,0)..controls (4,0) and (3,-1.25)..(1,-1.25);%--(0,1.25);
\end{scope}
\end{scope}
% ggggggggggg

% fffffffffff
\begin{scope}[scale=.7,yshift=-10cm]
\begin{scope}[scale=.25,xshift=-4cm] % (c) right lens (red) tip
\coordinate (A) at (-5.5,-2.4);
\coordinate (B) at (-4.5,-2.5);
\coordinate (Ar) at (5.5,1.5);
\coordinate (Br) at (4.5,1.6);
\coordinate (Ap) at (-5.5,-1.5);
\coordinate (Bp) at (-4.5,-1.6);
\coordinate (Arp) at (5.5,2.4);
\coordinate (Brp) at (4.5,2.5);
\coordinate (R) at (6.5,1.5);
\coordinate (L) at (-6.5,-1.5);

%\draw[very thick, color=red] (L) circle[radius=3pt];
%\draw[very thick, color=red] (6.5,1.5) circle[radius=3pt];
%\draw[very thick, color=red] (0,0) circle[radius=3pt];
\draw[very thick, color=red] (L)..controls (Ap) and (Bp)..(0,0.7)..controls (Brp) and (Arp)..(R);
\draw[very thick, color=red] (L)..controls (A) and (B)..(0,-.7)..controls (Br) and (Ar)..(R);
%(-6,-1.5)..controls (-4.5,-2) and (-3.5,-2)..(-2.5,-1)..controls (-1.5,0) and (-.5,2)..(.5,1.25)..controls (3.5,2) and (4.5,2)..(6,1.5);%--(0,1.25);
%\draw[very thick, color=red] (-5,0)..controls (-4,0) and (-3,-1.25)..(-1,-1.25);%--(0,1.25);
\end{scope}
\begin{scope}[xshift=-4.5cm] % (c) left lens (black) tip
\draw[very thick] (-1.5,1.25)..controls (0,1.25) and (1,0)..(2,0) ;
\draw[very thick] (-1.5,-1.25)..controls (0,-1.25) and (1,0)..(2,0) ;
\begin{scope}[xshift=1cm]
\draw[very thick] (4,0)..controls (5,0) and (6,1.25)..(7.5,1.25);
\draw[very thick] (4,0)..controls (5,0) and (6,-1.25)..(7.5,-1.25);
\end{scope}
\end{scope}
\begin{scope}%[xshift=4cm] % (c) right lens (blue) tip
\draw[very thick, color=blue] (-5,0)..controls (-4,0) and (-3,1.25)..(-1,1.25);%--(0,1.25);
\draw[very thick, color=blue] (-5,0)..controls (-4,0) and (-3,-1.25)..(-1,-1.25);%--(0,1.25);
\end{scope}

\begin{scope}[xshift=-2cm] % (c) right lens (blue) tip
\draw[very thick, color=blue] (5,0)..controls (4,0) and (3,1.25)..(1,1.25);%--(0,1.25);
\draw[very thick, color=blue] (5,0)..controls (4,0) and (3,-1.25)..(1,-1.25);%--(0,1.25);
\end{scope}
\end{scope}
% fffffffffff

%  eeeeeeeeee
\begin{scope}[scale=.7, yshift=-5.5cm]
\begin{scope}[scale=.25,xshift=-2.8cm] % (c) right lens (red) tip
\draw[very thick, color=red] (-5,0)..controls (-4,0) and (-3,1.25)..(-1,1.25);%--(0,1.25);
\draw[very thick, color=red] (-5,0)..controls (-4,0) and (-3,-1.25)..(-1,-1.25);%--(0,1.25);
\begin{scope}[xshift=-2cm] % (c) right lens (red) tip
\draw[very thick, color=red] (5,0)..controls (4,0) and (3,1.25)..(1,1.25);%--(0,1.25);
\draw[very thick, color=red] (5,0)..controls (4,0) and (3,-1.25)..(1,-1.25);%--(0,1.25);
\end{scope}
\end{scope}
\begin{scope}[xshift=-4.5cm] % (c) left lens (black) tip
\draw[very thick] (-1.5,1.25)..controls (0,1.25) and (1,0)..(2,0) ;
\draw[very thick] (-1.5,-1.25)..controls (0,-1.25) and (1,0)..(2,0) ;
\begin{scope}[xshift=1cm]
\draw[very thick] (4,0)..controls (5,0) and (6,1.25)..(7.5,1.25);
\draw[very thick] (4,0)..controls (5,0) and (6,-1.25)..(7.5,-1.25);
\end{scope}
\end{scope}
\begin{scope}%[xshift=4cm] % (c) right lens (blue) tip
\draw[very thick, color=blue] (-5,0)..controls (-4,0) and (-3,1.25)..(-1,1.25);%--(0,1.25);
\draw[very thick, color=blue] (-5,0)..controls (-4,0) and (-3,-1.25)..(-1,-1.25);%--(0,1.25);
\end{scope}

\begin{scope}[xshift=-2cm] % (c) right lens (blue) tip
\draw[very thick, color=blue] (5,0)..controls (4,0) and (3,1.25)..(1,1.25);%--(0,1.25);
\draw[very thick, color=blue] (5,0)..controls (4,0) and (3,-1.25)..(1,-1.25);%--(0,1.25);
\end{scope}
\end{scope}
% eeeeeeeeeeeee

% dddddddddddd
\begin{scope}[yshift=-.5cm,scale=.7]
\begin{scope}[xshift=-4cm] % (c) left lens (black) tip
\draw[very thick] (-2,1.25)..controls (0,1.25) and (1,0)..(2,0) (4,0)..controls (5,0) and (6,1.25)..(8,1.25);
\draw[very thick] (-2,-1.25)..controls (0,-1.25) and (1,0)..(2,0) (4,0)..controls (5,0) and (6,-1.25)..(8,-1.25);
\end{scope}
\begin{scope}%[xshift=4cm] % (c) right lens (blue) tip
\draw[very thick, color=blue] (-5,0)..controls (-4,0) and (-3,1.25)..(-1,1.25);%--(0,1.25);
\draw[very thick, color=blue] (-5,0)..controls (-4,0) and (-3,-1.25)..(-1,-1.25);%--(0,1.25);
\end{scope}

\begin{scope}[xshift=-2cm] % (c) right lens (blue) tip
\draw[very thick, color=blue] (5,0)..controls (4,0) and (3,1.25)..(1,1.25);%--(0,1.25);
\draw[very thick, color=blue] (5,0)..controls (4,0) and (3,-1.25)..(1,-1.25);%--(0,1.25);
\end{scope}
\end{scope}
% dddddddddddd

% ccccccccccccc
\begin{scope}[scale=.7, yshift=4cm]
\begin{scope}[xshift=-4cm] % (c) left lens (black) tip
\draw[very thick] (-2,1.25)..controls (0,1.25) and (1,.4)..(3,.4)..controls (5,.4) and (6,1.25)..(8,1.25);
\draw[very thick] (-2,-1.25)..controls (0,-1.25) and (1,-.4)..(3,-.4)..controls (5,-.4) and (6,-1.25)..(8,-1.25) ;
\end{scope}

\begin{scope}[scale=.7,xshift=-.5cm] % (c) right lens (blue) tip
\draw[very thick, color=blue] (-5,0)..controls (-4,0) and (-3,1.25)..(-1,1.25);%--(0,1.25);
\draw[very thick, color=blue] (-5,0)..controls (-4,0) and (-3,-1.25)..(-1,-1.25);%--(0,1.25);
\end{scope}
\begin{scope}[scale=.7,xshift=-2.5cm] % (c) right lens (blue) tip
\draw[very thick, color=blue] (5,0)..controls (4,0) and (3,1.25)..(1,1.25);%--(0,1.25);
\draw[very thick, color=blue] (5,0)..controls (4,0) and (3,-1.25)..(1,-1.25);%--(0,1.25);
\end{scope}
\end{scope}
% ccccccccccccc

\begin{scope}[scale=.7,yshift=8.9cm] % (b) bbbbbbbbbbbbbbbb
\begin{scope}[xshift=-4cm] % (c) left lens (black) tip
\draw[very thick] (-2,1.25)..controls (0,1.25) and (1,1)..(3,1)..controls (5,1) and (6,1.25)..(8,1.25);
\draw[very thick] (-2,-1.25)..controls (0,-1.25) and (1,-1)..(3,-1)..controls (5,-1) and (6,-1.25)..(8,-1.25) ;
\end{scope}
\begin{scope}[scale=.4,xshift=-2cm] % (c) right lens (blue) tip
\draw[very thick, color=blue] (-5,0)..controls (-4,0) and (-3,1.25)..(-1,1.25);%--(0,1.25);
\draw[very thick, color=blue] (-5,0)..controls (-4,0) and (-3,-1.25)..(-1,-1.25);%--(0,1.25);
\begin{scope}[xshift=-2cm] % (c) right lens (blue) tip
\draw[very thick, color=blue] (5,0)..controls (4,0) and (3,1.25)..(1,1.25);%--(0,1.25);
\draw[very thick, color=blue] (5,0)..controls (4,0) and (3,-1.25)..(1,-1.25);%--(0,1.25);
\end{scope}
\end{scope}
\end{scope} % bbbbbbbbbbbbb

\begin{scope}[xshift=-2.8cm,yshift=9.6cm,scale=.7]% (a) aaaaaaaa
\draw[very thick] (-2,1)--(8,1);
\draw[very thick] (-2,-1)--(8,-1) ;
\end{scope} % aaaaaaaaaaaa

\end{tikzpicture}
\caption{The mesh Legendrian $\Lambda_G$ corresponding to the $G$ of Figure \ref{Fig: 11} consists of three components, one black, one blue and one red. Its front $\pi(\Lambda_G)$ is a subset of $J^0(S^2)=S^2 \times \bR_z$. We think of $S^2$ as $\bR^2_{x,y}$ compactified at infinity and draw $\pi(\Lambda_G)$ as a movie of 1-dimensional fronts $\Sigma_y \subset J^0(\bR_x)=\bR_x \times \bR_z$, fixed at infinity, where $y$ is the time coordinate. We will see that $\Lambda_G$ is generated by a function $f:S^3 \to \bR$, where we view $S^3$ as the total space of the Hopf fibration $S^1 \to S^3 \to S^2$. The Legendrian $\Lambda_G$ is a Borromean link in the sense that any two of the three components of $\Lambda_G$ form a trivial Legendrian link, but the three together form a nontrivial Legendrian link. Moreover, $\Lambda_G$ is trivial as a formal Legendrian link.}
\label{Fig: start}
\end{center}
\end{figure}
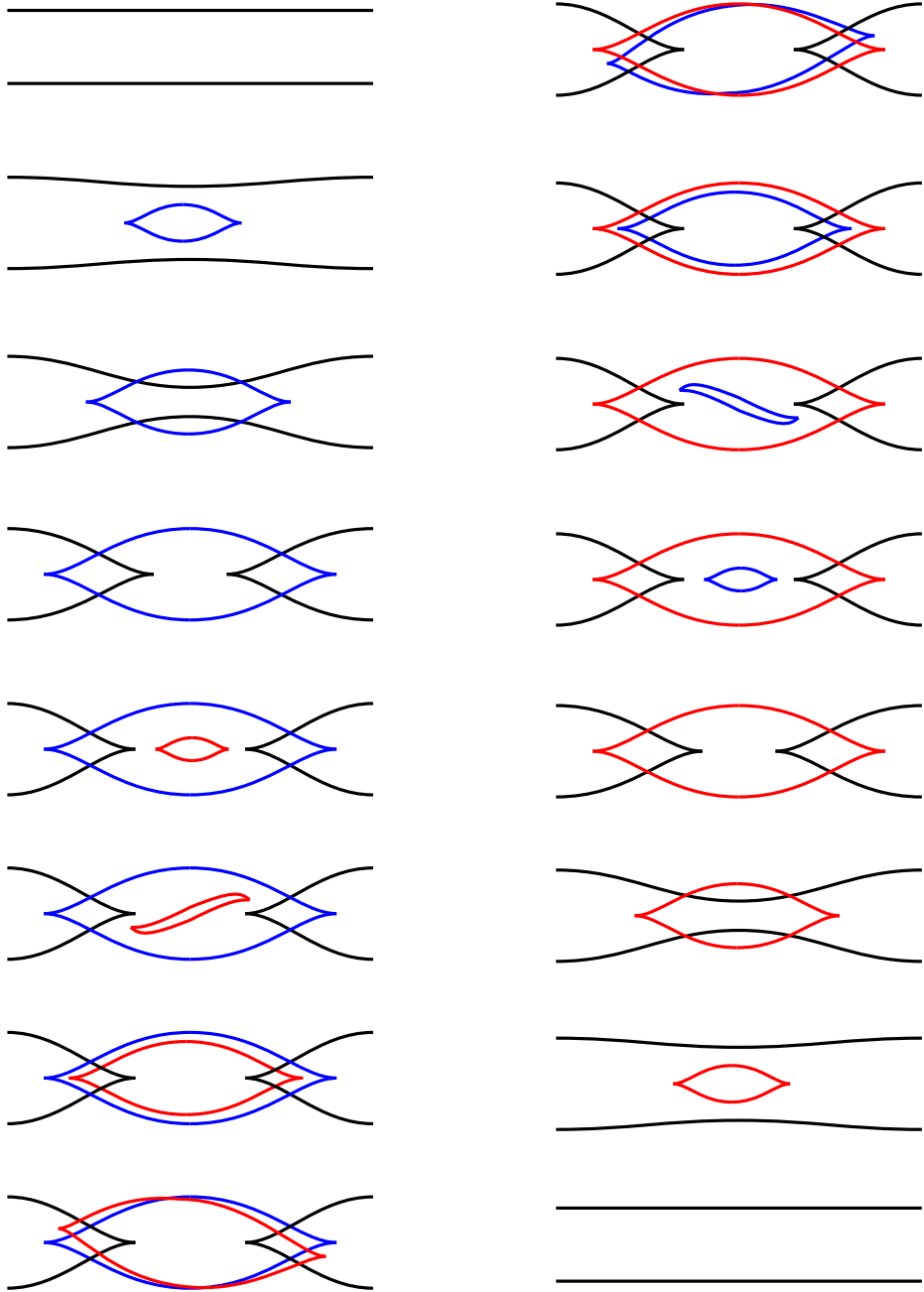

As we will see in Section \ref{systems of disks}, every mesh Legendrian $\Lambda_G$ admits a generating family on the circle bundle $S^1 \to E_G \to \Sigma_G $ of Euler number $\pm w(G)$, where $w(G) \in \bZ$ is defined as follows. 

 \begin{definition}
The winding number of a bicolored trivalent ribbon graph $G$ is $w(G) =  \frac{1}{2}(P-N)$, where $P$ is the number of positive vertices and $N$ is the number of negative vertices.
 \end{definition}
 
\begin{remark}
Since $G$ is trivalent, the number of vertices $P+N$ is even, hence so is $P - N$.
\end{remark}

By virtue of the homotopy lifting property for generating families, the fact that a mesh Legendrian $\Lambda_G$ admits a generating family on a circle bundle implies that $\Lambda_G$ is globally linked. This means that any $\Lambda$ which is Legendrian isotopic to $\Lambda_G$ must intersect every fibre of the projection $J^1(\Sigma_G) \to \Sigma_G$. %In particular, every mesh Legendrian is a nontrivial Legendrian link. 
The argument only uses the homology of the $S^1$ fibre, see Corollary \ref{corollary: mesh Legendrians are globally linked}. In order to distinguish different mesh Legendrians from each other we will use not only the properties of the fibre, but also the global properties of the circle bundle. In fact, in addition to circle bundles $E$ we will consider stabilized circle bundles $W=E \times \bR^{2k}$. %Under the stabilization procedure $W \mapsto W \times \bR^2$ a function $f$ on $W$ is replaced by the function $f+Q$ on $W \times \bR^2$, where $Q$ is the quadratic form $x^2-y^2$ on $\bR^2$. %The Euler number $e(W)=e(E)$ of a stabilized circle bundle $W=E \times \bR^{2k}$ is a well defined integer up to sign. The sign is determined as soon as we fix an orientation of the fibre. % 
We only allow generating families on $E \times \bR^{2k}$ which are fibrations at infinity and remain a bounded $C^1$ distance from the standard quadratic form $||x||^2-||y||^2$, for example the stabilization of a generating family on $E$.  We denote by $e(E) \in \bZ$ the Euler number of an oriented circle bundle $E$. Recall that changing the orientation of $E$ changes the sign of $e(E)$.
%\begin{remark} To keep the parity of Morse indices unchanged it will sometimes be important to only consider evenly stabilized circle bundles $W=E \times \bR^{4k}$, up to double stabilization. 
%\end{remark} 

 \begin{theorem}\label{euler} A mesh Legendrian $\Lambda_G$ admits a generating family on the stabilized circle bundle $W=E \times \bR^{2k}$ if and only if $e(E)=\pm w(G)$. 
\end{theorem}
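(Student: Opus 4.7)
My plan is to prove the two implications separately, exploiting the explicit combinatorial structure of $\Lambda_G$.

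\emph{Sufficiency} ($e(E) = \pm w(G) \Rightarrow$ generating family exists). I would build an explicit generating family on an oriented circle bundle $E_G$ with $e(E_G) = w(G)$, postponing the detailed combinatorial construction to Section \ref{systems of disks}. Starting with the 2-cell decomposition of $\Sigma_G$ cut out by $G$, for each face $F_j$ place a local model generating family on $S^1 \times D^2$ whose Legendrian is the flying saucer $\Lambda_G^j$ of Figure \ref{FigureLens}, and glue these pieces together. Gluing along each edge is canonical and reproduces the clasp linking of Figure \ref{FigureE}, while gluing at each vertex $V$ involves a signed rotation of the $S^1$-fibre whose sign matches the decoration of $V$. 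Summing these clutching contributions over all vertices yields the Euler number $\tfrac{1}{2}(P-N) = w(G)$. Reversing the orientation of $E_G$ flips the sign to $-w(G)$, and stabilisation by $\|x\|^2 - \|y\|^2$ on $\bR^{2k}$ preserves the generated Legendrian, giving generating families on every $E \times \bR^{2k}$ with $e(E) = \pm w(G)$.

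\emph{Necessity} (generating family $\Rightarrow e(E) = \pm w(G)$). Suppose $f : E \times \bR^{2k} \to \bR$ generates $\Lambda_G$. I would first use the fibration-at-infinity hypothesis together with the bounded-$C^1$-distance-from-$\|x\|^2-\|y\|^2$ assumption to destabilise, obtaining a generating family $f_E : E \to \bR$ for $\Lambda_G$ with the same $e(E)$. Over the interior of each face $F_j$, the function $f_E$ has exactly two fibrewise critical points per $S^1$-fibre, corresponding to the top and bottom sheets of the flying saucer $\Lambda_G^j$; these define two disjoint sections $s_j^\pm$ of $E|_{\mathrm{int}\, F_j}$ and hence a local trivialisation. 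I would then track how these local trivialisations patch together as $b$ crosses edges and vertices of $G$: edges contribute no twist (the canonical clasp model of Figure \ref{FigureE}), while each vertex contributes a signed rotation of the $S^1$-fibre determined by its decoration and the local spiralling model of Figure \ref{FigureT}. A clutching computation for circle bundles then identifies $e(E)$ with $\tfrac{1}{2}(P-N) = w(G)$, up to the global sign coming from a choice of orientation of $E$.

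The main obstacle is to show that the vertex rotation is \emph{intrinsic}: every generating family for $\Lambda_G$ must produce the same signed fibre rotation at each vertex, regardless of its global construction. I would establish this via a parametric Cerf-theoretic local uniqueness statement: near a vertex $V$, the front $\pi(\Lambda_G)$ has a universal local model (Figure \ref{FigureT}), and any two generating families on $S^1 \times D^2$ realising this local front are related by a fibre-preserving isotopy, obtained by a one-parameter analysis of births and deaths of fibrewise critical points as the parameter circles around $V$. Granting this local rigidity, the contribution of each vertex to $e(E)$ depends only on its decoration, and summing over all vertices yields the claimed equality $e(E) = \pm w(G)$.
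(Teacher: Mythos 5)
Your sufficiency sketch matches the paper: the construction of a fibrewise PGMF from a system of disks on $E_G$ (Section~\ref{systems of disks}, Corollary~\ref{generation}) is essentially what you describe, with the clutching computation of $e(E_G)=w(G)$ carried out edge-by-edge and vertex-by-vertex.

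The necessity direction, however, has a genuine gap at your very first step: the claimed destabilization from $f:E\times\bR^{2k}\to\bR$ to $f_E:E\to\bR$. There is no such destabilization theorem. Stabilization in parametrized Morse theory and pseudo-isotopy theory is a one-way operation (this is precisely why the stability theorem \cite{I88} is nontrivial and holds only in a range), and a generating family on $E\times\bR^{2k}$ with fibre $S^1\times\bR^{2k}$ need not arise by stabilizing one on $E$. The paper is explicit that this is the crux of the matter: the remark after Proposition~\ref{euler for circle} says ``Note that Theorem~\ref{euler} is the generalization of Proposition~\ref{euler for circle} from circle bundles to stabilized circle bundles\ldots For stabilized circle bundles we cannot argue as above.'' Your argument---reading off sections $s_j^\pm$ from the fibrewise critical points and computing $e(E)$ from clutching at vertices---is in spirit the unstabilized argument of Proposition~\ref{euler for circle} (there carried out via the cyclic set cocycle $c_\cZ$), and it does not survive stabilization: in the stabilized fibre the critical points live in $S^1\times\bR^{2k}$, and the local rigidity you invoke at vertices (``any two generating families on $S^1\times D^2$ realising this local front are related by a fibre-preserving isotopy'') is not available once the fibre has been thickened.

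The paper's actual argument is structurally different and is designed precisely to sidestep this obstacle. Instead of reconstructing the circle bundle geometrically, it extracts $e(E)$ algebraically from the 2-parametric handle-slide picture of an arbitrary generating family. Lemma~\ref{lem: ab=v} shows the two handle-slide labels on each edge satisfy $a_ib_i=v$ where $\tau(f,\rho)=\pm(1-v)$; Lemma~\ref{possible exchange points} and Lemma~\ref{a3=v2 and b3=v} analyze exchange points at each vertex triangle to obtain $a_1a_2a_3=v^2$, $b_1b_2b_3=v$ (and the reverse at negative vertices); then Theorem~\ref{thm: punching hole} cuts the surface along one edge, computes $\prod a_i$ two ways, and derives $u^n=v^{w(G)}$, which together with Proposition~\ref{prop: un=vm lemma} yields $|w(G)|=n=e(E)$. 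This argument never needs to recover a fibrewise PGMF or a trivialization over faces, and it is stable by design because it is phrased entirely in terms of the Thom--Smale complex and its elementary row/column operations. If you want to salvage your approach you would need to either prove a destabilization statement (which is false in the generality needed) or replace the geometric clutching argument with an invariant of the generating family that is manifestly stable---which is exactly what the handle-slide/Turaev-torsion machinery provides.
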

 
Hence $E_G$ is the only circle bundle which can be used to generate $\Lambda_G$, even stably. Combining Theorem \ref{euler} with the homotopy lifting property for generating families we deduce the following.
 
  \begin{corollary}\label{reidemeister}
If $\Lambda_{G_1}$ and $\Lambda_{G_2}$ are Legendrian isotopic, then $|w(G_1)|=|w(G_2)|$.
 \end{corollary}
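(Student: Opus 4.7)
The plan is to combine Theorem \ref{euler} with the homotopy lifting property for generating families, used as a black box (it is the tool the authors cite as justifying this kind of deduction). The key observation is that Legendrian isotopy transports generating families, so the existence of a generating family on a specified stabilized circle bundle is a Legendrian isotopy invariant.

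First I would note that if $\Lambda_{G_1}$ and $\Lambda_{G_2}$ are Legendrian isotopic, then they sit inside the same 1-jet space, so $\Sigma_{G_1} = \Sigma_{G_2} =: \Sigma$. By the assertion stated just before Theorem \ref{euler}, $\Lambda_{G_1}$ admits a generating family on the oriented circle bundle $E_{G_1} \to \Sigma$ with Euler number $\pm w(G_1)$; in particular, after possibly stabilizing by a factor of $\bR^{2k}$, there is such a generating family $f_0: E_{G_1} \times \bR^{2k} \to \bR$ fibering at infinity and $C^1$-close to $\|x\|^2 - \|y\|^2$.

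Next, invoking the homotopy lifting property for generating families (in the stabilized, fibrations-at-infinity setting considered by the authors, possibly after one further even stabilization), I would lift the given Legendrian isotopy from $\Lambda_{G_1}$ to $\Lambda_{G_2}$ to a family of generating families starting at $f_0$. The endpoint is a generating family for $\Lambda_{G_2}$ on the \emph{same} stabilized circle bundle $E_{G_1} \times \bR^{2k}$.

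Finally I would apply Theorem \ref{euler} in the reverse direction: the existence of such a generating family for $\Lambda_{G_2}$ on $E_{G_1} \times \bR^{2k}$ forces $e(E_{G_1}) = \pm w(G_2)$. Combined with $e(E_{G_1}) = \pm w(G_1)$ this gives $|w(G_1)| = |w(G_2)|$. The only real substantive step is the middle one — the homotopy lifting property — but since the authors present this as a known tool (and will have established it elsewhere in the paper), the corollary is a clean two-line consequence once Theorem \ref{euler} is in hand. No separate obstacle beyond ensuring the stabilization and fibration-at-infinity conditions are preserved along the isotopy.
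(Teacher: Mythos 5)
Your proposal is correct and is essentially the argument the paper has in mind (the paper leaves this proof implicit, stating only that the corollary follows by "combining Theorem \ref{euler} with the homotopy lifting property for generating families"). Your three steps — generate $\Lambda_{G_1}$ on $E_{G_1}$ via Corollary \ref{generation}, transport the generating family along the isotopy using Theorem \ref{Persistence Theorem}, then apply Theorem \ref{euler} to conclude $e(E_{G_1})=\pm w(G_2)$ — are exactly the intended chain of reasoning.
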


To show that $\Lambda_G$ remembers the sign of the winding number $w(G)$ we use the Legendrian Turaev torsion. Fix a rank 1 unitary local system $\rho: \pi_1 E_G \to U(1)$ whose restriction to the fundamental group of the fibre $\pi_1S^1$ is nontrivial. Recall that the Legendrian Turaev torsion is a priori a set $T(\Lambda, W, \rho) \subset \bC^\times / \pm 1$, whose elements correspond to different generating families $f$ for $\Lambda$ on an even stabilization of $W$. It turns out that for a mesh Legendrian $\Lambda=\Lambda_G$ and for the circle bundle $W=E_G$, the Legendrian Turaev torsion is a nonzero complex number well defined up to sign $\tau(G, \rho) \in \bC^\times/ \pm 1$, which does not depend on $f$. In other words, $T(\Lambda_G,E_G,\rho)=\{\tau(G,\rho)\}$ is a one-element set.

Explicitly, set $n=|w(G)|$ for a mesh Legendrian $\Lambda_G$. We assume $n \neq 1$, since for $n=1$ there are no local systems $\rho: \pi_1 E_G \to U(1)$ whose pullback to the fundamental group of the fibre is nontrivial. The orientation which gives the circle bundle $S^1 \to E_G \to \Sigma_G$ its non-negative Euler number $n \geq 0$ determines an isomorphism $\pi_1S^1 \simeq \bZ$ for the $S^1$ fibre. Set $\ve \in \{ \pm 1 \}$ to be the sign of $w(G) \in \bZ$. 
%The outcome of our calculation is the following.

\begin{theorem}\label{calculation} The Legendrian Turaev torsion of $\Lambda_G$ with respect to the rank 1 unitary local system $\rho:\pi_1E_G \to U(1)$ which sends $1 \in \bZ$ to the primitive $n$-th root of unity $\zeta^{-1}$ is $\tau(G, \rho)=\pm(1-\zeta^\ve)$.
\end{theorem}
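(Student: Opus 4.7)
The overall strategy is to use the explicit generating family $f_G \colon E_G \to \bR$ whose existence is established in Section ``systems of disks'' (which underlies Theorem \ref{euler}), restrict it to a single fibre $F = S^1$, and compute the Reidemeister/Turaev torsion directly from the resulting two-critical-point Morse function on $S^1$.

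The first step is to identify the Morse data on a generic fibre. Choose a base point $b \in \Sigma_G$ lying in the interior of some face $F_j$, far from $G$ and from the pushed-out cusp loci. By the flying-saucer description in the introduction, exactly one component $\Lambda_G^j$ has sheets above $b$, and it contributes two of them (the top and bottom of the saucer). Consequently $h := f_G|_{S^1} \colon S^1 \to \bR$ is a smooth function with exactly two critical points, one of index $0$ and one of index $1$. The twisted Morse complex computing $H^*(S^1; \bC^\rho)$ is therefore a two-term complex
\[
C_\bullet \colon \bC \xrightarrow{\ d\ } \bC,
\]
with differential $d$ obtained by counting, with signs and $\rho$-weights, the two gradient trajectories connecting the maximum to the minimum on $S^1$.

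The second step is to compute $d$ up to multiplication by $\rho(\pi_1 F)$. After choosing a lift of the maximum and minimum to the universal cover $\bR \to S^1$, the two negative gradient trajectories lift to flows that close up after a relative shift of $\pm 1 \in \pi_1 S^1 = \bZ$. Thus
\[
d \;=\; \rho(\alpha)\bigl(1 - \zeta^{\pm 1}\bigr),
\]
where $\alpha \in \pi_1 S^1$ is the ambiguity in the basepoint path. Since $\rho(\pi_1 F) = \rho(\pi_1 W)$ by the torsion-pair hypothesis, the Reidemeister torsion (as an element of $\bC^\times/\pm \rho(\pi_1 F)$) is unambiguously $1 - \zeta^{\pm 1}$, and in particular $H^\ast(S^1;\bC^\rho)=0$, confirming that the Turaev torsion is well defined.

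The third (and main) step is to pin down the sign in the exponent and to promote the class in $\bC^\times/\pm \rho(\pi_1 F)$ to a class in $\bC^\times/\pm 1$. The orientation convention fixed in the statement is that $E_G$ is oriented so that $e(E_G) = n \geq 0$, and this orientation determines both the generator $1 \in \pi_1 S^1$ and the sign of $\zeta^{-1}$. On the other hand, the generating family $f_G$ is constructed from the combinatorics of $G$, and the ``intrinsic'' trivialization of $\pi_1 S^1$ coming from the construction of $f_G$ corresponds to the orientation which makes $e(E_G) = w(G)$, not $|w(G)|$. When these two orientations agree ($w(G) > 0$) the winding of the two flow lines is $+1$ in the chosen trivialization and we obtain $1 - \zeta^{+1}$; when they disagree ($w(G) < 0$) the generator of $\pi_1 S^1$ is reversed, sending $\zeta \mapsto \zeta^{-1}$ and producing $1 - \zeta^{-1}$. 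In both cases the result is $1 - \zeta^\varepsilon$ with $\varepsilon = \operatorname{sgn} w(G)$.

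For the Euler-structure refinement, observe that $\Lambda_G$ is of Euler type by the count of components (unknotted spheres, each of degree zero over $\Sigma_G$), so the Legendrian-Turaev-torsion construction of Section \ref{sec: invariant} applies and produces a well-defined element of $\bC^\times / \pm 1$. The Euler structure selected by $\Lambda_G$ fixes the basepoint-path ambiguity $\rho(\alpha)$, and the remaining $\pm$-indeterminacy is exactly the stated one. The main obstacle in the whole argument is the orientation bookkeeping in the third step: one must verify that the ``intrinsic'' orientation of $S^1$ induced by the construction of $f_G$ indeed agrees with the orientation of $E_G$ giving Euler number $w(G)$ (with sign), and that reversing this orientation sends the computed torsion $1-\zeta$ to $1-\zeta^{-1}$ compatibly with the Legendrian Euler-structure normalization; everything else is a routine two-term Morse-theoretic determinant calculation.
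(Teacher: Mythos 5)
Your proposal has a genuine gap at its core: the sign $\varepsilon$ in $1-\zeta^\varepsilon$ cannot be extracted from the Morse data on a single fibre, which is what your third step attempts. Here is the issue concretely. The twisted Morse complex on the circle fibre always gives Reidemeister torsion $1-\zeta$ up to multiplication by $\pm\rho(\pi_1 S^1)$, and since $1-\zeta^{-1}=-\zeta^{-1}(1-\zeta)$, the two candidates $1-\zeta$ and $1-\zeta^{-1}$ are literally the same element of $U(1)/\pm\rho(\pi_1 S^1)$. The only thing that distinguishes them is the Euler structure, i.e.\ the homotopy class of the path joining the two critical points. That path is fixed by the requirement that it be $\rho$-compatible with the entire generating family over $\Sigma_G$, and this is a global constraint: the component $C_j$ of the fibrewise critical locus is a sphere mapping to $\Sigma_G$, and the compatible path in the fibre must close up with a path in $C_j$ that traverses the whole disk. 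Nothing intrinsic to the fibre, or to any ``intrinsic orientation of $S^1$ coming from the construction of $f_G$'', determines which power of $\zeta$ appears; indeed Lemma 3.16 in the paper (Lemma \ref{Lemma 0}) explicitly shows the Turaev torsion of a stabilized $S^1$ fibre is $\pm u^m(1-u)$ for \emph{some} $m\in\bZ$ that is not determined locally. Your claim that the local orientation bookkeeping pins down $m=0$ or $m=-1$ is exactly the step that is missing.

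What the paper actually does is a global $2$-parametric computation. Over each edge of $G$ it records handle-slide labels $a,b$ with $ab=v$ (Lemma \ref{lem: ab=v}, where $\pm(1-v)$ is the fibre Turaev torsion); around each vertex it derives, from the exchange points and a null-homotopy of the loop of row/column operations, the relations $a_1a_2a_3=v^2$, $b_1b_2b_3=v$ at a positive vertex and the reverse at a negative one (Lemma \ref{a3=v2 and b3=v}). Cutting one edge to trivialize the bundle introduces a correction of $u^n$ from the Euler number, and comparing the product $\prod a_i$ computed vertex-by-vertex with the product computed edge-by-edge yields $u^n=v^{w(G)}$. Proposition \ref{prop: un=vm lemma} then forces $|w(G)|=n$ and $v=u^\varepsilon$ with $\varepsilon=\operatorname{sgn} w(G)$. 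This is where the sign comes from, and there is no shortcut through a single-fibre calculation. An additional gap in your plan: you only consider the explicit generating family $f_G$, but the theorem must hold for \emph{every} generating family $f$ on every even stabilization of $E$; the paper's relations are derived for an arbitrary such $f$, using the front projection of $\Lambda_G$ to constrain the possible handle slides (Lemma \ref{possible exchange points}), and this universality is what makes $T(\Lambda_G,E_G,\rho)$ a singleton.
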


 \begin{remark}\label{rem: notation of IK}
In the portions of this article involving Morse theory we use the notation of \cite{IK93} so that our calculations will be consistent with the related calculations carried out in that paper. This includes the convention that $\rho$ sends the generator of $\pi_1S^1$ to $\zeta^{-1}$, also denoted $u^{-1}$ in \cite{IK93}.
\end{remark}

Since $\pm(1-\zeta) \neq \pm(1-\zeta^{-1})$ for $n \geq 3$, we deduce the following consequence.

 \begin{corollary}\label{turaev}
 If $\Lambda_{G_1}$ and $\Lambda_{G_2}$ are Legendrian isotopic and $|w(G_i)| \neq 1,2$, then $w(G_1)=w(G_2)$.
 \end{corollary}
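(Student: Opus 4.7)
The plan is to assemble Corollary \ref{reidemeister}, Theorem \ref{euler}, Theorem \ref{calculation} and the Legendrian isotopy invariance of $T(\Lambda,W,\rho)$, and then argue by direct comparison of the two possible torsion values. Start by setting $n:=|w(G_1)|$. Corollary \ref{reidemeister} forces $|w(G_2)|=n$ as well; if $n=0$ both winding numbers are zero and there is nothing to prove, so assume $n\geq 3$.

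Next I would choose a common ambient setup. By Theorem \ref{euler} each $\Lambda_{G_i}$ admits a generating family only on (stabilizations of) circle bundles $E$ with $|e(E)|=n$, so up to orientation there is a unique such bundle $E\to \Sigma$. Fix the orientation on $E$ that makes $e(E)=n\ge 0$; this fixes a generator of $\pi_1S^1$ as in the statement of Theorem \ref{calculation}. Choose the rank one local system $\rho:\pi_1E\to U(1)$ sending that generator to $\zeta^{-1}$, where $\zeta$ is a primitive $n$-th root of unity (this uses $n\neq 1$).

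Now invoke invariance: because $\Lambda_{G_1}$ and $\Lambda_{G_2}$ are Legendrian isotopic inside $J^1(\Sigma)$, the Legendrian Turaev torsion theorem (stated just before Section on mesh Legendrians) gives
\[
T(\Lambda_{G_1},E,\rho)=T(\Lambda_{G_2},E,\rho).
\]
Applying Theorem \ref{calculation} to each side, this equality becomes
\[
\pm(1-\zeta^{\varepsilon_1})=\pm(1-\zeta^{\varepsilon_2}) \quad \text{in } \bC^\times/\pm 1,
\]
where $\varepsilon_i\in\{\pm 1\}$ is the sign of $w(G_i)$.

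The remaining step is the elementary check that for $n\geq 3$ the two candidate values $\pm(1-\zeta)$ and $\pm(1-\zeta^{-1})$ are distinct in $\bC^\times/\pm 1$: equality would force either $\zeta=\zeta^{-1}$ (so $\zeta^2=1$, ruled out for $n\geq 3$) or $1-\zeta=\zeta^{-1}-1$, i.e.\ $\zeta+\zeta^{-1}=2$, which again forces $\zeta=1$. Both cases are excluded precisely by the hypothesis $n\neq 1,2$. Hence $\varepsilon_1=\varepsilon_2$, and combined with $|w(G_1)|=|w(G_2)|$ this yields $w(G_1)=w(G_2)$. The only real subtlety I anticipate is the bookkeeping in step two, namely verifying that a Legendrian isotopy inside $J^1(\Sigma)$ really does let us use a single pair $(E,\rho)$ to compute both torsions; this should follow directly from the homotopy lifting property for generating families together with Theorem \ref{euler}, but is the step where one must be most careful about orientations of the circle bundle and of the $S^1$ fibre.
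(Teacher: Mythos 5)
Your proof is correct and follows essentially the same route the paper takes: reduce to $|w(G_1)|=|w(G_2)|=n$ via Corollary \ref{reidemeister}, apply Theorem \ref{calculation} together with the invariance $T(\Lambda_{G_1},E,\rho)=T(\Lambda_{G_2},E,\rho)$, and observe that $\pm(1-\zeta)\neq\pm(1-\zeta^{-1})$ when $n\geq 3$. Your worry about orientation bookkeeping is resolved exactly as you suspect, since Theorem \ref{calculation} is itself phrased with respect to the orientation of $E_G$ making the Euler number non-negative, matching your chosen convention.
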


The techniques of the present article do not seem to work without the restriction $|w(G)|\neq 1,2$. The reason is that there do not exist interesting enough representations of $\pi_1E$ when $|e(E)| =  1,2$.

\begin{problem}
Extend Corollary \ref{turaev} to the case $|w(G)|=1,2$.
\end{problem}

We now prove Corollary \ref{corollarypairs} assuming the above results on the Turaev torsion of mesh Legendrians.

\begin{proof}[Proof of Corollary \ref{corollarypairs}] Given a trivalent ribbon graph $G$, let $\Sigma=\Sigma_G$ be the associated closed oriented surface. An orientation reversing diffeomorphism $\phi: \Sigma \to \Sigma$ produces an identification $\Sigma \simeq \Sigma_{H}$, where $H$ is the trivalent ribbon graph obtained from $G$ by reversing the cyclic orientation at each vertex. Note that $H$ is embedded in $\Sigma$ as $\phi(G)$. 

Let $\Lambda_+ \subset J^1(\Sigma)$ be the mesh Legendrian corresponding to the bicolored trivalent ribbon graph $G^+$ obtained from $G$ by coloring every vertex with positive labels. Let $\Lambda_- \subset J^1(\Sigma)$ be the mesh Legendrian corresponding to the bicolored trivalent ribbon graph $H^-$ obtained from $H$ by coloring every vertex with negative labels. Note that $\Lambda_-=\Phi(\Lambda_+)$ for $\Phi=j^1(\phi)$ the 1-jet lift of $\phi$. Note also that $w(G^+)=n$ and $w(H^-)=-n$, where $G$ has $2n$ vertices. Hence $w(G^+)$ and $w( H^-)$ have the same absolute value but opposite signs. 

When $n\geq 3$ we can apply Corollary \ref{turaev} and conclude that $\Lambda_+$ is not Legendrian isotopic to $\Lambda_-$. This proves property (c) of Corollary \ref{corollarypairs}. Property (b) is true by construction and property (a) is easy to check by hand because both $\Lambda_+$ and $\Lambda_-$ are formal Legendrian unlinks, see Section \ref{formal triviality}.  \end{proof}
The simplest examples of mesh Legendrians have appeared in disguise in work of the second author with J. Klein \cite{IK93}, which studied pictures for $K_3$ and higher Reidemeister torsion of circle bundles. The relevant mesh Legendrian $\Lambda_G \subset J^1(S^2)$ is generated by a positive generalized Morse function on a lens space (viewed as a circle bundle over the sphere) and the corresponding $G$ has all vertices colored with positive labels, just like our $\Lambda_+$ above. Given a nontrivial rank 1 unitary local system on the lens space, the picture of handle slides corresponding to this function produces an element of $K_3(\bC)$ from which the higher Reidemeister torsion of the circle bundle is computed. The Legendrian Turaev torsion of the mesh Legendrian can also be computed in terms of this handle slide picture. We discuss the connection further in Section \ref{pictures for K3} of the Appendix.

\subsection{Structure of the article}
The remainder of this introductory Section \ref{Section: introduction} gives some context for our work but is in no way logically necessary for the rest of the article. In particular we will not mention pseudo-holomorphic curve theory or microlocal sheaf theory after this introduction. %In particular, we touch on subjects such as pseudo-holomorphic curve theory and microlocal sheaf theory, which are not needed to understand Legendrian Turaev torsion or mesh Legendrians. With the exception of side remarks, the sections that follow take place entirely within the realm of finite dimensional parametrized Morse theory.

In Section \ref{Section: generating families} we review the elements of the theory of generating families for Legendrian submanifolds of 1-jet spaces, setting the stage for Legendrian Turaev torsion. We put an emphasis on the extra mileage that can be obtained from considering generating families on a fibre bundle with fibre more general than a vector space. The key ingredient we will need is the homotopy lifting property \ref{Persistence Theorem}, the statement of which can be taken as a black box for the remainder of the article. 

In Section \ref{Section: turaev torsion} we develop Legendrian Turaev torsion as an invariant of Euler Legendrians. The key point is to understand how the geometry of a generating family for an Euler Legendrian singles out a class of compatible (weak) Euler structures on the fibre of the generating family, which have a common Turaev torsion. Hence to each generating family and suitable local system is assigned a Turaev torsion, independent of the compatible (weak) Euler structure. 

In Section \ref{Section: mesh Legendrians} we compute the Legendrian Turaev torsion of mesh Legendrians and prove our structural results \ref{euler}, \ref{reidemeister}, \ref{calculation}, \ref{turaev}. The calculation consists of a 2-parametric Morse theory analysis of the picture of handle slides corresponding to a generating family for a mesh Legendrian. We show that the front projection of the mesh Legendrian essentially determines this picture, and extract from it the Legendrian Turaev torsion. 

Finally, in Appendix \ref{ap:torsion} we give a brief exposition of Whitehead, Reidemeister and Turaev torsion, of their higher analogues, and of their interaction with symplectic and contact topology.

\subsection{Naturality of Legendrian invariants}\label{naturality}
 
From Corollary \ref{corollarypairs} it follows that Legendrian Turaev torsion is not a natural invariant. We now explain what this means precisely. Let $\iota[\Lambda]$ be an invariant of Legendrian submanifolds $\Lambda \subset J^1(B)$. That $\iota[\Lambda]$ is an invariant is the property that $\iota[\Lambda]=\iota[\Lambda']$ whenever $\Lambda'$ is Legendrian isotopic to $\Lambda$. Hence an invariant is a map $\iota: \cI \to \cS$ from the set of Legendrian isotopy classes $\cI$ to some set $\cS$. Often the invariant is an algebraic gadget defined up to a certain class of isomorphisms and $\iota[\Lambda]$ is the isomorphism class. Sometimes $\iota[\Lambda]$ is only defined for a certain type of Legendrian submanifolds, for example connected, simply connected, oriented, spin...

\begin{remark}
One could categorify the discussion and instead define a Legendrian invariant to be a functor between appropriate categories, but we will restrict to the above definition for simplicity.
\end{remark}

Given a diffeomorphism $\phi:B \to B$, recall its 1-jet lift $\Phi=j^1(\phi):J^1(B) \to J^1(B)$. This is the map $(\phi^{-1})^* \times \text{id}_{\bR} : T^*B \times \bR \to   T^*B \times \bR$, where $J^1(B)=T^*B \times \bR$. Note that $(\phi^{-1})^*$ is an exact symplectomorphism and $\Phi$ is a strict contactomorphism.

\begin{definition}\label{def: naturality}
An invariant $\iota[\Lambda]$ is natural if $\iota[\Lambda]=\iota[\Phi(\Lambda)]$ for all $\Phi=j^1(\phi)$, $\phi \in \text{Diff}(B)$.
\end{definition}

As a first example, take the Chekanov-Eliashberg dga $(\cA_{\Lambda}, \partial_{\Lambda})$ with $\bZ/2$ coefficients \cite{C02},  \cite{E98}, \cite{EES07} (one make take either no basepoints or one basepoint per component). The invariant $\iota[\Lambda]$ is the stable tame isomorphism type of the dga over $\bZ/2$. Let $L_{\Lambda}=p(\Lambda) \subset T^*B$ be the image of $\Lambda$ under the Lagrangian projection $p:J^1(B) \to T^*B$. For a chord generic $\Lambda$, the generators of $\cA_{\Lambda}$ are in bijective correspondence with the self intersection points of $L_{\Lambda}$. Hence there is a bijection between the generators of $\cA_{\Lambda}$ and those of $\cA_{\Phi(\Lambda)}$ induced by $(\phi^{-1})^*|_{L_\Lambda}: L_\Lambda \to L_{\Phi(\Lambda)}$. 

To compute the differential one chooses extra data on $T^*B$, in particular an almost complex structure $J$ on $T^*B$ which is compatible with the symplectic form $\omega=dp \wedge dq$ and for which the moduli spaces which define $\partial_\Lambda$ are transversely cut out. One can then push forward this data by $\phi$ and use it to compute $\partial_{\Phi(\Lambda)}$. Indeed, $\phi_* J=d(\phi^{-1})^* \circ J \circ d \phi^*$ is also compatible with $\omega$. Moreover, the moduli spaces for $\partial_{\Phi(\Lambda)}$ are transversely cut out by $\phi_*J$ for tautological reasons and $u \mapsto (\phi^{-1})^* \circ u$ gives a bijection between rigid pseudo-holomorphic disks contributing to $\partial_\Lambda$ and to $\partial_{\Phi(\Lambda)}$. Hence for that specific $J$ and $\phi_*J$ we find that the algebras $(\cA_\Lambda, \partial_\Lambda)$ and $(\cA_{\Phi(\Lambda)}, \partial_{\Phi(\Lambda)})$ are isomorphic on the nose. We conclude that the invariant $\iota[\Lambda]$ is natural in the sense of Definition \ref{def: naturality}.

From the viewpoint of microlocal sheaf theory, the most basic invariant is also natural in the sense of Definition \ref{def: naturality}. Recall that for a closed Legendrian submanifold $\Lambda \subset J^1(B)$ we can view $J^1(B)$ as an open subset of the cosphere bundle $T^\infty J^0(B)$. Following \cite{STZ17}, consider the category $\cC(\Lambda)$ of constructible sheaves on $J^0(B)$ with microlocal support on $\Lambda \subset T^\infty J^0(B)$. By the sheaf quantization theorem of \cite{GKS12}, the equivalence class of this category is a Legendrian invariant $\iota[\Lambda]$. That this invariant is natural under $\Phi=j^1(\phi)$ follows from the push-forward of sheaves $j^0(\phi)_*: \cC(\Lambda) \to \cC\big(\Phi(\Lambda)\big)$. 

The above examples of Legendrian invariants can be greatly generalized, for instance the Chekanov-Eliashberg dga has now been defined over coefficients much more refined than $\bZ/2$. Some of these invariants are natural and some are not. Generally speaking, to obtain a Legendrian invariant which is not natural one must introduce an ambient object into the picture. For example one can use relative $H_2$ coefficients in the Chekanov-Eliashberg dga. In the case of Legendrian Turaev torsion the ambient object is a fibre bundle over the base. It would be interesting to know whether one can express Legendrian Turaev torsion in terms of pseudo-holomorphic curves or microlocal sheaves. A weaker question is whether our mesh Legendrians can be distinguished using pseudo-holomorphic curves or microlocal sheaves. We believe it is most likely possible, but we do not know how to do it.
\begin{remark}
Of course the equivalence class of a Legendrian submanifold as a formal Legendrian is a Legendrian invariant which is not in general natural. For example, the smooth isotopy class of $\Lambda$ as a smooth link need not be natural. Hence the significance of property (a) in Corollary \ref{corollarypairs}. 
\end{remark}

%It would be interesting to recast our results on mesh Legendrians in the language of pseudo-holomorphic curves or microlocal sheaves, and in particular to interpret the Legendrian Turaev torsion in those terms. However, for the remainder of the article we will remain entirely in the setting of finite dimensional parametrized Morse theory. Instead of the compactness theorem for pseudo-holomorphic curves or the quantization theorem for microlocal sheaves, we use the homotopy lifting property for generating families \ref{Persistence Theorem} as our hammer to access `hard' symplectic/contact topology.  

\subsection{Cluster algebras}
 Bicolored trivalent ribbon graphs appear in the theory of cluster algebras, which has been a source of motivation for the present article. Consider Figure \ref{FigureCA}, which is completed in Figure \ref{FigureCA2} illustrating the proof of Theorem \ref{thm: punching hole} (the Jensen-King-Su equation $b^k=a^{n-k}$ is proved in Remark \ref{proof of JKS}.) 
 %\ki{}{Say some things about ribbon graphs, relation to other branches of mathematics eg dessins d'enfants etc. It would be interesting to know whether cluster theory has anything to say about the corresponding mesh Legendrians $\Lambda_G$. We remark that an edge $E$ of $G$ produces a linking between two components of $\Lambda_G$ as smooth submanifolds whenever the labels at either end are opposite signs, and does not produce a linking when they have the same sign. Moreover this linking induces a co-orientation of $E$ in $\Sigma_G$. Hence we get the arrows that appear in cluster theory. This data can also be extracted from the Chekanov-Eliashberg dga. What else?}
Figure \ref{FigureCA} illustrates the relationship between plabic diagrams (planar bicolored graphs) as shown on the left side of the figure and the dual quiver (directed graph) shown on the right side of the figure. This dual graph is oriented clockwise around each white vertex and counterclockwise around each black vertex. The Jacobian algebra of this dual graph gives a categorification of the Grassmannian $G(2,10)$ in the example, see \cite{JKS16}. In the present article the plabic diagram is completed to a ribbon graph, see Figure \ref{FigureCA2}. The dual graph corresponds to the handle slide pattern of any family of Morse functions which generates the corresponding mesh Legendrian. Study of this type of diagram has also led the second author, together with E. Hanson, to a counterexample of the $\phi$-dimension conjecture in representation theory \cite{HI19}. Finally, we mention that cluster algebras have already produced interesting examples of Legendrian submanifolds in the work of Shende, Treumann, Williams and Zaslow \cite{STWZ15}, who also use plabic diagrams but in a different way. Trivalent graphs were also used to produce interesting Legendrians in the work of Casals, Murphy and Sachel \cite{CM18}.

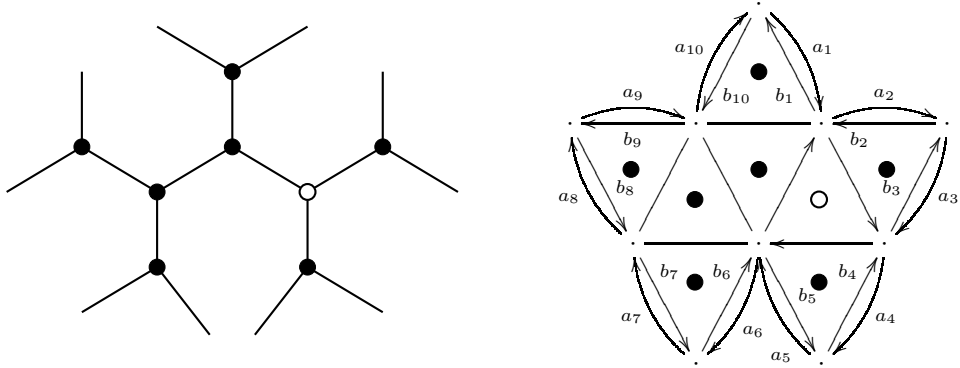
\begin{figure}[htbp]
\begin{center}
\begin{tikzpicture}%[scale=3]
%\draw[help lines=1,thick] (-5,-5) grid (5,5);
%\foreach \x in {-6,...,6}\draw (\x,0) node{\x};\foreach \y in {-5,-4,...,5}\draw (0,\y) node{\y};
\coordinate (A1) at (-7,1.6);
\coordinate (A2) at (-5,1.6);
\coordinate (K1) at (-8,1);
\coordinate (K2) at (-6,1);
\coordinate (K3) at (-4,1);
\coordinate (L1) at (-8,0);
\coordinate (L2) at (-6,0);
\coordinate (L3) at (-4,0);
\coordinate (M0) at (-9,-.6);
\coordinate (M1) at (-7,-.6);
\coordinate (M2) at (-5,-.6);
\coordinate (M3) at (-3,-.6);
\coordinate (N1) at (-7,-1.6);
\coordinate (N2) at (-5,-1.6);
\coordinate (P1) at (-8,-2.2);
\coordinate (P2) at (-6.3,-2.5);
\coordinate (P3) at (-5.7,-2.5);
\coordinate (P4) at (-4,-2.2);
\foreach\x in {K2,L1,L2,L3,M1,N1,N2}\draw[fill](\x) circle[radius=3pt];
\begin{scope}%[yshift=1cm]
\draw[thick] (K1)--(L1)--(M1)--(L2)--(K2) (L2)--(M2)--(L3)--(K3)(A1)--(K2)--(A2) (M0)--(L1) (M3)--(L3);
\draw[thick] (P1)--(N1)--(P2) (P3)--(N2)--(P4) (M1)--(N1) (M2)--(N2);
\end{scope}
\draw[fill,color=white] (M2) circle[radius=3pt];
\draw[thick] (M2) circle[radius=3pt];
\begin{scope}%[xshift=7cm]
%\draw[fill,color=white] (2,-.6) circle[radius=3pt];
\draw[thick] (1.8,-.7) circle[radius=3pt];
\draw[fill] (1,1) circle[radius=3pt];
\draw[fill] (.15,-.7) circle[radius=3pt];
\draw[fill] (1,-.3) circle[radius=3pt];
\draw[fill] (2.7,-.3) circle[radius=3pt];
\draw[fill] (-.7,-.3) circle[radius=3pt];
\draw[fill] (.15,-1.8) circle[radius=3pt];
\draw[fill] (1.8,-1.8) circle[radius=3pt];
\end{scope}
%
%\foreach \x/\xtext in {A/a,B/b,C/c}
%\draw (\x) node{$\xtext$};
\coordinate (R) at (1,-.5);
\draw(R) node{
$\xymatrixrowsep{35pt}\xymatrixcolsep{15pt}
\xymatrix{%begin xy matrix
 & & 
	&
	\cdot\ar[ld]^(.7){b_{10}}\ar@/^.5pc/[rd]^{a_1}\\ % 1
\cdot\ar[dr]^(.6){b_8} \ar@/^.5pc/[rr]^{a_9}&& \cdot\ar@/^.5pc/[ru]^{a_{10}}\ar[ll]^{b_9}\ar@{-}[rr] & & \cdot\ar@/^.5pc/[rr]^{a_2}\ar[lu]^(.3){b_1}\ar[rd] &&
	\cdot \ar[ll]^(.7){b_2}\ar@/^.5pc/[dl]^{a_3}\\ %2
& \cdot \ar@/^.5pc/[lu]^{a_8}\ar[dr]^(.3){b_7} \ar@{-}[rr]\ar@{-}[ru]& &
	\cdot\ar@/^.5pc/[ld]^(.6){a_6}\ar[dr]^{b_5}\ar[ru]\ar@{-}[lu]&&\cdot\ar[ll]\ar[ru]^(.4){b_3}\ar@/^.5pc/[dl]^{a_4}\\%3
	&&\cdot\ar@/^.5pc/[lu]^{a_7}\ar[ru]^(.7){b_6} &&\cdot\ar[ru]^(.7){b_4}\ar@/^.5pc/[lu]^(.2){a_5}
	}$};%end xy matrix};
%\coordinate (A) at (0,0);
%\draw[thick,color=blue] (A) ellipse [x radius=2.8cm,y radius=2.1cm];
\end{tikzpicture}
\caption{A spanning tree in a bicolored trivalent ribbon graph (left) produces, on the perimeter, a cyclic quiver with $n$ clockwise arrow $a_i$ and $n$ counterclockwise arrows $b_i$ satisfying the preprojective relations $a_ib_i=a_jb_j$ for all $i,j$ and the Jensen-King-Su relations $b^k=a^{n-k}$ when there are $k-1$ positive vertices. In this example, $a^2=b^8$.}
\label{FigureCA}
\end{center}
\end{figure}

\subsection{Acknowledgements}
The first author is very grateful to Yasha Eliashberg, who taught him about the theory of generating families and its connection with pseudo-isotopy theory. The first author is also grateful for many useful discussions with other mathematicians which helped frame the results of the present article in the wider context. In particular he would like to thank Sylvain Courte, Josh Sabloff and Lisa Traynor for discussions on generating families, Georgios Dimitroglou-Rizell, Tobias Ekholm, John Etnyre, Yank\i$ $ Lekili and Michael Sullivan for discussions on Legendrian Contact Homology, St\'ephane Guillermou for discussions on microlocal sheaf theory, David Treumann for discussions on cluster algebras, and Eduardo Fern\'andez, Javier Mart\'inez-Aguinaga and Fran Presas for discussions on formal Legendrians. Finally, the first author is grateful to the Centre de Recherches Math\'ematiques of Montreal for the opportunity to visit as a Simons scholar in the summer of 2019, during which time part of this article was written.

Many years ago, the second author had the distinct honor and great fortune to become a student of Allen Hatcher, a great mathematician who spent many hours explaining to him multiparameter Morse theory and its relation to algebraic K-theory. The second author will forever be grateful. % and dedicates this paper to him. 
The second author would also like to thank Gordana Todorov for explaining to him the results of \cite{JKS16} also summarized in \cite{5W}. He also thanks the many researchers who explained to him (mostly in lectures) plabic diagrams and their cluster categories, in particular, Yusuke Nakajima and Matthew Pressland. Finally, he would like to thank Langte Ma for many useful discussions about symplectic topology.

Both authors are very grateful to the referees, whose useful corrections and suggestions have greatly improved the text.

%The authors would like to dedicate this paper to their teachers Yasha Eliashberg and Allen Hatcher.
 
\section{Generating families}\label{Section: generating families}
 
\subsection{Legendrians as Cerf diagrams}
 
 We begin by revisiting the generating family construction. Let $B$ be a closed manifold, set $n=\dim B$ and consider the 1-jet space $J^1(B)=T^*B \times \bR$. On $J^1(B)$ there is the contact form $dz-\lambda$, where $z$ is the coordinate on $\bR$ and $\lambda$ is the Liouville 1-form on $T^*B$, which is intrinsically defined by the property that for any 1-form $\alpha$ on $B$, viewed as a map $\alpha: B \to T^*B$, we have $\alpha^*\lambda=\alpha$. It is customary to denote $\lambda=pdq$, indeed if $q_1,\ldots, q_n$ are local coordinates on $B$ and $p_1,\ldots,p_n$ are the corresponding dual coordinates, then $\lambda=\sum_i p_i dq_i$.
 
\begin{definition} An $n$-dimensional submanifold $\Lambda \subset J^1(B)$ is said to be Legendrian if $\lambda |_\Lambda=0$. 
\end{definition}

Consider first the graphical case, so $\Lambda=\Gamma(s)$ is the graph of a section $s:B \to J^1(B)$. Write $s=(\alpha,f)$ with respect to the splitting $J^1(B)=T^*B \times \bR$, so that $\alpha$ is a 1-form on $B$ and $f$ is a function on $B$. Then $\Lambda$ is Legendrian if and only if $\alpha=df$, in which case $s=j^1(f)$ is the 1-jet lift of $f$. We can think of non-graphical Legendrians $\Lambda \subset J^1(B)$ as given by the 1-jet lifts of multi-valued functions on $B$, but there is a different viewpoint which is better suited for our purposes.
 
Recall the front projection $\pi:J^1(B) \to J^0(B)$, which is the map that forgets order 1 information. Explicitly, we have $J^1(B)=T^*B \times \bR$ and $J^0(B)=B \times \bR$. Then $\pi : T^*B \times \bR \to B \times \bR$ is the product of the cotangent bundle projection and the identity on the $\bR$ factor. 
 
 \begin{remark} Note that if $\Lambda \subset J^1(B)$ is a Legendrian submanifold, then $\pi(\Lambda) \subset J^0(B)$ generically determines $\Lambda$. Indeed, the Legendrian condition is that the form $dz-pdq$ vanishes on $\Lambda$, hence the missing coordinate $p$ can generically be recovered by the formula $p=dz/dq$. 
 %This argument is rigorous for generic $\Lambda$. \ki{}{question: maybe for all $\Lambda$?}
 \end{remark}
 
Consider a fibre bundle of manifolds without boundary $F \to W \to B$, where we also assume that $B$ is compact for simplicity. Let $f:W \to \bR$ be  a function on the total space. We view $f$ as a family of functions $f_b:F_b \to \bR$ on the fibres of $W \to B$, parametrized by $b \in B$. We repeat the basic definitions given in the introduction for convenience:

\begin{definition} The Cerf diagram of $f$ is the subset $\Sigma_f=\{ (b,z) : \, \text{$z$ is a critical value of $f_b$} \} \subset B \times \bR$. \end{definition}

We will always assume the generic condition that the fibrewise derivative of $f$ satisfies $\partial_F f \pitchfork 0 $. Then $\Gamma(df) \subset T^*W$ is a graphical Lagrangian submanifold and $ \{ \partial_F f = 0 \} \subset \Gamma(df)$ is an isotropic submanifold contained in the coisotropic subbundle $E \subset T^*W$ of covectors with zero fibrewise derivative. The symplectic reduction $E \to T^*B$ restricts to a Lagrangian immersion $\{ \partial_F f= 0 \} \to T^*B$ which lifts to a Legendrian immersion $\{ \partial_Ff  = 0 \} \to J^1(B)$ via the function $f$ itself. 

%so that the Cerf diagram $\Sigma_f$ is transversely cut out. 
 
\begin{definition} We say that a Legendrian submanifold $\Lambda \subset J^1(B)$ is generated by $f:W \to \bR$ when $\{ \partial_F f = 0 \} \to J^1 (B)$ is an embedding with image $\Lambda$. 
\end{definition}

It is easy to see that any generic $f$ such that $\partial_F f = 0$ generates an embedded Legendrian $\Lambda \subset J^1(B)$, even though its Lagrangian projection in $T^*B$ may only be immersed. See Figure \ref{Fig: simple Legendrian} for a simple example of how a 1-dimensional Legendrian can arise as a Cerf diagram. See Figure \ref{FigLegendrian} for an example of a Legendrian which does not arise as a Cerf diagram.
\begin{figure}[htbp]
\begin{center}
\begin{tikzpicture}%[scale=3]
%\draw[help lines=1,thick] (-5,-2) grid (5,2);
%
\draw[very thick,color=blue] (-3.5,0)..controls (-2.7,0) and (-1,-.55)..(0,-.62);
\draw[very thick,color=blue] (3.5,0)..controls (2.7,0) and (.7,-.6)..(0,-.62);

\draw[thick,xshift=4.5cm] (-.5,-1.5)..controls (-.3,-.1) and (-.1,-.1)..(0,0)..controls (.1,.1) and (.3,.1)..(.5,1.5);
\draw[thick,xshift=3cm] (-.5,-1.5)..controls (-.3,.1) and (-.1,.1)..(0,0)..controls (.1,-.1) and (.3,-.1)..(.5,1.5);
\draw[thick,xshift=1.5cm] (-.5,-1.5)..controls (-.3,.6) and (-.1,.6)..(0,0)..controls (.1,-.6) and (.3,-.6)..(.5,1.5);
\draw[thick] (-.5,-1.5)..controls (-.3,1) and (-.1,1)..(0,0)..controls (.1,-1) and (.3,-1)..(.5,1.5);
\draw[thick,xshift=-1.5cm] (-.5,-1.5)..controls (-.3,.6) and (-.1,.6)..(0,0)..controls (.1,-.65) and (.3,-.65)..(.5,1.5);
\draw[thick,xshift=-3cm] (-.5,-1.5)..controls (-.3,.1) and (-.1,.1)..(0,0)..controls (.1,-.1) and (.3,-.1)..(.5,1.5);
\draw[thick,xshift=-4.5cm] (-.5,-1.5)..controls (-.3,-.1) and (-.1,-.1)..(0,0)..controls (.1,.1) and (.3,.1)..(.5,1.5);
\draw[very thick,color=blue] (-3.5,0)..controls (-3,0) and (-1,.62)..(0,.62);
\draw[very thick,color=blue] (3.5,0)..controls (2.7,0) and (.7,.6)..(0,.62);
%\foreach \x in {-8,-6,...,8}\draw (\x,0) node{\x};\foreach \y in {-6,-4,...,8}\draw (0,\y) node{\y};
%\coordinate (A) at (0,0);
%\draw[thick,color=blue] (A) ellipse [x radius=2.8cm,y radius=2.1cm];
\end{tikzpicture}
\caption{A 1-parameter family of functions generating an ``eye'' shaped Cerf diagram.}
\label{Fig: simple Legendrian}
\end{center}
\end{figure}
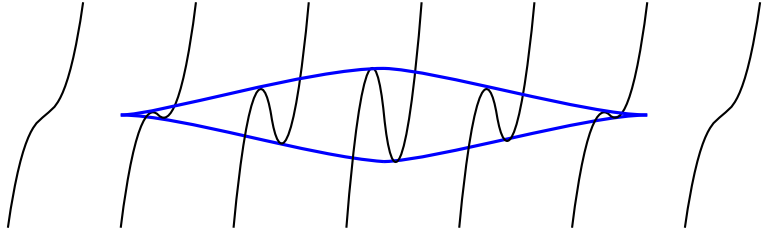
%

%\ki{}{Figure \ref{FigLegendrian} shows an example of a Legendrian submanifold which cannot be generated by any family of functions since the figure has five cusps. This is impossible since, for a family of functions, the parity of the index of the critical point changes at each cusp.}

%
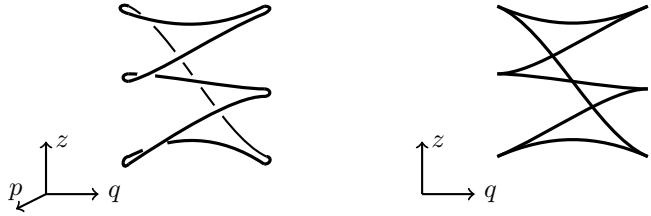
\begin{figure}[htbp]
\begin{center}
\begin{tikzpicture}%[scale=1.5]
%\draw[help lines=1,thick] (-6,-1) grid (4,4);
%
\begin{scope}[xshift= -3cm]
\coordinate (C) at (-2,-.5);
\coordinate (Q) at (-1.3,-.5);
\coordinate (Z) at (-2,.2);
\coordinate (P) at (-2.4,-.7);
\draw[thick,->] (C)--(Q);
\draw[thick,->] (C)--(Z);
\draw[thick,->] (C)--(P);
\draw (Q) node[right]{$q$};
\draw (Z) node[right]{$z$};
\draw (P) node[above]{$p$};
\coordinate (A0) at (-.9,-.1);
\coordinate (A0a) at (-.9,0);
\coordinate (A0b) at (-1,-.2);
\coordinate (A0c) at (-1,0);
\coordinate (A1) at (-.9,1.1);
\coordinate (A1a) at (-.9,1);
\coordinate (A1b) at (-1,1);
\coordinate (A1c) at (-1,1.1);
\coordinate (A11) at (-.5,1.1);
\coordinate (A2a) at (-.9,1.9);
\coordinate (A2) at (-1,2.05);
\coordinate (A2b) at (-.9,2);
\coordinate (A2c) at (-1.1,1.95);
\coordinate (B0) at (.93,0);
\coordinate (B0a) at (.9,-.1);
\coordinate (B0b) at (1,-.1);
\coordinate (B0c) at (1,0);
\coordinate (B1) at (.9,.8);
\coordinate (B1a) at (1,.8);
\coordinate (B1b) at (1,.9);
\coordinate (B1c) at (.9,.9);
\coordinate (B11u) at (.5,.9);
\coordinate (B11d) at (.5,.8);
\coordinate (B2) at (.9,1.9);
\coordinate (B2a) at (1,1.95);
\coordinate (B2b) at (1,2);
\coordinate (B2c) at (.9,2);
\draw[ thick] (A2b)..controls (-.3,1.7) and (.3,.3)..(B0);
\draw[fill,color=white,very thick] (-.65,1.82) circle[radius=.1cm];
\draw[fill,color=white,very thick] (-.22,1.33) circle[radius=.07cm];
\draw[fill,color=white,very thick] (0,1) circle[radius=.08cm];
\draw[fill,color=white,very thick] (.28,.6) circle[radius=.07cm];
\draw[very thick] (A2a)..controls (A2c) and (A2)..(A2b);
\draw[very thick] (A2a)..controls (-.3,1.7) and (.3,1.7)..(B2c);
\draw[very thick] (A0)..controls (A0b) and (A0c)..(A0a)..controls (-.3,.3) and (.3,.3)..(B0a) ;
\draw[very thick] (B0a)..controls (B0b) and (B0c)..(B0);
\draw[very thick] (A1)..controls (A11) and (B11u)..(B1c);
\draw[fill,color=white,very thick] (-.67,1.07) circle[radius=.1cm];
\draw[fill,color=white,very thick] (-.55,.13) circle[radius=.15cm];
\draw[very thick] (A0)..controls (-.3,.3) and (B11d)..(B1)..controls (B1a) and (B1b)..(B1c);
\draw[very thick] (A1a)..controls (A11) and (.3,1.66)..(B2)..controls (B2a) and (B2b)..(B2c);
\draw[very thick] (A1a)..controls (A1b) and (A1c)..(A1);
%
%\foreach \x in {0,1,2} \draw (-1,\x) circle[radius=.1cm] node[left]{A\x};
%\foreach \x in {0,1,2} \draw (1,\x) circle[radius=.1cm] node[right]{B\x};
\end{scope}
\begin{scope}[xshift= 2cm]%,yshift=1cm]
\coordinate (C) at (-2,-.5);
\coordinate (Q) at (-1.3,-.5);
\coordinate (Z) at (-2,.2);
\draw[thick,->] (C)--(Q);
\draw[thick,->] (C)--(Z);
\draw (Q) node[right]{$q$};
\draw (Z) node[right]{$z$};
\coordinate (A0) at (-1,0);
\coordinate (A1) at (-1,1.1);
\coordinate (A11) at (-.5,1.1);
\coordinate (A2) at (-1,2);
\coordinate (B0) at (1,0);
\coordinate (B1) at (1,.9);
\coordinate (B11) at (.5,.9);
\coordinate (B2) at (1,2);
%\foreach \x in {0,1,2} \draw (-1,\x) circle[radius=.1cm] node[left]{A\x};
%\foreach \x in {0,1,2} \draw (1,\x) circle[radius=.1cm] node[right]{B\x};
\draw[very thick] (A2)..controls (-.3,1.7) and (.3,1.7)..(B2);
\draw[very thick] (A0)..controls (-.3,.3) and (.3,.3)..(B0) ;
\draw[very thick] (A1)..controls (A11) and (B11)..(B1);
\draw[very thick] (A0)..controls (-.3,.3) and (B11)..(B1);
\draw[very thick] (A1)..controls (A11) and (.3,1.7)..(B2);
\draw[very thick] (A2)..controls (-.3,1.7) and (.3,.3)..(B0);
\end{scope}
%\foreach \x in {-8,-6,...,8}\draw (\x,0) node{\x};\foreach \y in {-6,-4,...,8}\draw (0,\y) node{\y};
%\coordinate (A) at (0,0);
%\draw[thick,color=blue] (A) ellipse [x radius=2.8cm,y radius=2.1cm];
\end{tikzpicture}
\caption{Legendrian submanifold of $J^1\mathbb R$ on left with front projection on right. This Legendrian cannot be generated by any family of functions.}
\label{FigLegendrian}
\end{center}
\end{figure}
%

%Similarly, we say that a Lagrangian submanifold $L \subset T^*B$ is generated by $f:W \to \bR$ if $L$ is the image of a Legendrian $\Lambda$ generated by $f$ under the Lagrangian projection $J^1(B) \to T^*B$. 

%Note however that the Lagrangian $L$ will generically only be immersed.

We shall extract information about a Legendrian from the Morse theory of its generating families. Since we will need to consider the case of non-compact fibres $F$, it is imperative that we impose some sort of control at infinity. We use the terminology of Eliashberg and Gromov \cite{EG98}.

\begin{definition}\label{Definition: fibration at infinity}
A function $f:W \to \bR$ is a fibration at infinity if there exists a closed interval $I \subset \bR$ such that the following properties hold.
\begin{itemize}
\item $f^{-1}(\bR \setminus I) \to \bR \setminus I$ is a fibration. 
\item $f^{-1}(I)\to I$ is a fibration outside a compact subset $K \subset f^{-1}(I)$. 
\end{itemize}
\end{definition}

\begin{remark}\label{Remark: infinity family}
In a homotopy $f_t: W \to \bR$ of fibrations at infinity we require these conditions to hold uniformly, in the sense that the conditions above should hold with the same interval $I$ and the same compact set $K$ for every $f_t$.
\end{remark}

In what follows, whenever we refer to a function $f:W \to \bR$ as a generating family, it will be implicitly assumed that $f$ is a fibration at infinity. It will moreover be assumed that generating families always generate their Legendrians transversely, i.e. that $\partial_F f \pitchfork 0$.

\subsection{Existence of generating families}

Locally, Legendrians in 1-jet spaces are always given by the generating family construction. To be more precise we have the following result, probably known in some form to Hamilton and Jacobi and later rediscovered by Arnold and H\"ormander \cite{AGV85}.

\begin{proposition}
Let $\Lambda \subset J^1(B)$ be a Legendrian submanifold and $x \in \Lambda$ a point. Then there exists an integer $k \geq 0$ and a function $f:  B \times \bR^k \to \bR$ which generates $\Lambda$ locally near $x$.
\end{proposition}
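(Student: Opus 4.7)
The plan is to reduce to a local statement in $J^1(\bR^n)$ and then construct an explicit generating family of the form $f:\bR^n\times\bR^k\to\bR$ using a classical ``mixed-type'' generating function together with a Legendre transform in the fibre directions.

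The first step is linear algebra at $T_x\Lambda$. After choosing local Darboux coordinates $(q_1,\dots,q_n,p_1,\dots,p_n,z)$ near $x$, the tangent space $T_x\Lambda$ is a Legendrian subspace, hence its image in the contact hyperplane $\ker(dz-pdq)$ is Lagrangian for $dp\wedge dq$. A standard fact about Lagrangian subspaces of a symplectic vector space gives a partition $\{1,\dots,n\}=I\sqcup J$ such that the covectors $\{dq_i\}_{i\in I}\cup\{dp_j\}_{j\in J}$ restrict to a basis of $T_x\Lambda^*$. By the inverse function theorem, $(q_I,p_J)$ then gives smooth local coordinates on $\Lambda$ near $x$, and along $\Lambda$ the remaining functions $q_J$, $p_I$, $z$ become smooth functions of $(q_I,p_J)$.

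The second step is to repackage the Legendrian condition $dz=pdq$ on $\Lambda$. Introduce $S(q_I,p_J):=z-p_J\cdot q_J$ viewed as a function of $(q_I,p_J)$ on $\Lambda$. A direct calculation from $dz=pdq$ shows
\[
dS=d z-d(p_J\cdot q_J)=p_I\,dq_I-q_J\,dp_J,
\]
so $p_I=\partial_{q_I}S$ and $q_J=-\partial_{p_J}S$. Hence $\Lambda$ is locally parameterized by
\[
(q_I,p_J)\ \longmapsto\ \bigl(q_I,\,-\partial_{p_J}S,\,\partial_{q_I}S,\,p_J,\,S-p_J\cdot\partial_{p_J}S\,\cdot(-1)^{?}\bigr),
\]
i.e.\ by the classical generating function of the second kind.

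The third step converts $S$ into a genuine generating family in the sense of the paper by a Legendre transform in the $J$-variables. Set $k=|J|$, let $u=(u_j)_{j\in J}$ be the coordinates on $\bR^k$, and define
\[
f(q,u)\ :=\ S(q_I,u)+u\cdot q_J\ :\ B\times\bR^k\to\bR.
\]
Then $\partial_{u}f=\partial_{u}S+q_J$, and the locus $\{\partial_u f=0\}$ is cut out transversely because the matrix of second derivatives $\partial^2 f/\partial u\,\partial q_J$ is the identity. On this critical locus one has $q_J=-\partial_u S$, $\partial_{q_I}f=\partial_{q_I}S$, $\partial_{q_J}f=u$, and $f=S-u\cdot\partial_u S$; comparing with Step~2 shows that the induced map $\{\partial_u f=0\}\to J^1(B)$ has image exactly $\Lambda$ (locally near $x$), and is an embedding because the composition with projection to $(q_I,p_J)$ is the diffeomorphism $(q_I,u)\mapsto(q_I,u)$.

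The only non-routine point is Step~1, the choice of index set $I$; everything afterwards is the implicit function theorem and bookkeeping of signs. There is no difficulty with the ``fibration at infinity'' condition here since the statement is purely local near $x$, so we may shrink $B$ to an open chart and restrict $u$ to a neighborhood of the origin in $\bR^k$.
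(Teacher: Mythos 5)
Your proof is correct, and it is essentially the classical Arnold--H\"ormander argument that the paper attributes to the literature and does not reprove. The three-step plan --- (1) choose a splitting $I\sqcup J$ of the coordinates so that $(q_I,p_J)$ restrict to local coordinates on $\Lambda$ near $x$, (2) write the Legendrian condition in terms of a ``mixed'' generating function $S(q_I,p_J)$, and (3) pass to a genuine generating family $f(q,u)=S(q_I,u)+u\cdot q_J$ by Legendre transform in the $J$-block --- is exactly the content of the references cited ([AGV85] and its predecessors). The verification that $\partial_u f\pitchfork 0$ via the block $\partial^2 f/\partial u\,\partial q_J=I_k$, and that the induced map $\{\partial_u f=0\}\to J^1(B)$ is an embedding onto $\Lambda$ by looking at the coordinates $(q_I,p_J)$, are both correct. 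Two cosmetic remarks: the placeholder ``$\cdot(-1)^{?}$'' in your Step~2 display should simply be dropped (the sign resolves to $z=S+p_J\cdot q_J$ by the definition of $S$, and your Step~3 computation confirms this); and since the conclusion is purely local, your closing remark that one works on a coordinate chart of $B$ and a bounded fibre neighborhood is the right way to reconcile the construction with the global notation $f:B\times\bR^k\to\bR$ in the statement.
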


Hence we can think of Legendrians as Cerf diagrams, at least locally. However, not all Legendrians $\Lambda \subset J^1(B)$ are globally given by the generating family construction. Indeed, existence of a global generating family leads to strong rigidity phenomena. In particular, flexibility is an obstruction to the global existence of a generating family. This is made precise in the following folklore result.

\begin{proposition}\label{Proposition: global non-existence} A loose Legendrian $\Lambda \subset J^1(B)$ does not admit any global generating family.
\end{proposition}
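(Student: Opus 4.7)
I would argue by contradiction, combining the homotopy lifting property for generating families (Persistence Theorem \ref{Persistence Theorem}, treated here as a black box) with Murphy's $h$-principle for loose Legendrians. Assume $\Lambda \subset J^1(B)$ is loose and admits a global generating family $f:W\to\bR$.

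The first step is to use Murphy's $h$-principle, which asserts that two loose Legendrians in the same formal class are Legendrian isotopic, in order to produce a Legendrian isotopy $\{\Lambda_t\}$ from $\Lambda_0 = \Lambda$ to a loose representative $\Lambda_1$ of its formal class with some feature manifestly incompatible with admitting a generating family on $W$. The cleanest such features are: either (i) $\Lambda_1$ is compressed into a Darboux ball disjoint from some cotangent fibre $\pi^{-1}(b_0)\subset J^1(B)$, or (ii) $\Lambda_1$ can be stabilized and then destabilized so as to change its front projection in a way incompatible with the Cerf diagram of any function on $W$.

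The second step is to apply the Persistence Theorem to lift $\{\Lambda_t\}$ to a family of generating families $\{f_t:W\to\bR\}$ on the same total space, so that $\Lambda_1$ is generated by $f_1:W\to\bR$. In case (i), if the fibre $F$ is closed then for every $b\in B$ the function $f_{1,b}:F_b\to\bR$ attains a maximum and a minimum, so $\Lambda_1$ must intersect every cotangent fibre $\pi^{-1}(b)$, contradicting its confinement to a ball disjoint from $\pi^{-1}(b_0)$. In case (ii), the mismatch between the cusp structure of $\pi(\Lambda_1)$ and the fibrewise critical point configurations of $f_1$ provides the contradiction; in the fibration-at-infinity setting one replaces this combinatorial argument by a finer invariant such as Sabloff--Traynor generating family cohomology, which is nontrivial for any Legendrian admitting a generating family but vanishes on sufficiently simple loose models produced by the $h$-principle.

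The main obstacle is the first step in full generality. The closed-fibre, trivial-formal-class version (i) above is essentially immediate, but bridging to arbitrary formal classes---in particular when the smooth projection $\Lambda \to B$ has nonzero degree, or when the fibre $F$ is merely a fibration at infinity---requires a genuinely nontrivial Legendrian invariant that can be shown to vanish on loose Legendrians by $h$-principle arguments, yet is forced to be nonzero by the existence of a generating family. Developing such an invariant (generating family cohomology on the Morse side, or the Chekanov--Eliashberg DGA and its augmentations on the pseudoholomorphic side) is the technical heart of the matter; once available, the combination with Murphy and with Persistence closes the argument.
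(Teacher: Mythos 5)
Your high-level strategy is correct and matches the paper's: argue by contradiction, use Murphy's $h$-principle to Legendrian-isotope $\Lambda$ to a loose model with a bad local feature in its front, and then transport the generating family along via the homotopy lifting property \ref{Persistence Theorem}. However, there is a genuine gap: you never identify what the bad local feature actually is, and your candidate (i) does not work. Murphy's theorem is a Legendrian isotopy, so it preserves the formal class; if $\Lambda \to B$ has nonzero degree or is not nullhomotopic there is no Legendrian isotopy compressing it into a Darboux ball, and even when there is, the argument you sketch there only re-proves global linking, which does not use looseness and does not contradict anything for, say, the closed-fibre case. Your candidate (ii) is the right direction but is left as a vague appeal to "cusp structure" and to generating family cohomology, whose required vanishing/nonvanishing you acknowledge you have not established.

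The missing idea in the paper is concrete and entirely local: looseness means that, after a Legendrian isotopy, there is a ball $U\subset J^0(B)$ in which $\pi(\Lambda)$ contains a \emph{zig-zag} (three sheets joined by two cusps, as in Figure \ref{FigZigZag}) disjoint from the rest of the front. A generating family $f$, being a fibration at infinity, has the property that the homology of the fibrewise sublevel sets $\{f_b\le z\}$ only changes as $z$ crosses the front, by attaching a handle of index the Morse index. The normal form at a birth/death (i.e.\ a cusp of the front) forces the three sheets of the zig-zag to carry consecutive indices $k-1,k,k+1$ from bottom to top. Picking a horizontal band $[z_0,z_1]$ containing the zig-zag and nothing else of the front, one computes $H_*\bigl(f_b^{-1}[z_0,z_1],\,f_b^{-1}(z_0)\bigr)$ at the two ends of the zig-zag: because the cusps occur at opposite ends, the relative homology in degree $k-1$ is zero at one end and nonzero at the other, yet the band touches no other part of the front so this homology cannot change. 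That is the contradiction, and it does not require any global invariant, any control on the formal class, or any of the Floer-theoretic machinery you invoke as a fallback. Once you have this zig-zag computation, your Murphy-plus-Persistence reduction (which you do state correctly) closes the general case.
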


\begin{proof}Consider first the case in which there exists a ball $U \subset J^0(B)$ such that the front $\pi(\Lambda) \cap U $ has a zig-zag which is disjoint from the rest of the front, see Figure \ref{FigZigZag}. Suppose that $\Lambda$ admits a generating family $f$ on a fibre bundle of manifolds $F \to W \to B$. We recall that $f$ is assumed to be a fibration at infinity. Therefore the homology of the sublevel sets $\{ f_b \leq z \}$ of $f_b : F_b \to \bR$ can only change when $z$ crosses the front $\pi(\Lambda)$. When it does so at a Morse point the change in the homology is induced by the addition of a $k$-dimensional handle to the sublevel set, where $k$ is the index of the Morse point. 

As you go from top to bottom the Morse indices of the three sections of the zig-zag are $k-1,k$ and $k+1$, in that order, for some $k \in \bZ$. The monotonicity of the Morse indices is imposed by the normal form of a Morse birth/death point. One then derives a contradiction by computing the change in the homology of the sublevel set $\{ f_b \leq z \}$ as $z$ crosses the zig-zag near each of its two endpoints.

We now prove the general case. Since $\Lambda$ is loose, by the work of Murphy \cite{E12} there exists a Legendrian isotopy $\Lambda_t$ between $\Lambda_0=\Lambda$ and a $\Lambda_1$ for which there exists such a ball $U$. We can then reduce to the previous case using the homotopy lifting property for generating families \ref{Persistence Theorem}. \end{proof}

%\ki{Figure \ref{FigZigZag}.}{Picture exhibiting a zig-zag with maybe a cartoon of the local models for birth/death at the two cusps?}

%
\begin{figure}[htbp]
\begin{center}
\begin{tikzpicture}%[scale=3]
%\draw[help lines=1,thick] (-5,-2) grid (5,2);
\coordinate (A) at (-2.5,.7);
\coordinate (AB) at (-.8,0.5);
\coordinate (CD) at (.8,-0.5);
\draw (AB) node[above]{$k+1$};
\draw (CD) node[below]{$k-1$};
\draw (0.1,0) node[right]{$k$};
\coordinate (ABa) at (-1.2,0.5);
\coordinate (ABb) at (0.2,0.3);
\coordinate (CDa) at (-0.2,-0.3);
\coordinate (CDb) at (1.2,-0.5);
\coordinate (B) at (1,.8);
\coordinate (C) at (-1,-.8);
\coordinate (D) at (2.5,-.7);
\draw[thick] (A)..controls (ABa) and (ABb)..(B);
\draw[thick] (B)--(C);
\draw[thick] (C)..controls (CDa) and (CDb)..(D);
\draw[thick,->] (-3.5,-.5)--(-3.5,.5);
\draw (-3.5,0) node[right]{$f_b$};
\draw (-3.3,1.1)--(-3.7,1.1) node[left]{$z_1$};
\draw (-3.3,-1.2)--(-3.7,-1.2) node[left]{$z_0$};
%\foreach \x in {-8,-6,...,8}\draw (\x,0) node{\x};\foreach \y in {-6,-4,...,8}\draw (0,\y) node{\y};
%\coordinate (A) at (0,0);
%\draw[thick,color=blue] (A) ellipse [x radius=2.8cm,y radius=2.1cm];
\end{tikzpicture}
\caption{If the zig-zag is disjoint from the rest of the front, $H_\ast(f_b^{-1}[z_0,z_1],f_b^{-1}(z_0))$ would be the same at both endpoints. But this relative homology in degree $k-1$ is zero on the left end and nonzero on the right end.}
\label{FigZigZag}
\end{center}
\end{figure}
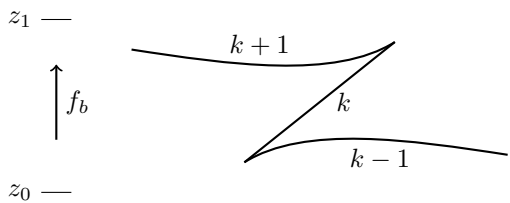

\begin{remark}
Loose Legendrians have dimension at least 2 by definition, but we note that the proof of Proposition \ref{Proposition: global non-existence} also works for stabilized 1-dimensional Legendrians.
\end{remark}

\begin{remark}
The argument in the proof of Proposition \ref{Proposition: global non-existence} is homological, hence can be recast in the language of microlocal sheaf theory to prove that there is no complex of sheaves whose microsupport is a loose Legendrian. This is spelled out for stabilized 1-dimensional Legendrians in \cite{STZ17}.
\end{remark}

From Proposition \ref{Proposition: global non-existence} we deduce the following.

\begin{corollary}
Mesh Legendrians are not loose.
\end{corollary}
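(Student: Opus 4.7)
The plan is to reduce immediately to Proposition \ref{Proposition: global non-existence} via the existence of a generating family for every mesh Legendrian. Concretely, recall from the discussion preceding Theorem \ref{euler} (and made precise there) that every mesh Legendrian $\Lambda_G \subset J^1(\Sigma_G)$ admits a generating family on the oriented circle bundle $S^1 \to E_G \to \Sigma_G$ of Euler number $\pm w(G)$. Since $\Sigma_G$ is closed and the fibre is $S^1$, the total space $E_G$ is compact, so any smooth function $f : E_G \to \bR$ is trivially a fibration at infinity in the sense of Definition \ref{Definition: fibration at infinity} (there are no ends). Hence the generating family provided by Theorem \ref{euler} is a bona fide global generating family of the type to which Proposition \ref{Proposition: global non-existence} applies.

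The proof is then a one-line contrapositive: if $\Lambda_G$ were loose, Proposition \ref{Proposition: global non-existence} would preclude the existence of any global generating family, contradicting the existence statement from Theorem \ref{euler}. Therefore $\Lambda_G$ is not loose.

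There is essentially no obstacle; the only thing worth flagging is the verification that the compactness of $E_G$ subsumes the fibration-at-infinity hypothesis, and the (obvious) fact that $\Lambda_G$ is at least $2$-dimensional so that looseness is defined for it (indeed $\dim \Lambda_G = \dim \Sigma_G = 2$). One could in principle give a more hands-on proof by directly examining the front $\pi(\Lambda_G)$ and ruling out stabilizing zig-zags using the local model described in (1)--(3) of the mesh construction, but this would merely reproduce the argument of Proposition \ref{Proposition: global non-existence} in a special case, so invoking the general statement is cleaner.
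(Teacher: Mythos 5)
Your proof is correct and is essentially the paper's argument: the paper also notes that every mesh Legendrian admits a generating family on a circle bundle and then invokes Proposition \ref{Proposition: global non-existence}. Your extra remarks about compactness of $E_G$ (so the fibration-at-infinity hypothesis is vacuous) and about $\dim \Lambda_G = 2$ (so looseness is defined) are accurate but not part of the paper's one-line proof.
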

\begin{proof} Every mesh Legendrian admits a generating family on a circle bundle, see Section \ref{systems of disks}.\end{proof}

\subsection{Homotopy lifting property}

The slogan is that whenever generating families exist, they persist. The precise statement can be formulated in terms of a homotopy lifting property for Legendrian isotopies. As a consequence, much of the Morse-theoretic information one can extract from a generating family $f$ for a Legendrian submanifold $\Lambda \subset J^1(B)$ is invariant under Legendrian isotopies of $\Lambda$. 

The homotopy lifting property for generating families has a long history, which in some sense starts with the work of Chaperon on Arnold's conjecture for Hamiltonian isotopies of the zero section in the cotangent bundle of the $n$-torus. Building on Chaperon's ideas \cite{C84}, Laudenbach and Sikorav gave a proof of the strong form of the Arnold conjecture in arbitrary cotangent bundles \cite{LS85} by constructing a generating family out of broken geodesics. The homotopy lifting property for generating families was then established by Sikorav \cite{S86} and Chekanov \cite{C96}. There have since appeared several generalizations in the literature. The version of the homotopy lifting property we will use is that from the article of Eliashberg and Gromov \cite{EG98}.

It is important to remark that the homotopy lifting property for generating families only holds up to stabilization. This is secretly the same stabilization that appears in pseudo-isotopy theory.
\begin{definition}
The stabilization of a fibre bundle $F \to W \to B$ is the fibre bundle $F \times \bR^2 \to W \times \bR^2 \to B$. We denote the total space by $\text{stab}(W)=W \times \bR^2$. The stabilization of a function $f:W \to \bR$ is the function  $\text{stab}(f): \text{stab}(W) \to \bR$, where
\[ \text{stab}(f)(w,x,y)=f(w)+x^2-y^2, \quad \quad (w,x,y) \in W \times \bR^2. \]
\end{definition}

We denote by $\text{stab}^k$ the $k$-fold stabilization, so that $\text{stab}^{k}(W)=W \times \bR^{2k}$. Note that if $f:W \to \bR$ is a fibration at infinity generating a Legendrian $\Lambda \subset J^1(B)$, then the function $\text{stab}^{k}(f): \text{stab}^{k}(W) \to \bR$ is also a fibration at infinity generating $\Lambda$. Of course the same would be true for any stabilization by a non-degenerate quadratic form, but we choose to always stabilize by the fixed quadratic form of balanced signature $x^2-y^2$ for reasons that will become apparent.

In fact we will always stabilize an even number of times in order to preserve the parity of the Morse indices. As we will see, it follows from the definition that stabilizing an odd number of times would change the sign of the higher Reidemeister torsion and invert the Turaev torsion. Hence from now on we insist on even stabilizations $\text{stab}^{2k}(W)=W \times \bR^{4k}$. The following is Theorem 4.1.1 from \cite{EG98}.

\begin{theorem}[Homotopy lifting property]\label{Persistence Theorem}
Let $f:W \to \bR$ be a fibration at infinity generating a Legendrian $\Lambda \subset J^1(B)$ and let $ \Lambda_t $ be a Legendrian isotopy of $\Lambda_0=\Lambda$, where $t \in [0,1]$. Then there exists $k\geq0$ and a homotopy of functions $f_t: \text{stab}^{2k}(W) \to \bR$ such that the following properties hold.
\begin{itemize}
\item $f_0=\text{stab}^{2k}(f)$.
\item $f_t$ generates $\Lambda_t$.
\item $f_t=f_0+ \varepsilon_t$ for a homotopy of compactly supported functions $\varepsilon_t$.
\end{itemize}
\end{theorem}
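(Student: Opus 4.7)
The plan is to follow the Chaperon--Sikorav strategy of broken trajectories, implemented in the contact setting along the lines of Eliashberg--Gromov. First I would extend the Legendrian isotopy $\Lambda_t$ to a compactly supported contact isotopy $\phi_t : J^1(B) \to J^1(B)$ with $\phi_0=\mathrm{id}$ and $\phi_t(\Lambda)=\Lambda_t$; this uses a standard extension lemma obtained by integrating a compactly supported contact Hamiltonian that restricts to the contact vector field along $\Lambda_t$. Subdividing $[0,1]$ finely as $0=t_0<t_1<\cdots<t_N=1$, I may assume each increment $\psi_i := \phi_{t_i}\circ\phi_{t_{i-1}}^{-1}$ is $C^1$-small enough to be generated by a single ``local'' generating function $G_i : B \times \mathbb R^{N_i} \to \mathbb R$ which is quadratic at infinity in the fibre variables. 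The production of $G_i$ is the infinitesimal generating-function construction: near the identity, the graph of a contactomorphism of $J^1(B)$ sits in $J^1(B\times B \times \mathbb R)$ as a graphical Legendrian, and a primitive of the contact form along it gives $G_i$.

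Next I would use the composition formula for generating families: if $\Lambda$ is generated by $f:W \to \mathbb R$ and a contactomorphism $\psi$ is generated by $G:B \times \mathbb R^N \to \mathbb R$, then $\psi(\Lambda)$ is generated by
\[
(f\#G)(w,\eta,\xi) \;=\; f(w)+G(\pi_B(w),\eta)+\langle \pi_{T^*B}(w)-d_{b}G, \xi\rangle
\]
on $W \times \mathbb R^{N}\times \mathbb R^{n}$ (or a variant in this spirit, adapted to the 1-jet setting). The critical point equations in the new fibre variables impose exactly the symplectic reduction describing how $\psi$ moves the Legendrian generated by $f$. Crucially, the fibration-at-infinity hypothesis on $f$ together with the compact support of $\psi$ ensures $f\#G$ remains a fibration at infinity with the same interval $I$ and a (slightly enlarged) compact set, so the construction stays within our class of admissible generating families. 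Iterating this from $f=f_0$ through $\psi_1,\dots,\psi_N$ yields the desired family $f_t$, and the fact that $\psi_i=\mathrm{id}$ outside a compact set forces $f_t-f_0$ to be compactly supported throughout.

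Finally I would address the stabilization parity. Each composition step adds fibre variables whose quadratic form at infinity is, in the natural construction, of balanced signature $\|x\|^2-\|y\|^2$ (this already uses an even number of new variables, but one can always tack on an extra pair $x^2-y^2$ to even out any discrepancy). After $N$ steps we land on $\mathrm{stab}^{2k}(W)$ for some $k$, and $f_0$ in the construction is $\mathrm{stab}^{2k}(f)$ as required.

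I expect the main obstacle to be the compact-support and control-at-infinity bookkeeping during composition. In particular, one must verify that the auxiliary critical points introduced by the $\xi$-variables in the composition formula are non-degenerate and do not escape the Cerf diagram of $f$ at infinity; this is where Definition~\ref{Definition: fibration at infinity} and Remark~\ref{Remark: infinity family} are used in an essential way. A secondary technical point is the smoothness of the concatenation across the subdivision times $t_i$, which can be fixed by a standard reparametrization making each $\psi_i$ equal the identity in a neighbourhood of its endpoint times before composing.
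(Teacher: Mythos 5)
The paper does not prove this theorem: it is stated as a black box and attributed to Theorem 4.1.1 of Eliashberg--Gromov \cite{EG98}, so there is no ``paper's proof'' to compare against. Your plan is nonetheless recognizably the Chaperon--Laudenbach--Sikorav--Chekanov strategy of discretizing the isotopy and composing generating data, which is indeed the right one and the one \cite{EG98} implements.

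That said, a few things in the sketch are not quite right or are glossed over. First, the composition formula as written is not well formed: for a general fibre bundle $W\to B$ the expression $\pi_{T^*B}(w)$ is undefined for an arbitrary $w\in W$ (it only makes sense at fibrewise critical points via $\partial_B f$), and the generating datum of a contactomorphism naturally lives over $B\times B$ rather than over $B$, so a na\"ive serial composition produces a function on $W\times B\times\bR^{N}$, not on $W\times \bR^{M}$. Eliminating the extra $B$-factor (e.g.\ by restricting to a tubular neighbourhood of the diagonal when the step is $C^1$-small, or by embedding $B$ in Euclidean space and using broken geodesics) is exactly the part that makes the fibre again of the form $F\times\bR^{M}$; without it you cannot land back on $\mathrm{stab}^{2k}(W)$. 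Second, the contact case needs to track the $z$-coordinate in $J^0(B)$: the composition of generating families for Legendrians is not the same formula as the Lagrangian one, because the new critical equations must also match the front height, and this changes the Lagrange-multiplier pairing. Third, the claim that the construction automatically produces the \emph{balanced} quadratic form $\|x\|^2-\|y\|^2$ and that $f_0$ equals $\mathrm{stab}^{2k}(f)$ on the nose requires a fibre-preserving change of variables killing the cross-terms in the quadratic part, and then a further doubling if needed; ``tacking on a pair $x^2-y^2$'' changes parity but does not by itself fix signature. Finally, the concatenation of the discretized steps into a single continuous family $f_t$ with the fibration-at-infinity condition holding \emph{uniformly} (same $I$ and same compact $K$ for all $t$, as in Remark~\ref{Remark: infinity family}) is a genuine compactness argument that you mention but do not supply; it is where the compact support of the contact isotopy enters in an essential way. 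None of these obstacles is fatal---they are exactly what \cite{EG98} resolves---but as written the plan should not be mistaken for a proof.
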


\begin{remark} Note in particular that every $f_t$ is a fibration at infinity.  \end{remark}

%Let $\cL$ be the space of Legendrian submanifolds in $J^1(B)$ and let $\cF$ be the space of fibrations at infinity on a fixed fibre bundle over $B$ up to stabilization (or up to even stabilization) which transversely generate a Legendrian submanifold in $J^1(B)$. In \cite{EG98} it is shown that Theorem \ref{Persistence Theorem} has the following consequence.

%\begin{corollary}\label{Corollary: serre fibration}
%The map $\cF \to \cL$ is a Serre fibration.
%\end{corollary}

The homotopy lifting property belongs to the realm of `hard' symplectic/contact topology. It can be interpreted as a Morse-theoretic manifestation of the compactness theorems for pseudo-holomorphic curves or of the quantization theorem for microlocal sheaves. %The article \cite{EG98} provides a systematic study of the consequences of Theorem \ref{Persistence Theorem} on the many flavors of Lagrangian intersection theory. 
For example, an immediate corollary of Theorem \ref{Persistence Theorem} is the Arnold Conjecture, which gives the lower bound $\#  \varphi_1(B)  \cap B\geq \text{rank }H^*(B)$ for the number of intersection points between the zero section $B$ and any transverse Hamiltonian isotopic image $\varphi_1(B)$ in $T^*B$. In fact we get the stronger bound $\#  \varphi_1(B)  \cap B\geq \text{SM}(B)$ for $\text{SM}(B)$ the stable Morse number of $B$. Moreover, the same proof also yields the general form of the Arnold Conjecture, where we don't assume $\varphi_1(B) \pitchfork B$ and instead get the lower bound $\#  \varphi_1(B)  \cap B \geq \text{LS}(B)$, where $\text{LS}(B)$ is the Lusternik-Schnirelman category of $B$. 
 
\subsection{Generating family invariants}
 
By virtue of the homotopy lifting property there exist a number of Legendrian invariants which one can build out of generating families. Although some of the results below have been upgraded to fancier coefficients, we will restrict our exposition to the case of $\bZ/2$ coefficients for simplicity. As a first example, let $V$ be a vector space and let $\Lambda \subset J^1(B)$ be generated by $f:B \times V \to \bR$, a function which is quadratic at infinity. This means that there exists a family of nondegenerate quadratic forms $Q_b$ on $V$, parametrized by $b \in B$, such that $f(b,v) = Q_b(v)$ outside of a compact subset. The difference function $\phi: B \times V \times V \to \bR$ is defined by the formula $\phi(b,v_1,v_2)=f(b,v_1)-f(b,v_2)$, where $ b \in B$ and $v_i \in V$. Observe that the critical points of the difference function $\phi$ which do not lie on the diagonal $\{ v_1 = v_2 \} \subset B \times V \times V$ are in bijective correspondence with the Reeb chords of $\Lambda$, i.e. the self-intersections of its Lagrangian projection. 
  
 \begin{definition}
The generating family homology of $\Lambda$ with respect to $f$ is the finite-dimensional graded vector space $GH_*(\Lambda, f) = H_*(\phi<+\infty, \phi \leq \delta)$, where $\delta>0$ is small enough so that $(0, \delta)$ consists entirely of regular values of $\phi$.
 \end{definition}
 
 The following result is a corollary of the homotopy lifting property \ref{Persistence Theorem}.  
 
 \begin{theorem}\label{theorem: GF}
 The set $GH(\Lambda)=\{ GH_*(\Lambda, f) \}_f $ of generating family homologies of $\Lambda \subset J^1(B)$ for all quadratic at infinity generating families $f$ is invariant under Legendrian isotopies of $\Lambda$.
 \end{theorem}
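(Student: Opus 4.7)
My strategy is to combine three ingredients: invocation of the homotopy lifting property (Theorem \ref{Persistence Theorem}) to extend the given generating family along the Legendrian isotopy, a sublevel-set continuation argument showing invariance of the generating family homology along the resulting homotopy, and a Thom/Künneth identification relating a generating family to its stabilizations.

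Given Legendrian isotopic $\Lambda = \Lambda_0$ and $\Lambda' = \Lambda_1$ and $A = GH_*(\Lambda, f) \in GH(\Lambda)$, I would first apply Theorem \ref{Persistence Theorem} to produce, after even stabilization $\text{stab}^{2k}$, a homotopy $f_t: B \times U \times \bR^{4k} \to \bR$ with $f_0 = \text{stab}^{2k}(f)$, each $f_t$ generating $\Lambda_t$, and $f_t - f_0$ compactly supported. In particular every $f_t$ remains quadratic at infinity, with the same quadratic form as $f_0$, so the corresponding difference functions $\phi_t$ share the same behavior at infinity. The topological pair $(\{\phi_t < +\infty\}, \{\phi_t \le \delta_t\})$ therefore only changes by critical-point bifurcations occurring inside a fixed bounded region of $B \times U \times U \times \bR^{4k} \times \bR^{4k}$. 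A Morse-theoretic continuation argument---chopping $[0,1]$ into short subintervals and tracking births, deaths, and handle slides of critical points of $\phi_t$ inside this compact region while continuously adjusting $\delta_t$ to stay in the regular locus---then yields $GH_*(\Lambda_0, f_0) \cong GH_*(\Lambda_1, f_1)$.

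The remaining step is to identify $GH_*(\Lambda, f)$ with $GH_*(\Lambda, \text{stab}^{2k}(f))$. Stabilizing the generating family replaces its difference function $\phi$ by $\phi + Q$, where $Q$ is a nondegenerate quadratic form of balanced signature $(4k, 4k)$ on the added $\bR^{8k}$ variables, with a unique critical point of index $4k$. A relative Künneth / Morse--Thom argument then identifies the two generating family homologies up to the expected degree shift of $4k$ coming from the index of $Q$. Since $GH(\Lambda)$ automatically contains the stabilization-induced degree shifts of each of its members (as $\text{stab}^{2k}(g)$ is a generating family whenever $g$ is), this identifies $A$ with an element of $GH(\Lambda')$, yielding $GH(\Lambda) \subseteq GH(\Lambda')$; the reverse inclusion follows by running the argument along the reversed isotopy.

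The principal obstacle is the continuation step: rigorously verifying that the relative homology of the sublevel set pair stays constant under the 2-parameter family of difference functions $\phi_t$ despite their Morse bifurcations. The uniform fibration-at-infinity hypothesis of Remark \ref{Remark: infinity family}, combined with the compactly supported nature of the perturbation $f_t - f_0$, is exactly what controls these bifurcations: all critical point creation and annihilation is confined to a bounded region, and no critical values escape to infinity. With this control in hand, the invariance of the relative homology reduces to standard finite-dimensional Morse theory, either via direct decomposition into elementary moves or via continuation maps in the spirit of Floer theory.
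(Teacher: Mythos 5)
Your overall strategy --- invoke the homotopy lifting property (Theorem \ref{Persistence Theorem}) to extend the given generating family along the Legendrian isotopy, show the relative homology of the sublevel set pair persists by a continuation argument confined to a compact region, and compare $f$ with $\text{stab}^{2k}(f)$ via a Thom/K\"unneth isomorphism --- is exactly the argument the paper is pointing at when it calls Theorem \ref{theorem: GF} a corollary of the homotopy lifting property, and the first two steps are sound as you describe them: the compactly supported perturbation $\ve_t$ keeps $\phi_t$ fixed at infinity, and because $\Lambda_t$ stays embedded throughout a compact isotopy, the smallest positive critical value of $\phi_t$ (smallest Reeb chord length) is bounded away from $0$, so $\delta_t$ can be chosen continuously in the regular locus.

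The gap is in the last step, the passage from the Thom isomorphism to the set inclusion. As you correctly note, the Thom/K\"unneth argument gives $GH_*(\Lambda,\text{stab}^{2k}(f)) \cong GH_{*-4k}(\Lambda,f)$, a degree shift by $4k$. Your continuation then places the \emph{shifted} object $A[4k]$ in $GH(\Lambda')$, not $A$ itself. But ``$GH(\Lambda')$ automatically contains the stabilization-induced degree shifts of its members'' is a one-directional closure property (stabilization only shifts upward), so from $A[4k] \in GH(\Lambda')$ you cannot de-shift and conclude $A \in GH(\Lambda')$; the asserted inclusion $GH(\Lambda) \subseteq GH(\Lambda')$ does not follow. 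With the paper's raw definition $GH_*(\Lambda,f)=H_*(\phi<+\infty,\phi\le\delta)$ the argument only proves that the two sets agree after saturating under degree shifts. The standard remedy --- also left implicit by the paper --- is to build the shift into the grading, e.g. define $GH_k(\Lambda,f):=H_{k+\dim V+1}(\phi<+\infty,\phi\le\delta)$ where $V$ is the fibre of the generating family, so that $GH_*(\Lambda,\text{stab}^{2k}(f))$ and $GH_*(\Lambda,f)$ agree on the nose. With that normalization in place, your argument closes and yields $GH(\Lambda)=GH(\Lambda')$ as claimed.
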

 
 \begin{remark}
 There is also a version of Theorem \ref{theorem: GF} for compact $\Lambda$ and non compact $B$, for example $B=\bR$. In order to generate compact Legendrians one should take $V \times \bR$ instead of $V$ and demand that $f$ is linear-quadratic at infinity, i.e. $f(b,v,t)=t+Q_b(v)$ outside of a compact subset of $B \times V \times \bR$. 
 \end{remark}
 
\begin{remark}
Both the Legendrian Turaev torsion $T(\Lambda, W, \rho)$ and the $GH(\Lambda)$ of Theorem \ref{theorem: GF} consist of a set of functional invariants, whose elements are parametrized by certain collections of generating families for $\Lambda$. However, instead of a set of homological functional invariants, Legendrian Turaev torsion is a set of torsion functional invariants.
\end{remark}

% In fact, a linear-quadratic at infinity generating family for $\Lambda$ determines a normal ruling of $\Lambda$, which is a certain type of decomposition of the front, and there is a bijection between normal rulings and augmentations of the Chekanov-Eliashberg dga. One can go further and build $A_\infty$ categories out of these generating families / augmentations [ref] using the notion of bilinearized LCH. These invariants have also been recast in the language of microlocal sheaves, see [ref]. 

Generating family homology has enjoyed numerous applications, particularly when $\dim B =1$. See the work of Traynor \cite{T01} and that of Jordan and Traynor \cite{T06}, which marks the beginning of the story. Moreover, in this 1-dimensional case it was shown by Fuchs and Rutherford \cite{FR11} that for each generating family $f$ as above there exists an augmentation $\ve$ of the Chekanov-Eliashberg dga such that the generating family homology $GH_*(\Lambda , f)$ is isomorphic to the linearized Legendrian contact homology $LCH^\ve_*(\Lambda)$. Under this isomorphism, Alexander duality for generating family homology corresponds to Sabloff duality for linearized contact homology \cite{S06}. Generating family homology can also be used to construct invariants of families of Legendrians, as in the work of Sabloff and Sullivan \cite{SS16}. Finally, we mention the connection with rulings \cite{FI04}, \cite{S05}.

To elucidate the relation between generating families and augmentations when $\dim B =1$, Henry developed the notion of a Morse complex sequence \cite{H04}, building on unpublished work of Pushkar. To each generating family $f:W \to \bR$ there is an associated Morse complex sequence, which is given by the 1-parameter family of fibrewise Thom-Smale complexes $C_*(f_t)$ (what we call the Thom-Smale complex is often called the Morse complex) \footnote{See F. Laudenbach's justification in the introduction to \cite{La11} for calling it the Thom-Smale complex.}. This family of chain complexes experiences elementary row and column operations at the discretely many times $t \in B$ when $f_t$ has a handle slide. A Morse complex sequence is an algebraic gadget which behaves just like this family of Thom-Smale complexes, but is purely combinatorial.

Being purely combinatorial, Morse complex sequences are easier to work with than generating families. For example, Henry and Rutherford proved that Morse complex sequences are in bijective correspondence with augmentations of the Chekanov-Eliashberg dga \cite{HR15}. The precise relation between Morse complex sequences and actual generating families is more subtle and not yet fully understood, due to the presence of homotopical obstructions for the existence and uniqueness of generating families.
 
 The analogous notion to a Morse complex sequence when $\dim B =2$ is that of a Morse complex 2-family (MC2F), which was introduced by Rutherford and Sullivan \cite{RS18}. A MC2F is an algebraic gadget which mimics the 2-parametric family of Thom-Smale complexes associated to a generating family. Among other applications they proved that the existence of a MC2F is equivalent to the existence of an augmentation for the Chekanov-Eliashberg dga. The extension of Morse complex sequences and 2-families when $\dim B >2$ is yet to be worked out.
 
 \begin{remark} In some sense MC2Fs appear implicitly in our calculation, when we express Turaev torsion in terms of a monodromy of handle slides. However, we don't use this formalism. It would be interesting to know if the techniques of the present article can be recast in the language of MC2Fs.
 \end{remark} 
 
 There are other types of Legendrian invariants which one can extract from generating families, such as the spectral numbers introduced by Viterbo \cite{V92}. These are metric measurements rather than Legendrian isotopy invariants, and have applications to quantitative aspects of symplectic and contact geometry, such as dynamics. We do not discuss this further as it is not relevant to our present work. 
  % Spectral numbers have later been defined and generalized in several flavors of Floer theory and correspond to infinite bars in the language of persistent homology.
 
 %For example, Viterbo introduced in [ref] certain numbers $c(\Lambda, u) \in \bR$ associated to a Legendrian $\Lambda \subset J^1(B)$ isotopic to the zero section and a class $u \in H_*(B)$. The number $c(\Lambda, u) $ measures quantitative information about $\Lambda$ by minimax theory on $f$. Namely, by Theorem \ref{Persistence Theorem} there is a generating family $f:B \times \bR^{2k} \to \bR$ for $\Lambda$, and since $H_*(f<+\infty ; f<-\infty) \simeq H_*(B)$ up to a shift one can set
 %\[ c(\Lambda , u ) =  \inf \{  \text{$z \in \bR$ : $u$ is in the image of } H_*(f<z , f<-\infty) \to H_*(f<+\infty, f<-\infty)\}.\]
%That $c(\Lambda, u)$ does not depend on $f$ is a consequence of a certain uniqueness theorem for generating families of the zero section [ref]. Note that even though $c(\Lambda,u)$ is usually called a spectral invariant, it is not a Legendrian invariant in our sense (in fact it is only defined for those $\Lambda$ which are Legendrian isotopic to the zero section). Rather, one can think of $c(\Lambda,u)$ as measuring how far away $\Lambda$ is from the zero section with respect to the class $u$. This can be made precise in the sense that the function $d(\Lambda_0,\Lambda) = c(\Lambda, [B]) - c(\Lambda, 1)$ can be extended to a distance on the space of Legendrians in $J^1(B)$ isotopic to the zero section $\Lambda_0$. Here $[B] \in H_n(B)$ denotes the fundamental class and $1 \in H_0(B)$.

\subsection{Beyond trivial bundles}
 
The majority of the literature on applications of the generating family construction has focused on the case when $W$ is a trivial bundle and moreover when the fibre $F$ is Euclidean space. However, as it was shown in \cite{EG98} there is subtler information available if we look at more general fibre bundles. For example, consider the following notion.

\begin{definition}
A Legendrian $\Lambda \subset J^1(B)$ is globally linked if any $\Lambda' \subset J^1(B)$ which is Legendrian isotopic to $\Lambda$ intersects every fibre of the projection $J^1(B) \to B$.
\end{definition}

When $F$ is a closed manifold we have the following result, which is implicit in \cite{EG98}.

\begin{proposition}\label{globally linked}
Suppose that a Legendrian $\Lambda \subset J^1(B)$ is generated by a function $f:W \to \bR$ on a fibre bundle of closed manifolds $F \to W \to B$. Then $\Lambda$ is globally linked.
\end{proposition}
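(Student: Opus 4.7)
The plan is to argue by contradiction via the homotopy lifting property. Suppose some $\Lambda_1 \subset J^1(B)$ is Legendrian isotopic to $\Lambda_0 = \Lambda$ yet misses the fibre of $J^1(B) \to B$ above some point $b_0 \in B$. Theorem \ref{Persistence Theorem} then produces an integer $k \geq 0$ and a homotopy of generating families $f_t : W \times \bR^{4k} \to \bR$, each $f_t$ generating $\Lambda_t$, with $f_0 = \text{stab}^{2k}(f)$ and $\varepsilon_t := f_t - f_0$ supported in a fixed compact subset of $W \times \bR^{4k}$.

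Restrict $f_0$ and $f_1$ to the fibre $F \times \bR^{4k}$ of the stabilized bundle $W \times \bR^{4k} \to B$ above $b_0$, setting $h(p,x,y) := f_{b_0}(p) + |x|^2 - |y|^2$ and $g := f_1|_{F \times \bR^{4k}}$. Since $\Lambda_1$ has no point above $b_0$, no fibrewise critical point of $f_1$ lies above $b_0$, so $g$ has no critical points on $F \times \bR^{4k}$. On the other hand $g - h = \varepsilon_1|_{F \times \bR^{4k}}$ is compactly supported, so $g$ agrees with $h$ outside some compact $K \subset F \times \bR^{4k}$. Choose $a < b$ with $a < \min(\min_K g, \min_K h)$ and $b > \max(\max_K g, \max_K h)$, so that the sublevel sets match: $\{g \leq a\} = \{h \leq a\}$ and $\{g \leq b\} = \{h \leq b\}$.

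The contradiction comes from computing the relative homology of this common pair in two ways. Because $g$ has no critical points and inherits the fibration-at-infinity property from $h$ (they agree outside a compact set), the rescaled gradient $V = -\nabla g / |\nabla g|^2$ is complete on the slab $\{a \leq g \leq b\}$ and its flow deformation-retracts $\{g \leq b\}$ onto $\{g \leq a\}$; hence $H_*(\{g \leq b\}, \{g \leq a\}) = 0$. But $h$ is, after a small Morse perturbation of $f_{b_0}$ on $F$ if needed, a sum of Morse functions in independent variables, so the K\"unneth formula for Morse complexes identifies $H_*(\{h \leq b\}, \{h \leq a\}) \cong H_{*-2k}(F)$: the closed-manifold factor contributes $H_*(F)$ and the split-signature quadratic form on $\bR^{4k}$ contributes a single generator in degree $2k$. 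Since $F$ is closed and nonempty, $H_0(F) \neq 0$ produces a nonzero class in degree $2k$, contradicting the vanishing above. The main technical point is ensuring Morse theory behaves correctly on the non-compact fibre $F \times \bR^{4k}$, which is guaranteed by the fibration-at-infinity hypothesis of Definition \ref{Definition: fibration at infinity}, preserved uniformly along the homotopy in the sense of Remark \ref{Remark: infinity family}.
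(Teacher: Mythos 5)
Your proof is correct and takes essentially the same approach as the paper: invoke the homotopy lifting property, restrict the resulting generating family to the fibre above a point that $\Lambda_1$ allegedly misses, and derive a contradiction by comparing the vanishing relative homology of sublevel sets for the critical-point-free $g$ against the nonvanishing relative homology $H_{*-2k}(F)$ of the stabilized $f_{b_0}$. You spell out more carefully than the paper's terse version the choice of the interval $[a,b]$, the completeness of the rescaled gradient flow, and the K\"unneth identification, but the structure of the argument is identical.
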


\begin{proof}
Let $\Lambda'$ be Legendrian isotopic to $\Lambda$. By Theorem \ref{Persistence Theorem} there is a generating family $g$ for $\Lambda'$ on a stabilization of $W$ which differs from the stabilization of $f$ only by a compactly supported function. Hence for every $b \in B$, it follows that up to a shift in grading $H_*(g_b<+ \infty, g_b<-\infty) \simeq H_*(f_b<+\infty , f_b<-\infty) \simeq H_*(F)$. Since $\text{crit}(g_b) = \Lambda' \cap J^1_b(B)$, we see that $\Lambda' \cap J^1_b(B) = \varnothing$ implies $H_*(g_b<+ \infty, g_b<-\infty)=0$, which is a contradiction with $H_*(F) \neq 0$. \end{proof}

\begin{remark}
A slight variation of this argument also proves that the front projection of any $\Lambda'$ which is Legendrian isotopic to $\Lambda$ must disconnect $+\infty$ from $- \infty$ in $B \times \bR$.
\end{remark}

\begin{remark} Since the proof is homological, the same reasoning can be used to deduce an analogous result in microlocal sheaf theory (which is a priori stronger since every generating family produces a complex of sheaves). Indeed, using the quantization theorem \cite{GKS12} we can deduce global linking of $\Lambda$ from the existence of a complex of sheaves $\cS$ on $B \times \bR$ whose microsupport is $\Lambda$ and whose restriction $\cS_+$ to $+\infty$ has a stalk which is homologically different to that of the restriction $\cS_-$ to $-\infty$. %Here we think of $J^1(B)$ as a domain inside the cosphere bundle $T^\infty (B \times \bR)$ and we restrict $\cS$ to $B \times z$ for $z$ very large and positive, or very large and negative. This yields two locally constant complexes of sheaves on $B$, which we called the restrictions $\cS_\pm$ of $\cS$ to $\pm \infty$. 
\end{remark}

\begin{remark}
The proof of Proposition \ref{globally linked} also implies the stronger conclusion that the cardinality of the intersection of $\Lambda'$ with any fibre $J^1_b(B)$ is bounded below by the minimal number of critical points of a function on $F \times \bR^{2n}$ which is a fibration outside of a compact set and is a finite distance from $||x||^2-||y||^2$ in the $C^0$ norm. For example when $F=S^1$ this number is $2$. 
\end{remark}

Since mesh Legendrians admit generating families on circle bundles, we deduce the following.

\begin{corollary}\label{corollary: mesh Legendrians are globally linked}
Mesh Legendrians are globally linked.
\end{corollary}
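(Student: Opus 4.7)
The plan is to deduce this directly from Proposition \ref{globally linked} together with the existence of a generating family on a circle bundle for every mesh Legendrian. Concretely, given a bicolored trivalent ribbon graph $G$, one needs to exhibit a function $f: E_G \to \bR$ on the total space of the circle bundle $S^1 \to E_G \to \Sigma_G$ with Euler number $\pm w(G)$ whose fibrewise critical set, pushed into $J^1(\Sigma_G)$, recovers $\Lambda_G$. This is precisely the content of the existence half of Theorem \ref{euler}, which is announced in the introduction and will be established in the section on systems of disks.

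Assuming this construction, the proof is essentially one line. Since $S^1$ and $\Sigma_G$ are closed, the bundle $S^1 \to E_G \to \Sigma_G$ is a fibre bundle of closed manifolds, so the hypotheses of Proposition \ref{globally linked} are satisfied with $F = S^1$, $W = E_G$ and $B = \Sigma_G$. Therefore any Legendrian $\Lambda'$ isotopic to $\Lambda_G$ must intersect every fibre of $J^1(\Sigma_G) \to \Sigma_G$, which is exactly the statement that $\Lambda_G$ is globally linked.

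The only subtlety worth flagging is that one should make sure the generating family produced in Theorem \ref{euler} is a fibration at infinity in the sense of Definition \ref{Definition: fibration at infinity}, but this is automatic because $E_G$ is compact: outside any interval $I \subset \bR$ containing $f(E_G)$ the preimage is empty and the conditions hold trivially. No further work is needed beyond invoking the two already-stated results. The main obstacle is therefore not in this corollary itself but is deferred to the construction of the generating family on $E_G$, which belongs to the proof of Theorem \ref{euler}.
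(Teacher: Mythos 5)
Your proof is correct and follows exactly the same route as the paper: invoke Proposition \ref{globally linked} once the existence of a generating family on a circle bundle is established, which the paper defers to Section \ref{systems of disks} (Corollary \ref{generation}). Your additional remark about the fibration-at-infinity condition being automatic for compact total space is a fine sanity check but not strictly necessary.
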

 
This result is already nontrivial even for the simplest mesh Legendrian $\Lambda_G \subset J^1(S^2)$, where $G$ is the graph consisting of one circular edge with no vertices, see Figure \ref{FigSimpleLink}. In this case $\Lambda_G= \Lambda_1 \cup \Lambda_2$ is a link of two spheres, which is generated by a function on the trivial circle bundle $S^2 \times S^1$, see Figure \ref{FigFuncOnCircle}. It is easy to check that $\Lambda_G$ is a smooth unlink, and in fact it is trivial as a formal Legendrian link. So even though $\Lambda_G$ is globally linked, it is not formally globally linked. Moreover, it seems to us that in order to prove that $\Lambda_G$ is globally linked using pseudo-holomorphic curves it is necessary to use relative $H_2$ coefficients in the Chekanov-Eliashberg dga, where the fundamental class of the base $Q=[S^2]$ plays a crucial role. %Hence the linking is somewhat subtle.

\begin{figure}[htbp]
\begin{center}
\begin{tikzpicture}[scale=.75]
%\draw[help lines=1,thick] (-5,-3) grid (5,3);
%\draw (0,0) circle[radius=2.5cm];
%\draw (0,0) circle[radius=2cm];
%\draw (0,0) circle[radius=1.5cm];
%
\begin{scope}
\coordinate (T1) at (0,2.5);
\coordinate (T11) at (1.5,2.5);
\coordinate (T12) at (2,1.5);
\coordinate (T13) at (2.3,1);
\coordinate (T14) at (2.6,.3);
\coordinate (T15) at (2,-.2);
\coordinate (T16) at (1.8,-1);
\draw[very thick,color=blue] (T1)..controls (T11) and (T12)..(T13)..controls (T14) and (T15)..(T16);
\coordinate (T3) at (0,1.5);
\coordinate (T31) at (1,1.5);
\coordinate (T32) at (1.2,.8);
\coordinate (T33) at (1.5,.4);
\coordinate (T34) at (1.7,0);
\coordinate (T35) at (2,-.4);
\coordinate (T36) at (1.8,-1);
\draw[very thick,color=blue] (T3)..controls (T31) and (T32)..(T33)..controls (T34) and (T35)..(T36);
\coordinate (T2) at (0,-2.5);
\coordinate (T21) at (1.5,-2.5);
\coordinate (T22) at (2,-1.5);
\coordinate (T23) at (2.3,-1);
\coordinate (T24) at (2.6,-.3);
\coordinate (T25) at (2,.2);
\coordinate (T26) at (1.8,1);
\draw[very thick,color=black] (T2)..controls (T21) and (T22)..(T23)..controls (T24) and (T25)..(T26);
\coordinate (T4) at (0,-1.5);
\coordinate (T41) at (1,-1.5);
\coordinate (T42) at (1.2,-.8);
\coordinate (T43) at (1.5,-.4);
\coordinate (T44) at (1.7,0);
\coordinate (T45) at (2,.4);
\coordinate (T46) at (1.8,1);
\draw[very thick,color=black] (T4)..controls (T41) and (T42)..(T43)..controls (T44) and (T45)..(T46);
\end{scope}
\begin{scope}[rotate=180]
\coordinate (T1) at (0,2.5);
\coordinate (T11) at (1.5,2.5);
\coordinate (T12) at (2,1.5);
\coordinate (T13) at (2.3,1);
\coordinate (T14) at (2.6,.3);
\coordinate (T15) at (2,-.2);
\coordinate (T16) at (1.8,-1);
\draw[very thick,color=black] (T1)..controls (T11) and (T12)..(T13)..controls (T14) and (T15)..(T16);
\coordinate (T3) at (0,1.5);
\coordinate (T31) at (1,1.5);
\coordinate (T32) at (1.2,.8);
\coordinate (T33) at (1.5,.4);
\coordinate (T34) at (1.7,0);
\coordinate (T35) at (2,-.4);
\coordinate (T36) at (1.8,-1);
\draw[very thick,color=black] (T3)..controls (T31) and (T32)..(T33)..controls (T34) and (T35)..(T36);
\coordinate (T2) at (0,-2.5);
\coordinate (T21) at (1.5,-2.5);
\coordinate (T22) at (2,-1.5);
\coordinate (T23) at (2.3,-1);
\coordinate (T24) at (2.6,-.3);
\coordinate (T25) at (2,.2);
\coordinate (T26) at (1.8,1);
\draw[very thick,color=blue] (T2)..controls (T21) and (T22)..(T23)..controls (T24) and (T25)..(T26);
\coordinate (T4) at (0,-1.5);
\coordinate (T41) at (1,-1.5);
\coordinate (T42) at (1.2,-.8);
\coordinate (T43) at (1.5,-.4);
\coordinate (T44) at (1.7,0);
\coordinate (T45) at (2,.4);
\coordinate (T46) at (1.8,1);
\draw[very thick,color=blue] (T4)..controls (T41) and (T42)..(T43)..controls (T44) and (T45)..(T46);
\end{scope}
\coordinate (X) at (0,-3);
\coordinate (Y) at (0,3);
\draw[thick,dotted] (X)--(Y);
%
%\foreach \x in {-8,-6,...,8}\draw (\x,0) node{\x};\foreach \y in {-6,-4,...,8}\draw (0,\y) node{\y};
%\coordinate (A) at (0,0);
%\draw[thick,color=blue] (A) ellipse [x radius=2.8cm,y radius=2.1cm];
\end{tikzpicture}
\caption{Spin this figure about the dotted line to get (the front projection of) the mesh Legendrian corresponding to the graph with one circular edge and no vertices.}
\label{FigSimpleLink}
\end{center}
\end{figure}
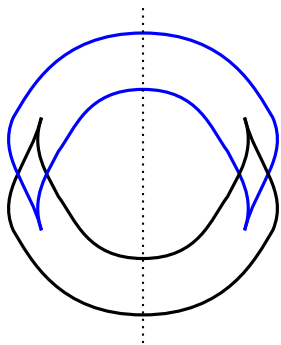
\begin{figure}[htbp]
\begin{center}
\begin{tikzpicture}%[scale=3]
%\draw[help lines=1,thick] (-5,-5) grid (5,4);
%\foreach \x in {-8,-6,...,8}\draw (\x,0) node{\x};\foreach \y in {-6,-4,...,8}\draw (0,\y) node{\y};
%\coordinate (A) at (0,0);
\coordinate (A1) at (-1.45,.3);
\coordinate (A11) at (-1.7,.3);
\coordinate (A12) at (-1.3,.3);
\coordinate (A2) at (-1,1.15);
\coordinate (A22) at (-1.3,1.15);
%\coordinate (A0) at (-1,.7);
%\draw[color=red,<->] (A1)--(A2);
%\draw (A0) node[left]{$-a$};
%
\coordinate (C1) at (-1.2,-.4);
\coordinate (C11) at (-1.4,-.4);
\coordinate (C12) at (-1,-.4);
\coordinate (C2) at (-1,-1.15);
\coordinate (C22) at (-1.5,-1.15);
%\coordinate (C0) at (-1,-.7);
%\draw[very thick,color=red,<->] (C1)--(A2);

\begin{scope}
\clip (-1,-1.3) rectangle (-.4,1.3);
\draw[very thick] (-1,0) ellipse[x radius=.3cm,y radius =1.15cm]; % right half of third circle
\end{scope}
\draw[very thick] (C2)..controls (C22) and (A11)..(A1)..controls (A12) and (C11)..(C1)..controls (C12) and (A22)..(A2); % left half of third circle

\coordinate (B1) at (1.45,.35);
\coordinate (B11) at (1.7,.35);
\coordinate (B12) at (1.3,.35);
\coordinate (B2) at (1,1.15);
\coordinate (B22) at (1.3,1.15);
%\coordinate (B0) at (-1,.7);
%\draw[color=red,<->] (B1)--(B2);
%\draw (B0) node[left]{$-a$};
%
\coordinate (D1) at (1.2,-.4);
\coordinate (D11) at (1.4,-.4);
\coordinate (D12) at (1,-.4);
\coordinate (D2) at (1,-1.15);
\coordinate (D22) at (1.5,-1.15);
%\coordinate (D0) at (-1,-.7);
%\draw[very thick,color=red,<->] (D1)--(B2);

\begin{scope}
\clip (1,-1.3) rectangle (.4,1.3);
\draw[very thick] (1,0) ellipse[x radius=.3cm,y radius =1.15cm]; % left half of 4th circle
\end{scope}
\draw[very thick] (D2)..controls (D22) and (B11)..(B1)..controls (B12) and (D11)..(D1)..controls (D12) and (B22)..(B2); % right half of 4th circle

%\draw (D0) node[right]{$a$};
%
\coordinate (2K) at (-4.1,-1.2);
\draw(2K) node[left]{$y_i$};
\coordinate (2Kp) at (-4.1,1.2);
\draw(2Kp) node[left]{$x_i$};
\coordinate (2Kr) at (4.1,-1.2);
\draw[color=blue] (2Kr) node[right]{$y_j$};
\coordinate (2Kpr) at (4.1,1.2);
\draw[color=blue] (2Kpr) node[right]{$x_j$};
\clip (-4.4,-2) rectangle (4.4,1.5);
\begin{scope}[xshift=-3cm] % (c) left lens (black)
\draw[ thick] (-1,1.25)--(1,1.25)..controls (2.8,1.25) and (4.8,0)..(5.8,0);
\draw[ thick] (-1,-1.25)--(1,-1.25)..controls (3,-1.25) and (4.5,0)..(5.8,0);
\draw[very thick] (-1,0) ellipse[x radius=.3cm,y radius =1.25cm]; % circle on left
\begin{scope}[xshift=-.5cm]
\clip (1,-1.3) rectangle (1.4,1.3);
\draw[very thick] (1,0) ellipse[x radius=.3cm,y radius =1.25cm]; % right half of 2nd circle
\end{scope}
\draw[very thick,xshift=-.5cm] (1,-1.25)..controls (.8,-1.25) and (.4,0)..(.7,0)..controls (1,0) and (.8,1.25)..(1,1.25); % left half of 2nd circle
\end{scope}
\begin{scope}[xshift=1cm] % (c) right lens (blue)
\draw[ thick, color=blue] (-3.8,0)..controls (-2.3,0) and (-1.3,1.25)..(1,1.25)--(3,1.25);
\draw[ thick, color=blue] (-3.8,0)..controls (-2.2,0) and (-1.3,-1.25)..(1,-1.25)--(3,-1.25);
\draw[very thick] (3,0) ellipse[x radius=.3cm,y radius =1.25cm]; % circle on right
\end{scope}
\begin{scope}[xshift=1.5cm]
\clip (1,-1.3) rectangle (.6,1.3);
\draw[very thick] (1,0) ellipse[x radius=.3cm,y radius =1.25cm]; % left half of 5th circle
\end{scope}
\draw[very thick,xshift=3.5cm] (-1,-1.25)..controls (-.8,-1.25) and (-.4,0)..(-.7,0)..controls (-1,0) and (-.8,1.25)..(-1,1.25); % right half of 5th circle

\end{tikzpicture}
\caption{A one parameter family of functions on the circle generating the Cerf diagram of Figure \ref{Fig: link}.}
\label{FigFuncOnCircle}
\end{center}
\end{figure}
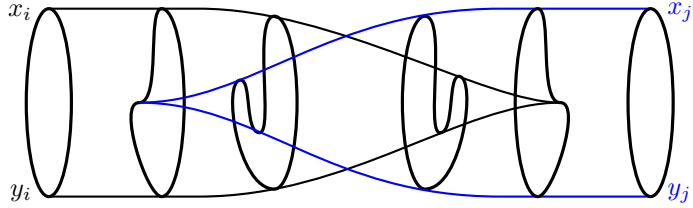
%

 %In this case we can make choices such that the mixed complex [ref] is generated by two elements $x,y$ of adjacent grading satisfying $\partial x = (1-Q)y$, where $Q=[S^2]$ is the fundamental class of the base. That $\Lambda_G$ is globally linked would follow from a lemma which says that $(\cA, \partial)$ (rather, the mixed complex?) is not stable tame isomorphic over $\bZ[Q]$ to any dga whose differential has no terms involving the variable $Q$. But even then it is not clear to us how to generalize this argument to arbitrary mesh Legendrians.

Consider next a fibre bundle where not only the fibre is nontrivial, but the bundle itself is also nontrivial. For circle bundles over surfaces $S^1 \to E \to \Sigma$ the theory is already quite rich, as we will see below. A natural question one can ask is the following.
\begin{question}\label{Question: remember} To what extent does a Legendrian $\Lambda \subset J^1(\Sigma)$ which is generated by a family on a circle bundle remember the circle bundle? 
\end{question}
For example, in \cite{DR11} Dimitroglou-Rizell considered the Legendrian sphere $\Lambda_h \subset J^1(S^2)$ generated by the height function $h:S^3 \to \bR$, where we think of $S^3$ as the Hopf bundle over $S^2$. \begin{remark} The front $\pi(\Lambda_h) \subset J^0(S^2)$ consists of two parallel copies of the zero section $S^2$, which are joined together by a conical singularity $z^2=x^2+y^2$ (see Figure \ref{Fig: cone}) over a single point $(x,y) \in S^2$. This caustic is not generic: after perturbation it decomposes into four swallowtails with arcs of cusps between them (see Figure \ref{FigDR}). 
\end{remark}
Dimitroglou-Rizell made the following observation.

\begin{figure}[htbp] % beginning of green curve
\begin{center}
\begin{tikzpicture}[scale=.7]

\draw[very thick] (0,2) ellipse[x radius=15mm, y radius=5mm];

\begin{scope}
\draw[very thick] (0,-2) ellipse[x radius=15mm, y radius=5mm];
\draw[fill, color=white] (-2,-1.9) rectangle (2,0);
\draw[very thick,dotted] (0,-2) ellipse[x radius=15mm, y radius=5mm];
\end{scope}

\draw[very thick] (1.47,1.9)--(-1.47,-1.9);
\draw[very thick] (-1.47,1.9)--(1.47,-1.9);

\end{tikzpicture}
\caption{A cone singularity}
\label{Fig: cone}
\end{center}
\end{figure}
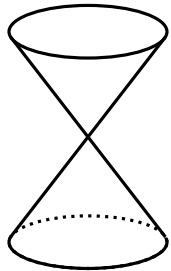
\begin{figure}[htbp]
\begin{center}
\begin{tikzpicture}[scale=.75]
%\draw[help lines=1,thick] (-5,-7) grid (5,3);
\coordinate (A) at (-1.3,0);
\coordinate (B) at (1.3,0);
\coordinate (B3) at (1.5,.3);
\coordinate (B4) at (1.8,.5);
\coordinate (C) at (0,.35);
\coordinate (C2) at (0,-.3);
\coordinate (D) at (-1.2,1.5);
\coordinate (D2) at (2,.7);
\coordinate (D3) at (0,.8);
\coordinate (D4) at (1.2,.4);
	\draw[dashed] (A)..controls (-1,.5) and (-1,1)..(D);
	\draw[dashed] (D)..controls (D3) and (D4)..(D2);
\begin{scope}
	\clip (1,.5) rectangle (2.1,1);
	\draw[very thick] (D)..controls (D3) and (D4)..(D2);
\end{scope}
%	\draw[dashed] (B)..controls (1.5,.3) and (1.8,.5)..(D2);
	\draw[very thick] (B)..controls (B3) and (B4)..(D2);
\begin{scope}
	\clip (-2.1,-.1) rectangle (-1,.7);
	\draw[very thick] (A)..controls (-1,.5) and (-1,1)..(D);
\end{scope}
\draw[very thick] (-2,.9)--(-3,-1)..controls (-1,-1) and (0,0)..(B);
\draw[very thick] (3,1)--(4,2.5)--(-2,2.5)--(-3,1)..controls (-1,1) and (0,0)..(B);
\draw[very thick] (3.1,1.2)--(4,1.2)--(3,-1)..controls (1,-1) and (0,0)..(A);
\draw[very thick] (3,1)..controls (1,1) and (0,0)..(A);
\draw[thick] (C)--(D);
	\draw[dashed] (C2)--(D2);
\begin{scope}
	\clip (0,-.5)rectangle (.8,.3);
	\draw[thick] (C2)--(D2);
\end{scope}
\draw[<-] (0,-1.5)--(0,-.9);
\draw (0,-1.2) node[right]{$p$};
\begin{scope}[yshift=-4cm]
\coordinate (A) at (-1.3,0);
\coordinate (A1) at (-2.5,-1);
\coordinate (A2) at (-2,-.7);
\coordinate (A3) at (-1.6,-.5);
\coordinate (B) at (1.3,0);
\coordinate (B0) at (1,-1.3);
\coordinate (B1) at (1,-1);
\coordinate (B2) at (1.1,-.2);
\coordinate (B3) at (1.5,.3);
\coordinate (B4) at (1.8,.5);
\coordinate (C) at (0,.35);
\coordinate (C2) at (0,-.3);
\coordinate (D) at (-1.2,1.5);
\coordinate (D2) at (2,.7);
\coordinate (D3) at (0,.8);
\coordinate (D4) at (1.2,.4);
\coordinate (X1) at (-4,-2);
\coordinate (X2) at (-2,2);
\coordinate (X3) at (4,2);
\coordinate (X4) at (2,-2);
\draw[very thick] (X1)--(X2)--(X3)--(X4)--(X1);
\draw[thick,dotted] (-3,0)--(3,0);
	\draw[thick] (A1)..controls (A2) and (A3)..(A)..controls (-1,.5) and (-1,1)..(D);
	\draw (A1)--(D2) (D)--(B0);
	\draw[thick] (D)..controls (D3) and (D4)..(D2);
	\draw[thick] (A1)..controls (-1,-.7) and (0,-.7)..(B0)..controls (B1) and (B2)..(B)..controls (B3) and (B4)..(D2);
\end{scope}
\end{tikzpicture}
\caption{The Legendrian of Dimitroglou-Rizell cut over dotted line to show details.}
\label{FigDR}
\end{center}
\end{figure}
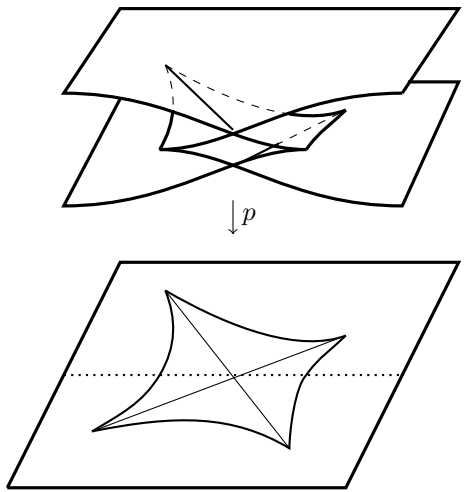

\begin{proposition}
The Legendrian $\Lambda_h$ cannot be generated by a family on the trivial bundle $S^2 \times S^1$.
\end{proposition}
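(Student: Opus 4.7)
The plan is to derive a topological contradiction from the assumption that $\Lambda_h$ is generated by a function $g$ on $S^2 \times S^1$ (or any even stabilization). The key observation is that the critical locus of $g$ is an embedded $2$-sphere in the total space whose equator is a specific non-null-homotopic circle.

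First I would analyze the critical locus $C_g = \{\partial_F g = 0\} \subset E$ for any generating family $g$ producing $\Lambda_h$ on a circle bundle $E \to S^2$. Since the front of $\Lambda_h$ has a single conical singularity $z^2 = x^2+y^2$ over a point $b_0 \in S^2$, both sheets attain the same $z$-value only at $b_0$. This forces the fibrewise maximum and minimum of $g_{b_0}: F_{b_0} \to \bR$ to coincide, so $g_{b_0}$ is constant on $F_{b_0} \cong S^1$. Hence $C_g$ contains the entire fibre $F_{b_0}$. On $S^2 \setminus \{b_0\}$ the fibrewise function is Morse with a unique max and min, and $C_g$ is a graphical double cover of $S^2 \setminus \{b_0\}$. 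These pieces assemble into an embedded $2$-sphere $C_g \subset E$ (homeomorphic to $\Lambda_h$) with $F_{b_0}$ appearing as an equator splitting $C_g$ into two embedded $2$-disks, each bounded by $F_{b_0}$.

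Second, I would use this to rule out $E = S^2 \times S^1$. Each hemisphere of $C_g$ gives an embedded $2$-disk in $E$ whose boundary is $F_{b_0}$, so $F_{b_0}$ must be null-homotopic in $E$. For the Hopf bundle $E = S^3$ this holds since $\pi_1(S^3) = 0$, and indeed the height function realizes this configuration. But in $E = S^2 \times S^1$ the fibre $\{b_0\} \times S^1$ generates $\pi_1(S^2 \times S^1) \cong \bZ$ and bounds no disk, giving the desired contradiction. The extension to stabilizations $S^2 \times S^1 \times \bR^{2k}$ is immediate: the critical locus of $\mathrm{stab}^{2k}(g)$ is just $C_g \times \{0\}$, and $\pi_1$ is unchanged by stabilization, so the same obstruction applies.

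The main subtlety is rigorously justifying that the conical front at $b_0$ genuinely forces $g_{b_0}$ to be constant rather than, say, arising from a near-tangency of two sheets. Here one uses the precise shape of the cone $z^2 = x^2+y^2$ (where both sheets meet at a single point, not transversally along a curve) together with the transversality condition $\partial_F g \pitchfork 0$, which allows $\partial_F g$ to vanish along the full fibre $F_{b_0}$ provided the base derivative of $\partial_F g$ is surjective transverse to $F_{b_0}$. Once this structural statement is established, the topological $\pi_1$-obstruction is essentially immediate.
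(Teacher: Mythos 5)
Your argument for the proposition as literally stated is correct but takes a genuinely different route from the paper's. The paper's (one-sentence) proof invokes the Morse inequalities: the critical points of any generating family $g$ for $\Lambda_h$ on a total space $W$ are in bijection with the intersections of the Lagrangian projection $L_{\Lambda_h}\subset T^*S^2$ with the zero section, of which there are exactly two (the two nondegenerate critical points of the height function $h$ on $S^3$); but $\dim H_*(S^2\times S^1;\bZ/2)=4$, so any such $g$ must have at least four critical points --- a contradiction. Your argument instead exploits the cone singularity of the front: $\Lambda_h\cap J^1_{b_0}(S^2)$ is the Legendrian lift of the cone vertex, hence a circle, so $C_g\cap F_{b_0}$ is a smooth compact $1$-manifold inside $F_{b_0}\cong S^1$, hence all of $F_{b_0}$; then $F_{b_0}\subset C_g\cong\Lambda_h\cong S^2$ is null-homotopic in $C_g$, hence in $S^2\times S^1$, contradicting that the fibre generates $\pi_1(S^2\times S^1)\cong\bZ$. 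This is a pleasant, explicitly geometric $\pi_1$-obstruction, complementary to the paper's critical-point count.

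The gap is in the stabilized extension (the sentence after the proposition, which your outline also tries to cover). You only treat families of the form $\mathrm{stab}^{2k}(g)$, but the stable statement requires ruling out arbitrary generating families $\tilde g$ on $S^2\times S^1\times\bR^{4k}$. For such $\tilde g$, the circle $\gamma:=C_{\tilde g}\cap F_{b_0}$ (still diffeomorphic to $\Lambda_h\cap J^1_{b_0}$) lies in $F_{b_0}=S^1\times\bR^{4k}$ and need not be the core circle: $\tilde g|_{F_{b_0}}$ cannot be constant, since it is a fibration at infinity, so the ``coincident max and min force constancy'' step has no stabilized analogue. Your $\pi_1$-argument then only yields the true but noncontradictory fact that $[\gamma]=0$ in $\pi_1(F_{b_0})$ (because $\gamma$ bounds inside the simply connected $C_{\tilde g}$). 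To reach a contradiction you would need a separate argument that $[\gamma]$ generates $\pi_1(F_{b_0})$, and this does not follow from anything in your outline. The paper's critical-point count is insensitive to this issue and carries over stably verbatim.
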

 
The proof uses the Morse inequalities for $S^2 \times S^1$, from which the conclusion is immediate. The same proof works stably, with $S^2 \times S^1 \times \bR^{2k}$ instead of $S^2 \times S^1$.  Although $\Lambda_h$ is not a mesh Legendrian, our work on mesh Legendrians is in a similar spirit. Indeed, Theorem \ref{euler} answers Question \ref{Question: remember} for the class of mesh Legendrians, and even stably. However, in order to distinguish different mesh Legendrians which are generated by families on the same circle bundle we must look at somewhat finer invariants, namely those coming from parametrized Morse theory instead of just Morse theory. 

Finally, we mention related work of Sullivan and Rutherford \cite{RS18} who used MC2Fs together with their cellular technology for the Chekanov-Eliashberg dga to produce examples of Legendrians in a 1-jet space of a surface $B$ which do not admit generating families on trivial fibre bundles. Their obstruction comes from expressing the action of $\pi_1(B)$ on the homology of the fibre $H_*(F)$ in terms of the Chekanov-Eliashberg dga and its augmentations. 
 
\section{Turaev torsion}\label{Section: turaev torsion}
 
\subsection{Torsion from the Morse-theoretic viewpoint}\label{reidemeistertorsion}
 
 Fix a commutative ring $R$. Let $M$ be a closed, connected, orientable manifold and let $\rho: \pi_1M \to U(R)$ be a representation, where $U(R)$ is the group of units of $R$. Observe that as far as $\rho$ is concerned we do not need to specify the basepoint for $\pi_1M$. Indeed, the ambiguity in the basepoint corresponds to conjugation in $\pi_1 M$ and $\rho$ takes values in the abelian group $U(R)$.
 
We recall how to compute the cohomology $H^*(M; R^\rho)$ of $M$ with coefficients in $R$ twisted by the local system $\rho$ in terms of a Morse function $f:M \to \bR$. Fix an orientation of $M$ and an orientation for the negative eigenspace of the Hessian $d^2f$ at each critical point of $f$. Fix also a vector field $Z$ on $M$ which is gradient-like for $f$. We assume that $Z$ is Morse-Smale for $f$, which means that the $Z$ stable and unstable manifolds of any two critical points of $f$ intersect transversely on any regular level set of $f$. Fix a basepoint $m_0 \in M$, which we use to identify $\pi_1M=\pi_1(M,m_0)$, and choose homotopy classes of paths $p_x$ in $M$ from the basepoint $m_0$ to each critical point $x$ of $f$. 
 
The Thom-Smale complex $C_*(f ; \bZ[\pi_1M]$) is freely generated in degree $k$ over $\bZ[\pi_1M]$ by the critical points of $f$ of index $k$, where $0 \leq k \leq \dim M$. For $x$ a critical point of index $k$, the differential $\partial x = \sum_y  a_{xy} y$ is a weighted sum of critical points $y$ of index $k-1$, where the coefficient $a_{xy} \in \bZ[\pi_1M]$ is given as follows. Because of the Morse-Smale condition, there are only finitely many $Z$ trajectories between $x$ and $y$. We can cap off each such trajectory $\gamma$ using the paths $p_x$ and $p_y$ to obtain an element $\widetilde \gamma \in \pi_1M$. Moreover, the intersection of $\gamma$ with an intermediate regular level set for $f$ corresponds to an intersection point between the $Z$ stable manifold for $x$ and the $Z$ unstable manifold for $y$ in that level. Since the former is oriented and the latter is co-oriented, we get a sign $\ve_\gamma \in \{\pm 1\}$ for each trajectory $\gamma$. The coefficient is then given by the formula $a_{xy} = \sum_\gamma \ve_\gamma \widetilde \gamma  \in \bZ[\pi_1M]$.

\begin{remark}\label{second remark about IK conventions}
Following the conventions of \cite{IK93}, we will write the matrix of the boundary map $\partial :C_k(f ; \bZ[\pi_1M]) \to C_{k-1}(f ; \bZ[\pi_1 M])$ as $d_{yx}=a_{xy}$, i.e. we transpose the matrix $(a_{xy})$ so that these matrices are composed right to left. Hence the entries of the boundary map should really be taken in the opposite ring $\bZ[\pi_1M]^{op}$. Since we will later pass to the commutative ring $R$, this will not matter.
\end{remark}

That $\partial^2=0$, or, equivalently, that $\sum_y a_{xy} a_{yz}=0$, follows as usual from considering the boundary of the 1-dimensional moduli spaces of $Z$ trajectories between critical points of index difference $2$. As a $\bZ$-module, the homology of $(C_*(f ; \bZ[\pi_1M]), \partial)$ is isomorphic to the singular cohomology of the universal covering space $\widetilde M$ over $\bZ$ . However, we are interested not in the Thom-Smale complex $(C_*(f ; \bZ[\pi_1M]) , \partial)$ but in the twisted Thom-Smale complex $(C_*(f ; R^\rho), \partial^\rho)$. This complex is freely generated in degree $k$ over $R$ by the critical points of index $k$ and its differential $\partial^\rho$ is obtained from $\partial$ by applying the map on coefficients $\bZ[\pi_1M] \to R$ induced from $\rho$. The homology of $(C_*(f; R^\rho),\partial^\rho)$ is precisely $H^*(M;R^\rho)$. 

Assume now that $H^*(M ; R^\rho)=0$, so that the twisted Thom-Smale complex $C_*(f ; R^\rho)$ is acyclic. In this case there exists a chain contraction $\delta^\rho$ for $\partial^\rho$, i.e. a chain homotopy between the identity and zero. It is readily seen that $\partial^\rho+\delta^\rho : C_{\text{odd}} (f ;R^\rho) \to C_{\text{even}}(f ; R^\rho)$ is an isomorphism of finite rank free $R$-modules. Fixing an ordering of the critical points of even and odd index we can therefore represent $\partial^\rho+\delta^\rho$ by an invertible matrix over $R$. Consider the determinant $\det(\partial^\rho+\delta^\rho) \in U(R)$ of this matrix.

First of all, this determinant is only defined up to sign, since a permutation of the chosen orderings for the critical points will change the determinant by $\pm1 $. Next, changing the choices of orientations for the negative eigenspaces of $d^2f$ at the critical point of $f$ will also change the determinant by $\pm 1$, and similarly if we change the chosen orientation for $M$. Something somewhat more dramatic happens if we change one of the paths from the basepoint $m_0\in M$ to a critical point of $f$ by an element $\alpha \in \pi_1M$. The effect on the determinant $\det(\partial^\rho + \delta^\rho)$ is that it gets multiplied by $\rho(\alpha) \in U(R)$ or $\rho(\alpha)^{-1} \in U(R)$, depending on the parity of the index of the critical point. 

However, changing the gradient-like Morse-Smale vector field $Z$ will not affect the determinant. Indeed, any two such $Z_0$ and $Z_1$ can be joined by a family $Z_t$ which is Morse-Smale at all but finitely many times $t \in [0,1]$, at which the Thom-Smale complex experiences handle slides. These bifurcations correspond to connecting trajectories between critical points of the same index. They have the algebraic effect of an elementary row/column operation on the matrix for $\partial^\rho+\delta^\rho$, which therefore does not affect the determinant. Finally, a homological algebra argument shows that the choice of chain contraction $\delta^\rho$ also does not affect the determinant. We conclude that $\text{det}(\partial^\rho+\delta^\rho)$ is well defined up to multiplication by $\pm$ an element in $\rho(\pi_1M) \subset U(R)$. %\ki{}{Thus $|\text{det}(\partial^\rho+\delta^\rho)|$ is well defined.}

\begin{definition}\label{reidemeisterdef}
The Reidemeister torsion of $M$ with respect to the function $f:M \to \bR$ and the local system $\rho: \pi_1M \to U(R)$ is $r(f, \rho) = \text{det}(\partial^\rho+\delta^\rho) \in U(R) / \pm \rho( \pi_1M )$. 
\end{definition}

\begin{remark}
The above definition is the multiplicative form of the Reidemeister torsion. When $R=\bC$, other common definitions in the literature include the absolute value $|\text{det}(\partial^\rho+\delta^\rho)| \in \bR_{>0}$ and its additive form $\log |\text{det}(\partial^\rho+\delta^\rho)| \in \bR$. We will only use the multiplicative form \ref{reidemeisterdef}.
\end{remark}

For now we are taking $M$ to be a closed manifold, in which case the Reidemeister torsion $r(\rho)=r(f, \rho)$ is also independent of the function $f:M \to \bR$. The reason is that any two Morse functions $f_0$ and $f_1$ can be joined by a family $f_t$ which is Morse at all but finitely many times $t \in [0,1]$, at which $f_t$ will experience a Morse birth/death bifurcation. This occurs when two Morse (quadratic) critical points die or are born together at a (cubic) critical point. For times close to the instant of birth/death we can assume that there is a unique trajectory connecting the two critical points and no other trajectories connecting them to other critical points. Therefore the algebraic effect of the birth/death is to add or remove a row and column to $\partial^\rho+\delta^\rho$ with zeros everywhere except at the diagonal entry, which is an element of $\pm \rho( \pi_1M)$. Hence the determinant of $\partial^\rho+\delta^\rho$ gets multiplied or divided by that same element. Additionally, the Thom-Smale complex might experience further handle slides throughout the homotopy $f_t$, but as above this does not affect the determinant. We conclude that the Reidemeister torsion does not depend on $f$. 

To be precise, the `independence of birth/deaths' needed so that the algebraic effect of a birth/death point on the Thom-Smale complex is exactly as described above can only be ensured when the dimension of $M$ is sufficiently large. Otherwise there may not be enough room to guarantee the existence of a gradient like $Z$ for which there are no connecting trajectories between the birth/death point and the other critical points. However, this can be solved as follows. 

As we will see below, torsion invariants are stable, as long as we always stabilize an even number of times. This means that they do not change when we replace $M$ with $M \times \bR^4$ and $f$ with $f +x_1^2+x_2^2 - y_1^2- y_2^2$. Under an odd stabilization, the determinant of $\partial^\rho+\delta^\rho$ will be inverted, but under an even stabilization it remains unchanged. Hence by first stabilizing a sufficiently large even number of times, we can make the necessary room to achieve the desired `independence of birth/deaths'. See Section \ref{Section: stability of torsion} for further discussion.

%The other way is to convince oneself that whenever a birth/death point is not independent, its algebraic effect on the Thom-Smale complex is the same as the effect of an independent birth/death point together with some additional row and column operations. Therefore this does not change the determinant. This is particularly easy when $M=S^1$, as in the intended application, but we will not discuss the argument further since we won't need it.

%\begin{remark}
%In particular the real number $\text{Re}( \log(D) ) \in \bR$ is an invariant of $M$ and $\rho$, where $D= \det(\partial^\rho+\delta^\rho)$. This is the definition for \ki{}{the additive form of} Reidemeister torsion which we gave in the introduction in order to be consistent with the generalization to higher Reidemeister torsion \ki{}{which is also additive}. However, Definition \ref{reidemeisterdef} is more appropriate for the generalization to Turaev torsion which we discuss next. Another common definition for the Reidemeister torsion is the absolute value $| \det(\partial^\rho+\delta^\rho) | \in \bR^{>0}$.
%\end{remark}

\subsection{Euler structures and Turaev torsion}
Following Turaev, we can lift the Reidemeister torsion to a finer invariant by choosing an Euler structure.

\begin{definition} An Euler structure for a Morse function $f:M \to \bR$ consists of a bijection between the sets of critical points of odd and even indices such that corresponding critical points have index difference 1, together with a homotopy class of paths connecting every such pair of critical points.
\end{definition}

\begin{remark}
Call two Euler structures $e_1$ and $e_2$ for $f$ equivalent if the 1-cycle of formal path differences $e_1-e_2$ is zero in $H_1(M)$. For a closed 3-manifold $M$ the set $\text{Eul}(M)$ of equivalence classes is in natural bijection with the set of $\text{spin}^c$ structures on $M$.
\end{remark}

We say that a collection of paths $p_x$ from the basepoint $m_0\in M$ to the critical points $x$ of $f$ is compatible with an Euler structure if for any $x$ and $y$ critical points paired by the Euler structure the composition $p_x \# p_{xy} \# \overline{p}_y $ is a null homotopic loop, where $\overline{p}_y$ denotes $p_y$ but with reversed orientation and $p_{xy}$ is the path from $x$ to $y$ determined by the Euler structure. See Figure \ref{Fig: Euler}. Consider as before the determinant $\det(\partial^\rho + \delta^\rho)$, where we now use a choice of paths compatible with the Euler structure to construct the twisted Thom-Smale complex $C_*(f; R^\rho)$. 

\begin{figure}[htbp] % {Euler structure}
\begin{center}
\begin{tikzpicture}[scale=.7]

\coordinate (X) at (1.8,2);
\coordinate (Xm) at (1.75,1.85);
\coordinate (Y) at (1,0);
\coordinate (Ym) at (1,0.15);
\coordinate (m0) at (-2,.8);
\coordinate (mp) at (-1.9,.8);
\coordinate (mpp) at (-1.85,.75);
\coordinate (Zp) at (1.8,1);
\coordinate (Z) at (1.7,.8);
\coordinate (Zm) at (1.6,.6);

%\foreach \x in {.2,.4,.6,.8,1}
%\draw (-.5,\x-.5) circle[radius=1mm];
\draw (.1,1.7) node{$p_x$} (-.5,-.2) node{$\overline{p}_y$};

\draw[semithick, ->] (X)..controls (2,1.5) and (Zp)..(Z);
\draw[semithick] (Z)..controls (Zm) and (1.3,.4)..(Y) (Z) node[right]{$p_{xy}$};
\draw (m0) node{$\ast$};
\draw (m0) node[left]{$m_0$};

\draw[semithick,->] (mp) ..controls (-1.4,2) and (-1,1.5)..(-.2,1.4)..controls (.6,1.3) and (1.3,1.4)..(Xm);
\draw[semithick,<-] (mpp) ..controls (-1.4,0) and (-1,0)..(0,0.3)..controls (.7,.6) and (1.1,1)..(Ym);
\begin{scope}[xshift=1.5cm]
\draw[thick] (0.3,2)..controls (.9,2) and (1.1,2.5)..(1.2,3);
\draw[thick] (0.3,2)..controls (-.4,2) and (-.6,2.5)..(-.7,3);
\end{scope}
\draw[thick] (1,0)..controls (1.6,0) and (1.8,-.5)..(1.9,-1);
\draw[thick] (1,0)..controls (.4,0) and (.2,-.5)..(.1,-1);

\draw (X) node{$\bullet$} node[above]{$x$};
\draw (Y) node{$\bullet$} node[below]{$y$};
\end{tikzpicture}
\caption{The paths $p_x$ from $m_0$ to each critical point $x$ are compatible with the Euler structure if each composition $p_x\# p_{xy}\# \overline p_y$ is null homotopic.}
\label{Fig: Euler}
\end{center}
\end{figure}
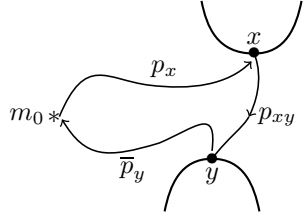
%gives a path $p_{xy}$ from $x$ to $y$ so that

\begin{lemma}\label{indet}
The determinant $\tau(f,e,\rho)=\det(\partial^\rho + \delta^\rho)$ is a unit in $R$ well defined up to sign. It only depends on the function $f$, the Euler structure $e$ and the local system $\rho$. In particular it is independent of the choice of compatible paths and of any other choices.
\end{lemma}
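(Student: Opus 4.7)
The plan is to verify independence from each choice in turn. First, by the argument already sketched in Section \ref{reidemeistertorsion}, $\partial^\rho+\delta^\rho$ is an $R$-module isomorphism from $C_{\text{odd}}(f;R^\rho)$ to $C_{\text{even}}(f;R^\rho)$, both of which are free of the same finite rank since $H^*(M;R^\rho)=0$. Hence after choosing ordered bases its determinant is automatically a unit in $R$. Permuting the ordering of the critical points of even or odd index changes the determinant by a sign, and flipping a chosen orientation of the negative eigenspace of $d^2f$ at any critical point also changes it by a sign; we absorb both of these ambiguities into the stated $\pm 1$.

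The crucial new step is independence from the choice of paths $p_x$ compatible with the given Euler structure $e$. Suppose we replace each $p_x$ by $p_x'=\alpha_x\# p_x$ for some $\alpha_x\in\pi_1(M,m_0)$. Demanding that the new paths are still compatible with $e$, i.e.\ that $p_x'\# p_{xy}\# \overline{p}_y'$ is null homotopic for every Euler-paired pair $(x,y)$, combined with the analogous condition for the original $p_x,p_y$, forces the loop $\alpha_x\#(p_x\# p_{xy}\# \overline{p}_y)\#\overline{\alpha}_y$ to be null homotopic; that is, $\alpha_y$ is conjugate in $\pi_1M$ to $\alpha_x$. Since $U(R)$ is abelian and $\rho$ factors through $H_1(M;\bZ)$, this yields $\rho(\alpha_y)=\rho(\alpha_x)$ for every Euler-paired $(x,y)$. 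Tracking the effect on the matrix for $\partial^\rho+\delta^\rho$: replacing $p_x$ by $\alpha_x\# p_x$ at an odd critical point $x$ multiplies the column indexed by $x$ by $\rho(\alpha_x)$, while the analogous change at the paired even critical point $y$ multiplies the row indexed by $y$ by $\rho(\alpha_y)^{-1}$. Summed over the Euler pairing, these row and column scalings cancel in pairs in the determinant, so $\det(\partial^\rho+\delta^\rho)$ is unchanged.

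For independence of the chain contraction $\delta^\rho$, a standard homological algebra argument shows that any two chain contractions $\delta_0^\rho$ and $\delta_1^\rho$ for $\partial^\rho$ differ by an expression of the form $\partial^\rho h+h\partial^\rho$ for some $R$-linear $h$ of degree $+1$, so that $(\partial^\rho+\delta_1^\rho)(\partial^\rho+\delta_0^\rho)^{-1}=\mathrm{id}+N$ for a nilpotent endomorphism $N$ of $C_{\text{even}}(f;R^\rho)$; such an endomorphism has determinant $1$. Finally, independence of the gradient-like Morse-Smale vector field $Z$ is the same argument already given for Reidemeister torsion in Section \ref{reidemeistertorsion}: a generic path of such $Z$ undergoes only handle slides between critical points of the same index, each of which realizes an elementary column operation on the matrix of $\partial^\rho+\delta^\rho$ and therefore preserves its determinant. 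The main obstacle in the argument is the path-change cancellation in paragraph two, which is exactly the constraint the Euler structure is designed to enforce; once that is in hand, the remaining independences are essentially those already used to define Reidemeister torsion.
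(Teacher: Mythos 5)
Your proof is correct and takes essentially the same route as the paper: compatibility with the Euler structure forces $\rho(\alpha_x)=\rho(\alpha_y)$ for paired critical points, so a change of compatible paths conjugates the matrix of $\partial^\rho+\delta^\rho$ by a diagonal matrix (your ``row and column scalings cancel in pairs'' is the same computation), leaving the determinant unchanged. A small imprecision: compatibility actually forces $\alpha_x=\alpha_y$ in $\pi_1(M,m_0)$ rather than merely conjugacy, but since you only use $\rho(\alpha_x)=\rho(\alpha_y)$ this does not affect the argument.
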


\begin{proof}
Let $A$ be the matrix representing $\partial^\rho + \delta^\rho$, where for simplicity we order the rows and columns so that the critical point corresponding to the $k$th column is paired by the Euler structure with the critical point corresponding to the $k$th row. Then a different choice of compatible paths will conjugate the matrix $A$ by a diagonal matrix, hence will not change the determinant. This removes the $\rho(\pi_1M)$ ambiguity in Definition \ref{reidemeisterdef}. 
\end{proof}

\begin{definition}
The Turaev torsion of the function $f$ with respect to the Euler structure $e$ and the local system $\rho$ is $\tau( f,e, \rho)=\det(\partial^\rho + \delta^\rho) \in U(R) / \pm 1$.
\end{definition}

Any Turaev torsion $\tau(f, e, \rho)$ maps to the Reidemeister torsion $r(f, \rho)$ under the map $U(R) / \pm 1 \to U(R) / \pm \rho( \pi_1M)$. In this sense Turaev torsion is a refinement of Reidemeister torsion.

% \begin{remark}\label{dependence} Suppose that we change the %Euler structure $e$ to a different Euler structure $\widetilde e$ by replacing a connecting path $\gamma$ between two paired critical points $x$ and $y$ with a different path $\widetilde \gamma$. Then the Turaev torsion changes by the formula $\tau( f,\widetilde e, \rho)= \rho(\sigma)^{\ve}\,  \tau( f,e, \rho) $, where $\sigma \in \pi_1 M$ is a loop based at $x$ such that the concatenation of $\sigma$ with $\gamma$ is homotopic to $\widetilde{\gamma}$ and where $\ve=(-1)^k$ for $k$ the Morse index of the critical point $x$. %We emphasize that the equality takes place in the quotient $U(R) / \pm 1$.
% \end{remark}

Let $f_t$ be a 1-parameter family of functions which is Morse at all but finitely many times $t \in [0,1]$, at which $f_t$ has a Morse birth/death bifurcation. 

\begin{definition}\label{Def: euler compatible} We say that a homotopy of functions $f_t$ is compatible with $e_t$ a family of Euler structures for $f_t$ if $e_t$ is continuous in the obvious sense when $f_t$ is Morse and such that two critical points of $f_t$ can only die or be born together if the following conditions hold.
\begin{itemize}
\item[(a)] The critical points are paired by $e_t$.
\item[(b)] At the instant of birth/death the $e_t$ path between them is a contractible loop.
 \end{itemize}
 \end{definition}

 \begin{lemma}\label{inv}
 Under conditions (a), (b) we have $\tau(f_0,e_0, \rho)=\tau(f_1,e_1, \rho)$ in $U(R) / \pm1$.
 \end{lemma}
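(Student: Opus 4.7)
The plan is to reduce to a local analysis at each moment of bifurcation along the homotopy $f_t$ and show that the determinant $\det(\partial^\rho+\delta^\rho)$ is invariant up to sign at every such event. First, I would partition $[0,1]$ into finitely many subintervals by the discrete set of bifurcation times of $f_t$, so that on each open subinterval $f_t$ is Morse and the Euler structure $e_t$ varies continuously in $t$. By possibly stabilizing $f_t$ an even number of times (as discussed in Section \ref{reidemeistertorsion} and Section \ref{Section: stability of torsion}) we may assume $\dim M$ is large enough that all the usual Cerf-theoretic genericity statements are available; this does not change $\tau$, since even stabilization only tensors the Thom--Smale complex with a contractible two-term complex of trivial determinant.

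On a subinterval where $f_t$ is Morse, I would argue that $\tau$ is constant as follows. A generic family of gradient-like vector fields $Z_t$ adapted to $f_t$ is Morse--Smale except at finitely many handle-slide times. Between handle slides the twisted Thom--Smale complex $(C_*(f_t;R^\rho),\partial^\rho_t)$ and any continuously varying chain contraction $\delta^\rho_t$ give a constant matrix up to a change of ordered basis respecting the Euler pairing, hence a locally constant determinant mod $\pm 1$. At a handle slide between two critical points of the same index the matrix of $\partial^\rho+\delta^\rho$ is modified by an elementary row or column operation (with entry in $R$), whose determinant is $1$; combined with the freedom in the choice of $\delta^\rho$ established in Lemma \ref{indet}, this shows $\tau(f_t,e_t,\rho)$ is unchanged. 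The compatibility of paths from $m_0$ with $e_t$ can be transported continuously across handle slides because $e_t$ itself is continuous, and conjugation by a diagonal matrix of $\rho(\pi_1 M)$-elements does not affect the determinant up to sign.

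Next I would analyze a single birth/death time $t=t_0$. By conditions (a) and (b), the two critical points $x$ (index $k$) and $y$ (index $k-1$) that are created or destroyed are paired by $e_t$ for $t$ close to $t_0$, and the connecting $e_t$-path contracts to a point as $t\to t_0$. In sufficiently high dimension (which we arranged by even stabilization), we can choose the gradient-like family so that for $t$ slightly past the birth time there is exactly one $Z_t$-trajectory from $x$ to $y$ inside the small Cerf neighborhood of the bifurcation, and no trajectories between $\{x,y\}$ and the remaining critical points of $f_t$; this is the standard ``isolation of the birth'' in Cerf theory, achieved by further generic handle slides which, by the previous paragraph, do not alter $\tau$. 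Because the $e_t$-path between $x$ and $y$ is contractible at $t_0$, the resulting group element in $\pi_1M$ is trivial, so the matrix entry $a_{xy}^\rho=\rho(1)\cdot(\pm 1)=\pm 1$.

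With this local model, I would take the chain contraction $\delta^\rho_t$ to also respect the splitting, sending $y$ to $\pm x$ on the death side and $0$ on the birth side (this is possible because the two-term subcomplex $R\langle x\rangle \xrightarrow{\pm 1} R\langle y\rangle$ is already acyclic and splits off the rest of $C_*(f_t;R^\rho)$). Then passing through $t_0$ adds or removes a $(1\times 1)$ diagonal block with entry $\pm 1$ from the matrix of $\partial^\rho+\delta^\rho$, so the determinant changes by a factor of $\pm 1$ and $\tau$ is preserved in $U(R)/\pm 1$. Composing these invariances over all bifurcations gives $\tau(f_0,e_0,\rho)=\tau(f_1,e_1,\rho)$. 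The main obstacle is the isolation step at a birth/death: one must genuinely use the high-dimensional Cerf-theoretic normal form and the compatibility condition (b) simultaneously, since without (b) the entry $a_{xy}^\rho$ would be some nontrivial $\pm\rho(\alpha)$, which would only give invariance modulo $\rho(\pi_1M)$ and recover no more than Lemma \ref{indet}.
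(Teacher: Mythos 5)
Your proposal is correct and follows essentially the same argument as the paper: handle slides change the matrix of $\partial^\rho+\delta^\rho$ by elementary row/column operations of determinant $1$, while at a birth/death satisfying conditions (a) and (b) the matrix gains or loses a row and column whose only nonzero entry is a diagonal $\pm 1$. The paper's proof is terse and leaves the stabilization/isolation bookkeeping implicit (deferring to Section~\ref{reidemeistertorsion} and \ref{Section: stability of torsion}), whereas you spell it out in full, but the underlying reasoning is identical.
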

 
 \begin{proof}
 Outside of the birth/death points the matrix for $\partial^\rho + \delta^\rho$ only changes by the row/column operations corresponding to handle slides, which do not affect the determinant. At a birth/death point the matrix $A$ for $\partial^\rho + \delta^\rho$ as above changes by adding or removing a row and column with the only nonzero entry being a $\pm 1$ in the diagonal.  \end{proof}
 
If we additionally fix an orientation for $M$ and an orientation for the negative eigenspaces of $d^2f$ at each critical point of $f$, then we can remove the sign ambiguity in the definition of Turaev torsion to obtain a well defined element of $ U(R)$. In the language of Turaev this data corresponds to a homology orientation and the resulting torsion is called sign-determined. In our intended applications we will not be able to remove the sign ambiguity, so our Turaev torsions will only be defined up to sign. On the flip side, allowing for the sign ambiguity has the advantage that we can work with a weaker version of an Euler structure, which will turn out to be very useful.

\begin{definition}\label{Definition: weak} A weak Euler structure for $f$ is a partition of the set of its critical points such that each part has the same number of critical points of even and odd index, together with a homotopy class of paths connecting any two points in the same part satisfying that any composable triangle of paths gives a null homotopic loop.
\end{definition}

\begin{remark} When each part has exactly two points we recover the definition of an Euler structure. 
\end{remark}

We say that a collection of paths $p_x$ from the basepoint $m_0\in M$ to the critical points $x$ of $f$ is compatible with the weak Euler structure if for $x$ and $y$ critical points in the same part the composition $p_x \# p_{xy} \# \overline{p}_y $ is a null homotopic loop, where $\overline{p}_y$ denotes $p_y$ but with reversed orientation and $p_{xy}$ is the path from $x$ to $y$ determined by the weak Euler structure. Consider the determinant $\det(\partial^\rho + \delta^\rho)$ as before, where we use a choice of paths compatible with the weak Euler structure to construct the twisted Thom-Smale complex $C_*(f; R^\rho)$. We have the following analogue of Lemma \ref{indet}, which is proved in exactly the same way.

\begin{lemma}\label{weakindet}
The determinant $\tau(f,e,\rho)=\det(\partial^\rho + \delta^\rho)$ is a unit in $R$ well defined up to sign. It only depends on the weak Euler structure for $f$ and the local system $\rho$. In particular it is independent of the choice of compatible paths and of any other choices.
\end{lemma}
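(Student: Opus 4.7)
The plan is to adapt the proof of Lemma \ref{indet} to the coarser partition structure of a weak Euler structure. That $\det(\partial^\rho+\delta^\rho)$ is a well-defined unit in $R$, up to a sign coming from permuting the orderings of odd and even critical points and from the orientation choices on $M$ and on the negative eigenspaces of $d^2 f$, follows exactly as in the strong case reviewed preceding Definition \ref{reidemeisterdef}. What genuinely requires a new argument is independence from the choice of compatible paths $p_x$.

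First I would parametrize the freedom available in choosing compatible paths. Given a weak Euler structure with partition $\{P_1,\ldots,P_k\}$, the compatibility condition forces all path classes within a single part to be determined by the path class to any one ``reference'' critical point of that part: if $x,y\in P_i$ are joined by the Euler path $p_{xy}$, then compatibility imposes $p_y\simeq p_x\#p_{xy}$. The composable-triangle condition built into Definition \ref{Definition: weak} ensures that this prescription is unambiguous, regardless of which chain of Euler paths inside $P_i$ one uses to propagate. The only remaining freedom is therefore, for each part $P_i$, a loop $\gamma_i\in\pi_1(M,m_0)$ to prepend to every $p_x$ with $x\in P_i$.

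Next I would trace the effect of this freedom on the matrix representing $\partial^\rho+\delta^\rho$. Order the bases of $C_{\text{odd}}(f;R^\rho)$ and $C_{\text{even}}(f;R^\rho)$ so that critical points belonging to the same part appear in contiguous blocks. Prepending $\gamma_i$ to the $P_i$-paths multiplies each column indexed by an odd-index critical point of $P_i$ by $\rho(\gamma_i)$ and each row indexed by an even-index critical point of $P_i$ by $\rho(\gamma_i)^{-1}$ (up to the sign conventions recorded in Remark \ref{second remark about IK conventions}). Thus the matrix $A$ is multiplied on the left and right by diagonal matrices. Because $P_i$ contains equally many odd- and even-index critical points, say $n_i$ of each, the product of the scalar factors coming from $P_i$ is $\rho(\gamma_i)^{n_i}\cdot\rho(\gamma_i)^{-n_i}=1$, and $\det A$ is unchanged. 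This is the crux, and the only step that genuinely uses the equal-parity condition in Definition \ref{Definition: weak}.

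Finally, the arguments for independence from the gradient-like Morse--Smale vector field $Z$ and from the chain contraction $\delta^\rho$ go through verbatim as in the strong Euler case and as recalled in Subsection \ref{reidemeistertorsion}: handle slides act by elementary row and column operations, which preserve the determinant, and independence from $\delta^\rho$ is standard homological algebra. I do not expect a real obstacle here, the whole content of the lemma being the combinatorial observation about parity counts in the previous paragraph.
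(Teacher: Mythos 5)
Your proof matches the paper's, which dispatches this lemma as ``proved in exactly the same way'' as its strong-Euler analogue (Lemma~\ref{indet}): changing compatible paths scales the matrix $A$ on the left and right by diagonal matrices whose determinants cancel, precisely because each part of the partition contains equally many odd- and even-index critical points. You spell out the parity count and the role of the composable-triangle condition more explicitly than the paper does, but there is no substantive difference.
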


Therefore we can define the Turaev torsion of a weak Euler structure, just like for an Euler structure.

\begin{definition}
The Turaev torsion of the function $f$ with respect to the weak Euler structure $e$ and the local system $\rho$ is $\tau(f, e, \rho)=\det(\partial^\rho + \delta^\rho) \in U(R) / \pm 1$.
\end{definition}

%\begin{remark}\label{rem: how Turaev torsion depends on weak Euler structure}
%Suppose that we change the weak Euler structure $e$ to a different weak Euler structure $\widetilde e$ by replacing the connecting paths $\gamma$ between the critical points in one of the parts $P$ by a different collection of paths $\widetilde \gamma$ in the following way: let $x \in P$ be a critical point, $\sigma \in \pi_1(M,x)$ any loop, and change every path starting at $x$ by first concatenating with $\sigma$. Then the Turaev torsion changes by the formula $\tau( f,\widetilde e, \rho)= \rho(\sigma)^{\ve  m}\,  \tau( f,e, \rho)$, where $\ve=(-1)^k$ for $k$ the Morse index of $x$ and where $2m$ is the cardinality of $P$. %We emphasize that the equality takes place in the quotient $U(R) / \pm 1$.
%\end{remark}

Let $f_t$ be a 1-parameter family of functions which is Morse at all but finitely many times $t \in [0,1]$, at which $f_t$ has Morse birth/death bifurcations. The weak analogue of Definition \ref{Def: euler compatible} is the following.

\begin{definition}\label{Def: weak euler compatible} We say that a homotopy $f_t$ is compatible with $e_t$ a family of a weak Euler structures for $f_t$ if $e_t$ is continuous in the obvious sense when $f_t$ is Morse and such that two critical points of $f_t$ can only die or be born together if the following conditions hold.

\begin{itemize}
\item[(a)] The critical points are in the same part determined by $e_t$.
\item[(b)] At the instant of birth/death the $e_t$ path between them is a contractible loop.
 \end{itemize}
 \end{definition}
 
The following analogue of Lemma \ref{inv} for weak Euler structures is proved in exactly the same way.
\begin{lemma}\label{weakinv}
 Under conditions (a)$\,$ and (b)$\,$ we have $\tau(f_0,e_0, \rho)=\tau(f_1,e_1, \rho)$ in $U(R) / \pm1$.
\end{lemma}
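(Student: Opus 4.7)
The plan is to adapt the proof of Lemma \ref{inv} almost verbatim, leveraging the fact that the only features of the proof that actually use the bijective pairing of critical points are (i) the existence of a compatible collection of paths (which is subsumed by Definition \ref{Definition: weak}), and (ii) the local model at a birth/death, which is handled by condition (b). First, I would partition $[0,1]$ into finitely many closed subintervals such that on the interior of each subinterval $f_t$ is Morse and the only bifurcations of the Thom--Smale complex are handle slides (with Morse birth/death points sitting at the endpoints). By Lemma \ref{weakindet} it suffices to choose, on each such subinterval, a continuously varying collection of paths $p_x(t)$ from the basepoint to the critical points, compatible with $e_t$, and then to track how the matrix $A(t)$ representing $\partial^\rho+\delta^\rho$ changes with $t$.

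On each Morse subinterval, the compatibility of the paths with the continuously varying weak Euler structure allows me to arrange the rows and columns of $A(t)$ so that columns are grouped by the parts of the partition and no non-trivial conjugation by $\rho(\pi_1 M)$ occurs as $t$ varies. At a handle slide time the matrix $A(t)$ changes by an elementary row/column operation (adding an $R$-multiple of one generator of the same index to another), which leaves $\det A(t) \in U(R)/\pm 1$ untouched; changes in the chain contraction $\delta^\rho$ across the subinterval are likewise harmless by the homological algebra argument already invoked in the proof of Lemma \ref{indet}/\ref{weakindet}.

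The content of the argument is at the birth/death points. At such a time $t_\ast$, two critical points $x, y$ of $f_{t_\ast}$ coalesce. Condition (a) ensures that $x$ and $y$ belong to the same part of $e_{t_\ast}$, so that the weak Euler structure can be extended continuously across $t_\ast$ (on one side the part contains $x$ and $y$; on the other it contains neither). Condition (b) says that at the instant of birth/death the path $p_{xy}$ prescribed by $e_{t_\ast}$ is contractible in $M$. Consequently, near $t_\ast$ I can choose the compatible paths $p_x(t)$ and $p_y(t)$ so that $p_x(t)\# p_{xy}(t)\# \overline{p_y(t)}$ is null-homotopic and, using the standard local model for a Morse birth/death in which the unique short trajectory between $x$ and $y$ contributes $\pm 1$, the resulting change in $A(t)$ is the addition (or removal) of a row and a column whose only non-zero entry is a $\pm 1$ on the diagonal. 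Hence $\det A(t) \in U(R)/\pm 1$ is preserved across $t_\ast$.

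Combining the two types of bifurcations yields $\tau(f_0,e_0,\rho)=\tau(f_1,e_1,\rho)$ in $U(R)/\pm 1$. The main technical point—and essentially the only place where the weak setting could in principle differ from the classical one—is that parts of the partition may have more than two critical points, so I must verify that the compatible paths can be chosen to vary continuously through handle slides inside a part (not just between paired points). This is straightforward because compatibility in Definition \ref{Definition: weak} demands only that composable triangles of paths bound discs, which is a condition stable under homotopy of the ambient $p_x(t)$, and because handle slides between critical points of equal index never mix different parts of the partition.
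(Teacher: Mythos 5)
Your proof is correct and follows essentially the same route as the paper, which simply observes that Lemma~\ref{weakinv} is proved in exactly the same way as Lemma~\ref{inv}: handle slides do not change the determinant of $\partial^\rho+\delta^\rho$, and conditions (a) and (b) ensure that a birth/death point adds or removes a row and column with only a $\pm 1$ on the diagonal. One small inaccuracy in your final paragraph: the claim that handle slides never occur between critical points in different parts of the partition is not in general true, and it is also not needed — the compatible paths $p_x(t)$ are auxiliary data chosen independently of the gradient vector field, so they vary continuously across handle-slide times regardless, while the determinant is unchanged because handle slides act by elementary row/column operations.
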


\subsection{Fibre Turaev torsion}
 
 We now introduce a version of Turaev torsion for fibre bundles $F \to W \to B$. The output will be a Turaev torsion of the fibre $F$. We start with a definition, where we abuse notation to use the same symbol $\rho$ to denote both a local system $\pi_1 W \to U(R)$ and its pullback to $\pi_1F$ by the inclusion $F \subset W$.
 
 \begin{definition}\label{torsionpair}
A torsion pair $(\pi, \rho)$ consists of a fibre bundle of manifolds $\pi: W \to B$ with fibre $F$ and a representation $\rho: \pi_1W \to U(R)$ such that the following properties hold.
\begin{itemize} 
\item $B$, $F$ and $W$ are closed, connected and orientable.
\item $H^*(F; R^\rho)=0$.
\item $\rho(\pi_1F)=\rho(\pi_1W)$.
\end{itemize}
\end{definition}
 
Let $(\pi,\rho)$ be a torsion pair. The Reidemeister torsion of the fibre $F$ with respect to $\rho$ is well defined and is independent of the fibre. Given a function $f:W \to \bR$, there is sometimes a preferred class of weak Euler structures for the restriction of $f$ to the generic fibre. When this is the case it is possible to refine the Reidemeister torsion of the fibre to a Turaev torsion. 

Consider $C=\{ \partial_F f = 0 \} \subset W$ the fibrewise critical set of $f:W \to \bR$. For generic $f$ we have $\partial_F f \pitchfork 0$ and hence the fibrewise critical set is a smooth orientable submanifold of $W$ of the same dimension as $B$. Generically, the fibre $F_b$ of $W \to B$ over $b \in B$ intersects $C$ transversely, in which case the restriction $f_b: F_b \to \bR$ of $f$ is Morse.  

\begin{definition}
A weak Euler structure for $f_b:F_b \to \bR$ is said to be $\rho$-compatible with $f$ if the following conditions hold. (See Figure \ref{Fig: rho-compatible weak Euler structure}.)
\begin{itemize}
\item[(1)] The partition of $\text{crit}(f_b)$ determined by the Euler structure is given by the connected components $C_i$ of $C$. In other words, the partition is $\cP= \{ C_i \cap F_b \}_i$.
\item[(2)] Let $\gamma= \alpha \# \beta$ be the composition of the path $\alpha$ in $F_b$ between two critical points $x,y$ of $f_b$ determined by the weak Euler structure and a path $\beta$ in $W$ from $y$ to $x$ which is contained in $C_i$. Then the loop $\gamma \subset W$ lies in the kernel of $\rho$.
\end{itemize}
\end{definition}

In general, $\rho$-compatible weak Euler structures for a given $f$ may not exist. However, as we will see in Lemma \ref{familyturaev} below, the following condition on $f$ ensures their existence.

\begin{definition}\label{Def: function euler}
We say that a function $f:W \to \bR$ is of Euler type if $\partial_F f \pitchfork 0$ and each of the connected components $C_i$ of the fibrewise critical set $C$ is simply connected and projects down the base as a degree zero map $C_i \to B$. 
\end{definition}

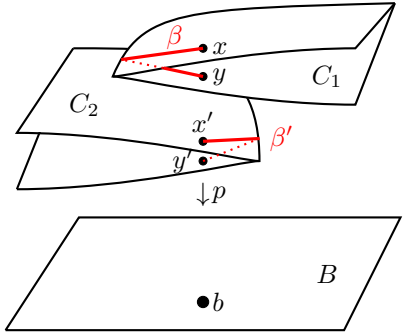
\begin{figure}[htbp]
\begin{center}
\begin{tikzpicture}[scale=.75]
%\draw[help lines=1,thick] (-5,-7) grid (5,3);
\coordinate (G) at (-2,.5);
\begin{scope}
\clip (-3,-1) rectangle (-2,-2.1);
	\draw[ thick,yshift=-1.5cm]  (G)--(-3,-.5);
\end{scope}
\draw[ thick,yshift=-1.5cm] (G)--(-3,.5)..controls (-1,.5) and (1,0)..(1.3,0);
\draw[ thick,yshift=-1.5cm] (-3,-.5)..controls (-1,-.5) and (1,0)..(1.3,0)..controls (1.3,2) and (0,2)..(G);
%\draw[very thick,color=green,yshift=-1.5cm] 
%
\coordinate (R) at (3.7,1.3);
\draw[fill,color=white] (R)--(3,-.5)..controls (1,-.5) and (-1,0)..(-1.3,0)..controls (-.8,1.2) and (0,1.2)..(R);
\draw[ thick] (R)--(3,-.5)..controls (1,-.5) and (-1,0)..(-1.3,0)..controls (-.8,1.2) and (0,1.2)..(R);
\draw[ thick] (R)--(3,.5)..controls (1,.5) and (-1,0)..(-1.3,0);
%\draw[very thick] (R)--(3,-.5)..controls (1,-.5) and (-1,0)..(-1.3,0)..controls (-.8,1.2) and (0,1.2)..(R);

\begin{scope}[yshift=-2.5cm, xshift=.8cm]
\coordinate (X1) at (-4,-2);
\coordinate (X2) at (-2.7,0);
\coordinate (X3) at (3,0);
\coordinate (X4) at (2,-2);
\coordinate (b) at (-.5,-1.5);
\coordinate (x1) at (-.5,3);
\coordinate (x2) at (-.5,2.5);
\coordinate (x3) at (-1.95,2.8);
\coordinate (x23) at (-1.2,2.65);
\coordinate (x4) at (-1,3.2);
\coordinate (y1) at (-.5,1.35);
\coordinate (y2) at (-.5,1);
\coordinate (y3) at (-.5,.4);
\coordinate (y4) at (.5,1.4);
\draw (y3) node{$\downarrow$} node[right]{$p$};
\foreach\x in {x1,x2,y1,y2}
	\draw[fill] (\x) circle[radius=.7mm];
	\draw[very thick, color=red] (y1)--(y4) node[right]{$\beta'$};
	\draw[thick, color=red, dotted] (y2)--(y4);
\draw (x1) node[right]{$x$};
\draw (x2) node[right]{$y$};
\draw (y1) node[above]{$x'$};
\draw (y2) node[left]{$y'$};
\draw[very thick, color=red] (x23)--(x2) (x1)--(x3) (x4) node{$\beta$};
\draw[dotted,thick, color=red] (x23)--(x3);
\coordinate (B) at (1.7,-1);
\coordinate (C1) at (1.7,2.5);
\coordinate (C2) at (-2.6,2);
\draw (C1) node{$C_1$};
\draw (C2) node{$C_2$};
\draw[fill] (b) circle[radius=1mm];
\draw (b) node[right]{$b$} (B) node{$B$};
\draw[ thick] (X1)--(X2)--(X3)--(X4)--(X1);
%\draw[thick,dotted] (-3,0)--(3,0);
\end{scope}
\end{tikzpicture}
\caption{Given that $C_1,C_2$ are simply connected, a $\rho$-compatible weak Euler structure on $F_b$ is given by choosing a path $\alpha$ from $x$ to $y$ so that $\gamma=\alpha\#\beta$ is in the kernel of $\rho$,  where $\beta$ is any path from $x$ to $y$ in $C_1$ and similarly for $x',y'$.}
\label{Fig: rho-compatible weak Euler structure}
\end{center}
\end{figure}

\begin{lemma}\label{familyturaev}
Suppose that $(\pi, \rho)$ is a torsion pair and that $f:W \to \bR$ is of Euler type. Then there exists a $\rho$-compatible weak Euler structure $e_b$ on the generic fibre $F_b$. Moreover, the Turaev torsion $\tau(f_b,e_b, \rho)$ is independent of both the fibre $F_b$ and of the $\rho$-compatible weak Euler structure $e_b$.
\end{lemma}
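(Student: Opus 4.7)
First I verify that the partition $\cP = \{C_i \cap F_b\}_i$ meets the parity condition of Definition \ref{Definition: weak}. Since $\partial_F f \pitchfork 0$, the set $C$ is a smooth submanifold of $W$ with normal bundle the fibrewise tangent bundle $T_F W|_C$; since both $W$ and $F$ are oriented, $C$ is oriented. At a point $x \in C \cap F_b$ the local degree of the projection $C_i \to B$ is $(-1)^{\mathrm{ind}(x)}$, so the degree-zero hypothesis on $C_i \to B$ forces the number of even-index and odd-index critical points in $C_i \cap F_b$ to agree. To build the paths, I fix in each part a base point $x_i$ and, for any other $y \in C_i \cap F_b$, start with any path $\alpha_0$ from $x_i$ to $y$ in $F_b$. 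Let $\beta \subset C_i$ be the essentially unique path from $y$ to $x_i$ (unique up to homotopy since $C_i$ is simply connected); write $g := \rho([\alpha_0 \# \beta]) \in U(R)$. The hypothesis $\rho(\pi_1 F) = \rho(\pi_1 W)$ supplies a loop $\delta$ in $F_b$ based at $x_i$ with $\rho([\delta])=g$, so $\alpha_{x_i y} := \delta^{-1} \# \alpha_0$ is $\rho$-compatible. For other pairs $y,z$ in the same part, I set $\alpha_{yz} := \alpha_{x_iy}^{-1} \# \alpha_{x_i z}$; triangles are then tautologically null homotopic in $F_b$, and $\rho$-compatibility of $\alpha_{yz}$ reduces to $\rho$ vanishing on a loop of $\beta$-pieces in the simply connected $C_i$.

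\textbf{Step 2: Independence from the choice of $\rho$-compatible structure on a fixed fibre.} Two $\rho$-compatible structures $e_b, e_b'$ on $F_b$ share the same partition $\{C_i \cap F_b\}$ by clause (1). For each pair $x,y$ in a common part, the two chosen path classes $\alpha,\alpha'$ satisfy $\alpha \# (\alpha')^{-1} \in \ker \rho$, since $\alpha \# \beta$ and $\alpha' \# \beta$ both lie in $\ker \rho$. By Lemma \ref{weakindet} I can compute both Turaev torsions using base-to-critical-point paths $p_x$ that differ only by loops in $\ker \rho$, which act trivially on the twisted Thom-Smale complex. Hence $\tau(f_b, e_b, \rho) = \tau(f_b, e_b', \rho)$ up to sign.

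\textbf{Step 3: Independence from the choice of fibre.} Given generic $b_0, b_1 \in B$, I pick a generic path $\gamma:[0,1] \to B$ so that $f_{\gamma(t)}$ is Morse outside finitely many birth/death times $t_1, \dots, t_N$. Using local triviality of $W \to B$, parallel transport identifies $F_{\gamma(t)}$ with $F_{b_0}$ and carries a chosen $\rho$-compatible structure $e_{b_0}$ to a family $e_t$. Away from the $t_j$ the partition $C_i \cap F_{\gamma(t)}$ and the chosen paths vary continuously, and at a bifurcation time $t_j$ two critical points merge at a single point $c \in C$; by $\partial_F f \pitchfork 0$ these two points lie on a single local branch of $C$, hence in the same component $C_i$, and the connecting path in $C_i$ shrinks to $c$, so it is contractible. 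Conditions (a) and (b) of Definition \ref{Def: weak euler compatible} hold, and Lemma \ref{weakinv} gives $\tau(f_{\gamma(0)}, e_0, \rho) = \tau(f_{\gamma(1)}, e_1, \rho)$. Combined with Step 2, this yields the claimed independence.

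\textbf{Anticipated obstacle.} The delicate point is Step 3: one must know that at every birth/death instant the merging pair sits in a \emph{common} component $C_i$ and is joined there by a contractible arc. This is precisely where the Euler-type hypothesis $\partial_F f \pitchfork 0$ does its work, because near a fold of $C \to B$ the critical set is a single smooth sheet. If the fibrewise transversality were allowed to fail, the two merging points could a priori lie on distinct components, and no continuous family $e_t$ would exist.
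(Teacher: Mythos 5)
Your Steps 1 and 2 match the paper's argument closely and are fine. The gap is in Step 3, at the birth/death analysis.

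You write that ``the connecting path in $C_i$ shrinks to $c$, so it is contractible,'' and conclude that condition (b) of Definition \ref{Def: weak euler compatible} holds. But you have checked the wrong path. The path that must become a contractible loop at the birth/death instant is the $e_t$-path between the two merging critical points, which by Definition \ref{Definition: weak} lives in the fibre $F_{b_t}$ — it is the path $\alpha$, not the path $\beta \subset C_i$. The $\rho$-compatibility you carried along the family only tells you that $\rho([\alpha_t \# \beta_t]) = 1$. Since $\beta_t$ does contract to $c$, you get $\rho([\alpha_{t_j}]) = 1$, i.e. that $\alpha_{t_j}$ lies in $\ker\rho$; but $\ker\rho$ is generally much larger than the trivial class, so $\alpha_{t_j}$ need not be null-homotopic in the fibre. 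Condition (b) therefore does not hold automatically, and Lemma \ref{weakinv} cannot be applied directly.

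The paper repairs exactly this point: just before the birth/death time, replace the $e_t$-path between the two dying critical points by the unique gradient connecting trajectory between them (which does shrink to the constant loop at $c$), and correspondingly adjust the paths to the other critical points in the same part so that the triangle condition of Definition \ref{Definition: weak} is maintained. Near the birth/death the connecting trajectory and $\beta$ together form a small contractible loop, so the new structure is still $\rho$-compatible; by your Step 2 the Turaev torsion is unchanged by this replacement, and now condition (b) genuinely holds, so Lemma \ref{weakinv} applies. You should insert this replacement step; without it the argument is incomplete. Your ``anticipated obstacle'' paragraph correctly identifies why condition (a) holds, but the real delicacy — which you did not address — is condition (b).
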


\begin{proof} We can compute the degree of $C_i \to B$ by taking preimages over $b \in B$. On the generic fibre we have $F_b \pitchfork C_i$, hence each preimage contributes $\pm 1$ to the degree. Moreover, each preimage is a Morse critical point of $f_b$ and the sign $\pm 1$ depends only on the parity of the Morse index. Therefore the assumption that $C_i \to B$ has degree zero implies that $f_b$ has the same number of critical points of even and odd index in the component $C_i$. %We conclude that the partition determined by condition (1) of a $\rho$-compatible weak Euler structure is compatible with the definition of a weak Euler structure.

Fix $x$ a critical point of $f_b$ in the component $C_i$. For every other critical point $y$ of $f_b$ in the component $C_i$ take any path $\alpha$ in $F_b$ between $x$ and $y$. Since $C_i$ is simply connected, there is a unique path $\beta$ in $C_i$ from $y$ to $x$, up to homotopy. Concatenating these two paths we get a loop $\gamma = \alpha \# \beta$ in $W$.  Let $\sigma$ be a loop in $F_b$ based at $y$ such that $\rho(\sigma)=\rho(\gamma)$, which exists by the assumption that $\rho(\pi_1F_b)=\rho(\pi_1W)$. We can replace $\alpha$ by the concatenation of $\alpha$ with the inverse of $\sigma$ to obtain a path from $x$ to $y$ as in condition (2) of a $\rho$-compatible weak Euler structure. 

For every other pair $y,z$ of critical points of $f_b$ in the component $C_i$ the path from $y$ to $z$ is determined by the path from $x$ to $y$ and from $x$ to $z$, because the resulting triangle must be null homotopic in $F_b$. Hence we have the desired choice of paths corresponding to one component $C_i$. Repeating the above procedure for each component produces the required $\rho$-compatible weak Euler structure.

Next, suppose that $e_1$ and $e_2$ are two different $\rho$-compatible weak Euler structures for $f_b$. Then the paths $\alpha_1, \alpha_2$ in $F_b$ between two critical points $x,y$ of $f_b$ in a same component $C_i$ differ by a loop $\sigma$ such that $\rho(\sigma)=1$. Hence  for a fixed choice of basepoint and reference paths, the incidence matrices over $\bC$ relative to $e_1$ and $e_2$ are equal. In particular it follows that $\tau(f_b,e_1,\rho)=\tau(f_b, e_2, \rho)$. Hence Turaev torsion is independent of the $\rho$-compatible weak Euler structure for $f_b$. 

Finally, we address independence of the fibre $F_b$. Observe that the condition $\partial_F f \pitchfork 0$ ensures that  $f_b$ only has Morse singularities in the complement of a codimension 1 subset of $B$. Moreover, $f_b$ only has Morse or Morse birth/death singularities in the complement of a codimension 2 subset of $B$. Since $B$ is connected, it follows that any two points $b_0, b_1 \in B$ can be joined by a path $b_t \in B$ such that $f_t=f_{b_t}$ is Morse for all but finitely many times $t\in [0,1]$, at which $f_t$ has Morse birth/death singularities. Start with a $\rho$-compatible Euler structure for $f_0$ and propagate it to a family $e_t$ of $\rho$-compatible Euler structures for $f_t$. The propagation is uniquely determined away from the birth/death points and the Turaev torsion does not change by continuity. Near a birth/death point it might be that condition (b) of Definition \ref{Def: weak euler compatible} does not hold. When this is the case we replace the path between the two critical points which are about to die by the unique connecting trajectory between them. According to Definition \ref{Definition: weak}, in doing so we must also change the paths for all other critical points in that same connected component of the critical locus. However, this still gives a $\rho$-compatible weak Euler structure, hence the Turaev torsion is unchanged. We can then conclude by Lemma \ref{weakinv} that the Turaev torsion does not change as we cross the birth/death point. \end{proof}

\begin{definition}
The common Turaev torsion produced by Lemma \ref{familyturaev} is called the fibre Turaev torsion and is denoted by $\tau(f,\pi, \rho) \in U(R) / \pm 1$. It is a unit of $R$, well defined up to sign, which only depends on the torsion pair $(\pi, \rho)$ and the Euler function $f:W \to \bR$.
\end{definition}

%\begin{remark}
%We can think about the fibre Turaev torsion from a more global perspective. Let $(\pi, \rho)$ be a torsion pair and let $f:W \to \bR$ be of Euler type. Fix a basepoint $w_0 \in W$ and pick paths $\beta_i$ in $W$ from $w_0$ to each component $C_i$ of the fibrewise critical set $C$. Let $b \in B$ be such that $f_b$ is Morse and let $\alpha$ be a connecting trajectory between two critical points of $f_b$ of index difference 1. We can cap off $\alpha$ to a loop $ \gamma  \in \pi_1 W$ by using the chosen paths $\beta_i$ together with paths on $C_i$ connecting the boundary points of $\beta_i$ and $\alpha$ (these connecting paths are unique up to homotopy by virtue of $C_i$ being simply connected). Consider the chain complex over $R$ which is freely generated in degree $k$ by the critical points of $f_b$ of index $k$ and whose differential counts connecting trajectories $\alpha$ between critical points of index difference 1, after capping them off to loops $ \gamma \in \pi_1W$ and applying $\rho : \pi_1.W \to U(R)$. Under our assumptions for $f$ and $\rho$ this complex is acyclic and its torsion $\tau(f, \pi, \rho) \in U(R) / \pm1$ is independent of the paths $\beta_i$. This agrees with our previous definition.
%\end{remark}

\subsection{Torsion of open manifolds}

Up to now we have been working exclusively with closed manifolds, but for the intended applications it will be necessary to consider a slight generalization. We begin by discussing Reidemeister torsion. Let $M$ be an orientable manifold without boundary and recall the notion of a fibration at infinity \ref{Definition: fibration at infinity}. For $f:M \to \bR$ a fibration at infinity, we can build the Thom-Smale complex as before, because all the critical points as well as the trajectories between them stay in a compact subset of $M$. The homology of the Thom-Smale complex, twisted or untwisted, will in general depend on the fibration at infinity $f$. For example, over $\bZ$ the Thom-Smale complex computes the integral cohomology of the pair $(M,M_{-})$, where $M_{-}$ is the sublevel set $\{ f \leq z \}$ for $z\ll0$, which of course depends on $f$. However, observe the following.

\begin{lemma}\label{Lemma: bars}
Suppose that $f_i:M \to \bR$, $i=1,2$ are two fibrations at infinity on a boundaryless manifold $M$ which are a finite distance from each other in the $C^0$ norm. Then their Thom-Smale complexes have isomorphic homologies.
\end{lemma}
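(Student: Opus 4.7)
The plan is to reinterpret each Thom--Smale homology as a relative sublevel-set homology of the manifold, and then use the $C^0$-bound to identify those homologies.

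First I would extend the standard Morse-theoretic identification to the fibration-at-infinity setting: since the critical points of $f_i$ are confined to the compact set $K_i \subset f_i^{-1}(I_i)$, choosing $z_-^i < \min I_i$ and $z_+^i > \max I_i$ and a Morse--Smale gradient-like vector field for $f_i$, the Thom--Smale complex of $f_i$ computes the relative cohomology $H^*(M_+^i, M_-^i)$ where $M_\pm^i := f_i^{-1}((-\infty, z_\pm^i))$. Moreover, because $f_i$ is a genuine fibration on $f_i^{-1}(\bR \setminus I_i)$, flowing along the fibration gives deformation retractions showing that the pair $(M_+^i, M_-^i)$, up to homotopy type, is independent of the particular $z_\pm^i$ chosen in the fibration region.

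Next I would exploit the $C^0$-bound. Set $C := \|f_1 - f_2\|_{C^0} < \infty$ and pick real numbers $z_- \ll 0$ and $z_+ \gg 0$ so that the intervals $[z_- - C, z_- + C]$ and $[z_+ - C, z_+ + C]$ lie in the fibration-at-infinity regions of both $f_1$ and $f_2$. The pointwise inequality $|f_1 - f_2| \leq C$ then yields the nested inclusions
\[
\{f_1 \leq z_\pm - C\} \;\subset\; \{f_2 \leq z_\pm\} \;\subset\; \{f_1 \leq z_\pm + C\},
\]
and by the first paragraph applied to $f_1$ alone, the outer inclusion is a homotopy equivalence (both sublevel sets sit in the fibration region of $f_1$, with no critical values of $f_1$ between them). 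A standard two-out-of-three argument for these inclusions of sublevel sets then shows the middle inclusion $\{f_2 \leq z_\pm\} \hookrightarrow \{f_1 \leq z_\pm + C\}$ is itself a homotopy equivalence. Doing this at both the $+$ and $-$ levels produces a homotopy equivalence of pairs identifying $(M_+^1, M_-^1)$ with $(M_+^2, M_-^2)$ (after possibly adjusting $z_\pm^1, z_\pm^2$ using the sublevel-invariance of the first paragraph), and the isomorphism of Thom--Smale homologies follows.

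The step I expect to require the most care is the first one: justifying the Morse-theoretic identification of the Thom--Smale complex with $H^*(M_+, M_-)$ in the non-compact setting, because one needs to ensure that the gradient flow descends gradient-trajectory counts to genuine cellular attaching data inside the tube $f_i^{-1}([z_-^i, z_+^i])$. Once that is in place, the sandwiching argument is essentially formal, and in particular does not require any homotopy of $f_1$ to $f_2$ through fibrations at infinity -- it is the $C^0$-closeness alone that produces the nested inclusions of sublevel sets needed for the comparison.
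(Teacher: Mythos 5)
Your overall strategy is the same as the paper's: identify the Thom--Smale homology with a relative sublevel-set homology, then use the $C^0$-bound to sandwich the sublevel sets of one function between those of the other. But the step you describe as ``essentially formal'' is precisely where a gap sits. From the chain $\{f_1 \leq z - C\} \subset \{f_2 \leq z\} \subset \{f_1 \leq z + C\}$ you know only that the \emph{outer composite} is a homotopy equivalence; two-out-of-three requires that two of the three maps be equivalences, and knowing the composite alone tells you nothing about the individual inclusions (for instance $\{0\} \subset \{0,1\} \subset [0,1]$ has an outer equivalence while neither intermediate inclusion is one).

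The repair --- which is what the paper actually does --- is a four-term sandwich. Write $M^i_z = \{f_i \leq z\}$. The $C^0$-bound gives
\[
M^2_{z-C} \;\subset\; M^1_z \;\subset\; M^2_{z+C} \;\subset\; M^1_{z+2C},
\]
and for $z \ll 0$ the $C$-shifts stay inside the fibration-at-infinity region, so the two-step composites $H_*(M, M^1_z) \to H_*(M, M^1_{z+2C})$ and $H_*(M, M^2_{z-C}) \to H_*(M, M^2_{z+C})$ are isomorphisms. Hence the middle map $H_*(M, M^1_z) \to H_*(M, M^2_{z+C})$ has both a left inverse and a right inverse and is therefore an isomorphism. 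A smaller stylistic difference: the paper works with $H_*(M, M^i_z)$ for a single level $z \ll 0$ rather than with your pair $H_*(M^i_+, M^i_-)$; since $M$ deformation retracts onto any $M^i_{z_+}$ in the fibration region, the upper level can be absorbed into $M$, and the sandwich argument then only needs to be run at the lower end rather than at both.
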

\begin{proof}
Suppose that $||f_1-f_2||_{C^0} \leq C$, where $C>0$ is a constant. Let $M^i_z=\{f_i \leq z \} \subset M $ for $z \in \bR$. Then $H_*(M,M^1_z) \to H_*(M,M^2_{z+C})$ is an isomorphism for $z \ll 0$, because $H_*(M,M^2_{z+C}) \to H_*(M,M^1_{z+2C})$ is a left inverse and $H_*(M,M^2_{z-C}) \to H_*(M,M^1_z)$ is a right inverse. \end{proof}

\begin{remark} One can take integral coefficients or twisted coefficients in the above proof. \end{remark}

If we demand a $C^1$ bound then we have an even greater control.

\begin{lemma}\label{lemma: we need C1}
Any two fibrations at infinity which are a bounded distance from each other in the $C^1$ norm and which also have the property that the norm of their derivative is a proper function are homotopic through fibrations at infinity.
\end{lemma}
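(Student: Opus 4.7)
The plan is to take the straight-line homotopy $f_t := (1-t)f_1 + tf_2$, $t \in [0,1]$, and verify it is a homotopy through fibrations at infinity in the uniform sense of Remark \ref{Remark: infinity family}, namely that every $f_t$ is a fibration at infinity with the same interval $I$ and the same compact set $K$. Set $C := ||f_1 - f_2||_{C^0}$ and $D := ||df_1 - df_2||_{C^0}$; both are finite by hypothesis.

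First I would produce the common compact set and interval. Since $df_t - df_1 = t(df_2 - df_1)$ we have $||df_t(p)|| \geq ||df_1(p)|| - D$ pointwise, so properness of $||df_1||$ yields a compact $K_0 \subset M$ with $||df_t|| \geq 1$ on $M \setminus K_0$ uniformly in $t \in [0,1]$. In particular every critical point of every $f_t$ lies in $K_0$. Enlarging the interval $I_1$ from Definition \ref{Definition: fibration at infinity} for $f_1$ so that $f_1(K_0) \subset I_1$, set $I := [\min I_1 - C, \max I_1 + C]$. The bound $|f_t - f_1| \leq C$ then gives $f_t(K_0) \subset I$, hence $K_0 \subset f_t^{-1}(I)$ for every $t$, and in particular $f_t^{-1}(\bR \setminus I) \subset M \setminus K_0$.

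To verify the two fibration conditions for each $f_t$, I would fix a complete Riemannian metric on $M$ (which exists because $M$ is boundaryless) and consider $X_t := \nabla f_t / ||\nabla f_t||^2$ on $M \setminus K_0$, a smooth vector field with $||X_t|| \leq 1$ and $X_t(f_t) \equiv 1$. For the first clause of Definition \ref{Definition: fibration at infinity}, flowing $X_t$ from any $p$ with $f_t(p) > \max I$ keeps $f_t$ strictly above $\max I \geq \max f_t(K_0)$, so the flow avoids $K_0$; together with the norm bound on $X_t$ and completeness of the metric, this yields a flow defined for all time which trivializes $f_t^{-1}((\max I, \infty)) \to (\max I, \infty)$, and symmetrically for the lower component. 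For the second clause, around $p \in f_t^{-1}(z) \setminus K_0$ we have $d(p, K_0) > 0$, and for $|s| < d(p, K_0)$ the flow $\phi_t^s(p)$ stays outside $K_0$, giving a local trivialization of $f_t : f_t^{-1}(I) \setminus K_0 \to I$ on a neighborhood of $p$.

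The main technical obstacle is the second clause: because the admissible flow time is bounded by the distance to $K_0$, the local trivializations are not uniform on the whole fiber and must be patched together. This is routine given $||X_t|| \leq 1$ and the smooth dependence of $X_t$ on $t$, but requires some care to confirm the patching works continuously in the parameter $t$, which is what Remark \ref{Remark: infinity family} ultimately demands.
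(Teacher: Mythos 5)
Your approach is essentially the paper's: take the convex interpolation $f_t = (1-t)f_1 + tf_2$ and use the pointwise estimate $\|df_t\| \geq \|df_1\| - \|df_1 - df_2\|_{C^0}$ to see that properness of the derivative norm persists uniformly along the family. At that point the paper simply cites Criterion~0.2.1 of \cite{EG98}, which is precisely the statement that a function with proper derivative is a fibration at infinity, and stops. You instead attempt to re-derive that criterion via the normalized pseudo-gradient $X_t = \nabla f_t / \|\nabla f_t\|^2$. Your treatment of the first clause of Definition~\ref{Definition: fibration at infinity} is fine. The obstacle you flag in the second clause is genuine, but the fix is not about patching local trivializations or continuity in $t$: the issue is that $K_0$ is the wrong compact set to remove. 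Since the flow of $X_t$ changes $f_t$ at unit rate and satisfies $\|X_t\| \leq 1$, any $X_t$-trajectory has metric length at most $|I|$ while inside $f_t^{-1}(I)$ (monotonicity of $f_t$ along the flow means it enters and exits $f_t^{-1}(I)$ exactly once); so if you take $K$ to be the closed $|I|$-neighborhood of $K_0$ (compact by Hopf--Rinow, and independent of $t$), then the trajectory of any point of $f_t^{-1}(I)\setminus K$ never reaches $K_0$, and the global trivialization of $f_t \colon f_t^{-1}(I)\setminus K \to I$ goes through exactly as in your argument for the first clause. With that correction your direct verification closes; but the paper's proof is a one-liner precisely because it treats the implication ``proper derivative $\Rightarrow$ fibration at infinity'' as a black box from \cite{EG98} rather than re-proving it.
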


\begin{proof}
We can simply take a convex interpolation $tf+(1-t)g$ between the two functions, $0 \leq t \leq 1$. The hypothesis implies that each $tf+(1-t)g$ has proper derivative, hence the interpolation is a homotopy of fibrations at infinity, see Criterion 0.2.1 in \cite{EG98}.
\end{proof}

\begin{example}\label{ex: standard quadratic}
Examples of functions with this property are given by functions on $W\times \bR^{2n}$ with $W$ compact which are a bounded $C^1$ distance from the standard quadratic function $||x||^2-||y||^2$ on $\bR^{2n}$. We will apply Lemma \ref{lemma: we need C1} to such functions below.
\end{example}

\begin{warning}
Two fibrations at infinity which are a finite distance from each other in the $C^1$ norm need not be homotopic through fibrations at infinity.  The following instructive example was pointed out to us by E. Giroux. Let $X_0$ and $X_1$ be non-diffeomorphic closed manifolds of dimension $\geq 5$ which are h-cobordant. Let $M_{01}$ be an h-cobordism from $X_0$ to $X_1$ and let $M_{10}$ be the inverse h-cobordism. Let $M$ be the noncompact boundaryless manifold obtained by concatenating $M_{01}$ and $M_{10}$ infinitely many times. Choose Morse functions $g_0:M_{01} \to [0,1]$ and $g_1:M_{10} \to [0,1]$, standard near the boundary, and assemble them into a Morse function $f:M \to \bR$. Since the concatenation of $M_{01}$ and $M_{10}$ is diffeomorphic to $X_0 \times [0,1]$, the function $f$ can be modified by a $C^1$ bounded (in fact, periodic) perturbation to a fibration $f_0 : M \to \bR$ with fibre $X_0$. Switching the roles of $X_0$ and $X_1$, it follows that $f$ can also be modified by a $C^1$ bounded perturbation to a fibration $f_1: M \to \bR$ with fibre $X_1$. The fibrations at infinity (in fact, fibrations) $f_0$ and $f_1$ are a finite distance from each other in the $C^1$ norm, yet they are not homotopic through fibrations at infinity since their fibres are not even diffeomorphic.
\end{warning}

Let $f:M \to \bR$ be a fibration at infinity and consider a local system $\rho: \pi_1 M \to U(R)$ such that the twisted Thom-Smale complex $(C_*(f; R^\rho), \partial^\rho)$ is acyclic. We can define the Reidemeister torsion $r(f,\rho) \in U(R) / \pm \rho(\pi_1M)$ in the same way as before, by taking the determinant of $\partial^\rho + \delta^\rho$. Although the Reidemeister torsion may now depend on the fibration at infinity $f$, it is easy to see that $r(f_0,\rho)=r(f_1,\rho)$ for any other fibration at infinity $f_1: M \to \bR$ which is homotopic to $f$ through fibrations at infinity $f_t$. The proof of this invariance property is the same as in the compact case. The key point is to ensure that the Thom-Smale complex does not experience dramatic bifurcations coming from critical points or trajectories escaping to infinity, see Remark \ref{Remark: infinity family}. 

Similarly we can define the Turaev torsion $\tau(f,e,\rho) \in U(R) / \pm 1$ of a fibration at infinity $f:M \to \bR$ with respect to an Euler structure or a weak Euler structure for $f$. For the invariance property as in Lemmas \ref{inv} and \ref{weakinv} we must again take note of Remark \ref{Remark: infinity family}. For a noncompact version of the fibre Turaev torsion we must generalize Definition \ref{torsionpair} of a torsion pair $(\pi,\rho)$ to allow $F$ (and hence also $W$) to be noncompact. The additional data we need to keep track of is a reference fibration at infinity, which must satisfy the following property.

\begin{definition}
Let $\pi: W \to B$ be a fibre bundle of manifolds, $\rho: \pi_1W \to U(R)$ a local system and $g:W \to \bR$ a fibration at infinity. We say that $g$ is $\rho$-fibre acyclic if the $\rho$-twisted Thom-Smale complex $(C_*(g_b; R^\rho), \partial^\rho)$ of the restriction $g_b$ of $g$ to the generic fibre of $\pi$ is acyclic with respect to the pullback of $\rho$ to $\pi_1F$ by the inclusion $F \subset W$.% $F \hookrightarrow W$.
\end{definition}

We are now ready to generalize the notion of a torsion pair \ref{torsionpair} to the noncompact setting.

 \begin{definition}\label{torsiontriple}
A torsion triple $(\pi, g, \rho)$ consists of a fibre bundle of manifolds $\pi: W \to B$ with fibre $F$, a local system $\rho: \pi_1W \to U(R)$ and a fibration at infinity $g:W \to \bR$ such that the following properties hold.
\begin{itemize} 
\item $B$, $F$ and $W$ are boundaryless, connected and orientable.
\item $g$ is $\rho$-fibre acyclic.
\item $\rho(\pi_1F)=\rho(\pi_1W)$.
\end{itemize}
\end{definition}
Suppose that $(\pi, g, \rho)$ is a torsion triple and $f:W \to \bR$ is a fibration at infinity which is of Euler type and which is a bounded distance from $g$ in the $C^0$ norm. Then by Lemma \ref{Lemma: bars} the $\rho$-twisted Thom-Smale complex $(C_*(f_b; R^\rho), \partial^\rho)$ is also acyclic. Hence we can define its fibre Turaev torsion $\tau( f,\pi,\rho) \in U(R) / \pm1 $, just as before. The proof of Lemma \ref{familyturaev} remains the same. Moreover, if $f$ is homotopic to $g$ through fibrations at infinity, then the Reidemeister torsion of the restriction of $f$ to the fibre is always the same as that of $g$. However, the fibre Turaev torsion $\tau(f,\pi, \rho)$, which is a lift of the Reidemeister torsion of the restriction of $g$ to the fibre, in general does depend on $f$. 

%\begin{remark}
%Although we could develop the general theory for torsion triples $(\pi,f,\rho)$ as above, in our applications we will always have $\pi: W \to B$ the stabilization of a fibre bundle of closed manifolds. Hence in this case there is a canonical $f$, which is the standard quadratic form $||x||^2-||y||^2$ on the Euclidean factor of the stabilization, and the condition on $\rho$ reduces to the old condition on the destabilized fibre. So we will not pursue this further and return to our previous definition of a torsion pair \ref{torsionpair}.
%\end{remark}

\subsection{Stability of torsion}\label{Section: stability of torsion}
We now consider the effect of stabilization on torsion. Recall that given a manifold $M$ we set $\text{stab}(M)=M \times \bR^2$, and given a fibration at infinity $f: M \to \bR$ we define $\text{stab}(f): \text{stab}(M) \to \bR$ by the formula $\text{stab}(f)(m,x,y)=f(m) + x^2 - y^2$, where $m\in M$ and $(x,y) \in \bR^2$. The first observation is that the critical points of $f$ are in canonical bijection with the critical points of $\text{stab}(f)$. Under this bijection the Morse indices increase by 1. Next, suppose that $\rho: \pi_1M \to U(R)$ is such that the $\rho$-twisted Thom-Smale complex of $f$ is acyclic. Let us compute $r(f,\rho)$ with respect to a set of choices: an orientation for $M$, orientations for the negative eigenspaces of $d^2f$ at the critical points of $f$, a gradient-like vector field $Z$, a basepoint $m_0\in M$ and paths $\gamma$ from $m_0$ to the critical points of $f$. 

Denote by $\text{stab}(\rho) : \pi_1 \text{stab}(M) \to U(R)$ the pullback of $\rho$ by the projection $M \times \bR^2 \to M$. Orient $\text{stab}(M)=M \times \bR^2$ using the product of the chosen orientation for $M$ and the canonical orientation for $\bR^2$. Orient the negative eigenspaces of $d^2 \text{stab}(f)$ using the product of the chosen orientations for the negative eigenspaces of $d^2f$ with the direction $\partial_y$. Let $\text{stab}(Z) = Z + \partial_x - \partial_y$, which is gradient-like for $\text{stab}(f)$. Take as a basepoint for $\text{stab}(M)$ the product of the basepoint of $M$ and the origin of $\bR^2$. Finally, for any path $\gamma$ in $M$ let $\text{stab}(\gamma)$ be the path in $\text{stab}(M)$ obtained by taking the product of $\gamma$ with the constant path at the origin in $\bR^2$. With respect to these choices the twisted Thom-Smale complex $C_*(\text{stab}(f); R^{\text{stab}(\rho)} )$ is the suspension of the Thom-Smale complex $C_*(f; R^\rho)$, i.e. it is the same chain complex but with a degree shift by 1. Hence 
\[ r\big( \text{stab}(f), \text{stab}(\rho)\big) = r(f, \rho)^{-1} \qquad \text{and} \qquad r\big( \text{stab}^2(f), \text{stab}^2(\rho)\big) = r(f, \rho).\]
In particular we deduce that Reidemeister torsion is invariant under stabilization as long as we do it an even number of times. This is called even stabilization. 

Next we consider the effect of stabilization on Turaev torsion. Let $e$ be an Euler structure for $f:M \to \bR$. Then there is an obvious Euler structure $\text{stab}(e)$ for $\text{stab}(f) : \text{stab}(M) \to \bR$ induced by the bijection of critical points and by the stabilization of paths as above. Moreover, for each choice of paths in $M$ compatible with $e$ there is a canonical choice of paths in $\text{stab}(M)$ compatible with $\text{stab}(e)$, namely the one obtained by stabilizing the paths. The same holds if we replace the Euler structure $e$ by a weak Euler structure. In both cases we deduce that 
\[  \tau\big(\text{stab}(f), \text{stab}(e), \text{stab}(\rho) \big) = \tau(f,e,\rho)^{-1} \qquad \text{and} \qquad \tau\big(\text{stab}^2(f), \text{stab}^2(e), \text{stab}^2(\rho) \big) = \tau(f,e, \rho). \]
We conclude that Turaev torsion is invariant under even stabilization. Hence the same is true for the fibre Turaev torsion. We record this fact explicitly for future reference.

\begin{proposition}\label{Proposition: turaev stability} Let $(\pi,g,\rho)$ be a torsion triple and let $f:W \to \bR$ be a fibration at infinity which is of Euler type and a bounded distance from $g$ in the $C^0$ norm. Then the fibre Turaev torsion satisfies $\tau(f, \pi, \rho) = \tau\big( \text{stab}^2(f) , \text{stab}^2(\pi), \text{stab}^2(\rho) \big)$.
\end{proposition}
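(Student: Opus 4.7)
The proof should largely assemble itself from the calculation done in the paragraph immediately preceding the proposition, where the suspension identity $\tau(\mathrm{stab}(f), \mathrm{stab}(e), \mathrm{stab}(\rho)) = \tau(f,e,\rho)^{-1}$ has already been established for a single Morse fibration at infinity. The task is therefore to promote that identity to fibre Turaev torsion, which amounts to checking that each ingredient in Definition \ref{torsiontriple} and Definition \ref{Def: function euler} behaves functorially under double stabilization, and then running the identity on the generic fibre.

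First I would verify that $(\mathrm{stab}^2(\pi), \mathrm{stab}^2(g), \mathrm{stab}^2(\rho))$ is again a torsion triple. The new fibre is $F \times \bR^4$, which is boundaryless, connected, and orientable; since $\mathrm{stab}^2(\rho)$ is the pullback of $\rho$ along $W \times \bR^4 \to W$ and $\pi_1(F \times \bR^4) = \pi_1 F$, the surjectivity condition $\rho(\pi_1 F) = \rho(\pi_1 W)$ is preserved. Moreover, the $\rho$-twisted Thom-Smale complex of $\mathrm{stab}^2(g_b)(w,x,y) = g_b(w) + \|x\|^2 - \|y\|^2$ on $F_b \times \bR^4$ is, in the bijection of critical points, the double suspension of $C_*(g_b; R^\rho)$, hence remains acyclic. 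Next, since $\mathrm{stab}^2(f) - \mathrm{stab}^2(g) = (f - g) \circ \mathrm{pr}_W$, the hypothesis $\|f - g\|_{C^0} < \infty$ passes verbatim to the stabilized functions.

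Then I would check that $\mathrm{stab}^2(f)$ is of Euler type with respect to $\mathrm{stab}^2(\pi)$: its fibrewise critical set is exactly $C \times \{0\} \subset W \times \bR^4$, with the same connected components as $C$, each simply connected and of degree zero over $B$. The fibrewise transversality $\partial_F f \pitchfork 0$ carries over because the added quadratic form $\|x\|^2 - \|y\|^2$ is transverse to zero. Given a $\rho$-compatible weak Euler structure $e_b$ on the generic fibre $F_b$, define $\mathrm{stab}^2(e_b)$ on $F_b \times \bR^4$ by keeping the same partition of critical points and stabilizing each compatible path as the product with the constant path at $0 \in \bR^4$; this is automatically $\mathrm{stab}^2(\rho)$-compatible since the partition, the fibrewise critical set, and the pullback of $\rho$ are all unchanged under the product. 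Applying the suspension identity twice (once for each factor of $\bR^2$) to the complex $C_*(f_b; R^\rho)$ yields
\[
\tau\bigl(\mathrm{stab}^2(f_b), \mathrm{stab}^2(e_b), \mathrm{stab}^2(\rho)\bigr) = \tau(f_b, e_b, \rho),
\]
and Lemma \ref{familyturaev} identifies both sides with the corresponding fibre Turaev torsions.

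I do not expect a serious obstacle. The only points requiring care are bookkeeping: that the canonical choices of orientations, gradient-like vector field, basepoint, and reference paths used in the single-function stabilization calculation combine coherently with the choices entering the $\rho$-compatible weak Euler structure on $F_b$. Since every Morse index shifts by $1$ uniformly under each stabilization and the weak Euler structure is defined only up to sign, the parity-based argument of Section \ref{Section: stability of torsion} applies without modification; the absolute sign ambiguity in $U(R)/{\pm 1}$ absorbs any residual orientation convention.
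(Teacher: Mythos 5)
Your proposal is correct and takes essentially the same approach as the paper: the paper's justification consists entirely of the paragraph preceding the proposition, which establishes the suspension identity $\tau\big(\text{stab}^2(f),\text{stab}^2(e),\text{stab}^2(\rho)\big)=\tau(f,e,\rho)$ and then simply notes that this carries over to the fibre Turaev torsion, recording the result ``for future reference.'' You have merely spelled out the bookkeeping (that the stabilized data form a torsion triple, that $\text{stab}^2(f)$ is again of Euler type with fibrewise critical set $C\times\{0\}$, and that a $\rho$-compatible weak Euler structure stabilizes to a $\text{stab}^2(\rho)$-compatible one) that the paper leaves implicit.
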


\begin{remark} Recall that $\text{stab}^2(f) : \text{stab}^2(W) \to \bR$ is the (fibrewise) double stabilization of $f$ on $\text{stab}^2(\pi):\text{stab}^2(W) \to B$, which is the fibre bundle $F \times \bR^4 \to W \times \bR^4 \to B$. \end{remark}

%\begin{definition}
%A stable torsion pair $(\pi, \rho)$ is a torsion pair (with possibly noncompact fibre $F$) up to \ki{}{even} stabilization.
%\end{definition}

%\begin{remark}
%In what follows we will also use the symbol $\rho$ to denote any stabilization of $\rho$. Since the stabilization of a manifold is canonically homotopy equivalent to the original manifold there is no danger of confusion.
%\end{remark}

%The above discussion shows that the fibre Turaev torsion $\tau(f, \rho) \in U(R) / \pm 1$ is well defined for any stable torsion pair $(\pi, \rho)$ and any fibration at infinity $f: W \to \bR$, where now $f$ is taken also up to \ki{}{even} stabilization.
 
\subsection{A Legendrian invariant}\label{sec: invariant}
 
 We are now ready to use the fibre Turaev torsion to produce a Legendrian invariant, which is the main protagonist of this article. This invariant will only be defined for a certain class of Legendrians. As before, let $B$ be a closed, connected, orientable manifold.
 
 \begin{definition}
A closed, orientable Legendrian submanifold $\Lambda \subset J^1(B)$ is said to be of Euler type if each connected component $\Lambda_i$ of $\Lambda$ is simply connected and the projection to the base $\Lambda_i \to B$ has degree zero. \end{definition}

\begin{remark} If $\Lambda$ is an Euler Legendrian then any generating family $f$ for $\Lambda$ is automatically of Euler type, see Definition \ref{Def: function euler}. We recall that a generating family is by definition always a fibration at infinity and moreover always generates $\Lambda$ transversely, i.e. $\partial_F f \pitchfork 0$. 
\end{remark}

The invariant in question is the following. %, where we use the same symbol to denote $\rho: \pi_1 W \to U(R)$ and its pullback to $\pi_1 \text{stab}^{2k}(W)$ by the projection $W \times \bR^{4k} \to W$.

\begin{definition}
The Legendrian Turaev torsion of an Euler Legendrian $\Lambda \subset J^1(B)$ with respect to a torsion pair $(\pi, \rho)$ is a set $T(\Lambda, \pi, \rho) = \{ \tau(f, \pi, \rho) \} \subset U(R) / \pm 1$. Its elements consist of the fibre Turaev torsions with respect to $\rho$ of all generating families $f$ for $\Lambda$ on even stabilizations $\text{stab}^{2k}(W)$ of $W$, where we only consider those $f$ which are a bounded distance from $||x||^2-||y||^2$ in the $C^1$ norm.
\end{definition}

\begin{remark}
We will write $T(\Lambda, W, \rho)$ instead of $T(\Lambda, \pi, \rho)$ when the fibration $\pi : W \to B$ is obvious from the context, as is often the case. %However, we emphasize that this is an abuse of notation. % 
We also implicitly identify local systems on $W$ and local systems on $\text{stab}^{2k}(W)$ via the projection $W \times \bR^{4k} \to W$.
\end{remark}

\begin{remark}
We could also define the Legendrian Turaev torsion of an Euler Legendrian $\Lambda$ with respect to a torsion triple $(\pi, g, \rho)$. %Then the generating families $f$ for $\Lambda$ on $\text{stab}^{2k}(W)$ that we consider should remain a bounded distance from $g + ||x||^2 - ||y||^2$ in the $C^1$ norm. 
Since we won't need this generalization for our applications, we will restrict our discussion to torsion pairs $(\pi, \rho)$ for simplicity, though see Remark \ref{remark: need triples}.
\end{remark}

%\begin{remark} Of course for most stable torsion pairs $(\pi, \rho)$ the set $T(\Lambda, W, \rho)$ will be empty because there may not exist any generating families for $\Lambda$ on $W$. 
%\end{remark}

Legendrian Turaev torsion exhibits a certain functoriality, which we now discuss. Let $X \subset B$ be a closed submanifold and let $\Lambda \subset J^1(B)$ be a Legendrian submanifold. Under a generic transversality hypothesis, the intersection of $\Lambda$ with $J^1(B)|_X$ reduces to a Legendrian submanifold $\Lambda_X \subset J^1(X)$. The Legendrian $\Lambda_X$ is characterized by the property that its front coincides with the restriction of the front of $\Lambda$ to $X \times \bR \subset B \times \bR$. We call $\Lambda_X$ the reduction of $\Lambda$ to $X$.

\begin{proposition}\label{functoriality}
Let $X \subset B$ be a closed, orientable, connected submanifold and let $\Lambda \subset J^1(B)$ be an Euler Legendrian such that the reduction $\Lambda_X \subset J^1(X)$ is also an Euler Legendrian. Then for every torsion pair $(\pi, \rho)$ we have $T(\Lambda , W, \rho)  \subset T(\Lambda_X, W_X, \rho_X)$, where $W_X$ is the restriction of $W$ to $X$ and $\rho_X$ is the precomposition of $\rho: \pi_1W \to U(R)$ with $\pi_1W_X \to \pi_1W$. 
\end{proposition}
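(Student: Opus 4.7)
The plan is to show that for each generating family $f$ for $\Lambda$ on an even stabilization $\mathrm{stab}^{2k}(W)$ satisfying the $C^1$ bound at infinity, the restriction $f_X := f|_{\mathrm{stab}^{2k}(W_X)}$ is itself such a generating family for $\Lambda_X$, and their fibre Turaev torsions agree:
\[
\tau(f, \pi, \rho) \;=\; \tau(f_X, \pi_X, \rho_X).
\]
This immediately gives the desired inclusion $T(\Lambda, W, \rho) \subset T(\Lambda_X, W_X, \rho_X)$.

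First I would check that $f_X$ is a generating family for $\Lambda_X$. Being a fibration at infinity and a bounded $C^1$ distance from $\|x\|^2 - \|y\|^2$ are both preserved under restriction from $\mathrm{stab}^{2k}(W)$ to $W_X \times \bR^{4k} = \mathrm{stab}^{2k}(W_X)$. The transversality $\partial_F f_X \pitchfork 0$ follows from the generic transversality hypothesis built into the reduction $\Lambda \mapsto \Lambda_X$, and the fibrewise critical set $C^X = \{\partial_F f_X = 0\}$ is precisely $C \cap \mathrm{stab}^{2k}(W_X)$ where $C = \{\partial_F f = 0\}$; its image under the generating construction is $\Lambda_X$ by the front description of reduction.

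Next I would pick a generic $b \in X$ so that $F_b$ meets both $C$ and $C^X$ transversely, so $f_b : F_b \to \bR$ is Morse. Both fibre Turaev torsions are then computed as $\det(\partial^\rho + \delta^\rho)$ with respect to this same Morse function, using local systems that agree on $\pi_1 F$: since $\rho_X = \rho \circ (\iota_X)_*$ for $\iota_X : W_X \hookrightarrow W$, and the map $\pi_1 F \to \pi_1 W$ factors through $\pi_1 W_X$, the induced representations of $\pi_1 F$ coincide. Consequently the twisted Thom--Smale complexes and their chain contractions on the two sides are literally identical.

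The main obstacle is reconciling the two weak Euler structures, since the partition required by $\rho$-compatibility uses the components of $C \subset \mathrm{stab}^{2k}(W)$ while that required by $\rho_X$-compatibility uses the (a priori finer) components of $C^X \subset \mathrm{stab}^{2k}(W_X)$, as one $C_i$ may meet $W_X \times \bR^{4k}$ in several $C^X_j$. To handle this I would fix a $\rho_X$-compatible weak Euler structure $e_X$ for $f_X$ and lift it to a $\rho$-compatible weak Euler structure $e$ for $f$ as follows: in each $C_i$, pick a distinguished sub-component $C^X_{j_0(i)}$ and use its $e_X$-hub $h_i$ as the $e$-hub; paths from $h_i$ inside $C^X_{j_0(i)}$ are inherited from $e_X$; for $x$ in some other $C^X_j \subset C_i$ with $e_X$-hub $x_0^j$, choose any path $p^e_{h_i \to x_0^j}$ in $F_b$ and correct it by a loop supplied by the assumption $\rho(\pi_1 F) = \rho(\pi_1 W)$ to arrange $\rho$-compatibility (exactly as in the existence half of the proof of Lemma~\ref{familyturaev}), then set $p^e_{h_i \to x} := p^e_{h_i \to x_0^j} \# p^{e_X}_{x_0^j \to x}$. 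A short direct check shows that the induced path $p^e_{xy} = (p^e_{h_i \to x})^{-1} \# p^e_{h_i \to y}$ between two critical points $x, y$ lying in the same $C^X_j$ is homotopic to $p^{e_X}_{xy}$, so any collection of reference paths $\{p_x\}$ from a basepoint $m_0 \in F_b$ compatible with $e$ is automatically compatible with $e_X$. Using these shared paths and the same chain contraction, the matrices of $\partial^\rho + \delta^\rho$ computed on each side are identical, hence so are their determinants; invoking Lemma~\ref{familyturaev} (independence of the fibre Turaev torsion from the choice of compatible weak Euler structure) on both sides gives $\tau(f, \pi, \rho) = \tau(f_X, \pi_X, \rho_X)$, completing the proof.
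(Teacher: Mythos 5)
Your proof is correct and follows the paper's approach (restrict the generating family to $\mathrm{stab}^{2k}(W_X)$), though the paper's own proof is a single sentence which leaves implicit exactly the comparison of fibre Turaev torsions that you spell out. Your careful handling of the possibly finer partition by components $C^X_j$ of the restricted fibrewise critical set is the real content of the verification; for economy one could instead go the other direction and restrict a $\rho$-compatible weak Euler structure on $F_b$ to a $\rho_X$-compatible one (since $C^X_j\subset C_i$ and both are simply connected, the path $\beta\subset C_i$ in the compatibility condition is homotopic rel endpoints in $W$ to one contained in $C^X_j$, and any reference paths compatible with the coarser structure remain compatible with the finer one), avoiding the hub-lifting construction.
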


\begin{proof} 
Every generating family for $\Lambda$ on $W$ restricts to a generating family for $\Lambda_X$ on $W_X$.
\end{proof}

We deduce that Legendrian Turaev torsion is a Legendrian invariant.

\begin{theorem}\label{legendrianinvariant}
Suppose that $\Lambda_0$ and $\Lambda_1$ are Legendrian isotopic Euler Legendrians in $J^1(B)$. Then for every torsion pair $(\pi, \rho)$ we have $T(\Lambda_0, W, \rho) = T(\Lambda_1, W, \rho)$ as subsets of $U(R) / \pm 1$.
\end{theorem}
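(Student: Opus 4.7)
My plan is to reduce the theorem to two ingredients already in place: the homotopy lifting property (Theorem \ref{Persistence Theorem}) and a parametric version of the invariance argument proving Lemma \ref{familyturaev}, together with the stability of fiber Turaev torsion under even stabilization (Proposition \ref{Proposition: turaev stability}). The key step is to lift the given Legendrian isotopy $\Lambda_t$ from $\Lambda_0$ to $\Lambda_1$ to a $1$-parameter family of generating families $f_t$ and then show that the fiber Turaev torsion $\tau(f_t,\pi,\rho)$ is independent of $t$. Once this is established, symmetry in the roles of $\Lambda_0$ and $\Lambda_1$ yields the equality of sets.

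\textbf{Lifting the isotopy.} Pick an arbitrary element of $T(\Lambda_0,W,\rho)$, represented by a generating family $f: \text{stab}^{2k}(W) \to \bR$ for $\Lambda_0$ that is a bounded $C^1$ distance from $\|x\|^2-\|y\|^2$. Apply Theorem \ref{Persistence Theorem} to produce an integer $\ell\geq 0$ and a homotopy $f_t: \text{stab}^{2k+2\ell}(W) \to \bR$ with $f_0 = \text{stab}^{2\ell}(f)$, such that each $f_t$ generates $\Lambda_t$ and $f_t = f_0 + \varepsilon_t$ for a compactly supported homotopy $\varepsilon_t$. (If the stabilization count produced by Theorem \ref{Persistence Theorem} were odd, one would further stabilize by one more copy of $\bR^2$.) The compact support of $\varepsilon_t$ ensures every $f_t$ remains a bounded $C^1$ distance from the standard quadratic form. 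By Proposition \ref{Proposition: turaev stability}, $\tau(f_0,\text{stab}^{2\ell}(\pi),\rho) = \tau(f,\pi,\rho)$.

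\textbf{Constancy along the lifted family.} Each $f_t$ is of Euler type because its fiberwise critical set is diffeomorphic to $\Lambda_t$, which is an Euler Legendrian; in particular each connected component is simply connected and projects to $B$ with degree zero. I claim $\tau(f_t,\text{stab}^{2\ell}(\pi),\rho)$ is independent of $t$. Choose a base point $b\in B$ generic enough that, for all but finitely many times, the fiber $F_b$ meets the evolving fiberwise critical locus $C_t = \{\partial_F f_t = 0\}$ transversely, so $f_t|_{F_b}$ is Morse and undergoes only Morse birth/death singularities at the exceptional times. Start with a $\rho$-compatible weak Euler structure on $f_0|_{F_b}$ produced by Lemma \ref{familyturaev}, and propagate it through the family as in the final step of that proof: away from exceptional times the partition of critical points by components of $C_t \cap F_b$ deforms continuously and the choice of paths can be propagated continuously; at a birth/death time the two critical points in question lie in a common component of $C_t$ and are joined there by a contractible arc, so conditions (a) and (b) of Definition \ref{Def: weak euler compatible} hold. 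Lemma \ref{weakinv} now says $\tau(f_t,\text{stab}^{2\ell}(\pi),\rho)$ is constant in $t$. Hence $\tau(f,\pi,\rho) = \tau(f_1,\text{stab}^{2\ell}(\pi),\rho)\in T(\Lambda_1,W,\rho)$, and reversing the isotopy gives the opposite inclusion.

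\textbf{Main obstacle.} The most delicate step is the propagation argument across birth/death times, and the subsidiary issue that the single base point $b\in B$ might need to be replaced by a time-dependent one if $C_t$ evolves badly over $b$. Both can be handled using path-connectedness of $B$ and the fiber-independence portion of Lemma \ref{familyturaev}: one partitions $[0,1]$ into subintervals on each of which a fixed $b$ is generic, and shifts $b$ between subintervals via Lemma \ref{familyturaev} applied to the (Morse) function $f_{t_\ast}$ at a transition time $t_\ast$. The only remaining subtlety is to verify that no trajectory or critical point of $f_t|_{F_b}$ escapes to infinity during the homotopy, but this is guaranteed by the uniformity clause in Remark \ref{Remark: infinity family}.
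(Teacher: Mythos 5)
Your argument is correct, but it takes a different route from the paper's. The paper's proof is considerably more compact: it forms the trace Legendrian $\Lambda \subset J^1([0,1]\times B)$ of the isotopy, applies the homotopy lifting property once to produce a single generating family on $[0,1]\times W$ for the trace, and then invokes the functoriality property (Proposition \ref{functoriality}) of reduction over the two boundary slices $\{0\}\times B$ and $\{1\}\times B$. The constancy of the fibre Turaev torsion along the family is not re-derived there; it is swallowed by the fibre-independence already established in Lemma \ref{familyturaev}, now applied over the base $[0,1]\times B$. Your proof, by contrast, keeps the family $f_t$ separate and re-runs the Euler-structure propagation directly in the $t$-direction. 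This is a legitimate unpacking of the same core argument: what the paper's proof gains is economy (the propagation is proved once and then reused), while your version makes explicit the control at infinity, the even-stabilization bookkeeping, and the need to move the base point $b$ when $F_b$ fails to meet $C_t$ transversely — points the paper handles implicitly. Your version also sidesteps a small informality in the paper's proof, namely that $[0,1]\times B$ is a manifold with boundary while the definitions of torsion pair and Euler Legendrian are stated for closed base.

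One sentence in your argument is imprecise as stated. At a birth/death time you write that the two critical points "lie in a common component of $C_t$ and are joined there by a contractible arc, so conditions (a) and (b)... hold." Condition (a) follows, but condition (b) of Definition \ref{Def: weak euler compatible} is about the $e_t$ path between the two critical points inside the fibre $F_b$, not about a path in $C_t$; that fibre path need not degenerate to a contractible loop at the birth/death instant. The fix — replace the fibre path by the unique connecting trajectory near the birth/death, and correspondingly adjust the other paths in the same part — is exactly the content of the final paragraph of the proof of Lemma \ref{familyturaev}, which you do cite, so this is a gloss rather than a gap, but you should state the fix rather than claim (b) holds automatically.
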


\begin{proof}
Let $\Lambda_t\subset J^1(B)$ be a Legendrian isotopy between $\Lambda_0$ and $\Lambda_1$, and denote by $\Lambda$ its trace. This is the Legendrian submanifold of $J^1([0,1] \times B)$ whose front projection is $\bigcup_t t \times \Sigma_t \subset [0,1] \times B \times \bR$, where $\Sigma_t \subset B \times \bR$ is the front projection of $\Lambda_t$. The homotopy lifting property \ref{Persistence Theorem} implies the inclusion $T(\Lambda_0 , W , \rho) \subset T(\Lambda , [0,1] \times W  , \rho)$, where we also use $\rho$ to denote the local system $\pi_1 ([0,1] \times W ) \to U(R)$ which is the pullback of $\rho$ by the projection $[0,1] \times W \to W$. Combining this inclusion with the functoriality property \ref{functoriality} we obtain $T(\Lambda_0, W, \rho) \subset T(\Lambda_1, W, \rho)$. The reverse inclusion follows by switching the roles of $\Lambda_0$ and $\Lambda_1$. 
\end{proof}

\begin{remark}\label{remark: need triples}
Note that we implicitly used Proposition \ref{Proposition: turaev stability} in the proof, since the homotopy lifting property requires stabilization. Therefore, even if we are only interested in torsion pairs, in order to produce a Legendrian invariant we are forced to consider torsion triples (at least those which are stabilizations of torsion pairs).
\end{remark}
 
The Legendrian invariant $T(\Lambda, W, \rho)$ will be used below to study mesh Legendrians up to Legendrian isotopy, providing examples and explicit computations. We conclude this section by observing that the functoriality property can also be used to obstruct the existence of generating families for Legendrian cobordisms more general than the trace of Legendrian isotopies. 
 
 \begin{corollary}\label{cobordism}
Let $\Lambda_1, \Lambda_2 \subset J^1(B)$ be Euler Legendrians such that $T(\Lambda_1,W,\rho) \cap T(\Lambda_2, W, \rho) = \varnothing$ for some torsion pair $(\pi, \rho)$. Then any Euler Legendrian $\Lambda \subset J^1(B \times [0,1])$ whose reduction over $B \times 0$ and $B \times 1$ is $\Lambda_0$ and $\Lambda_1$ respectively cannot be generated by a family $f:W \times [0,1]\to \bR$. 
 \end{corollary}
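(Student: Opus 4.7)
The plan is to argue by contradiction, leveraging the observation that the fibre Turaev torsion is intrinsic to the generating family on the total bundle, and then restricting to the two ends of $[0,1]$.

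Suppose such an $\Lambda \subset J^1(B\times[0,1])$ and a generating family $f: W\times[0,1]\to\bR$ exist. Let $\pi': W\times[0,1]\to B\times[0,1]$ denote the product fibration and let $\rho'$ denote the pullback of $\rho$ under the projection $W\times[0,1]\to W$. First I would check that $(\pi',\rho')$ is a torsion pair: the fibre of $\pi'$ is still $F$, the acyclicity of $H^*(F;R^{\rho'})$ and the equality $\rho'(\pi_1F)=\rho'(\pi_1(W\times[0,1]))$ are inherited from $(\pi,\rho)$ under the obvious identifications. Since $\Lambda$ is of Euler type by assumption, the generating family $f$ is automatically a function of Euler type in the sense of Definition \ref{Def: function euler}, so by Lemma \ref{familyturaev} we obtain a well-defined fibre Turaev torsion
\[ \tau(f,\pi',\rho') \in U(R)/\pm 1. \]

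The main step is to identify this element with members of both $T(\Lambda_1,W,\rho)$ and $T(\Lambda_2,W,\rho)$. For $i=1,2$, restrict $f$ to $W\times\{i-1\}\cong W$ to obtain a function $f_i:W\to\bR$. Since the front of $\Lambda$ restricted to $B\times\{i-1\}$ is, by definition, the front of $\Lambda_i$, the restricted function $f_i$ is a generating family for $\Lambda_i$ on $W$ (it is still a fibration at infinity, and the condition $\partial_F f \pitchfork 0$ is preserved under restriction to a fibre of $[0,1]$). Therefore $\tau(f_i,\pi,\rho)\in T(\Lambda_i,W,\rho)$ by definition of Legendrian Turaev torsion.

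The key observation, and the crux of the argument, is that by the fibre-independence established in the proof of Lemma \ref{familyturaev}, the fibre Turaev torsion of $f$ computed on any fibre of $\pi'$ over $B\times[0,1]$ yields the same element of $U(R)/\pm 1$. Choosing the fibre over a generic point of $B\times\{0\}$ recovers $\tau(f_1,\pi,\rho)$, and similarly for $B\times\{1\}$, so
\[ \tau(f_1,\pi,\rho) \;=\; \tau(f,\pi',\rho') \;=\; \tau(f_2,\pi,\rho). \]
This common element lies in $T(\Lambda_1,W,\rho)\cap T(\Lambda_2,W,\rho)$, contradicting the disjointness hypothesis.

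The only genuine subtlety is to make sure the fibre-independence argument of Lemma \ref{familyturaev} still applies: one needs a path in $B\times[0,1]$ joining a generic point of $B\times\{0\}$ to a generic point of $B\times\{1\}$ along which $f$ is Morse outside finitely many birth/death times, and one needs to propagate a $\rho'$-compatible weak Euler structure along it. This is straightforward since the connected components of the fibrewise critical locus of $f$ over $B\times[0,1]$ are simply connected and of degree zero by the Euler-type hypothesis on $\Lambda$, exactly as in the proof of Lemma \ref{familyturaev}. Once this is in place, the contradiction is immediate and no further computation is required.
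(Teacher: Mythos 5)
Your proof is correct and takes essentially the same approach as the paper, which simply invokes Proposition \ref{functoriality} (restriction of generating families to $B\times\{0\}$ and $B\times\{1\}$ gives inclusions $T(\Lambda,W\times[0,1],\rho)\subset T(\Lambda_i,W,\rho)$ for $i=1,2$, and nonemptiness of the left side forces a common element). You have merely unwound the content of that proposition, together with the fibre-independence from Lemma \ref{familyturaev} that makes it work, rather than citing it directly.
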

 
 \begin{proof} This is an immediate consequence of Proposition \ref{functoriality}. \end{proof}
 
 \begin{example}
The examples of Corollary \ref{corollarypairs} are such that each pair $\Lambda_\pm$ is generated by a family $f_\pm : E \to \bR$, where $S^1 \to E \to \Sigma$ is the same circle bundle for both $\Lambda_+$ and $\Lambda_-$. We will see below that for a certain $\rho: \pi_1E \to U(R)$ such that $(E,\rho)$ is a torsion pair, the Turaev torsions $T(\Lambda_\pm, E , \rho)$ are distinct one-element subsets of $U(R) / \pm 1$. Hence from Theorem \ref{legendrianinvariant} we deduce that $\Lambda_+$ and $\Lambda_-$ are not Legendrian isotopic. Now, any generic homotopy $f_t:E \to \bR$ between $f_+$ and $f_-$ produces a Legendrian cobordism $\Lambda \subset J^1(\Sigma \times [0,1])$ whose reduction over $\Sigma \times 0$ and $\Sigma \times 1$ is $\Lambda_+$ and $\Lambda_-$ respectively. Hence from Corollary \ref{cobordism} we deduce that the cobordism $\Lambda$ must have at least one component which is not simply connected.
 \end{example}
 
 %\begin{remark}
%The reader can easily check that we could have worked throughout with an arbitrary abelian representation $\rho: \bZ[\pi_1] \to R$, where $R$ is a commutative ring, instead of the special case of a rank 1 unitary local system. The Legendrian Turaev torsion is then a subset $T(\Lambda, W, \rho) \subset U(R) / \pm 1$. 
 
 %\end{remark}

\section{Mesh Legendrians}\label{Section: mesh Legendrians}

\subsection{Generating families from systems of disks}\label{systems of disks}
 
We begin our study of mesh Legendrians by proving that they all admit generating families on circle bundles. More precisely, we prove that $\Lambda_G$ is generated by a function on the circle bundle of Euler number $w(G)$. 

We first make a small digression into the Morse theory of $S^1$. The space of Morse functions on $S^1$ is homotopy equivalent to a disjoint union of infinitely many circles. However, if in addition to quadratic (Morse) singularities we also allow cubic (Morse birth/death) singularities we get a connected space, and if we only allow positive cubic singularities we get a contractible space. Indeed, identify $S^1 = [0,1]/ 0 \sim 1$ so that of the derivative of a function $f:S^1 \to \bR$ is another function $f':S^1 \to \bR$.  
\begin{definition}
A smooth function $f:S^1 \to \bR$ is called a generalized Morse function (GMF) if for every $x \in S^1$ one of the first three derivatives of $f$ at the point $x$ is nonzero. We say that $f$ is a positively oriented generalized Morse function (PGMF) if $f'''(x)>0$ whenever $f'(x)=f''(x)=0$. \end{definition}

 We endow the space $\text{PGMF}(S^1)$ of PGMFs on $S^1$ with the $C^\infty$ topology. The following result is proved in \cite{IK93}. It is closely related to the more general result \cite{I87}, \cite{EM12} that the space of framed functions is contractible.
 
 \begin{theorem}\label{contractibility}
 The space $\text{PGMF}(S^1)$ is contractible.
 \end{theorem}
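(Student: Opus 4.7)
My plan is to prove contractibility of $\mathrm{PGMF}(S^1)$ by a parametric Morse-theoretic argument in which the positive cubic singularities play the role of allowed birth--death moves used to cancel pairs of Morse critical points. The first step is to reduce to a more concrete space of derivatives. Since adding a constant gives a contractible $\bR$-factor, it suffices to treat PGMFs with $f(0)=0$; moreover, passing to $g=f'$ identifies this subspace with the space $\mathcal{G}$ of smooth $g:S^1\to\bR$ with $\int_{S^1} g=0$ whose zero set is isolated, each zero being either a transverse crossing or a local minimum of $g$ at value $0$ (the latter corresponding exactly to positive cubic critical points of $f$). Thus I must show $\mathcal{G}$ is contractible.

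The second step is the local cancellation move. Let $\mathcal{G}_{2n}\subset\mathcal{G}$ denote the open stratum where $g$ has exactly $2n$ zeros. I would show that any two adjacent zeros of opposite type (one up-crossing and one down-crossing) can be canonically merged and erased via a one-parameter family inside $\mathcal{G}$: lift the arc of $g$ between the two zeros upward until the intervening local minimum of $g$ touches $0$ at a positive cubic, at which moment the pair of zeros coalesces, and then continues to vanish. Iterating brings any $g$ into $\mathcal{G}_2$, the stratum of a single max--min pair, through a path in $\mathcal{G}$. Within $\mathcal{G}_2$, functions are determined by the positions of the two zeros in $S^1$ plus a convex shape parameter, and the residual topology (the circular ambiguity in the position of, say, the up-crossing) is killed inside $\mathcal{G}$ by opening a new pair of zeros via the \emph{inverse} cancellation move (running a positive cubic backwards in time), transporting the configuration around $S^1$, and then recanceling, which exhibits a null-homotopy of the apparent $S^1$.

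The third step is the parametric version: given a family of PGMFs over $S^k$, extend it to a family over $D^{k+1}$. I would do this by performing the above cancellations simultaneously over the parameter space, picking the first pair to cancel by some continuous rule (for instance, one depending on the minimum intra-arc value of $g$, with ties broken by small generic perturbation). One checks that the only local model for a wall in parameter space between distinct choices of cancellation order is itself a positive cubic event, so the extension is possible across walls.

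The main obstacle is exactly this parametric coherence: making the pair-cancellation process depend continuously on parameters and join up consistently over higher-dimensional families. This is the technical heart of the framed function theorem of Igusa \cite{I87} and Eliashberg--Mishachev \cite{EM12}, and in one dimension the framing data reduces precisely to the sign $f'''>0$ at cubic points; hence the theorem is a direct specialization of the framed function theorem. An independent elementary proof, which would be substantially shorter than in the general case because in dimension one handle slides are trivial and all birth/death cancellations involve only indices $0$ and $1$, can be given along the inductive lines sketched above, with the parametric cancellation made rigorous by a standard Cerf-style codimension argument on the stratification of $\mathcal{G}$.
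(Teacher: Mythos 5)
The paper does not prove this statement; it cites \cite{IK93}, with the remark that the result is ``closely related to'' (not a special case of) the framed function theorem. Both of the routes you sketch have genuine gaps. The central one is in the parametric coherence step, which you rightly identify as the crux but do not handle correctly. The rule ``cancel the pair with the shallowest intervening arc, ties broken by small generic perturbation'' cannot be made continuous over a family: in a $k$-parameter family the locus where two arcs tie is generically of codimension one in the parameter space, so it is not removed by perturbation, and across it the selected pair jumps. Your claim that the only local model for such a wall is ``itself a positive cubic event'' is false --- a wall is an event in the parameter directions (two arcs attaining equal depth), not a cubic degeneration of $g=f'$ in the fibre. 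Proving that different orders of cancellation yield homotopic families, coherently along walls and on their higher-codimension intersections, is the technical content of \cite{IK93} and of the framed function proofs; it cannot be dispatched as a standard Cerf-style codimension argument.

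Two further points. Your cancellation move applies only to adjacent zeros of $g$ of type (down-crossing, up-crossing) with $g<0$ on the arc between them, i.e.\ a maximum of $f$ immediately followed by its adjacent minimum in the positive direction: lifting that arc produces a local \emph{minimum} of $g$ touching zero, which is a positive cubic. The other type of adjacent pair, with $g>0$ between, would produce a local maximum of $g$ at zero on merging, i.e.\ a forbidden negative cubic. So the move does not apply to ``any two adjacent zeros of opposite type,'' and this asymmetry --- which must be respected throughout the induction --- is exactly what the positivity hypothesis encodes. Finally, the assertion that the framing data of a framed function on $S^1$ ``reduces precisely to the sign $f'''>0$ at cubic points'' misplaces the framing: a framing is a choice of orientation of the one-dimensional negative eigenspace at each index-one \emph{Morse} critical point, subject to a compatibility condition at cubics, and the space of framed GMFs on $S^1$ contains functions with cubics of both signs. $\text{PGMF}(S^1)$ is the subspace in which every such orientation is the positive one, and contractibility of a subspace is not a formal consequence of contractibility of the ambient space; the claimed deduction from \cite{I87}, \cite{EM12} therefore needs a further argument that you have not supplied.
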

 
 Next, let $p: E \to \Sigma$ be an oriented circle bundle over a surface $\Sigma$. 
 
 \begin{definition}
A  fibrewise PGMF is a function $f:E \to \bR$ such that the restriction of $f$ to every (oriented) fibre is a PGMF.
 \end{definition}
 
Equivalently, fibrewise PGMFs are sections of the bundle $\text{PMGF}(S^1) \to \text{PGMF}(E) \to \Sigma$ whose fibre over a point $x \in \Sigma$ consists of all PGMFs on the oriented fibre $S^1$ of $E$ over $x$. We will prove below that every mesh Legendrian is generated by a fibrewise PGMF on a circle bundle. We focus on a special class of fibrewise PGMFs which can be understood explicitly in terms of systems of disks.
 
 \begin{definition}
 A system of disks on the total space of a circle bundle $p:E \to \Sigma$ consists of a finite union of disjoint embedded 2-disks $D_i \subset E$ such that each projection $p:D_i \to \Sigma$ is an immersion and such that the interiors of the projected disks $p(D_i) \subset \Sigma$ cover $\Sigma$. (See Figure \ref{Fig: system of disks}.)
 \end{definition}
 
The following result is proved in \cite{IK93}.
 
 \begin{proposition}\label{systems}
 To every system of disks $\{ D_i \}$ on $E$ there exists a fibrewise PGMF $f:E \to \bR$ whose fibrewise critical set consists of positive cubic singularities along each $\partial D_i$ and Morse singularities along two parallel copies of $D_i$, which are obtained by pushing $D_i$ in the positive and negative $S^1$ directions relative to $\partial D_i$. Moreover, the space of such $f$ is contractible. 
 \end{proposition}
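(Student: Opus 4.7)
My plan is to describe $f$ fibrewise, prescribing the PGMF on each circle $p^{-1}(x)$ in terms of the critical-point data dictated by the system of disks, and then to deduce contractibility from Theorem \ref{contractibility}.

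Pushing each $D_i$ slightly in the positive (resp.\ negative) $S^1$-direction relative to its boundary produces embedded disks $D_i^+, D_i^- \subset E$ sharing the common boundary $\partial D_i$. Since the $D_i$ are pairwise disjoint and the pushes can be made arbitrarily small, the $D_i^\pm$ are also pairwise disjoint. The desired fibrewise critical locus of $f$ is $\bigcup_i (D_i^+ \cup D_i^-)$, with Morse type on the interiors (maximum on $D_i^+$, minimum on $D_i^-$) and positive cubic type on $\partial D_i$. For each $x \in \Sigma$ the fibre $p^{-1}(x)$ meets this locus in finitely many marked points, each with prescribed critical type. To construct such an $f$ explicitly near $\partial D_i$, I would use the local model
\[
g_i(s,t,\theta) = \chi(\theta)\bigl(\theta^3 - 3 t \theta\bigr),
\]
with $(s,t)$ parametrizing a neighborhood of $\partial p(D_i)$ in $\Sigma$ (with $t=0$ on $\partial p(D_i)$, $t>0$ inside), $\theta$ the fibre coordinate normalized so that $D_i$ sits at $\theta=0$, and $\chi$ a bump equal to $1$ near $\theta=0$; this produces the required positive cubic at $(t,\theta)=(0,0)$ and the Morse pair $\theta = \pm\sqrt{t}$ for $t > 0$. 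Deep inside $p(D_i)$ one glues in an analogous fibrewise-Morse model, and the complement of these local regions can be filled in by a fibrewise PGMF with no additional fibrewise critical points; because the projected interiors of the $D_i$ cover $\Sigma$ and the $D_i^\pm$ are pairwise disjoint, such a global assembly is possible.

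For contractibility of the resulting space $\cF$, I would use the following fibrewise description: the space of PGMFs on $S^1$ with a specified set of critical points of specified type is nonempty and contractible, because on each arc between consecutive critical points one has a contractible space of strictly monotone interpolations, and the tuple of prescribed critical values varies in an open convex (hence contractible) subset of Euclidean space. The sub-bundle of $\mathrm{PGMF}(E) \to \Sigma$ whose fibre over $x$ is this contractible space of PGMFs on $p^{-1}(x)$ with the required critical data therefore has contractible fibres, and $\cF$ is its space of sections, hence contractible by standard arguments. The main obstacle is verifying this uniform fibrewise contractibility across $\partial p(D_i)$, where the combinatorial type of the critical-point set changes as a pair of Morse points collides into a cubic; this is precisely where Theorem \ref{contractibility}, i.e.\ the contractibility of $\mathrm{PGMF}(S^1)$ in a neighborhood of cubic birth/death, plays its role.
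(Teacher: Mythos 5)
The paper gives no proof of this proposition; it simply cites \cite{IK93}, so your reconstruction cannot be compared with the paper's own argument. Your overall strategy---construct the function fibrewise from an explicit unfolding $\theta^3 - 3t\theta$ of the cubic singularity near $\partial D_i$, interpolate monotonically in between, and deduce contractibility from that of $\mathrm{PGMF}(S^1)$ as a space of sections---is reasonable and likely close in spirit to what is done in \cite{IK93}.

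Two points need more care, and there is one sign slip. You never check that the prescribed fibrewise critical configuration is actually the critical set of \emph{some} PGMF on each circle fibre. Reading around $p^{-1}(x)$ in the positive direction, Morse maxima and minima must alternate, and a positive cubic can only sit on an increasing arc (from a minimum to the next maximum); in particular no cubic can lie on the short decreasing arc running from $D_j^-$ through $D_j$ to $D_j^+$. This does hold---each cubic lies on $\partial D_i \cap p^{-1}(x)$, which is disjoint from every $D_j$, so for small enough pushes it misses the short arcs---and the covering hypothesis is what guarantees every fibre contains at least one Morse pair, ruling out a fibre with only cubics (which would force a globally increasing function on $S^1$); but both of these checks should be made explicit. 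Regarding signs: with your model $\theta^3 - 3t\theta$, the push $D_i^+$ at $\theta = +\sqrt t$ has $f''=6\theta>0$ and so carries the \emph{minimum}, while $D_i^-$ carries the maximum, the reverse of what you wrote. Finally, the appeal to Theorem \ref{contractibility} for the section-space argument is not quite on target: that theorem asserts global contractibility of $\mathrm{PGMF}(S^1)$, whereas what you need across $\bigcup_i \partial p(D_i)$, where the number of critical points jumps, is a parametrized local statement---that the space of PGMFs subordinate to a prescribed birth-death unfolding is contractible with contractions varying continuously in the parameters. This is true and follows from the same convex-interpolation argument you sketch, but it is a separate lemma rather than a direct consequence of Theorem \ref{contractibility}.
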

 
 %%%%%%%%%%%%%%%%%%%%%%%%%
  %
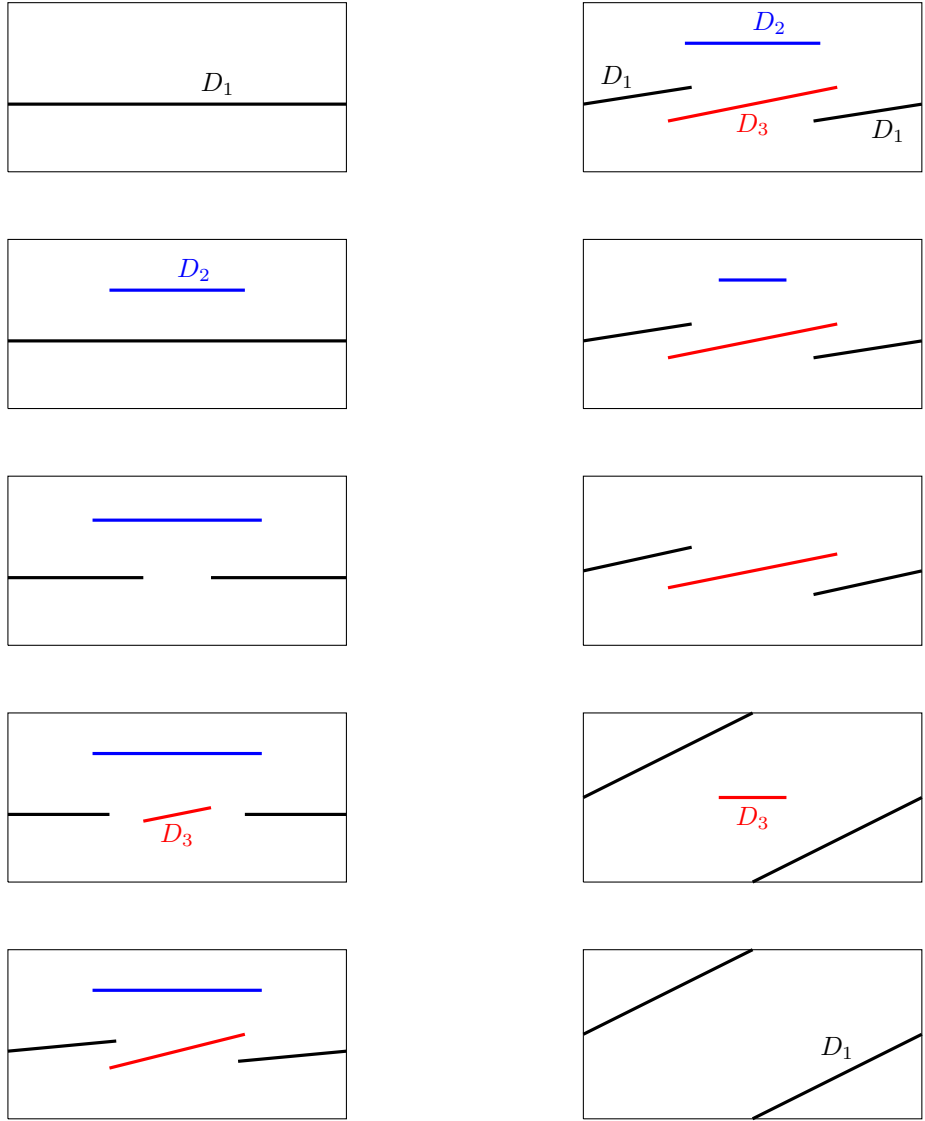
\begin{figure}[htbp]
\begin{center}
\begin{tikzpicture}[scale=.45]
%
% a
\begin{scope}[xshift=-3cm,yshift=28cm]
\draw (-5,0)--(5,0)--(5,5)--(-5,5)--(-5,0);
\draw[very thick] (-5,2)--(5,2); %D1
%\draw (1.2,2.7) node{$D_1$};
\draw (1.2,2) node[above]{$D_1$};
\end{scope}
%
% b
\begin{scope}[xshift=-3cm,yshift=21cm]
\draw (-5,0)--(5,0)--(5,5)--(-5,5)--(-5,0);
\draw[very thick] (-5,2)--(5,2); % D1
\draw[very thick,color=blue] (-2,3.5)--(2,3.5); % D2
\draw[color=blue] (.5,3.5) node[above]{$D_2$};
\end{scope}
%
% c
\begin{scope}[xshift=-3cm,yshift=14cm]
\draw (-5,0)--(5,0)--(5,5)--(-5,5)--(-5,0);
\draw[very thick] (-5,2)--(-1,2) (1,2)--(5,2); % D1
\draw[very thick,color=blue] (-2.5,3.7)--(2.5,3.7); % D2
\end{scope}
%
% d
\begin{scope}[xshift=-3cm,yshift=7cm]
\draw (-5,0)--(5,0)--(5,5)--(-5,5)--(-5,0);
\draw[very thick] (-5,2)--(-2,2) (2,2)--(5,2); % D1
\draw[very thick,color=blue] (-2.5,3.8)--(2.5,3.8); % D2
\draw[very thick,color=red] (-1,1.8)--(1,2.2); % D3
\draw[color=red] (0,2) node[below]{$D_3$};
\end{scope}
%
% e
\begin{scope}[xshift=-3cm,yshift=0cm]
\draw (-5,0)--(5,0)--(5,5)--(-5,5)--(-5,0);
\draw[very thick] (-5,2)--(-1.8,2.3) (1.8,1.7)--(5,2); % D1
\draw[very thick,color=blue] (-2.5,3.8)--(2.5,3.8); % D2
\draw[very thick,color=red] (-2,1.5)--(2,2.5); % D3
\end{scope}
%
% f
\begin{scope}[xshift=14cm,yshift=28cm]
\draw (-5,0)--(5,0)--(5,5)--(-5,5)--(-5,0);
\draw[very thick] (-5,2)--(-1.8,2.5) (1.8,1.5)--(5,2); % D1
\draw[very thick,color=blue] (-2,3.8)--(2,3.8); % D2
\draw[very thick,color=red] (-2.5,1.5)--(2.5,2.5); % D3
\draw (-4,2.2) node[above]{$D_1$};
\draw (4,1.8) node[below]{$D_1$};
\draw[color=blue] (.5,3.8) node[above]{$D_2$};
\draw[color=red] (0,2) node[below]{$D_3$};
\end{scope}
%
% g
\begin{scope}[xshift=14cm,yshift=21cm]
\draw (-5,0)--(5,0)--(5,5)--(-5,5)--(-5,0);
\draw[very thick] (-5,2)--(-1.8,2.5) (1.8,1.5)--(5,2); % D1
\draw[very thick,color=blue] (-1,3.8)--(1,3.8); % D2
\draw[very thick,color=red] (-2.5,1.5)--(2.5,2.5); % D3
\end{scope}
%
% h
\begin{scope}[xshift=14cm,yshift=14cm]
\draw (-5,0)--(5,0)--(5,5)--(-5,5)--(-5,0);
\draw[very thick] (-5,2.2)--(-1.8,2.9) (1.8,1.5)--(5,2.2); % D1
\draw[very thick,color=red] (-2.5,1.7)--(2.5,2.7); % D3
\end{scope}
%
% i
\begin{scope}[xshift=14cm,yshift=7cm]
\draw (-5,0)--(5,0)--(5,5)--(-5,5)--(-5,0);
\draw[very thick] (-5,2.5)--(0,5) (0,0)--(5,2.5); % D1
\draw[very thick,color=red] (-1,2.5)--(1,2.5); % D3
\draw[color=red] (0,2.5) node[below]{$D_3$};
\end{scope}
%
% j
\begin{scope}[xshift=14cm,yshift=0cm]
\draw (-5,0)--(5,0)--(5,5)--(-5,5)--(-5,0);
\draw[very thick] (-5,2.5)--(0,5) (0,0)--(5,2.5); % D1
\draw (2.5,1.5) node[above]{$D_1$};
\end{scope}
\end{tikzpicture}
\caption{A system of 3 disks $D_1,D_2,D_3$ in $E$ the Hopf circle bundle over $S^2$ is shown in this sequence of cross-sections of these disks. This gives the fiberwise PGMF illustrated in Figure \ref{Fig: start}.}
\label{Fig: system of disks}
\end{center}
\end{figure}
 %%%%%%%%%%%%%%%%%%%%%%%%%
 
  Let $G$ be a bicolored trivalent ribbon graph and let $\Sigma_G$ be the closed oriented surface associated to it, see Section \ref{Subsection: main results}. Let $U_1 \subset \Sigma_G$ be a thickening of the vertices and let $U_2 \subset \Sigma_G$ be a thickening of the centerpoints of those edges having endpoints of the same color. Let $S^1 \to E_G \to \Sigma_G$ be the oriented circle bundle given by gluing the trivial bundle on $U_1 \cup U_2$ to the trivial bundle on $\Sigma_G \setminus (U_1 \cup U_2)$ using a clutching function around the boundary of each component of $U_1 \cup U_2$ which (for reasons explained below and illustrated in Figure \ref{Fig: 3 circles}) has winding number $-1$ around each positive vertex and each centerpoint of an edge connecting two negative vertices and $+1$ around each negative vertex and each centerpoint of an edge connecting two positive vertices.
  
Start by placing a disk $D_i$ above each face $F_i$ of $\Sigma_G \setminus G$. Over each edge of $G$ we extend the two disks $D_i, D_j$ corresponding to either side of the edge in the homotopically unique way to do so. To finish the construction we need to specify how we extend the three disks which surround a given vertex of $G$. Number the disks clockwise around the vertex in $G$ so that $D_i,D_j,D_k$ are in cyclic order/anti-cyclic order depending on whether the vertex is positive/negative, respectively. In a deleted neighborhood of a positive vertex, the bundle $E_G$ can be trivialized by taking the section which interpolates, say $D_i$ and $D_j$, along the edge separating these disks by moving in the positive direction around the fiber as we go from $D_i$ to $D_j$ and going the negative direction the other way. This gives a winding number of $-1$ at the positive vertex. It also gives a winding number of $+1$ at the center of the edge separating $D_i,D_j$ in the case when the other endpoint of that edge is also positive. This is because the section moves from $D_i$ to $D_j$ positively around the fiber close to the other endpoint, see Figure \ref{Fig: 3 circles}.

Note that there could be repetitions, for example one could have the same disk on the two sides of an edge, but this is ok since $p:D_i\to \Sigma_G$ is only required to be an immersion. This produces a system of disks $\{D_i\}$ on $E_G$. Any function $f:E_G \to \bR$ corresponding to $\{D_i\}$ under Proposition \ref{systems} generates the mesh Legendrian $\Lambda_G$, up to Legendrian isotopy, see \cite{IK93}. Therefore it only remains to compute the Euler number of $E_G$.  %
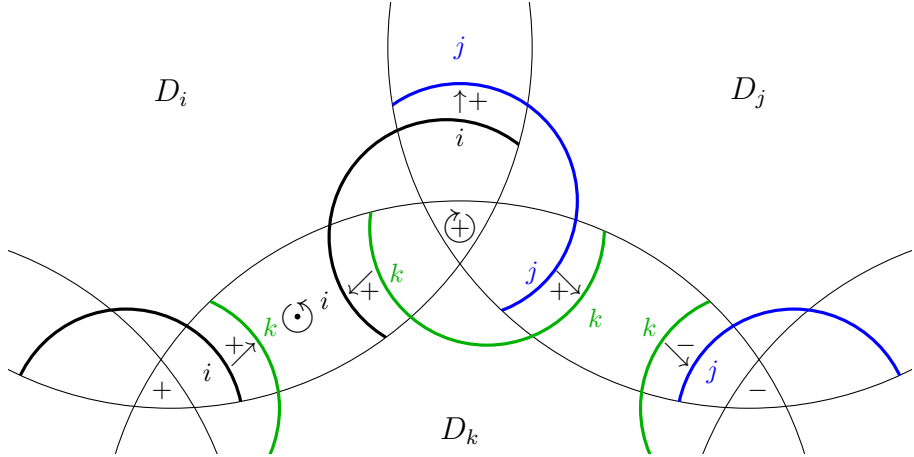
\begin{figure}[htbp]
\begin{center}
\begin{tikzpicture}[scale=1.2]
%\draw[help lines=1,thick] (-5,-2) grid (5,3);
%\foreach \x in {-5,...,5}\draw (\x,0) node{\x};\foreach \y in {-1,...,3}\draw (0,\y) node{\y};
\coordinate (A) at (-3.2,1.5);
\coordinate (A1) at (-3.2,2);
\coordinate (B) at (3.2,1.5);
\coordinate (B1) at (3.2,2);
\coordinate (C) at (0,-2);
\coordinate (C1) at (0,-3.7);
\coordinate (D) at (-4.4,-2);
\coordinate (D1) at (-6.4,-4);
\coordinate (E1) at (6.4,-4);
\coordinate (E) at (4.4,-2);
\coordinate (V1) at (-3.3,-1.8);
\coordinate (V2) at (0,0);
\coordinate (V3) at (3.3,-1.8);
\clip (-5,-2.5) rectangle (5,2.5);
\draw (V1) node{$+$};
\draw (V2) node{$+$};
\draw (V3) node{$-$};
\draw (A1) circle[radius=4cm] (A) node{\Large$ D_i$};
\draw (B1) circle[radius=4cm] (B) node{\Large$D_j$};
\draw (C1) circle[radius=4cm] (C) node[below]{\Large$D_k$};
\draw (D1) circle[radius=4cm];
%\draw (D) node[below]{\Large$D_\ell$};
\draw (E1) circle[radius=4cm];
%\draw (E) node[below]{\Large$D_m$};% node[above]{$D_b$};
%
\draw (0,1.4) node{$\uparrow$};
\draw (1.2,-.6) node{$\searrow$};
\draw (.2,1.4) node{$+$};
\draw (1.1,-.7) node{$+$};
\draw (-1,-.7) node{$+$};
\draw (-2.5,-1.3) node{$+$};
\draw (2.5,-1.3) node{$-$};
\draw (-1.1,-.6) node{$\swarrow$};
\draw (-2.4,-1.4) node{$\nearrow$};
\draw (2.4,-1.4) node{$\searrow$};
\draw (0,0) node{\huge $\circlearrowright$};
\draw (-1.8,-1) node{\Huge$\cdot$};
\draw (-1.8,-.99) node{\huge $\circlearrowleft$};
\begin{scope}
\clip (B1) circle[radius=4cm];
\draw[very thick,color=blue] (0,0.3)circle[radius=1.3cm] (0,2)node{$j$}(0.8,-.5)node{$j$}
(3.7,-2.2)circle[radius=1.3cm]
(2.8,-1.6) node{$j$};
\end{scope}
\begin{scope}
\clip (A1) circle[radius=4cm];
\draw[very thick] (-.15,-.1)circle[radius=1.3cm]
(-1.5,-.8)node{$i$}(0,1)node{$i$}
(-3.7,-2.2)circle[radius=1.3cm]
(-2.8,-1.6) node{$i$};
\end{scope}
\begin{scope}
\clip (C1) circle[radius=4cm];
\draw[very thick,color=green!70!black] (.3,0) circle[radius=1.3cm]
(1.5,-1)node{$k$}(-.7,-.5)node{$k$}
(-3.3,-2)circle[radius=1.3cm]
(3.3,-2)circle[radius=1.3cm]
(-2.1,-1.1) node{$k$}
(2.1,-1.1) node{$k$};
\end{scope}
\end{tikzpicture}
\caption{As we go clockwise around a positive vertex (center) the canonical section moves positively around the fiber circle $i\to j\to k\to i$ giving winding number $-1$. As we go counterclockwise around the midpoint of the edge $E_{ik}$ connecting two positive vertices, we go from $i$ to $k$ back to $i$ in the positive direction giving winding number $+1$. Around the midpoint of an $E_{jk}$ connecting positive and negative vertices we go positively from $j$ to $k$ then negatively from $k$ to $j$ giving winding number zero.}
\label{Fig: 3 circles}
\end{center}
\end{figure}
%
 
 % so that the holonomy around each vertex of $G$ is the antipodal map. Thus, each disk $D_i^G$ gives a parallel section. The holonomy around a vertex is $3\pi$, i.e. the antipodal map, since it shifts by $\pi$ as we cross each of the three edges around the vertex {To see that the Euler number of $E_G$ is $w(G)=\frac12(P-N)$, go in a circle counterclockwise around a vertex. Near a positive vertex the sections given by the disks $D_i^G,D_j^G,D_k^G$ are $2\pi/3$ apart. As we move away from the vertex, each gaps increase to $\pi$. So, the total increase is $3(\pi-2\pi/3)=\pi$. So, each positive vertex contributes $+\frac12$ to the winding number of $E_G$ and, by a similar argument, each negative vertex contributes $-\frac12$ to the winding number. See [ref] for another explanation.}
 
 \begin{corollary}\label{generation}
 A mesh Legendrian $\Lambda_G \subset J^1(\Sigma_G)$ is generated by a fibrewise PGMF on the oriented circle bundle $S^1 \to E_G \to \Sigma_G$ of Euler number $w(G)$.
 \end{corollary}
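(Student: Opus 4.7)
My plan is to verify the two substantive assertions separately: that the fibrewise PGMF coming from the disk system generates $\Lambda_G$, and that the circle bundle $E_G$ has Euler number $w(G)$.

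First, essentially all of the geometric work has been done in the paragraphs preceding the statement. The explicit recipe produces a system of disks $\{D_i\}$ on $E_G$ indexed by the faces $F_i$ of $\Sigma_G \setminus G$, so Proposition \ref{systems} immediately yields a fibrewise PGMF $f:E_G \to \bR$ whose fibrewise critical set is the union of positive cubic loci along each $\partial D_i$ and two parallel Morse sheets above each $D_i$. To see that the resulting Legendrian is $\Lambda_G$ I would verify the three local models by inspection: over the interior of each face the two Morse sheets form the standard flying-saucer unknot of Figure \ref{FigureLens}; over each edge the adjacent sheets come together at cusps and produce the 1-dimensional clasp of Figure \ref{FigureE}; and at each vertex, the cyclic clockwise/anticlockwise ordering of the three disks around the vertex in $E_G$ controls the chirality of the cusp spirals of Figure \ref{FigureT}, matching the $\pm$ decoration. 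This is exactly the Morse-theoretic picture worked out in \cite{IK93}, which I would cite.

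The substantive remaining task is the Euler number computation. The circle bundle $E_G$ is built by clutching trivial bundles across the boundaries of the components of $U_1\cup U_2$, with winding numbers $-1$ at each positive vertex, $+1$ at each negative vertex, $+1$ at each midpoint of an edge joining two positive vertices, and $-1$ at each midpoint of an edge joining two negative vertices; edges with endpoints of opposite colors are not thickened and contribute $0$. Since the Euler number of a circle bundle over a surface equals the signed sum of the winding numbers of the transition maps around the holes of a trivialization on the complement, writing $P,N$ for the number of vertices of each sign and $E_{++},E_{--},E_{+-}$ for the edges of each type, I get
\[
e(E_G) \;=\; -P + N + E_{++} - E_{--}.
\]

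Finally I would invoke trivalence to collapse this to $w(G)$. Counting half-edges by color gives $2E_{++}+E_{+-}=3P$ and $2E_{--}+E_{+-}=3N$, hence $E_{++}-E_{--}=\tfrac{3}{2}(P-N)$, so
\[
e(E_G) \;=\; -P+N+\tfrac{3}{2}(P-N) \;=\; \tfrac{1}{2}(P-N) \;=\; w(G).
\]
The only delicate point is the sign bookkeeping in the winding-number computation: one must adopt the orientation conventions illustrated in Figure \ref{Fig: 3 circles} consistently across vertices and edge midpoints, so that the contributions of positive versus negative vertices/edges carry opposite signs. Once this is done, trivalence forces the edge contribution and the vertex contribution to combine into the stated half-difference, which I expect to be the main, though mild, obstacle.
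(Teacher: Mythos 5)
Your proof is correct and follows essentially the same route as the paper's: appeal to Proposition \ref{systems} and the disk-system construction (citing \cite{IK93}) for the fact that the resulting fibrewise PGMF generates $\Lambda_G$, then compute $e(E_G)$ as a signed sum of winding numbers $-P+N+E_{++}-E_{--}$ and collapse it to $\tfrac{1}{2}(P-N)$ using the trivalence/half-edge count, exactly as the paper does (with $E_{++}=\tfrac{3P-Q}{2}$, $E_{--}=\tfrac{3N-Q}{2}$). Your sign bookkeeping matches Figure \ref{Fig: 3 circles}, so there is no gap.
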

 
 \begin{proof}% The mesh Legendrian $\Lambda_G$ is generated by the fibrewise PGMF corresponding to the system of local sections $\{D^G_i\}$. 
 Let $P$ be the number of positive vertices and let $N$ be the number of negative vertices. By Figure \ref{Fig: 3 circles} and the discussion above, the Euler number is equal the number of edges connecting two positive vertices minus $P$ plus $N$ minus the number of edges connecting two negative vertices. This is
 \[
 	e(E_G)=\frac{3P-Q}2-P+N-\frac{3N-Q}2=\frac12(P-N)=w(G).
 \]
 where $Q$ is the number of edges connecting vertices of opposite colors. \end{proof}
 
 %\begin{remark}  Note that the ambiguity in the fibrewise PGMF produced by Proposition \ref{systems} corresponds to a Legendrian isotopy of $\Lambda_G$ which preserves the fibres of $J^1(\Sigma_G) \to \Sigma_G$.
% \end{remark}
 
In fact, the converse to Corollary \ref{generation} also holds: 
 
  \begin{proposition}\label{euler for circle}
 Suppose that $\Lambda_G \subset J^1(\Sigma_G)$ is a mesh Legendrian generated by a fibrewise PGMF on the oriented circle bundle $S^1 \to E \to \Sigma_G$. Then $e(E)=w(G)$. 
 \end{proposition}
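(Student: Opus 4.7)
The plan is to reverse the construction of Corollary \ref{generation} by invoking Proposition \ref{systems}: extract from the fibrewise PGMF $f:E\to\bR$ a system of disks on $E$ which must match the combinatorial pattern imposed by $\Lambda_G$, and then compute $e(E)$ by the same local-contributions count that was used to prove Corollary \ref{generation}.

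First I would observe that the fibrewise critical set $\{\partial_F f = 0\}\subset E$ is precisely the image of $\Lambda_G$ under the embedding determined by $f$. Its projection to $\Sigma_G$ is the union of the slightly fattened faces of $\Sigma_G\setminus G$, meeting along the cusp locus that runs just outside $G$. By Proposition \ref{systems}, a fibrewise PGMF is determined up to deformation through fibrewise PGMFs by a system of disks whose boundaries comprise the fibrewise birth/death locus. Matching this data against the front of $\Lambda_G$ (whose only caustics are cusps along the pushed-out boundary of each face) forces the system to consist of one embedded disk $D_j\subset E$ per face $F_j$ of $\Sigma_G\setminus G$, with $\partial D_j$ mapping to the cusp curve just outside $\partial F_j$, and with three such disks $D_i,D_j,D_k$ meeting over each vertex of $G$.

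Next I would read off the clutching data of $E$ from the disk system, following the prescription of Corollary \ref{generation}. Trivializing $E$ on the complement of small neighborhoods $U_1$ of the vertices and $U_2$ of the midpoints of same-color edges, the disks provide canonical sections away from $U_1\cup U_2$. The chirality of spiraling at each vertex (positive or negative, dictated by the color labels of $\Lambda_G$ via Figure \ref{FigureT}) then forces the local clutching to have winding $-1$ at positive vertices, $+1$ at negative vertices, $+1$ at midpoints of positive-positive edges, $-1$ at midpoints of negative-negative edges, and $0$ at mixed edges, exactly as in Figure \ref{Fig: 3 circles}. Summing these contributions gives
\[ e(E) = e_{++} - P + N - e_{--} = \tfrac{1}{2}(P-N) = w(G) \]
by the same trivalence count carried out at the end of the proof of Corollary \ref{generation}.

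The main obstacle is the rigidity claim underlying the previous paragraph: the cyclic order in which the three cusp arcs at a vertex emanate from it, together with their spiraling chirality, must be entirely prescribed by the front of $\Lambda_G$ at that vertex, leaving no freedom to alter the local winding. This I would verify by a direct local analysis near each vertex: the three sheets of $\Lambda_G$ correspond to three adjacent min-max pairs of critical points on the fiber $S^1$, and the positive (resp.\ negative) chirality of the front forces these pairs to undergo a cyclic rotation of order $-1$ (resp.\ $+1$) as the base point traverses a small loop around the vertex, which is exactly the local winding number feeding into the count above.
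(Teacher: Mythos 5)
Your route is genuinely different from the paper's. The paper computes $e(E)$ via the cyclic set cocycle $c_\cZ$ of \cite{I04}: the PGMF induces a map $\Sigma_G \to B\cZ$ by recording, over each point, the cyclically ordered set of critical points in the fibre; the pullback of $[c_\cZ]$ is the Euler class, and the cochain is then evaluated simplex-by-simplex at the vertices of $G$, giving $\pm\tfrac12$ per vertex and hence $e(E)=\tfrac12(P-N)$. You instead propose to recover a global system of disks from the PGMF and rerun the clutching-function count of Corollary~\ref{generation}. The cocycle argument acts on the PGMF directly and keeps the computation purely local and cohomological; yours is more hands-on but must first globally reconstruct the disk system and pin down its combinatorics.

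There is a genuine gap in your reduction step. Proposition~\ref{systems} does not say that a fibrewise PGMF is ``determined up to deformation by a system of disks'' --- it only goes the other direction, producing a (contractible space of) PGMFs from a given system of disks. A general fibrewise PGMF on a circle bundle need not arise from one. What rescues your plan is that for a PGMF $f$ generating $\Lambda_G$, the fibrewise critical locus $\{\partial_F f = 0\}\subset E$ is the embedded image of $\Lambda_G$, hence a disjoint union of spheres each carrying a single equatorial positive-cubic circle bounding two Morse hemispheres; from this one can construct a genuine system of disks. But that should be argued from the structure of $\Lambda_G$ and the PGMF normal forms, not cited from Proposition~\ref{systems}. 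In particular you must check that the ``middle'' disks interpolating between the Morse hemispheres of each component can be chosen pairwise disjoint in $E$ (this rests on the arcs of distinct components being unlinked in each fibre, as in Figure~\ref{Fig: link}) and that their projections cover $\Sigma_G$ (which follows because the fronts of the $\Lambda^j_G$ cover $\Sigma_G$ by construction). Once this is in place, the local rigidity you correctly flag as the main obstacle is essentially the same assertion the paper also makes in its proof (``around a positive vertex \dots a PGMF is given by three disjoint sections \dots going clockwise \dots around the fiber''), so the remaining work is comparable in detail; your count $e(E)=e_{++}-P+N-e_{--}=\tfrac12(P-N)=w(G)$ then matches the paper's trivalence count verbatim.
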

 
We give a proof of Proposition \ref{euler for circle} using the ``cyclic set cocycle'' from \cite{I04}, which we briefly recall. Consider the category $\cZ$ of all finite nonempty cyclically ordered sets and cyclic order preserving monomorphisms. It follows from \cite{K92} that the geometric realization $B\cZ$ of this category is homotopy equivalent to $\mathbb CP^\infty$. Kontsevich gave a 2-form $\sum \theta_i\wedge \theta_j$ for his version of this space which he called $BU(1)^{comb}$. In \cite{I04} the integral of the Kontsevich 2-form is computed. Any 2-simplex in $B\cZ$ is a triple $A\subset B\subset C$ of cyclically ordered set. On this 2-simplex the value of the cocycle $c_\cZ$ is given by
 \[
 	c_\cZ(A,B,C)= -\frac12 \big(\mathbb P(a,b,c\text{ in cyclic order})-\mathbb P(a,c,b\text{ in cyclic order})\big)
 \]
In other words, $-2c_\cZ(A,B,C)$ is the probability that randomly chosen elements $a,b,c$ in $A,B,C$ are distinct and in cyclic order in $C$ minus the probability that they are distinct and in reverse cyclic order, see Figure \ref{Fig: cyclic sets}.

Given any oriented circle bundle $E$ over a surface $\Sigma$, any system of disks for $E$ gives a mapping from $\Sigma$ into $B\cZ$. It is shown in \cite{I04} and \cite{J89} that the pull-back of $[c_\cZ]$ to $ H^2(\Sigma)$ is the Euler class for $E$. We are now ready to give the proof.
 
 \begin{proof}[Proof of Proposition \ref{euler for circle}]
%There are several ways to prove this. 
We use the cyclic set cocycle $c_\cZ$ defined above to compute the Euler number of $E$.
%. In degree 2 this is defined on a triple of cyclically ordered sets $A\subset B\subset C$ by
% This formula was obtained in \cite{MMM} by integrating the Kontsevich form $\sum \theta_i\wedge \theta_j$ giving the Chern class of an oriented circle bundle in terms of continuous sections \ki{}{which formula was obtained? you just gave a definition!} 
 Around a positive vertex in a ribbon graph a PGMF is given by three disjoint sections of the corresponding oriented circle bundle giving disks $D_i,D_j,D_k$ going clockwise around the vertex on the surface and clockwise around the fiber. Let $A=\{k\}, B=\{j,k\}, C=\{i,j,k\}$. With probability $\frac16$, randomly chosen elements of $A,B,C$ will be distinct and $a=k,b=j,c=i$. Since these are in reverse cyclic order around the fiber circle, the sign is positive. Thus we get a contribution of $+\frac1{12}$. Each of the six triangles in the barycentric subdivision of the triangle give the same contribution. So, we get a total of $c_\cZ=\frac12$ for each positive triangle and, similarly, $-\frac12$ for each negative triangle, see Figure \ref{Fig: cyclic sets}. Therefore, $e(E_G)=\frac12(P-N)=w(G)$ as claimed.
%Such a fibrewise PGMF $f:E \to \bR$ is given by some system of disks $\{D_i\}$ on $E$. We can think of $E$ as above as obtained by gluing the trivial bundle $A$ to the trivial bundle on $C \cup B$ using some collection of clutching functions on the boundary of each component of $A$. Then up to homotopy the intersection of the disks $\{D_i \}$ with the restriction of $E$ over $C \cup B$ is the same as that of $\{D^G_i\}$. The winding number of the clutching function around each component of $A$ is then necessarily $\pm 1$ (otherwise the disks couldn't be extended over $A$) and the sign is determined by the coloring $\pm$ of the corresponding vertex because the value of two cubic points completely determines everything (needs to be written carefully). \ki{Kontsevich form also gives $w(G)$.}{Also, there is a positive or negative winding number around the center point of each edge connecting two vertices of the same sign positive or negative. As in the proof of Corollary \ref{generation}, we get $e(E)=w(G)$.}
 \end{proof}

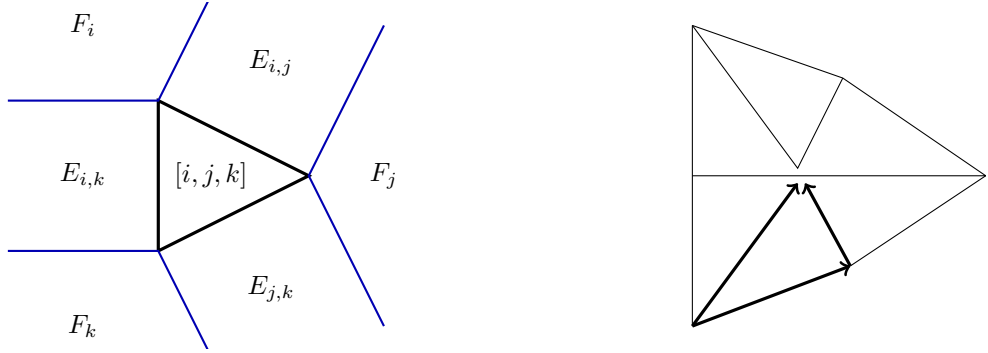
\begin{figure}[htbp]
\begin{center}
\begin{tikzpicture}%[scale=3]
%\draw[help lines=1,thick] (-5,-3) grid (5,3);
%\foreach \x in {-8,-6,...,8}\draw (\x,0) node{\x};\foreach \y in {-6,-4,...,8}\draw (0,\y) node{\y};
\clip (-2.5,-2.3) rectangle (11,2.3);
\coordinate (A) at (0,-1);
\coordinate (Am) at (-2,-1);
\coordinate (Ap) at (1,-3);
\coordinate (B) at (2,0);
\coordinate (Bm) at (3,-2);
\coordinate (Bp) at (3,2);
\coordinate (C) at (0,1);
\coordinate (Cm) at (-2,1);
\coordinate (Cp) at (1,3);
\coordinate (Xacb) at (.7,0);
\coordinate (Xab) at (1.5,1.5);
\coordinate (Xac) at (-1,0);
\coordinate (Xcb) at (1.5,-1.5);
%
%\foreach \x/\xtext in {A/a,B/b,C/c}
%\draw (\x) node{$\xtext$};
\draw[very thick,color=black] (A)--(B)--(C)--(A);
\draw[thick, color=blue!70!black] (Am)--(A)--(Ap); 
\draw[thick, color=blue!70!black] (Cm)--(C)--(Cp); 
\draw[thick, color=blue!70!black] (Bm)--(B)--(Bp); 
\draw[color=black] (Xacb) node{$[i,j,k]$};
\draw[color=black] (Xab) node{$E_{i,j}$};
\draw[color=black] (Xac) node{$E_{i,k}$};
\draw[color=black] (Xcb) node{$E_{j,k}$};
\draw[color=black] (-1,2) node{$F_i$}
(-1,-2) node{$F_k$}
(3,0) node{$F_j$};
%\coordinate (A) at (0,0);
%\draw[thick,color=blue] (A) ellipse [x radius=2.8cm,y radius=2.1cm];
\begin{scope}[xshift=8cm]
\draw[very thick,color=black,->] (-.9,-2)--(.5,-.1);
\draw[very thick,color=black,->] (-.9,-2)--(1.2,-1.2);
\draw[very thick,color=black,->] (1.2,-1.2)--(.6,-.1);
\draw[color=black] (-.9,-2)--(-.9,2);
\draw[color=black] (-.9,2)--(1.1,1.3)--(3,0)--(-.9,0);
\draw[color=black] (1.1,1.3)--(.5,.1)--(-.9,2);
\draw[color=black] (3,0)--(1.2,-1.2);
\end{scope}
\end{tikzpicture}
\caption{The cyclic poset on a positive triangle coming from a positive vertex in a ribbon graph is shown. For a negative triangle we reverse all the cyclic orders. As morphisms in the category of cyclic sets and inclusions, this gives six $2$-simplices. Each gives a contribution of
\[
c_\cZ(\{k\},\{j,k\},\{i,j,k\})=\frac1{12}
\]
to the Euler number of the covering for a total of $\frac12$ for each positive vertex.}
\label{Fig: cyclic sets}
\end{center}
\end{figure}

 \begin{remark}
 Note that Theorem \ref{euler} is the generalization of Proposition \ref{euler for circle} from circle bundles to stabilized circle bundles, which is necessary in order to deduce consequences about the Legendrian isotopy class. For stabilized circle bundles we cannot argue as above. Instead, we will extract the Euler number from the handle slide bifurcation picture.  \end{remark}

\subsection{Formal triviality}\label{formal triviality}
 The goal of this section is to establish property (a) in Corollary \ref{corollarypairs}. Indeed, all our examples $\Lambda_{\pm}$ in that corollary are Legendrian links to which the following proposition applies.
 
\begin{proposition}\label{unlinked} Suppose that $\Lambda_G \subset J^1(\Sigma_G)$ is the mesh Legendrian associated to a trivalent bicolored ribbon graph $G$ whose vertices are all the same color. Then $\Lambda_G$ is trivial as a formal Legendrian.
\end{proposition}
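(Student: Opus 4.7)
The plan is to establish formal Legendrian triviality in two steps: reduce to smooth triviality, then exhibit an explicit smooth unlinking using the combinatorial structure of $G$. For the reduction, note that each component $\Lambda_G^i$ is by construction a standard Legendrian unknot sphere (two parallel copies of $F_i$ meeting at cusps along $\partial F_i$), so its individual formal Legendrian structure is the standard one. Since the formal Legendrian isotopy class of a link of unknotted $2$-spheres in a $5$-dimensional contact manifold is determined by the smooth link type together with the formal Lagrangian framing on each component (and for unknotted standard spheres the latter is automatic), it suffices to show that $\Lambda_G$ is smoothly isotopic to a disjoint union of $2$-spheres contained in pairwise disjoint balls.

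For the smooth unlinking, I would use the generating family description from Section \ref{systems of disks}. Each sphere $\Lambda_G^i$ is smoothly the double of the disk $D_i \subset E_G$, namely $D_i^+ \cup_{\partial D_i} D_i^-$, where $D_i^\pm$ are small pushoffs in the fiber $S^1$ direction. Along an edge of $G$ the two neighboring disks $D_i$ and $D_j$ meet transversely over the edge, and the resulting clasp of their doubles is the suspension of the one-dimensional Legendrian clasp, hence smoothly trivial in the $5$-dimensional ambient space. The nontrivial smooth feature of $\Lambda_G$ therefore comes entirely from the spiraling at the vertices of $G$.

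The key observation is that when all vertices carry the same color, the chirality of this spiraling is globally coherent. Concretely, I would work in a neighborhood of each positive (resp.\ negative) vertex, write down a standard local model for the three doubled disks, and exhibit an explicit smooth isotopy that separates their corresponding spheres by unwinding the spiral in a small ball. Because the fiber-direction pushoff data at each vertex is consistent, these local separations can be patched together into a global smooth isotopy from $\Lambda_G$ to the standard unlink; one clean way is to induct on a spanning tree of $G$, adding back edges and vertices one at a time and verifying that each addition preserves smooth triviality under the same-color hypothesis.

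The main obstacle is precisely this patching step. The coherence of vertex signs is essential: mixed signs would force conflicting pushoff directions around cycles of $G$, producing potentially nontrivial linking obstructions. With all signs equal, the local unspoolings commute and globalize, but verifying this requires tracking the cyclic ordering of disks around each vertex (cf.\ Figure \ref{Fig: 3 circles}) and checking that the resulting global fiber-direction perturbation is well defined up to isotopy. That the formal triviality nevertheless holds while Legendrian nontriviality persists is the crux of Corollary \ref{corollarypairs}, and confirms that the obstructions detected by Legendrian Turaev torsion are genuinely contact-topological and invisible to the underlying smooth or formal data.
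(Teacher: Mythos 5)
There is a genuine gap, and you point to it yourself. Your plan is to reduce formal triviality to smooth triviality and then unlink $\Lambda_G$ smoothly by local unwindings at the vertices, patched along a spanning tree of $G$. But the patching step is never carried out --- you write ``The main obstacle is precisely this patching step'' --- and that step is the entire content of the proposition. Asserting that the local separations ``commute and globalize'' when all signs agree is a claim, not an argument; you do not write down the local model, the isotopy, or the verification that nothing re-links as you spiral off one vertex while the others remain. Likewise, the opening reduction (``formal Legendrian isotopy class is determined by smooth link type plus formal Lagrangian framing, and for unknotted standard spheres the latter is automatic'') is stated as a black box. It is a plausible h-principle-type fact for $2$-spheres in a $5$-manifold, and the paper makes a similar remark in the introduction about its pairs $\Lambda_\pm$, but you would need to justify it: what exactly is the relevant homotopy group of Lagrangian Grassmannians or framings, and why is it trivial here?

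The paper avoids both issues by never reducing to smooth topology at all. It deforms the \emph{formal} Legendrian data directly: for a chosen component $\Lambda_1$ it homotopes the tangent plane field $P_s$ by tilting the $\partial_\theta$ component along the collar of $\partial F_1$, then verifies embeddedness of the resulting family $\Lambda^s$ via a $3\times 4$ rank computation. That is precisely where the same-color hypothesis enters, not as a statement about ``patching local unspoolings'' but as the elementary inequality that along $\pi(\Lambda_1)\cap\pi(\Lambda_2)$ one has $b_1\neq b_2$ or $a_1 > a_2$, which keeps the planes distinct throughout the homotopy. After that $\Lambda_1$ can be slid into a disjoint Darboux ball over $\operatorname{int}F_1$ (the large $p$-coordinate prevents collisions), the plane-field tilt is undone, and one inducts over faces. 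Your instinct that the sign coherence is essential is correct, but you need to turn it into a concrete computation at the level of tangent planes rather than hoping the smooth unlinkings cohere.
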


Before we prove Proposition \ref{unlinked}, we recall the definition of a formal Legendrian.

\begin{definition}
A formal Legendrian in a contact manifold $(V^{2n+1},\xi)$ is a pair $(\Lambda, F_t)$ such that $\Lambda \subset V$ is an $n$-dimensional smooth embedded submanifold and $F_t: T \Lambda \to TV$ is a homotopy of injective bundle maps between $F_0=\text{id}_{T\Lambda}$ and a bundle map $F_1$ with image in $\xi$ such that $F_1(T\Lambda) \subset \xi$ is Lagrangian with respect to the conformal symplectic structure on $\xi$.
\end{definition}

\begin{remark}
For example, in the co-oriented case when $\xi= \ker(\alpha)$ for $\alpha$ a 1-form we require that $F_1(T\Lambda) \subset \xi$ is Lagrangian with respect to $d \alpha |_\xi$.
\end{remark}

When $\Lambda$ is connected we call $(\Lambda, F_t)$ a formal Legendrian knot and in general we call $(\Lambda, F_t)$ a formal Legendrian link. If $\Lambda$ is a genuine Legendrian submanifold of $V$, we can also think of it as a formal Legendrian $(\Lambda, F_t)$ by setting $F_t \equiv \text{id}_{T\Lambda}$, $t \in [0,1]$. A homotopy of formal Legendrians, also called a formal homotopy, is a family $(\Lambda^s,F_t^s)$.%, where both $\Lambda_t^s$ and $F_t^s$ depend smoothly on $s \in [0,1]$.

\begin{definition}
A formal Legendrian link $(\Lambda, F_t)$ is trivial if it is formally homotopic to a union of standard Legendrian unknots, each contained in a disjoint Darboux ball (see Figure \ref{FigureDisjointUnion}).
\end{definition}

\begin{figure}[htbp]
\begin{center}
\begin{tikzpicture}[scale=.8]
%\draw[help lines=1,thick] (-5,-2) grid (5,2);
%\foreach \x in {-8,-6,...,8}\draw (\x,0) node{\x};\foreach \y in {-6,-4,...,8}\draw (0,\y) node{\y};
%\coordinate (A) at (0,0);
% plane
\begin{scope}[yshift=-4cm,xshift=.5cm]
\draw[thick,color=blue] (-3.5,1) ellipse [x radius=2.5cm,y radius=.5cm];
%\draw[thick,color=blue] (3,1.5) ellipse [x radius=2.5cm,y radius=.5cm];
 \draw[thick] (-8,0)--(8,0)--(8.4,3)--(-6.2,3)--(-8,0);
\end{scope}
%right lens
\begin{scope}[xshift=5.5cm,yshift=-1cm, scale=.75]
\draw[thick,color=blue] (0,-2.5) ellipse [x radius=2.5cm,y radius=.5cm];
% bottom
\begin{scope}
\draw[fill,color=white] (0,0) circle[radius=1cm];
\draw[thick,color=blue] (-2,0)..controls (-1,0) and (-1,-1)..(0,-1); % bottom
\draw[thick,color=blue] (2,0)..controls (1,0) and (1,-1)..(0,-1);
\draw[fill,color=white] (0,0) ellipse [x radius=2.5cm,y radius=.5cm];
\end{scope}
% top
\begin{scope}
\draw[very thick,color=blue] (0,0) ellipse [x radius=2.5cm,y radius=.5cm];
\draw[fill,color=white] (0,0.05) ellipse[x radius=1.8cm, y radius=.5cm];
\end{scope}
\draw[thick,color=blue] (-2,0)..controls (-1,0) and (-1,1)..(0,1); % top
\draw[thick,color=blue] (2,0)..controls (1,0) and (1,1)..(0,1);
\draw (0.15,-1.5) node{$\downarrow p$};
\end{scope}
%middle lens
\begin{scope}[xshift=2.2cm,yshift=0cm, scale=.75]
\draw[thick,color=blue] (0,-2.5) ellipse [x radius=2.5cm,y radius=.5cm];
% bottom
\begin{scope}
\draw[fill,color=white] (0,0) circle[radius=1cm];
\draw[thick,color=blue] (-2,0)..controls (-1,0) and (-1,-1)..(0,-1); % bottom
\draw[thick,color=blue] (2,0)..controls (1,0) and (1,-1)..(0,-1);
\draw[fill,color=white] (0,0) ellipse [x radius=2.5cm,y radius=.5cm];
\end{scope}
% top
\begin{scope}
\draw[very thick,color=blue] (0,0) ellipse [x radius=2.5cm,y radius=.5cm];
\draw[fill,color=white] (0,0.05) ellipse[x radius=1.8cm, y radius=.5cm];
\end{scope}
\draw[thick,color=blue] (-2,0)..controls (-1,0) and (-1,1)..(0,1); % top
\draw[thick,color=blue] (2,0)..controls (1,0) and (1,1)..(0,1);
\draw (0.15,-1.5) node{$\downarrow p$};
\end{scope}
%
%Left lens
\begin{scope}[xshift=-3cm,yshift=-.2cm]
% bottom
\begin{scope}
\draw[fill,color=white] (0,0) circle[radius=1cm];
\draw[thick,color=blue] (-2,0)..controls (-1,0) and (-1,-1)..(0,-1); % bottom
\draw[thick,color=blue] (2,0)..controls (1,0) and (1,-1)..(0,-1);
\draw[fill,color=white] (0,0) ellipse [x radius=2.5cm,y radius=.5cm];
\end{scope}
% top
\begin{scope}
\draw[very thick,color=blue] (0,0) ellipse [x radius=2.5cm,y radius=.5cm];
\draw[fill,color=white] (0,0.05) ellipse[x radius=1.8cm, y radius=.5cm];
\end{scope}
\draw[thick,color=blue] (-2,0)..controls (-1,0) and (-1,1)..(0,1); % top
\draw[thick,color=blue] (2,0)..controls (1,0) and (1,1)..(0,1);
\draw (0.15,-1.5) node{$\downarrow p$};
\end{scope}
%\draw[color=blue] (0.7,0.2) node{$\Lambda_G^j$};
%
\end{tikzpicture}
\caption{A disjoint union of flying saucers lying over disjoint balls.}
\label{FigureDisjointUnion}
\end{center}
\end{figure}
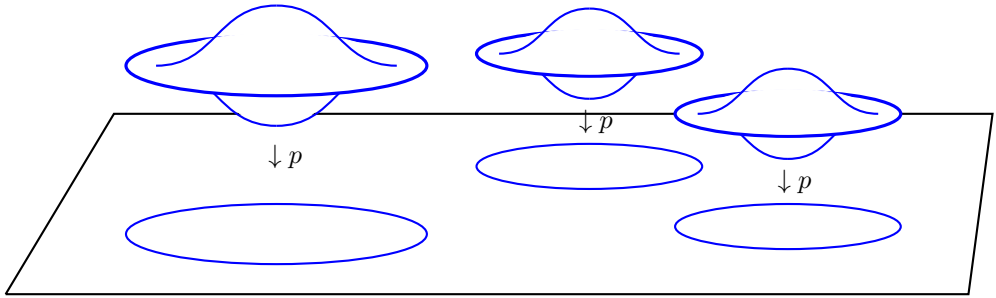

We review a useful viewpoint on formal homotopies. Recall the front projection $\pi:J^1(B) \to J^0(B)$. A generic Legendrian $\Lambda \subset J^1(B)$ is determined by its front $\pi(\Lambda) \subset J^0(B)$, because the missing coordinates $p_1, \ldots , p_n$ can be recovered by the formula $p_j = \partial z / \partial q_j$, where $z$ is the $\bR$ coordinate on $J^0(B)=B \times \bR$,  $q_1, \ldots, q_n$ are local coordinates on $B$ and $p_1 , \ldots , p_n$ are the dual coordinates. Equivalently, denote by $\text{Gr}_n(B \times \bR)$ the Grassmannian of $n$-planes in $T(B \times \bR)$ and let $P: \Lambda \to  \text{Gr}_n(B \times \bR)$ be the field of $n$-planes tangent to $\pi(\Lambda)$ (this is well defined even at a cuspidal point of the front). At a point $x \in \Lambda$ that is regular for the front projection we have $P(x)=d\pi_x(T \Lambda)$, which is non-vertical and hence equal to the graph of a linear form $\sum_{j=1}^n p_j dq_j$, from which the coordinates $p_j$ can be recovered.

%spanned by the vectors $\partial / \partial q_j + p_j \partial / \partial z$, $j=1, \ldots, n$.

Suppose that we deform the field $P=P_0$ through non-vertical plane fields $P_s:\Lambda \to \text{Gr}_n(B \times \bR)$. We obtain a deformation $(\Lambda^s,F_t^s)$ of $\Lambda$ through formal Legendrians. Indeed, the plane $P_s$ gives coordinates $p_1, \ldots , p_n$ as before which determine a smooth isotopy of submanifolds $\Lambda^s \subset J^1(B)$, and the homotopy $F^s_t$ is given by tracing back the derivative of the smooth isotopy (so $F^s_1$ has image $T \Lambda$ for all $s$). But we need to be careful. If $\pi(x)=\pi(y) \in J^0(B)$ for distinct points $x,y \in \Lambda$, then we need to ensure that the corresponding planes $P_s(x)$ and $P_s(y)$ never coincide. Otherwise $\Lambda_s$ would develop a self-intersection and hence would no longer be embedded. However, if this doesn't occur then $(\Lambda^s,F^s_t)$ is indeed a homotopy of formal Legendrians. %With this viewpoint in mind we are now ready to prove Proposition \ref{unlinked}.

\begin{proof}[Proof of Proposition \ref{unlinked}]
Suppose for concreteness that all the vertices of $G$ have positive labels (the case where all the labels are negative is entirely analogous). Set $\Lambda=\Lambda_G$ and $\Sigma=\Sigma_G$ to simplify notation. Pick a face $F_1$ of $\Sigma \setminus G$ and let $\Lambda_1$ be the corresponding component of $\Lambda$. Choose $\theta \in S^1$ a coordinate for $\partial F_1 \simeq S^1$ compatible with its boundary orientation which we extend to a tubular neighborhood $U=S^1 \times (-1,1)$ of $\partial F_1$ in $\Sigma$. Let $r \in (-1,1)$ be the collar direction, so that $\partial F_1 = \{ r=0\} \subset U$. 

The plane tangent to $\pi(\Lambda_1)$ at any point which lies over $U$ is spanned by a vector of the form $ \partial_\theta + a_1 \partial_z$ and a vector of the form $ \partial_r + b_1 \partial_z$. Take any other component $\Lambda_2$ of $\Lambda$ corresponding to another face $F_2$ of $\Sigma \setminus G$ which shares an edge with $F$. Then the plane tangent to $\pi(\Lambda_2) \subset J^0(\Sigma)=\Sigma \times \bR$ at any point which lies over $U$ is spanned by a vector of the form $  \partial_\theta + a_2 \partial_z$ and a vector of the form $ \partial_r + b_2 \partial_z$. Here $a_i,b_i \in \bR$ for $i=1,2$ and the key fact is that along the intersection locus $\pi(\Lambda_1) \cap \pi(\Lambda_2)$ we have either $b_1 \neq b_2$ or $a_1>a_2$, which can be verified by inspection. 

Consider the homotopy of plane fields along $\Lambda_1$ given by the formula $$P_s : \Lambda_1 \to T(\Sigma \times \bR), \qquad s \mapsto \text{span} \big\{ (1-s)\partial_\theta + a_1\partial_z, \, \,  \partial_r + b_1 \partial_z \big\},$$ cut off to the identity in the interior of the face $F_1$ (where there are no interactions with any other components of $\Lambda$) and stopping at $s$ near but strictly smaller than $1$. Observe that $P_s$ is always distinct to the plane field tangent to $\pi(\Lambda_2)$ along $\pi(\Lambda_1) \cap \pi(\Lambda_2)$. Indeed, if they happened to coincide, then the following matrix would have rank 2.

\[ \mat{1-s & 0 & 1 & 0 \\
0 & 1 & 0 & 1 \\
a_1& b_1 &a_2 &b_2 } \]

This implies $b_1=b_2$ and $a_1=a_2(1-s)<a_2$, a contradiction. Therefore we obtain a homotopy of formal Legendrians $(\Lambda^s ,F^s_t)$ with $\Lambda^0=\Lambda$, $F^0_t \equiv \text{id}_{T \Lambda}$. Note that every component except $\Lambda_1$ is fixed. 

After this deformation, we can further homotope $\Lambda_1$ through formal Legendrians into a small Darboux ball lying over the interior of $F_1$. There is no risk of creating self intersections due to the large $p$ coordinate of the deformed $\Lambda_1$ in the region where it overlaps with the other components of $\Lambda$. Once we have put $\Lambda_1$ in this ball we can undo the deformation of the plane field, i.e. we can homotope it through formal Legendrians to the standard Legendrian unknot. We then inductively repeat this procedure for every face of $\Sigma \setminus G$. This completes the proof.  \end{proof}

%\subsection{Turaev torsion of mesh Legendrians}
 
%Suppose that $f:W \to \bR$ is a generating function for a mesh Legendrian $\Lambda_G$ on $W=E \times \bR^{2k}$, the stabilization of a circle bundle $S^1 \to E \to \Sigma_G$. We recall our main results on mesh Legendrians, which were stated in Section \ref{}. 
  
 %\begin{theorem} $e(E)=\pm w(G)$. \end{theorem}
 
%Let $n=|w(G)|$ and take $\zeta$ an $n$-th root of unity. There corresponds a unique unitary local system $\rho: \pi_1W \to U(1)$ which is determined by the condition that the generator of $\pi_1W = \bZ$ which orients \ki{}{$W=E \times \bR^{4k}$} as a stabilized circle bundle of Euler number $n>0$ gets mapped to \ki{}{$\zeta^{-1}$} by $\rho$. The second structural result is.
 
 %\begin{theorem}
%$T(\Lambda_G,W,\rho)=\{\pm(1-\zeta^\ve)\}$, where $\ve=\text{sign}(w(G))$.\end{theorem}
 
 %\begin{proof}
 %A geometric argument shows that if the Turaev torsion is $1-v$, then $v^{e(\Lambda)}=u^{|e(\Lambda)|}$. An algebraic argument implies that $v=u^\ve$. This will made rigorous in the next sections. \end{proof}
 
 %An immediate corollary is Corollary \ref{turaev}, since $\pm(1-\zeta) \neq \pm(1-\zeta^{-1})$ whenever $n \geq 3$.
 
%It remains for us to prove these two structural results \ki{}{using parametrized Morse theory}.

%\subsection{Punching a hole}(comes later)

%Kiyoshi from here%

%Punch hole, explain turaev torsion in $\bZ[u,u^{-1},w]/(w(1-u)=1)$.

\subsection{0-parametric Morse theory}

We now turn our attention to the Morse theory of evenly stabilized circle bundles. First, consider the case where $B$ is a point, i.e. consider a single Morse function on the evenly stabilized circle $W=S^1\times \bR^{4k}$. We will only consider functions which are a bounded $C^1$ distance from the standard quadratic form $||x||^2-||y||^2$, where $(x,y) \in \bR^{2k} \times \bR^{2k}=\bR^{4k}$. Let $U(R)$ be the group of units of the commutative ring $R=\bZ[u,u^{-1},(1-u)^{-1}]$. This is shorthand for $\bZ[u,v,w]/(uv=1,w(1-u)=1)$. Let $\rho_0:\pi_1W \to U(R)$ be the representation which sends the oriented circle $[S^1]$ to $u^{-1}$. Recall that an Euler structure on a Morse function with two critical points consists of a path between those two points. In this case this is the same as a weak Euler structure.

%Our Morse function $f:S^1\times \bR^{4k}\to\bR$ is required to be quadratic at infinity, i.e. equal to $Q(s,x,y)=||x||^2-||y||^2$ outside a compact set. 

\begin{lemma}\label{Lemma 0}
Let $f$ be a Morse function on $W=S^1\times \bR^{4k}$ which is a bounded $C^1$ distance from the standard quadratic form. Then for $\rho_0$ as above and any weak Euler structure $e$, we have
\[
	\tau(f,e,\rho_0)=\pm u^n(1-u).
\]
for some choice of sign $\pm1$ and some $n \in \bZ$.
\end{lemma}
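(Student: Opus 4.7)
The plan is to reduce to an explicit model function via a homotopy argument, exploiting the fact that $W = S^1\times\bR^{4k}$ deformation retracts onto $S^1$. We have $\pi_1 W = \bZ$ and the units of $R = \bZ[u^{\pm1},(1-u)^{-1}]$ are precisely $\{\pm u^n(1-u)^m : n,m \in \bZ\}$, so the claim amounts to showing that the $(1-u)$-power in $\tau(f,e,\rho_0)$ equals $1$ and that the $u$-ambiguity is absorbed by the choice of weak Euler structure.

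First I would compute the torsion of the model $f_0(\theta,x,y) = -\cos(2\pi\theta) + \|x\|^2 - \|y\|^2$, which is Morse with two critical points on $S^1\times\{0\}$ of consecutive indices $2k$ and $2k+1$. Its twisted Thom-Smale complex is concentrated in these two degrees, each of rank one over $R$, and its differential is computed from the two gradient trajectories connecting the critical points, which wind around $S^1$ in opposite directions. With a choice of weak Euler structure $e_0$ (a homotopy class of path between the two critical points) and suitable sign conventions, the trajectories contribute $\pm 1$ and $\mp u^{\pm 1}$, so that $\tau(f_0,e_0,\rho_0) = \pm(1-u)$ after multiplying by a unit $u^{\pm 1}$. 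Any other weak Euler structure differs from $e_0$ by wrapping the path around $S^1$, which conjugates the incidence matrix by a power of $u$ and so multiplies $\tau$ by $u^{\pm k}$. Thus every Turaev torsion of $f_0$ lies in $\{\pm u^n(1-u) : n \in \bZ\}$.

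For a general $f$, the $C^1$-boundedness hypothesis together with Example \ref{ex: standard quadratic} places both $f$ and $f_0$ in the hypothesis of Lemma \ref{lemma: we need C1}, so the straight-line interpolation $f_t = (1-t)f_0 + tf$ is a homotopy through fibrations at infinity. After a generic perturbation, $f_t$ is Morse for all but finitely many $t$, at which single birth-death bifurcations occur. Starting from the prescribed weak Euler structure $e$ on $f = f_1$ and propagating backward, I would produce a family $e_t$ of weak Euler structures compatible with $f_t$ in the sense of Definition \ref{Def: weak euler compatible}: at each birth-death time, one arranges the canceling pair to lie in a common part and to be joined by the unique connecting trajectory, which becomes null-homotopic at the bifurcation, adjusting the rest of the partition to preserve the weak Euler axioms. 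Then Lemma \ref{weakinv} yields $\tau(f,e,\rho_0) = \tau(f_0,e_0',\rho_0)$ for the resulting weak Euler structure $e_0'$ on $f_0$, and the previous paragraph gives $\tau(f_0,e_0',\rho_0) = \pm u^n(1-u)$.

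The main obstacle will be the construction of the compatible family $e_t$ across birth-deaths: at each such instant one must modify the weak Euler structure immediately before so that the dying pair is joined by a contractible path, while re-routing the paths for other critical points in that part to preserve Definition \ref{Definition: weak}. This is a standard 1-parametric Morse-theoretic manipulation, but if dimension obstructs it one first applies Proposition \ref{Proposition: turaev stability} to stabilize (by an even number of times) until there is enough room, without affecting the torsion.
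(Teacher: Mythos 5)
Your approach is correct in outcome but genuinely different from the paper's, and noticeably more laborious. The paper's proof is short: it observes that the model function $f_0$ (stabilized height function on $S^1$) has torsion $1-u$, invokes Lemma~\ref{lemma: we need C1} to get a homotopy of fibrations at infinity from $f_0$ to $f$, and then uses the fact that \emph{Reidemeister} torsion is invariant along such a homotopy. Since Turaev torsion maps to Reidemeister torsion under $U(R)/\pm1 \to U(R)/\pm\rho(\pi_1 F) = U(R)/\pm u^{\bZ}$, and the Reidemeister torsion is pinned to the class of $1-u$, the Turaev torsion must lie in the fibre $\{\pm u^n(1-u)\}$. No propagation of weak Euler structures is needed at all.

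Your proposal instead tries to drag a compatible family of weak Euler structures across the homotopy and then invoke Lemma~\ref{weakinv} to get exact equality of Turaev torsions. This is where the argument is shakier than you present it. At each backward birth--death the dying pair may lie in different parts of the partition, or may be joined by a non-contractible path; you handle this by ``adjusting the partition and re-routing paths,'' but that adjustment is a genuine change of weak Euler structure at a fixed Morse time and is \emph{not} covered by Lemma~\ref{weakinv}, which only controls the torsion across a birth--death once conditions (a),(b) of Definition~\ref{Def: weak euler compatible} already hold. One has to separately track what the discrete modification does to the determinant. As it happens, merging balanced parts leaves the torsion unchanged, and re-routing a path changes it by a unit in $\pm\rho(\pi_1)=\pm u^{\bZ}$, so the conclusion $\pm u^n(1-u)$ survives -- but this is exactly the bookkeeping that the paper's Reidemeister-invariance argument packages away in one stroke. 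In short: your route works after the missing bookkeeping is filled in, but the $u^n$ ambiguity in the statement is there precisely because the cheap thing to control is the Reidemeister class, not the Turaev torsion itself, and the paper's proof leans on that directly.
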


\begin{proof}
Observe that there is one Morse function $f_0$ which is a bounded $C^1$ distance from the standard quadratic form and has torsion $1-u$, namely the stabilization of the height function on $S^1$. By Lemma \ref{lemma: we need C1}, $f$ is homotopic to $f_0$ through fibrations at infinity. During the homotopy the fibre Turaev torsion can change, but the Reidemeister torsion stays the same. The Reidemeister torsion is the image of the Turaev torsion under the map $U(R) / \pm 1 \to U(R)/\pm \rho(\pi_1F)$. Hence the Turaev torsion can only change by multiplication by elements of $\pm \rho(\pi_1F)=\{\pm u^n\}$. We conclude that the possible values of the Turaev torsion are $\pm u^n(1-u)$, as claimed.
\end{proof}

\begin{lemma}\label{Lemma 0.0}
Let $f$ be a Morse function on $W=S^1\times \bR^{4k}$ which is a bounded $C^0$ distance from the standard quadratic form and has exactly two critical points. Then their indices are $2k$ and $2k+1$.
\end{lemma}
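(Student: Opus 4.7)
The plan is to determine the indices of the two critical points by computing the integral homology of the Thom-Smale complex of $f$ in two different ways and comparing. Since $f$ has exactly two critical points, at indices $i<j$ say, its Thom-Smale complex with $\bZ$ coefficients has rank one in each of those degrees and is zero in all others.

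First I would identify the Thom-Smale homology with sublevel set topology: standard Morse theory gives that the Thom-Smale complex of $f$ computes $H_*(W,\{f\leq z_0\})$ for $z_0$ smaller than both critical values of $f$. Using the $C^0$ bound $\|f-q\|_{C^0}\leq C$, the squeeze $\{q\leq z-C\}\subset\{f\leq z\}\subset\{q\leq z+C\}$ from the proof of Lemma \ref{Lemma: bars} produces a natural isomorphism
\[
H_*(W,\{f\leq z_0\})\cong H_*(W,\{q\leq z_0'\})
\]
for any $z_0'<0$. Although $q$ itself is not a fibration at infinity (its critical locus is the full circle $S^1\times\{0\}$), its negative sublevel sets all share the same homotopy type, so the squeeze argument of Lemma \ref{Lemma: bars} goes through verbatim.

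Next I would compute the right hand side directly. Writing $q_0(x,y)=\|x\|^2-\|y\|^2$ on $\bR^{4k}$, for any $z_0'<0$ the sublevel set $\{q_0\leq z_0'\}$ deformation retracts in the $y$-direction onto a sphere $S^{2k-1}$, so the contractibility of $\bR^{4k}$ yields $H_n(\bR^{4k},\{q_0\leq z_0'\})\cong\tilde H_{n-1}(S^{2k-1})$, which equals $\bZ$ for $n=2k$ and vanishes otherwise. Applying the K\"unneth formula for pairs to $W=S^1\times\bR^{4k}$ with $\{q\leq z_0'\}=S^1\times\{q_0\leq z_0'\}$ then gives that $H_*(W,\{q\leq z_0'\})$ is $\bZ$ in degrees $2k$ and $2k+1$ and zero in every other degree.

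Finally I would compare with the Thom-Smale complex of $f$, which is a two-term complex $\bZ\xrightarrow{d}\bZ$ concentrated in degrees $j,i$. If $j>i+1$ the differential vanishes for degree reasons, yielding free homology of rank one in degrees $i$ and $j$; matching the above computation would force $\{i,j\}=\{2k,2k+1\}$, contradicting $j>i+1$. If $j=i+1$ and $d\neq 0$, then $H_{i+1}=0$ and $H_i$ is a nontrivial torsion group, again contradicting the computation. Hence $j=i+1$ and $d=0$, and $\{i,i+1\}=\{2k,2k+1\}$, giving $i=2k$ and $j=2k+1$. The one technical wrinkle to verify carefully is the step invoking Lemma \ref{Lemma: bars}, since $q$ is not a fibration at infinity; but this is harmless, as only the $C^0$-squeeze argument is needed, and that only uses stability of the homotopy type of $\{q\leq z\}$ below the lowest critical value of $q$, which holds trivially here.
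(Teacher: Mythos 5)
Your proof is correct and follows the same route the paper intends: the paper's one-line proof simply invokes Lemma \ref{Lemma: bars} ``for homological reasons,'' and you have carried out exactly that comparison, computing $H_*(W,\{f\le z_0\})$ once via the squeeze against the standard quadratic form and once via the two-generator Thom–Smale complex, and matching degrees. Your side remark about $q$ not being a fibration at infinity is handled appropriately, since the squeeze only needs stability of the negative sublevel sets, which holds for $q$.
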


\begin{proof}
The indices are determined for homological reasons, see Lemma \ref{Lemma: bars}.
\end{proof}

The coefficient ring $R=\bZ[u,u^{-1},(1-u)^{-1}]$ is also appropriate for any trivial stabilized $S^1$ bundle $W=B\times S^1\times \bR^{4k}$. Suppose $f:W\to \bR$ is of Euler type, see Definition \ref{Def: function euler}, and assume that the restriction of $f$ to each fibre is a bounded $C^1$ distance from the standard quadratic form. Let $\rho:\pi_1W\to U(R)$ be given by $\rho_0:\pi_1S^1\to U(R)$ composed with the map $\pi_1W\to \pi_1S^1$ induced by the projection $W\to S^1$. Note that any element $x\in R$ can be written as $x=1-v$ (let $v=1-x$).

\begin{proposition}\label{prop: un=vm lemma}
Suppose that the fibre Turaev torsion of $f$ with respect to $\rho$ is $\tau(f,\rho)=\pm(1-v)\in R$ and $v^n=u^m$ for integers $n,m$ with $n>0$. Then $|m|=n$ and $v=u^\varepsilon$ where $\varepsilon=m/n$ is the sign of $m$.
\end{proposition}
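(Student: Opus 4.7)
The plan is to reduce the problem to the structure of the unit group of $R=\bZ[u,u^{-1},(1-u)^{-1}]$, combined with the observation that $v$ is forced to be a monomial up to sign because $v^n=u^m$ in the UFD $\bZ[u,u^{-1}]$.

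First I would establish two standalone algebraic facts. (i) The units of $R$ are exactly the elements $\pm u^a(1-u)^b$ with $a,b\in\bZ$: an arbitrary element of $R$ can be written as $p(u)/(u^a(1-u)^b)$ with $p\in\bZ[u]$ coprime to both $u$ and $1-u$, and for the inverse to lie in $R$ one needs $p$ to divide some $u^c(1-u)^d$ in $\bZ[u]$, which forces $p=\pm1$ by coprimality. (ii) If $v\in\bQ(u)$ satisfies $v^n=u^m$ with $n>0$, then $v=\pm u^\ell$ with $n\ell=m$. Indeed, write $v=P(u)/Q(u)$ in lowest terms as Laurent polynomials; then $P^n=u^mQ^n$ in $\bZ[u,u^{-1}]$, and since $\bZ[u,u^{-1}]$ is a UFD in which the irreducible $u$ is coprime to both $P$ and $Q$ once one normalizes, both $P$ and $Q$ must be monomials, yielding $v=\pm u^\ell$ with the required exponent relation.

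Next I would bring in the Turaev torsion hypothesis. Because fibre Turaev torsions are units in $R$ by construction, the assumption $\tau(f,\rho)=\pm(1-v)$ tells us that $1-v$ is a unit of $R$. Combining with (i) and (ii), the proof reduces to a finite case analysis on the equation
\[
1\mp u^\ell=\pm u^a(1-u)^b.
\]
The main step is to rule out the minus sign and pin down $\ell$: I would use the $(1-u)$-adic valuation on the localization of $\bZ[u]$ at the prime $(1-u)$ (equivalently, evaluation and order of vanishing at $u=1$). For $v=-u^\ell$, the left side is $1+u^\ell$, which takes the value $2$ at $u=1$, hence has valuation $0$; so $b=0$ and $1+u^\ell=\pm u^a$, which is impossible on comparing the number of Laurent monomials. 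For $v=u^\ell$ with $|\ell|\geq 2$, factoring $1-u^\ell=(1-u)\cdot(1+u+\cdots+u^{\ell-1})$ (or the analogous expression for negative $\ell$) shows $b=1$ and reduces the equation to $1+u+\cdots+u^{\ell-1}=\pm u^a$, again impossible by counting monomials. The case $\ell=0$ gives $1-v=0$, which is not a unit.

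Thus the only surviving possibilities are $v=u$ or $v=u^{-1}$, i.e.\ $\ell=\varepsilon\in\{\pm 1\}$, whence $m=n\ell=\pm n$, $|m|=n$, and $\varepsilon=m/n$ as claimed. The only mildly delicate step is the unit-classification lemma for $R$; everything else is elementary polynomial arithmetic in $\bZ[u,u^{-1}]$, so I expect no real obstacle beyond being careful with the case analysis on signs and the direction of $\ell$.
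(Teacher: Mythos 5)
Your proof is correct, and it takes a genuinely different route from the paper's. The paper argues analytically: it first invokes Lemma~\ref{Lemma 0} to write $1-v=\pm u^k(1-u)$, then studies the ring homomorphisms $\rho_\theta\colon R\to\bC$ sending $u\mapsto e^{i\theta}$ for small $\theta>0$, and compares Taylor expansions of $\rho_\theta(1-v)$ and $\rho_\theta(\pm u^k(1-u))$ to second order in $\theta$ to extract $k$, the sign, and the relation $|m|=n$. You instead argue purely algebraically: you classify $U(R)=\{\pm u^a(1-u)^b\}$ via the UFD structure of $\bZ[u]$, observe that $v^n=u^m$ in $\bQ(u)$ forces $v=\pm u^\ell$ with $n\ell=m$, and then run a short case analysis on $1\mp u^\ell=\pm u^a(1-u)^b$ using the $(1-u)$-adic valuation and a Laurent-monomial count. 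A pleasant feature of your route is that you do not need the full strength of Lemma~\ref{Lemma 0} (which fixes the exponent on $(1-u)$ to be exactly one via a homotopy argument); you only use the general fact from Lemma~\ref{indet} that the Turaev torsion is a unit of $R$, and you recover $b=1$ directly by factoring $1-u^\ell=(1-u)(1+u+\cdots+u^{\ell-1})$. One minor imprecision in your step (ii): you speak of $u$ as an ``irreducible'' in $\bZ[u,u^{-1}]$, but $u$ is a unit there. The argument should be phrased in $\bQ[u]$ (or $\bZ[u]$, tracking content): write $v=P/Q$ in lowest terms in $\bQ[u]$; then $P^n=u^mQ^n$ forces $Q$ to be constant, and then $P^n$ a monomial forces $P$ to be a monomial, with the constant an $n$-th root of unity in $\bQ$, hence $\pm1$. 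With that small repair, everything checks out.
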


\begin{proof}
We prove this by taking representations $R\to \bC$. For any small $\theta>0$ let $\rho_\theta:R\to\bC$ be the unique ring homomorphism so that $\rho_\theta(u)=e^{i\theta}$. To second order in $\theta$ this is $1+i\theta-\frac12\theta^2$. Then $\rho_\theta(v^n)=\rho_\theta(u)^m=e^{im\theta}$. This implies that $\rho_\theta(v)=e^{i\psi}$ where $\psi=\frac mn\theta$ plus an integer multiple of $2\pi/n$. By Lemma \ref{Lemma 0}, $|\rho_\theta(1-v)|=|\rho_\theta(1-u)|\approx \theta$ which is very small. In other words, $\rho_\theta(v)$ is very close to 1. So, $\psi=\frac mn\theta$. By Lemma \ref{Lemma 0} we have $1-v= \pm u^k(1-u)$ for some integer $k$. But:
\[
	\rho_\theta(1-v)\approx -\frac mn\theta i+\frac{m^2}{2n^2}\theta^2
\]
\[
	\rho_\theta\big(\pm u^k (1-u)\big)\approx \pm e^{ik\theta}\left(
	-i\theta +\frac12 \theta^2
	\right)\approx \pm \left(
	-i\theta +k\theta^2+\frac12\theta^2
	\right)
\]
to second order in $\theta$. Comparing the linear terms we see that $|m|= n$ and the sign $\pm$ in the second equation is the sign $\ve$ of $m$. Comparison of the $\theta^2$ terms give two cases.
\begin{enumerate}
\item $\ve=+$. Then $k=0$ and $v=u$.
\item $\ve=-$. Then $k=-1$ and $1-v=-u^{-1}(1-u)=1-u^{-1}$. So $v=u^{-1}$.
\end{enumerate}
In both cases, $v=u^\ve$ where $\ve=m/n$ proving the proposition.
\end{proof}

\subsection{1-parametric Morse theory}
 
 A bundle $F\to W\to J$ over an interval $J\subset \bR$ is always trivial. So $W=J\times F$ and any smooth function $f:W\to\bR$ can be viewed as a 1-parameter family of functions $f_t:F\to\bR, t\in J$. Let $F=S^1\times \bR^{4k}$ be the evenly stabilized circle and let $\rho:\pi_1W \to U(R)$ be as above, namely the representation which sends the oriented circle $[S^1]$ to $u^{-1}$, where $R=\bZ[u,u^{-1},(1-u)^{-1}]$.
 
 A generic 1-parameter family of functions $f_t:F\to \bR$ will have birth-death points and handle slides at isolated times. Handle slides, also known as $i/i$ incidences, occur when, at some parameter value $t_0 \in J$, a trajectory of the gradient-like vector field $Z$ goes between two Morse critical points of $f_{t_0}$ of the same index $i$, say $x_a$, $x_b$, with $f_{t_0}(x_a)<f_{t_0}(x_b)$. Such a handle slide is denoted $x_{ab}^{\pm s}$. Here $s=\rho(\sigma)$ for $\sigma\in \pi_1 F$ the homotopy class of $p_b\# \gamma \# \overline {p}_a$, where $p_a$ is the chosen path from the basepoint to $x_a$ and $\overline{p}_a$ is the path $p_a$ but with reversed orientation. The sign $\ve=\pm 1$ is determined by comparing the orientation of the descending manifold of $x_b$ with the normal orientation of the ascending manifold of $x_a$ at the moment these cross.
 
 As $t$ crosses $t_0$ the matrix for the boundary map $\partial_i:C_i(f_t;R^\rho)\to C_{i-1}(f_t;R^\rho)$ will change by the column operation which adds $\varepsilon \rho(\sigma)$ times the $a$-th column to the $b$-th column. Recall from Remark \ref{second remark about IK conventions} that the entries in these matrices are in the opposite ring $R^{op}=R$. Indeed, if $\gamma'$ is a trajectory of $-Z$ from $x_a$ to a critical point $y_c$ of index $i-1$ contributing $\rho(\tau)$ to the $(c,a)$ entry of the matrix of $\partial_i$ (where $\tau=[p_a\# \gamma' \# \overline{p}_c$]), then a new trajectory from $x_b$ to $y_c$ will be created (namely $p_b\# \gamma\# \gamma' \# \overline{p}_c$), whose homotopy class is $\sigma\tau$. The column operation $E_{ab}^{\varepsilon \rho(\sigma)}$ corresponding to the handle slide $x_{ab}^{\varepsilon\rho(\sigma)}$ described above will then add $\rho(\tau)\rho(\sigma)$ to the $(c,b)$ entry of the matrix of $\partial_i$. But $\rho(\tau)\rho(\sigma)=\rho(\sigma\tau)$ since $R$ is commutative. This column operation will be denoted $x_{ab}^{\varepsilon\rho(\sigma)}$, the same as the geometric handle slide, when there is no possibility of confusion. Following \cite{IK93}, row operations, coming from $i-1/i-1$ handle slides given by a trajectory of $-Z$ from $y_b$ down to $y_a$, which add a multiple of the $b$-th row of the matrix of $\partial_i$ to the $a$-th row of that matrix, will be denoted $y_{ab}^s$.
 
 An important property of row and column operations on matrices is that multiplication of $x_{ab}^{\sigma}$ is additive in $\sigma$:
 \[
 	\prod_i x_{ab}^{\sigma_i}=x_{ab}^{\sum \sigma_i}
 \]
 We refer to this property as the additivity of row and column operations.

 Figure \ref{Fig: link} shows the graphic of a 1-parameter family of functions on the evenly stabilized circle $F=S^1\times \bR^{4k}$ parametrized by an interval $J=[t_-,t_+]$.

\begin{figure}[htbp]
\begin{center}
\begin{tikzpicture}%[scale=3]
%\draw[help lines=1,thick] (-7,-3) grid (7,1.5);
%\foreach \x in {-8,-6,...,8}\draw (\x,0) node{\x};\foreach \y in {-6,-4,...,8}\draw (0,\y) node{\y};
\coordinate (A1) at (-1,.35);
\coordinate (A2) at (-1,1.1);
\coordinate (A0) at (-1,.7);
\draw[<->] (A1)--(A2);
\draw (A0) node[left]{$-a$};
\coordinate (B1) at (1,.35);
\coordinate (B2) at (1,1.1);
\coordinate (B0) at (1,.7);
\draw[<->] (B1)--(B2);
\draw (B0) node[right]{$b$};
\coordinate (C1) at (-1,-.35);
\coordinate (C2) at (-1,-1.1);
\coordinate (C0) at (-1,-.7);
\draw[<->] (C1)--(C2);
\draw (C0) node[left]{$-b$};
\coordinate (D1) at (1,-.35);
\coordinate (D2) at (1,-1.1);
\coordinate (D0) at (1,-.7);
\draw[<->] (D1)--(D2);
\draw (D0) node[right]{$a$};
\coordinate (2K) at (-3.1,-1.2);
\draw(2K) node[left]{$y_i$};
\coordinate (2Kp) at (-3.1,1.2);
\draw(2Kp) node[left]{$x_i$};
\coordinate (Lb) at (-3,-2.2);
\coordinate (Cb) at (0,-2.2);
\coordinate (Rb) at (3,-2.2);
\coordinate (Lbu) at (-3,-2);
\coordinate (Cbu) at (0,-2);
\coordinate (Rbu) at (3,-2);
\draw(Lb) node{$t_-$};
\draw(Cb) node{$t_0$};
\draw(Rb) node{$t_+$};
\draw(Lbu) node[above]{$\uparrow$};
\draw(Cbu) node[above]{$\uparrow$};
\draw(Rbu) node[above]{$\uparrow$};
\coordinate (2Kr) at (3.1,-1.2);
\draw[color=blue] (2Kr) node[right]{$y_j$};
\coordinate (2Kpr) at (3.1,1.2);
\draw[color=blue] (2Kpr) node[right]{$x_j$};
\clip (-3,-2) rectangle (3,1.5);
\begin{scope}[xshift=-3cm] % (c) left lens (black)
\draw[very thick] (-1,1.25)--(1,1.25)..controls (3,1.25) and (4,0)..(5,0);
\draw[very thick] (-1,-1.25)--(1,-1.25)..controls (3,-1.25) and (4,0)..(5,0);
\end{scope}
\begin{scope}[xshift=1cm] % (c) right lens (blue)
\draw[very thick, color=blue] (-3,0)..controls (-2,0) and (-1,1.25)..(1,1.25)--(3,1.25);
\draw[very thick, color=blue] (-3,0)..controls (-2,0) and (-1,-1.25)..(1,-1.25)--(3,-1.25);
\end{scope}
\end{tikzpicture}
\caption{Portion of the front projection of $\Lambda_G$ lying over a curve transverse to an edge of $G$: $t_-,t_0,t_+$ are parameter values along this curve. The critical points $y_i,y_j$ have index $2k$ and $x_i,x_j$ have index $2k+1$. The four two-headed vertical arrows indicate the possible handle slides which we claim must be: $y_{ij}^{-b},x_{ji}^{-a}, x_{ij}^{b},y_{ji}^{a}$.
}
\label{Fig: link}
\end{center}
\end{figure}
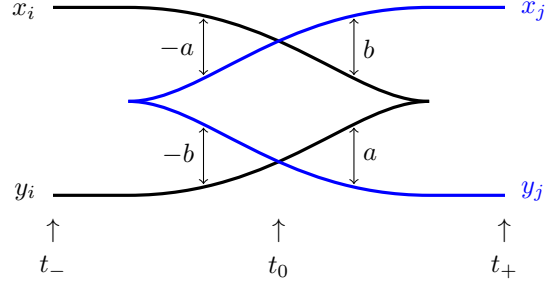

 \begin{lemma}\label{lem: ab=v}
Consider a 1-parameter family of functions on the evenly stabilized circle $S^1\times \bR^{4k}$ which is a bounded $C^1$ distance from the standard quadratic form and whose Cerf diagram is as shown in Figure \ref{Fig: link}. Choose orientations at the critical points so that the incidence at each birth-death point is $+1$. Then the four handle slides which occur are $y_{ij}^{-b},x_{ji}^{-a}, x_{ij}^{b},y_{ji}^{a}$ for some $a,b\in R$ such that
 $ab=v$, where $\tau(f,\rho)=\pm(1-v)$ is the fibre Turaev torsion of the function. 
 \end{lemma}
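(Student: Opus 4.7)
My plan is to track the boundary matrix of $\partial_{2k+1}$ through the interval between the birth of $(x_j, y_j)$ and the death of $(x_i, y_i)$, and then to read off the four coefficients from the algebraic conditions governing a clean birth/death. First, before the birth, Lemma~\ref{Lemma 0.0} provides a single generator in each of degrees $2k$ and $2k+1$, so $\partial_{2k+1}$ is a $1 \times 1$ matrix $(c)$; by Lemma~\ref{Lemma 0} and a choice of sign in the Euler structure, I may take $c = 1-v$. Since the birth-death incidence at $t_0$ is $+1$ and, by the clean-birth hypothesis recalled in Section~\ref{reidemeistertorsion}, no other trajectories involve the newborn pair, the matrix just after the birth is
$$ M_{\text{birth}} = \begin{pmatrix} c & 0 \\ 0 & 1 \end{pmatrix} $$
with rows indexed by $y_i, y_j$ and columns by $x_i, x_j$.

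Next, I would let $\alpha_1, \beta_1, \alpha_2, \beta_2 \in R$ denote the coefficients of the four handle slides, corresponding respectively to $x_{ji}^{\alpha_1}, y_{ij}^{\beta_1}$ at the left crossings and $x_{ij}^{\alpha_2}, y_{ji}^{\beta_2}$ at the right crossings. Each acts as the corresponding elementary row/column operation, and within each pair these operations commute, so applying all four gives
$$ M_{\text{pre}} = \begin{pmatrix} c + \alpha_1\beta_1 & \beta_1 + \alpha_2(c + \alpha_1\beta_1) \\ \alpha_1 + \beta_2(c + \alpha_1\beta_1) & 1 + \alpha_1\alpha_2 + \beta_1\beta_2 + \alpha_2\beta_2(c + \alpha_1\beta_1) \end{pmatrix}. $$
Applying the clean birth/death hypothesis to the death of $(x_i, y_i)$ then forces the $i$-th row and column of $M_{\text{pre}}$ to have $+1$ on the diagonal and zeros off of it, giving the three equations $c + \alpha_1\beta_1 = 1$, $\beta_1 + \alpha_2 = 0$, and $\alpha_1 + \beta_2 = 0$. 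These immediately yield $\alpha_1\beta_1 = 1 - c = v$, $\alpha_2 = -\beta_1$, and $\beta_2 = -\alpha_1$; setting $a = -\alpha_1 = \beta_2$ and $b = -\beta_1 = \alpha_2$ then identifies the four slides as $x_{ji}^{-a}, y_{ij}^{-b}, x_{ij}^{b}, y_{ji}^{a}$ with $ab = \alpha_1\beta_1 = v$, as required.

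The hard part of this argument is the full strength of the clean birth/death hypothesis, in particular the off-diagonal vanishing of the $i$-th row and column at the moment of death. The bare cancellation condition $(i,i) = +1$ alone would leave the pair $(\alpha_2, \beta_2)$ completely undetermined, so the one-parameter form $(a,b)$ with $ab = v$ asserted in the lemma genuinely requires the stronger Cerf-theoretic input recalled in Section~\ref{reidemeistertorsion}: that one may isotope the gradient-like vector field so that near the instant of death the cancelling pair is connected to the rest of the Thom-Smale complex only through the single cancelling trajectory of incidence $+1$. This is precisely the ingredient that cuts the a priori four-dimensional space of slide coefficients down to the one-dimensional family parametrized by a factorization $ab = v$.
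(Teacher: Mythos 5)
Your proof is correct and takes essentially the same approach as the paper's: the paper computes the incidence matrix at the intermediate time $t_0$ arriving from both $t_-$ (via the left slides) and $t_+$ (undoing the right slides) and equates all four entries, whereas you track the matrix forward through all four slides and impose the constraint that the $i$-th row and column of $M_{\text{pre}}$ are trivial just before the death, which is algebraically the same thing. You also correctly isolate the only delicate ingredient --- the stabilized ``independence'' of a birth/death point from the rest of the Thom--Smale complex --- as the point where the even-stabilization hypothesis is used.
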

 
  \begin{remark}
In the following proof and in what follows we use the convention that if a pair of critical points have died or haven't been born yet then we still include them in the incidence matrix with a 1 in the diagonal and 0s elsewhere. This way all the incidence matrices have the same size. There is no room for confusion since in the examples at hand the critical points are always paired up such that each pair does not interact (i.e. has no birth/death) with the other pairs.
 \end{remark}
 
 \begin{proof}
 Take the parameter space to be the interval $J=[t_-,t_+]$ with center point $t_0$ where there are only two critical values. Since handle slides can only occur between critical points of the same index, the only possible handle slides are as indicated in Figure \ref{Fig: link}. There may be multiple handle slides. However, by additivity of handle slides, there are, algebraically, only four handle slides: $y_{ij}^{-b}, x_{ji}^{-a}$ on the left and $x_{ij}^{b'},y_{ji}^{a'}$ on the right for $a,b,a',b'\in R$, where we claim that $a'=a$ and $b'=b$.

 By assumption, the incidence matrices of $f_{t_-}$ and $f_{t_+}$ have the form $\mat{x&0\\0&1}$ and $\mat{1 & 0\\0& y}$ respectively for some $x,y\in R$ which we can write $x=1-v$, $y=1-v'$. 
 
 At $t_0$ it becomes
 \[
 	\mat{1-v+ba & -b\\ -a & 1}=\mat{1 & -b'\\ -a' & 1-v'+a'b'}.
 \]
 Comparing entries we see that $a'=a, b'=b$ and $ab=v=v'$. Thus $\tau(f,\rho)=\pm(1-v )=\pm(1-v' )$.% \pm x= \pm y$.
 \end{proof}
 
\subsection{2-parametric Morse theory}

In a 2-parameter family of functions there are additional bifurcations which will generically occur. These consist of exchange points and double handle slides, which respectively give exchange and Steinberg relations among row and column operations. Although the Steinberg relations were crucial in the study of higher Reidemeister torsion in \cite{IK93}, in the present study they will not play a role, so we ignore them and deal only with the exchange points. Another bifurcation which generically occurs in 2-parameter families is the swallowtail (quartic) singularity, where two arcs of cusps are born/die together. However, mesh Legendrians don't have any swallowtails, so we won't need to discuss them either. %only need to determine the location of the exchange points.

In a family of Morse functions $f_t$ on a manifold $M$, an \emph{exchange} occurs at $t_0$ when a trajectory, say $\gamma$, of $-Z$, the negative gradient-like vector field for $f_{t_0}$, goes from a critical point $y_b$ of index $i$ down to a critical point $x_a$ of index $i+1$. This occurs only at isolated parameter values in a 2-parameter family of functions. The exchange is labelled $Z_{ab}^{\pm s}$ where $s=\rho(\sigma)$ and $\sigma\in \pi_1M$ is the homotopy class of $p_b\# \gamma\# \overline{p}_a$, where the notation and sign is as in the previous subsection. %$p_b$, resp. $p_a$, is a chosen path from the base point $m_0$ of $M$ to $y_b$, resp $x_a$ and the sign is determined by comparing orientations of negative eigenspaces of $D^2f_{t_0}$ at the two critical points which extend along $\gamma$.
 In our case we will see that both $s$ and its sign are uniquely determined by % the algebra of the exchange points and their locations which are determined by the front projection of 
 the Cerf diagram of $f_t$.

At the exchange point $Z_{ab}^{\pm s}$, the critical value of $y_a$ is above the critical value of $x_b$. So, the $(a,b)$-entry of the matrix of the boundary map $\partial_{i+1}:C_{i+1}\to C_i$ must be zero.
\[
\partial_{i+1}=\qquad \begin{matrix}\ \\ p\\a
\end{matrix}
\begin{matrix}
\begin{matrix} \  & b\quad &  & q\end{matrix}\\
\mat{ 
\cdot & c_{pb} & \cdot & c_{pq}\\
\cdot & 0 & \cdot & c_{aq} 
}
\end{matrix}
\qquad
\begin{matrix}
 \xrightarrow{x_{bq}^{sc_{aq}}}\\
 \xleftarrow{y_{pa}^{-c_{pb}s}}
 \end{matrix}
\qquad \begin{matrix}\ \\ p\\a
\end{matrix}
\begin{matrix}
\begin{matrix} \  & b\quad &  & q\end{matrix}\\
\mat{ 
\cdot & c_{pb} & \cdot & c_{pq}+c_{pb}sc_{aq}\\
\cdot & 0 & \cdot & c_{aq} 
}
\end{matrix}
\]

At the exchange point $Z_{ab}^{s}$ a family of handle slides is created: for every critical point $y_p\neq y_a$ of index $i$ and every critical point $x_q\neq x_b$ of index $i+1$ a pair of handle slides $x_{bq}^{s c_{aq}}$ and $y_{pa}^{-c_{pb}s}$ are created. The corresponding pair of row and column operations will add and subtract the quantity $c_{pb}s c_{aq}$ to $c_{pq}$, the $(p,q)$ entry of the matrix for $\partial_{i+1}$, resulting in no change in the matrix. Another viewpoint is that the column operation $x_{bq}^{s c_{aq}}$ is ``exchanged'' for the row operation $y_{pa}^{c_{pb}s}$ since these do the same thing to the matrix of $\partial_{i+1}$. See \cite{HW73} for details in general and \cite{IK93} for the details in the particular case of 2-parameter Morse theory on circle bundles.

\begin{comment}
A deformation of exchange points can give birth to new exchange points. For example, if an exchange point $Z_{ab}^s$ passes under a handle slide line $x_{bc}^r$, then a new exchange $Z_{ac}^{\pm sr}$ will be created. Note however, that a necessary condition for this to occur is that $x_a$ should be below both $y_b$ and $y_c$. As we will see, examination of the mesh Legendrians shows that this necessary condition is never satisfied. Thus this phenomenon will never occur.
\end{comment}

As in the case of handle slides, exchange points are additive in the sense that a collection of exchange points $Z_{ab}^{s_i}$ of $y_b$ over $x_a$ has the same algebraic effect as the algebraic exchange $Z_{ab}^{\sum s_i}$.

%
%\begin{figure}[htbp]
%\begin{center}
%\begin{tikzpicture}%[scale=3]
%\draw[help lines=1,thick] (-5,-5) grid (5,4);
%%\draw[thick,<-] (1,1) .. controls(1.4,0) and (1.6,0)..(2,1);
%\draw (1.5,0.05) node[above]{$\bullet$} node[below]{$y_3$};
%\draw (2.5,1.95) node[below]{$\bullet$} node[above]{$x_3$};
%\draw[thick] (2,1) .. controls(2.4,2) and (2.6,2)..(3,1);
%\begin{scope}[xshift=2cm]
%\draw[thick,<-] (1,1) .. controls(1.4,0) and (1.6,0)..(2,1);
%\draw (1.5,0.05) node[above]{$\bullet$} node[below]{$y_2$};
%\draw (2.5,1.95) node[below]{$\bullet$} node[above]{$x_2$};
%\draw[thick] (2,1) .. controls(2.4,2) and (2.6,2)..(3,1);
%\end{scope}
%\begin{scope}[xshift=4cm]
%\draw[thick,<-] (1,1) .. controls(1.4,0) and (1.6,0)..(2,1);
%\draw (1.5,0.05) node[above]{$\bullet$} node[below]{$y_1$};
%\draw (2.5,1.95) node[below]{$\bullet$} node[above]{$x_1$};
%\draw[thick] (2,1) .. controls(2.4,2) and (2.6,2)..(3,1);
%\end{scope}
%%\foreach \x in {-8,-6,...,8}\draw (\x,0) node{\x};\foreach \y in {-6,-4,...,8}\draw (0,\y) node{\y};
%\coordinate (A) at (0,0);
%\draw[thick,color=blue] (A) ellipse [x radius=2.8cm,y radius=2.1cm];
%\end{tikzpicture}
%\caption{For a Morse function $f$ on a circle with maxima and minima arranged in cyclic order: $x_1,y_1,x_2,y_2,\cdots$ the only restriction on critical values is $f(x_{i+1})>f(y_i)<f(x_i)$. The arcs going forward from $x_i$ to $y_i$ are the only places where the function is decreasing. (The orientation is right to left.)}
%\label{Morse function on circle}
%\end{center}
%\end{figure}
%

\begin{lemma}\label{possible exchange points}
Suppose that a mesh Legendrian $\Lambda_G$ is generated by a function $f$ on a stabilized circle bundle $W \to \Sigma_G$ which is a bounded $C^0$ distance from the standard quadratic form. Then the only possible exchange points of the function occur at the corners of the triangles given by the vertices of $G$. Furthermore, at a corner where two birth-death lines lying in components $C_i,C_j$ of the singular set of $f$ cross and where $f$ is larger on $C_j$ than on $C_i$, the only possible exchanges near that corner have the form $Z_{ij}^c$ for some $c$ in the coefficient ring.
\end{lemma}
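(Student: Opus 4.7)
The plan is to locate exchange points by tracking where the upper sheet of one component of the front can meet the lower sheet of another. Every flying saucer arises from a cubic cusp birth of a pair of critical points of adjacent indices; together with the $C^0$ proximity of $f$ to the standard quadratic form of balanced signature, this forces the upper sheet of each saucer to carry the index-$(2k{+}1)$ critical point (the local maximum at birth) and the lower sheet the index-$2k$ critical point. In the notation of the previous subsection, an exchange $Z_{ab}^s$ in the $2$-parametric family therefore corresponds to a point of $\Sigma_G$ over which an upper sheet $x_i$ of some $C_i$ meets a lower sheet $y_j$ of some other $C_j$ and a $-Z$-trajectory exists between them; in particular $f$ must be larger on $C_j$ than on $C_i$ at such a point.

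I would next rule out exchanges away from the vertices. Over the interior of a face of $\Sigma_G \setminus G$ only one saucer is present, so no cross-component sheet crossings can occur. Over the interior of an edge of $G$, the front is locally the product of the one-dimensional Legendrian clasp of Figure~\ref{FigureE} with the edge direction, whose transverse slice is exactly Figure~\ref{Fig: link}. Inspection of the clasp shows that the two saucers only interact through same-index crossings, producing the handle-slide curves $y_{ij}^{-b}, y_{ji}^{a}, x_{ij}^{b}, x_{ji}^{-a}$ of Lemma~\ref{lem: ab=v}, but the upper sheets of both components remain strictly above both lower sheets throughout. Hence no exchange points can lie in the interior of any face or any edge of $G$.

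It remains to analyze a neighborhood of a vertex $v$ of $G$, whose front is the three-component spiral depicted in Figure~\ref{FigureT}. There the three cusp arcs cross pairwise at exactly three points near $v$, which form the corners of a small triangle encircling $v$. I would make this precise by using the standard local model of a fibrewise PGMF coming from three systems of disks as in Proposition~\ref{systems}, and then checking by direct inspection that between any two adjacent corners the six sheets of the three saucers remain interleaved in a clasp-like manner admitting only same-index crossings. Consequently every upper-lower crossing between distinct components must lie at one of the three corners, which are the only candidate exchange points. For the second assertion, at a corner where $C_i$ and $C_j$ meet with $f$ larger on $C_j$ than on $C_i$, only the lower sheet of $C_j$ (carrying $y_j$) is positioned to cross the upper sheet of $C_i$ (carrying $x_i$), so any exchange near that corner must take the form $Z_{ij}^c$ for some coefficient $c$.

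The hard part will be the vertex analysis: although the two-component clasp over an edge is handled cleanly by Figure~\ref{Fig: link}, the three-component spiral requires a careful bookkeeping of the relative heights and monodromy of six sheets as they wind once around $v$ in order to exclude upper-lower crossings away from the three corners. Once this bookkeeping is in place, identifying the exchange type $Z_{ij}^c$ at each corner reduces to the local check of which of the two meeting components sits above the other, which is immediate from the orientation data carried by the local picture.
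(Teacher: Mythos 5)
Your overall strategy---reduce to the ordering of critical values read off from the front, rule out exchanges over faces and edges, and analyze the triangle of cusp arcs near each vertex---is the same as the paper's. However, you have left the vertex analysis, which is exactly where the substance of the lemma lies, as an acknowledged ``hard part,'' so the proof is incomplete.

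There are two concrete issues. First, you claim that exchange points lie \emph{at} the three corners of the triangle (where the cusp arcs cross). This is not correct, and the paper makes a point of noting the opposite: at the corners, and indeed along the entire birth--death locus, birth--death points of a stabilized family can be arranged to be independent of all other critical points, so no exchange can occur there. The exchange locus is instead an open region strictly inside the triangle near each corner, bounded by a curve where the relevant critical values coincide (e.g.\ $f(x_1)=f(y_2)$). This distinction matters for Lemma~\ref{a3=v2 and b3=v}, where the exchanges must be pushed into small interior neighborhoods of the corners before one encircles them with the curve $\gamma$. Second, you conflate ``upper--lower sheet crossing in the front'' with ``exchange point.'' Sheet-crossings occur along curves in the base, whereas an exchange is the isolated event of an actual $-Z$-trajectory of index difference $-1$; the crossing locus only bounds the region where such a trajectory is \emph{possible}. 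The paper handles this cleanly by writing out the total order of the six (or fewer) critical values at each corner and in each shaded region:
\[
f(y_3)< f(y_1)< f(x_1)< f(y_2)< f(x_2)< f(x_3),
\]
and reading off that only $Z_{12}$ can occur there, that such an exchange precludes $Z_{23}$ and $Z_{31}$, and that the inequality constraints confine the exchange to the interior near the corner. To close your gap you would need to produce this explicit critical-value bookkeeping for the three-component spiral, rather than appeal to ``direct inspection.''
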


\begin{proof}
Over the edges of $G$, the Cerf diagram of the generating family, which is the front projection of $\Lambda_G$, is precisely as in Figure \ref{Fig: link}. In particular there are no possible exchange points since the critical values are in order of index. 

Around each positive vertex of $G$ we have a triangle in $\Sigma_G$ bounded by three birth-death lines as indicated in Figure \ref{Fig: birth-death triangle} on the right. The graphic for the family of functions along the $z_3$ birth-death line is indicated on the left side of Figure \ref{Fig: birth-death triangle}. %We now explain the shape of these figures and deduce algebraic consequences.
%Recall the construction of mesh Legendrians from Section \ref{}. Figure \ref{Fig: birth-death triangle} indicates a positive triangle. 
Denote the singular components of $f:W \to \bR$ over this triangle by $C_1,C_2,C_3 \subset W$. %are arranged clockwise around the triangle and the critical points are arranged in positive cyclic order around the fiber circle. 
%As before, since the function $f$ on $W=S^1 \times \bR^{4k}$ is a bounded $C^0$ distance from the standard quadratic form $||x||^2-||y||^2$, their Morse homologies are the same. Hence 
The critical points of $f$, which come in pairs $x_i,y_i$ in each component $C_i$ of $\Lambda_G$, have index $2k, 2k+1$ respectively, see Lemma \ref{Lemma 0.0}. Denote by $z_i$ the birth/death of $x_i$ and $y_i$.%see Figure \ref{Morse function on circle}.

For example, at the bottom corner of the triangle, the %two critical points must come in the order $z_1,z_2,x_3,y_3$ on the circle with 
critical values satisfy $f(y_3)<f(z_1)<f(z_2)<f(x_3)$ as indicated in the middle part of Figure \ref{Fig: birth-death triangle}. Thus, in the bottom shaded region on the right side of Figure \ref{Fig: birth-death triangle}, the function $f$ is Morse and the critical values of $f$ are in the order:
\[
	f(y_3)< f(y_1)< \boxed{f(x_1)< f(y_2)}<f(x_2)<f(x_3).
\]
An exchange $Z_{12}$ is possible anywhere we have $f(x_1)<f(y_2)$. This is the region shaded near the bottom corner on the right side of Figure \ref{Fig: birth-death triangle}. Note that, at the corners, and in fact along the entire birth-death line, there can be no exchange points since, for stabilized functions, we can arrange that birth-death points are independent of all other critical points. Thus, there are no exchange point along the boundary of the triangle and the shaded regions in Figure \ref{Fig: birth-death triangle} are disjoint from this boundary.

Similarly, the critical values at the other two corners, indicated in the left part of Figure \ref{Fig: birth-death triangle}, must come in the order $f(y_1)<f(z_2)<f(z_3)<f(x_1)$ and $f(y_2)<f(z_3)<f(z_1)<f(x_2)$. The only constraint on the critical values of the other functions in the two parameter family are given by a system of convex inequalities.
%as indicated in Figure \ref{Morse function on circle}. 
In particular, the only possible exchange points are $Z_{12},Z_{23}$ and $Z_{31}$. 
%For example, $f(x_2)$ must be greater than $f(y_1)$ and $f(y_2)$. So, the only possible minimum which can be above $x_2$ is $y_3$. So, $Z_{2b}$ is possible only for $b=3$. 
Furthermore, each exchange precludes the other two since, e.g., $f(x_2)<f(y_3)$ gives 
\begin{equation}\label{eq: no concurrent exchanges}
f(y_1),f(y_2)<f(x_2)<f(y_3)<f(x_3),f(x_1).
\end{equation}
This completes the proof. \end{proof}

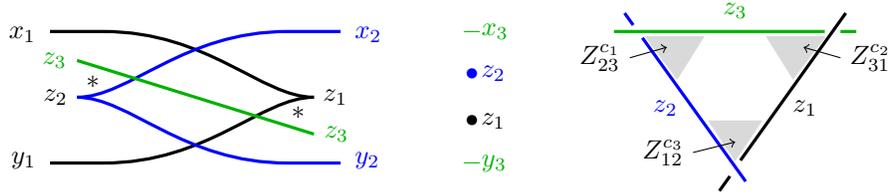
\begin{figure}[htbp]
\begin{center}
\begin{tikzpicture}[scale=.7]
%\draw[help lines=1,thick] (-7,-3) grid (7,1.5);
%\foreach \x in {-8,-6,...,8}\draw (\x,0) node{\x};\foreach \y in {-6,-4,...,8}\draw (0,\y) node{\y};
%
\begin{scope}
\coordinate (2K) at (-3.1,-1.2);
\draw(2K) node[left]{$y_1$};
\coordinate (2Kp) at (-3.1,1.2);
\draw(2Kp) node[left]{$x_1$};
\coordinate (2Kr) at (2.6,-1.2);
\draw[color=blue] (2Kr) node[right]{$y_2$};
\coordinate (2Kpr) at (2.6,1.2);
\draw[color=blue] (2Kpr) node[right]{$x_2$};
\clip (-3,-2) rectangle (2.5,1.5);
\begin{scope}[xshift=-3cm] % (c) left lens (black)
\draw[very thick] (-1,1.25)--(1,1.25)..controls (3,1.25) and (4,0)..(5,0);
\draw[very thick] (-1,-1.25)--(1,-1.25)..controls (3,-1.25) and (4,0)..(5,0);
\end{scope}
\begin{scope}[xshift=.5cm] % (c) right lens (green)
\draw[very thick, color=blue] (-3,0)..controls (-2,0) and (-1,1.25)..(1,1.25)--(3,1.25);
\draw[very thick, color=blue] (-3,0)..controls (-2,0) and (-1,-1.25)..(1,-1.25)--(3,-1.25);
\end{scope}
\end{scope}

\draw(2,0) node[right]{$z_1$};
\draw(-2.5,0) node[left]{$z_2$};
\begin{scope}
\draw[very thick, color=green!70!black] (-2.5,.7) -- (2,-.7)
(-2.5,.7) node[left]{$z_3$}
(2,-.7) node[right]{$z_3$};
\draw (-2.2,.3)node{$\ast$};
\draw (1.7,-.3)node{$\ast$};
\end{scope}
\begin{scope}[xshift=5cm]
\draw[ thick, color=green!70!black] (0, 1.25) node{$-$} node[right]{$x_3$}
(0, -1.25) node{$-$} node[right]{$y_3$};
\draw (0, -.45) node{$\bullet$}node[right]{$z_1$};
\draw[color=blue] (0, .45) node{$\bullet$}node[right]{$z_2$};
\end{scope}
\begin{scope}[xshift=10cm]
\draw[very thick,color=blue] (-2.1,1.6)--(0.2,-1.6) (-.9,-.2)node[left]{$z_2$};
\draw[fill,color=white] (-1.85,1.25) circle[radius=4pt];
\draw[very thick,color=green!70!black] (-2.3,1.25)--(2.3,1.25)
(0,1.3)node[above]{$z_3$};
\draw[fill,color=white] (1.85,1.25) circle[radius=4pt];
\end{scope}
\begin{scope}[xshift=10cm]
\draw[very thick] (2.1,1.6)--(.1,-1.2)
(-.1,-1.5)--(-.3,-1.78) (.9,-.2)node[right]{$z_1$};
%\draw[fill,color=white] (0,-1.25) circle[radius=3pt];
\end{scope}
\begin{scope}[xshift=10cm, yshift=-.05cm]
\draw[fill,color=gray!30!white] (-1.7,1.2)--(-.6,1.2)--(-1.1,.4)--(-1.7,1.2);
\draw[fill,color=gray!30!white] (1.7,1.2)--(.6,1.2)--(1.1,.4)--(1.7,1.2);
\draw[fill,color=gray!30!white] (0,-1.15)--(-.5,-.4)--(.5,-.4)--(0,-1.15);
\draw[<-] (-1.3,1)--(-2,.8)node[left]{$Z_{23}^{c_1}$}; 
\draw[<-] (1.3,1)--(2,.8)node[right]{$Z_{31}^{c_2}$}; 
\draw[<-] (0,-.8)--(-.8,-1)node[left]{$Z_{12}^{c_3}$}; 
\end{scope}
\end{tikzpicture}
\caption{Critical points of the lower index $2k$ are denoted $y_i$. The upper index critical points are $x_i$. Corresponding birth-death points are indicated by $z_i$. The triangle on the right is the triangle of birth death lines projected to the base with overcrossing indicating that $z_3$ is above $z_2$, etc. at the corners. The shaded region indicates the position of possible exchange points. The critical values of $x_3,y_3,z_1,z_2$ at the bottom corner are indicated by the figure in the middle. On the left side is a possible graphic for the function along the $z_3$ birth-death line with possible exchanges indicated with $\ast$. See also Figure \ref{FigureT} and the left side of \ref{FigureR} showing the same thing.
}
\label{Fig: birth-death triangle}
\end{center}
\end{figure}

Since the second cohomology of any proper subset of $\Sigma_G$ is zero, the restriction of $W$ to any such subset will be trivial and we can take the canonical representation in the ring $R=\bZ[u,u^{-1},(1-u)^{-1}]$ which sends the oriented circle $[S^1]$ to $u^{-1}$. Later we specialize $u=\zeta$, a root of unity. In particular, over a neighborhood of the triangle surrounding one vertex, the mesh Legendrian $\Lambda_G$ will have 3 contractible components which map to the base with a degree zero map. With respect to the weak Euler structure coming from $\Lambda_G$, the fibre Turaev torsion is well defined as an element of $U(R)/\pm1$. 
%Also, if we make a particular choice of weak Euler structures, we will have a definite value (in $R/\pm 1$) for the Turaev torsion. 
We call this the universal fibre Turaev torsion at the vertex.

 \begin{lemma}\label{a3=v2 and b3=v}
Given any generating family $f$ for $\Lambda_G$ with universal fibre Turaev torsion $\tau(f,\rho)=\pm(1-v)$ at one chosen vertex, let $(a_i,b_i)$ for $i=1,2,3$ be the handle slide labels on the edges adjacent to this vertex. If the vertex is positive we have $a_1a_2a_3=v^2$ and $b_1b_2b_3=v$. If the vertex is negative then $a_1a_2a_3=v$ and $b_1b_2b_3=v^2$.
 \end{lemma}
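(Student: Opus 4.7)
The plan is to apply Lemma \ref{lem: ab=v} on each of the three edges meeting the vertex and then compute a handle-slide monodromy around the vertex. Applying Lemma \ref{lem: ab=v} to edge $z_i$ (the birth-death line of pair $i$) gives $a_i b_i = v$ for $i = 1, 2, 3$, and hence $(a_1 a_2 a_3)(b_1 b_2 b_3) = v^3$. It therefore suffices to compute the ratio $(a_1 a_2 a_3)/(b_1 b_2 b_3)$, which I claim equals $v$ for a positive vertex and $v^{-1}$ for a negative vertex.

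To compute this ratio, I would track the $3 \times 3$ incidence matrix $M$ of the boundary $\partial_{2k+1}$ at several points inside the triangle, where all three pairs $(y_i, x_i)$ of critical points are alive. On the interior side of edge $z_i$ the matrix $M$ must be block diagonal with the pair-$i$ block equal to $\pm 1$, so that pair $i$ dies cleanly when $z_i$ is crossed outward; the surviving $2 \times 2$ block for pairs $j, k$ is then exactly the matrix appearing in the proof of Lemma \ref{lem: ab=v}, transformed by the row and column operations $y_{jk}^{-b_i}, x_{kj}^{-a_i}, x_{jk}^{b_i}, y_{kj}^{a_i}$. Near each corner the exchange $Z_{jk}^{c_i}$ of Lemma \ref{possible exchange points} converts a column operation into a row operation, mediating the compatibility between the handle-slide data on the two adjacent edges. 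These three conditions, one per edge and one per corner, are enough to relate the six handle-slide labels $a_i, b_i$ to each other and to the exchange data.

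Tracing $M$ around a small loop encircling the vertex inside the triangle (and resolving each crossing of a handle-slide line or exchange point as an elementary row/column operation) produces a product of elementary matrices which must equal the identity. The key observation is that the cyclic order in which the three edges and three corners are encountered along the loop is governed by the chirality of the cusp spiral at the vertex, as depicted in Figure \ref{FigureT}. For a positive vertex the $a_i$'s are traversed in one cyclic direction and the $b_i$'s in the opposite direction; after using the edge relations $a_i b_i = v$ to eliminate the exchange labels $c_i$, the monodromy equation collapses to $(a_1 a_2 a_3)/(b_1 b_2 b_3) = v$. For a negative vertex the chirality reverses, producing $(a_1 a_2 a_3)/(b_1 b_2 b_3) = v^{-1}$. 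Combining this ratio with $(a_1 a_2 a_3)(b_1 b_2 b_3) = v^3$ gives the stated identities in both cases.

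The main obstacle is the meticulous bookkeeping of the matrix transformations: one must determine, at each region of the triangle, the precise order of the six critical values (as in the proof of Lemma \ref{possible exchange points}), the identification of rows and columns of $M$ across regions separated by handle-slide lines, and the signs of the elementary operations coming from the chosen orientations of the unstable manifolds. The assignment of which exponent of $v$ corresponds to positive versus negative vertex chirality — the source of the asymmetry in the statement — falls out of this bookkeeping once the spiral pattern of Figure \ref{FigureT} is correctly translated into the cyclic ordering of the row/column operations along the monodromy loop.
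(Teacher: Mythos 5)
Your overall strategy is the one used in the paper: invoke Lemma~\ref{lem: ab=v} on each of the three edges to get $a_ib_i=v$, push the exchange points to the corners (as in Lemma~\ref{possible exchange points}), and read off relations from a monodromy around a loop $\gamma$ inside the triangle of birth-death lines, with the mirror symmetry of Figure~\ref{FigureT} supplying the sign flip for negative vertices. Deriving $(a_1a_2a_3)(b_1b_2b_3)=v^3$ first and then computing a ratio is a fine repackaging of the same arithmetic.

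The one genuine gap is in the sentence ``produces a product of elementary matrices which must equal the identity.'' Tracking the incidence matrix $M$ of $\partial_{2k+1}$ around $\gamma$ and resolving each crossing as a row or column operation only gives the conjugation constraint $R\,M\,C = M$, where $R$ is the product of all row operations and $C$ the product of all column operations crossed along $\gamma$. Since row and column operations act on opposite sides and do not commute, there is no single well-defined ``product of elementary matrices,'' and the conjugation constraint alone does not force $C=I_3$. What the paper actually proves (and uses) is the sharper statement that the column-operation word is null-homotopic on its own,
\[
(x_{23}^{c_1}x_{21}^{-a_3}x_{12}^{b_3})(x_{31}^{c_2}x_{32}^{-a_1}x_{23}^{b_1})(x_{12}^{c_3}x_{13}^{-a_2}x_{31}^{b_2})=I_3,
\]
and likewise for the row-operation word. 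The justification is precisely that the disk $D$ bounded by $\gamma$ contains no exchange points (Lemma~\ref{possible exchange points} and the pushing step): contracting $\gamma$ inside $D$ reduces the column word to the empty word using only Steinberg relations, never mixing rows and columns. You need to state and use this separate cancellation; once you do, solving the $3\times3$ identity above together with $a_ib_i=v$ gives $c_i=-b_i$, $a_i=b_jb_k$, then $b_1b_2b_3=v$ (from the $(3,3)$ entry) and hence $a_1a_2a_3=(b_1b_2b_3)^2=v^2$. Your proposed shortcut of ``eliminating the $c_i$ using $a_ib_i=v$ so the monodromy collapses to a ratio'' is not how the computation actually closes — the $c_i$ are determined by the matrix identity, not eliminated by the edge relation — so even with the separation in place you should carry out the matrix multiplication rather than expect a clean collapse.
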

 
 \begin{proof}
 At the vertices of a positive triangle %as given in Figure \ref{Fig: birth-death triangle}, the exchange points can be algebraically collected together to give three algebraic exchange points $Z_{12}^{c_3}$, $Z_{23}^{c_1}$ and $Z_{31}^{c_2}$ for some $c_i\in R$. Furthermore, 
 the exchange points can be ``pushed'' to the corner. Indeed, there is no obstruction to moving exchange points in convex parameter regions in which there are no other critical points with critical value between the two incident to give the exchange, as is the case here by \eqref{eq: no concurrent exchanges}. More precisely, by a homotopy of the pseudo-gradient for $f$ we can push all the exchanges in each of the shaded regions into small enough neighborhoods of the vertices, where no other handle slides occur. % By doing this, we can arrange it so that there exists paths from each vertex of the triangle to the exchanges in the corresponding region which do not intersect any other handle slides. 
After doing this, there exists a simple closed curve $\gamma$ contained in the interior of the triangle and satisfying the following properties, see Figure \ref{Fig: gamma}.
\begin{itemize}
%\item)] $\gamma$ is parallel to the sides of the triangle except near the vertices of the triangle.
\item[(a)] There are no exchanges inside the disk $D$ bounded by $\gamma$.
\item[(b)] $\gamma$ intersects the handle slides $a_i$, $b_i$ along each side of the triangle and the handle slides produced by each exchange, and nothing more.
\end{itemize}

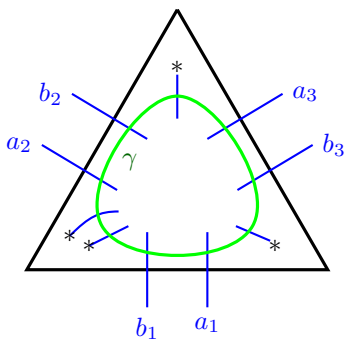
\begin{figure}[htbp] % beginning of green curve
\begin{center}
\begin{tikzpicture}[scale=1]

\begin{scope}[scale=2,yshift=-2.9mm]
\coordinate (A) at (0,1.73);
\coordinate (B) at (-1,0);
\coordinate (C) at (1,0);
\draw[very thick] (A)--(B)--(C)--(A);
\begin{scope}[xshift=-1mm,yshift=1.7mm]
\coordinate (Y1) at (.8,1);
\coordinate (Y2) at (.3,.7);
\draw[thick,color=blue] (Y2)--(Y1) node[right]{$a_3$};
\end{scope}
\begin{scope}[xshift=1mm,yshift=1.7mm]
\coordinate (X1) at (-.8,1);
\coordinate (X2) at (-.3,.7);
\draw[thick,color=blue] (X2)--(X1) node[left]{$b_2$};
\end{scope}
\begin{scope}[xshift=-1mm,yshift=-1.7mm]
\coordinate (X1) at (-.8,1);
\coordinate (X2) at (-.3,.7);
\draw[thick,color=blue] (X2)--(X1)node[left]{$a_2$};
\end{scope}
\begin{scope}[xshift=1mm,yshift=-1.7mm]
\coordinate (Y1) at (.8,1);
\coordinate (Y2) at (.3,.7);
\draw[thick,color=blue] (Y2)--(Y1) node[right]{$b_3$};
\end{scope}
\begin{scope}
\draw[thick,color=blue] (-.2,.25)--(-.2,-.25) node[below]{$b_1$};
\draw[thick,color=blue] (.2,.25)--(.2,-.25)node[below]{$a_1$};
\end{scope}
\end{scope}

\begin{scope}[scale=1.3,yshift=-.1cm]
\coordinate (A) at (0,1.73);
\coordinate (Ap) at (0,1.65);
\coordinate (App) at (0,1.2);
\coordinate (B) at (-1.1,0);
\coordinate (Bp) at (-.9,-.1);
\coordinate (C) at (1,-.1);
\draw[color=blue, thick] (Ap)--(App) (.95,-.05)--(.6,.1)
(B) .. controls ( -.9,.2) and (-.8,.25)..(-.6,.25)
(Bp)--(-.5,.1);
\draw (A) node{$\ast$};
\draw (B) node{$\ast$};
\draw (Bp) node{$\ast$};
\draw (C) node{$\ast$};
\end{scope}
\begin{scope}%[scale=.6,yshift=.14cm]
\coordinate (A) at (0,1.73);
\coordinate (Ap) at (-.52,1.73);
\coordinate (Am) at (.52,1.73);
\coordinate (B) at (-1,0);
\coordinate (Bp) at (-1.3,.52);
\coordinate (Bm) at (-.7,-.52);
\coordinate (C) at (1,0);
\coordinate (Cp) at (1.3,.52);
\coordinate (Cm) at (.7,-.52);
\draw[color=green, very thick] (A)..controls (Am) and (Cp) ..(C)..controls (Cm) and (Bm)..(B)..controls (Bp) and (Ap)..(A) ;
\draw[color=green!40!black] (-.63,.85) node{$\gamma$};
\end{scope}

\end{tikzpicture}
\caption{The curve $\gamma$.}
\label{Fig: gamma}
\end{center}
\end{figure}
%

%The fact that the restrictions on critical values are convex means we are allowed to linearly interpolate the critical values in the interior of the triangle given the values on the three corners. This is not quite right since the functional values form a cusp at birth-death points. However, topologically, a cusp is the same as an angle between two straight line segments. So, up to homeomorphism, the locus of parameter values at which the exchange points can occur is given by linear inequalities. 

%The conclusion is that the inequality $f(x_2)<f(y_3)$ occurs in a \ki{}{convex open set inside the upper left corner} allowing for the exchange $Z_{23}^{c_1}$ near that corner. 

 By additivity of exchange, at each of the vertices of the triangle the exchange points can be algebraically collected together to give three algebraic exchange points $Z_{12}^{c_3}$, $Z_{23}^{c_1}$ and $Z_{31}^{c_2}$ for some $c_i\in R$. 
%By additivity of exchange, we may collect together all the exchanges near each vertex if we allow the $c_i \in R$ to be any element of the coefficient ring. The exchanges $Z_{31}^{c_1}$, $Z_{12}^{c_3}$ occur in open regions inside the other corners.
 Let us examine what the exchange $Z_{23}^{c_1}$ does. It occurs near the two birth-death points $z_2,z_3$. So, the incidence matrix is the diagonal matrix with diagonal entries $(1-v,1,1)$ for $v\in R$. The exchange creates two handle slides $x_{23}^{c_1}$ and $y_{23}^{-c_1}$. The other two exchange points have a similar effect. %By property (a), within the disk $D$ there are no further exchanges. So, t
We claim that the sequence of column operations and row operations going around $\gamma$ (with the exchange points on the outside) must separately cancel out. Indeed, the disk $D$ gives a null homotopy of these sequences of row and column operations. More precisely, by contracting $\gamma$ inside $D$ we can simplify the sequence of column and row operations using Steinberg relations until we get the empty sequence. By property (a) there are no exchanges between the row and column operations inside the disk. Let us therefore focus on just the column operations. From property (b), this means that the product of the following column operations must be trivial, i.e., must give the identity matrix:
 \[
 	(x_{23}^{c_1}x_{21}^{-a_3}x_{12}^{b_3})
	(x_{31}^{c_2}x_{32}^{-a_1}x_{23}^{b_1})
	(x_{12}^{c_3}x_{13}^{-a_2}x_{31}^{b_2})=I_3.
 \]
 We will solve this equation for the variables. Let $A$ be the product of the first five factors and let $B$ be the product of the last four factors. Then we are given that $AB=I_3$ or $A=B^{-1}$. We compute using the identity $a_ib_i=v$ given by Lemma \ref{lem: ab=v}.
 \[
 	A=x_{23}^{c_1}x_{21}^{-a_3}x_{12}^{b_3}x_{31}^{c_2}x_{32}^{-a_1}=\mat{1 & b_3 & 0\\
	-a_3 & 1-a_3b_3 & c_1\\
	0 & 0 & 1
	}x_{31}^{c_2}x_{32}^{-a_1}
 \]
 \[
 	= \mat{1 & b_3 & 0\\
	c_1c_2-a_3 & 1-v & c_1\\
	c_2 & 0 & 1}x_{32}^{-a_1}
	= \mat{1 & b_3 & 0\\
	c_1c_2-a_3 & 1-v-c_1a_1 & c_1\\
	c_2 & -a_1 & 1}.
 \]
 This must be equal to:
 \[
 	B^{-1}=x_{31}^{-b_2}x_{13}^{a_2}x_{12}^{-c_3}x_{23}^{-b_1}=\mat{1 & 0 & a_2\\
	0 & 1 & 0\\
	-b_2 & 0 & 1-v}x_{12}^{-c_3}x_{23}^{-b_1}
 \]
 \[
 	= \mat{
	1 & -c_3 & a_2+c_3b_1\\
	0 & 1 & -b_1\\
	-b_2 & b_2c_3 & 1-v-b_2c_3b_1
	}
 \]
 Equating the corresponding off-diagonal entries we get $c_i=-b_i$ for all $i$ and $a_i=b_jb_k$ for $i,j,k=1,2,3$ in cyclic order. The $(2,2)$ entries reiterate the identity $b_1a_1=v$ and the $(3,3)$ entry gives the identity $b_2b_3b_1=v$. Finally, $a_1a_3a_2=b_2b_3b_1b_2b_3b_1=v^2$ proving the two identities that we need.
 
A negative triangle is given by taking the mirror image. The roles of $a_i,b_i$ are then reversed and we get the required analogous equations.
 \end{proof}
 \begin{example}
 In Figure \ref{FigureR}, we have $(a_1,b_1,c_1)=(1,u,-u)$ since these are the exponents of $x_{32},x_{23}, Z_{23}$. Similarly, $(a_2,b_2,c_2)=(u,1,-1)$ and $(a_3,b_3,c_3)=(u,1,-1)$. Putting $u=v$, this agrees with the Lemma and proof above since $a_1a_2a_3=u^2$, $b_1b_2b_3=u$ and $c_i=-b_i$ for each $i$. On the right hand triangle in Figure \ref{FigureR} we have (from \cite{IK93}) that $(a_1',b_1')=(u,1)$ which equals $(b_1,a_1)$ since the clockwise direction around one triangle is the counterclockwise direction around the other triangle. Similarly, $(a_3',b_3')=(1,u)=(b_3,a_3)$. Finally, $(a_2',b_2')=(u,1)$ which agrees since $a_1'a_2'a_3'=u^2$. However, $(a_2',b_2')\neq (b_2,a_2)=(1,u)$ which means the labels on the two ends of the edge $E_{31}$ in Figure \ref{FigureR} do not match (unless $u=1$). The green dotted line indicates the discontinuity. This is resolved by Theorem \ref{thm: punching hole} and another example is given in Figure \ref{FigureCA2}.
 \end{example}

\subsection{Conclusion of the computation}  Let $G$ be a bicolored trivalent ribbon graph and $\Lambda_G\subset J^1(\Sigma_G)$ the corresponding mesh Legendrian. Let $W=E \times \bR^{4k}$ be an even stabilization of an oriented circle bundle $S^1 \to E \to \Sigma_G$ with $e(E)=n>0$. Let $\zeta$ be a primitive $n$th root of unity.

\begin{lemma}
There is a rank 1 unitary local system $\rho:\pi_1W\to  U(1)$ whose image is generated by $\zeta$ so that $\rho$ sends the preferred generator of $\pi_1S^1$ to $\zeta^{-1}$.
\end{lemma}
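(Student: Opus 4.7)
The plan is to construct $\rho$ by computing $H_1(W;\bZ)$ via the Gysin sequence and then specifying a homomorphism to $U(1)$ on the abelianization.

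First I would note that since $\bR^{4k}$ is contractible, the projection $W = E\times \bR^{4k}\to E$ is a homotopy equivalence, so $\pi_1 W\simeq \pi_1 E$ and it suffices to construct the desired $\rho$ on $\pi_1 E$. Moreover, because $U(1)$ is abelian, any $\rho$ factors through the abelianization, so the problem reduces to constructing an appropriate homomorphism $H_1(E;\bZ)\to U(1)$.

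Next I would compute $H_1(E;\bZ)$ using the Gysin sequence associated to the oriented circle bundle $S^1\to E\to \Sigma_G$. The relevant portion reads
\[
H_2(\Sigma_G;\bZ)\xrightarrow{\smile\, e(E)} H_0(\Sigma_G;\bZ)\xrightarrow{\iota_*} H_1(E;\bZ)\xrightarrow{p_*} H_1(\Sigma_G;\bZ)\to 0,
\]
where $\iota_*$ sends a generator of $H_0(\Sigma_G)$ to the class of the fibre. Since $e(E)=n$, the leftmost map is multiplication by $n$, hence $\iota_*$ identifies $H_0(\Sigma_G)/n\simeq \bZ/n$ with a subgroup of $H_1(E;\bZ)$, and the preferred generator of $\pi_1 S^1$ maps to a generator of this $\bZ/n$ summand. (One can split the sequence, but this is not required.)

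Now I would define $\rho:H_1(E;\bZ)\to U(1)$ on the subgroup $\iota_*\, H_0(\Sigma_G)\simeq \bZ/n$ by sending the image of the preferred fibre generator to $\zeta^{-1}\in U(1)$; this is well defined because $\zeta^{-1}$ has order exactly $n$, so it kills the relation $n\cdot[\text{fibre}]=0$. Finally I would extend $\rho$ to all of $H_1(E;\bZ)$, for instance by sending every class in a chosen lift of $H_1(\Sigma_G;\bZ)$ to $1\in U(1)$; any extension works, since the quotient $H_1(E;\bZ)/\iota_* H_0(\Sigma_G) \simeq H_1(\Sigma_G;\bZ)$ is a finitely generated abelian group and $U(1)$ is divisible, so homomorphisms on subgroups extend. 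The resulting $\rho$ has image containing $\zeta^{-1}$, hence generated by $\zeta$, and by construction sends the preferred generator of $\pi_1 S^1$ to $\zeta^{-1}$. There is essentially no obstacle here; the only subtle point is ensuring the order of $\zeta^{-1}$ matches $n$ so that the relation coming from the Euler class is respected, which is exactly why we require $\zeta$ to be a \emph{primitive} $n$-th root of unity.
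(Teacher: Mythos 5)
Your proof is correct. The paper's own proof is a two-liner: there is a fibrewise $n$-fold cyclic covering of $E$ (hence of $W$), and its holonomy gives the representation after composing with $\bZ/n\to U(1)$, $1\mapsto\zeta^{-1}$. What you have done is make explicit the algebra that underlies that geometric observation: the homology Gysin sequence (note: the map should be cap $\frown$ with $e(E)$, not cup, in homology) shows the fibre class has order exactly $n$ in $H_1(E)\cong\bZ/n\oplus H_1(\Sigma_G)$, and this is precisely what guarantees both that the fibrewise $n$-fold cyclic cover exists and that its holonomy sends the fibre generator to a generator of $\bZ/n$. In other words, the paper's covering is the one classified by the composite $\pi_1E\to H_1(E)\twoheadrightarrow\bZ/n$ (the retraction onto the $\bZ/n$ factor you construct), and the two proofs are two phrasings of the same underlying computation. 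Your approach buys a completely explicit description of $H_1(E)$ and makes clear why primitivity of $\zeta$ is exactly the right hypothesis; the paper's buys brevity and a geometric picture of $\rho$ as a deck-transformation representation, which is the form in which it is used later when working with the ring $R=\bZ[u,u^{-1},(1-u)^{-1}]$ and its specialization $u\mapsto\zeta$.
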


\begin{proof}
There is a fiberwise $n$-fold cyclic covering of $E$, hence also of $W$. The holonomy of this covering gives the representation.
\end{proof}

We now give the proof of Theorems \ref{euler} and \ref{calculation}, which we recall for convenience.

\begin{theorem}\label{thm: punching hole}
Suppose that $\Lambda_G\subset J^1(\Sigma_G)$ is generated by a function $f:W\to \bR$ which is a bounded $C^1$ distance from the standard quadratic form. Then $e(E)= |w(G)|$ and $\tau(f,\rho)=\pm(1-\zeta^\ve)$, where $\ve$ is the sign of $w(G)$.
%$u=v^\ve$
\end{theorem}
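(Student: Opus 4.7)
The plan is to extract from the local vertex identities of Lemma \ref{a3=v2 and b3=v} a single global identity of the form $v^{n}=u^{m}$ with $n=|w(G)|$ and $m=\pm e(E)$, and then apply Proposition \ref{prop: un=vm lemma}.

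By Lemma \ref{familyturaev}, the fibre Turaev torsion $\tau(f,\rho)=\pm(1-v)$ is fibre-independent, so a single element $v\in R$ governs the vertex identities of Lemma \ref{a3=v2 and b3=v} simultaneously at every vertex of $G$. Dividing the two identities there yields the uniform relation
\[
\frac{a_1 a_2 a_3}{b_1 b_2 b_3} \;=\; v^{\chi(V)}, \qquad \chi(V)=\begin{cases} +1 & \text{if $V$ is positive,}\\ -1 & \text{if $V$ is negative,}\end{cases}
\]
together with $a_e b_e = v$ on each edge $e$.

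Multiplying this relation over all $P+N$ vertices, the right-hand side becomes $v^{P-N}=v^{2w(G)}$. On the left-hand side, each edge $e$ contributes the ratio $a_e/b_e$ at each of its two endpoints, read in the two local trivializations of $E$ near those endpoints; these trivializations disagree by parallel transport along $e$. Organizing the edge-by-edge trivialization discrepancies into a cocycle, they reconstruct the Euler class of $E$ --- by the same mechanism that identifies the cyclic-set cocycle with the Euler class in the proof of Proposition \ref{euler for circle} --- so their product contributes a global factor $u^{2e(E)}$. After square-root extraction (permissible at the level of $U(R)/\pm 1$) the identity reads
\[
v^{w(G)} \;=\; \pm\, u^{e(E)}.
\]

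Setting $n=|w(G)|>0$ and specializing $u\mapsto\zeta$ via $\rho$, we get $v^{n}=\pm u^{\pm e(E)}$. Proposition \ref{prop: un=vm lemma} then forces simultaneously $|e(E)|=n=|w(G)|$ and $v=u^{\varepsilon}$ for some sign $\varepsilon\in\{\pm 1\}$. The orientation convention $e(E)>0$ pins down $\varepsilon=\mathrm{sign}(w(G))$, and we conclude $\tau(f,\rho)=\pm(1-\zeta^{\varepsilon})$, as claimed.

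The main obstacle is the trivialization bookkeeping that identifies the accumulated edge discrepancies with $u^{e(E)}$. I expect this to follow from the additivity of exchange points (already used in the proof of Lemma \ref{a3=v2 and b3=v}) combined with the explicit system-of-disks description of $E_G$ in Section \ref{systems of disks}: locally at each vertex one writes out the three adjacent trivializations, reads off the winding difference around the vertex from Figure \ref{Fig: 3 circles}, and verifies that the total winding cocycle summed over the graph is Poincaré dual to $e(E)$.
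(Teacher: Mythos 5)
Your overall strategy—pair the vertex identities of Lemma \ref{a3=v2 and b3=v} against an edge-by-edge cancellation, extract a power relation between $u$ and $v$, and feed it to Proposition \ref{prop: un=vm lemma}—is exactly the paper's. The genuine difference is \emph{where} you let the nontriviality of the bundle show up. You want the discrepancy to be spread across all edges as a cocycle contributing $u^{2e(E)}$; the paper instead \emph{cuts} $\Sigma_G$ along a single well-chosen edge so that $W$ becomes trivial over the punctured surface, all uncut edges cancel on the nose (labels match as $a_i=b_j$, $a_j=b_i$), and the entire $u^{e(E)}$ factor appears concentrated at the seam from the explicit monodromy $b'=u^{-n}a$ of the loop winding $n$ times around the fiber. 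This yields $u^n=v^{w(G)}$ directly, with no square-root step.

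The gap in your version is precisely the step you flag as ``the main obstacle.'' The assertion that the edge-discrepancy cocycle ``reconstructs the Euler class by the same mechanism that identifies the cyclic-set cocycle with the Euler class'' is a conflation: the cyclic-set cocycle of Proposition \ref{euler for circle} is a real-valued cocycle assigning $\pm\tfrac12$ at each \emph{vertex} and computes $e(E)$ via the degree of a map to $B\mathcal{Z}$, whereas what you need here is a $U(R)$-valued discrepancy in handle-slide labels arising from comparing local trivializations along \emph{edges}. These two are compatible, but the identification is not automatic and is not established by invoking \ref{euler for circle}. The paper avoids this entirely: once you trivialize over the punctured surface, the labels are globally consistent, and only the cut seam carries a nontrivial comparison, which is computed explicitly by tracking a null-homotopic loop that wraps around the fiber $n$ times. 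There is also a technical point the paper handles (via the remark after the theorem) that you would need to reckon with: the cut edge must bound two \emph{distinct} faces so the singular components over $\Sigma^0$ remain simply connected; otherwise one must pass to a double cover.

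One smaller issue: your square-root extraction produces $v^{w(G)}=\pm u^{e(E)}$ and you then invoke Proposition \ref{prop: un=vm lemma}, but that proposition is stated and proved for $v^n=u^m$ with no sign. The sign case $v^n=-u^m$ can in fact be ruled out (square again, apply the proposition to $v^{2n}=u^{2m}$, and derive a contradiction), but you should make that explicit rather than absorb it silently into $U(R)/\pm1$, since the proposition's conclusion is an equality in $R$, not $R/\pm1$. The paper sidesteps this entirely because its argument never squares.
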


\begin{proof} We compute the product of the $a_i$ labels clockwise around the vertices of $G$ in two different ways. In order to perform this calculation using coefficients in the ring $R=\bZ[u,u^{-1},(1-u)^{-1}]$ we must puncture the surface to trivialize the bundle. Then we pass to $\bC^\times$ using the unique ring homomorphism $R\to \bC$ sending $u$ to $\zeta$. 

To obtain the punctured surface $\Sigma^0$, we cut $\Sigma_G$ across one of the edges of $G$. This creates two copies of Figure \ref{Fig: link}. We denote with primes $(')$ the labels on the back copy. The restriction of $W$ to the punctured surface is trivial. That the Euler number of the bundle over the unpunctured surface $\Sigma_G$ is  $n>0$ means any arc going clockwise around the hole (counterclockwise around $\partial\Sigma^0$, the boundary of the punctured surface), which used to be null homotopic, will now wrap around the fiber $n$ times. 

Such a null homotopic loop is given by first moving left to right along the back of the hole going along the top critical line $x_i'$ in Figure \ref{Fig: link}, going down to $x_j'$ along one of the trajectories indicated by $-a$ in Figure \ref{Fig: link}, moving to the right along $x_j'$, then doing the reverse on the front of the hole. However, moving left to right on the back of the hole is going counterclockwise around the punctured surface. So, $a$ in Figure \ref{Fig: link} is equal to $b'$, the counterclockwise label on that back face. The fact that this cycle wraps around the fiber $n$ times implies that the label ``$-a$'' on the back side of the cut ($-b_5$ in Figure \ref{FigureCA2}, $-b'$ in general) is equal to $-u^{-n}a$ ($-u^{-n}a_6$ in Figure \ref{FigureCA2}, $-u^{-n}a$ in general) where the exponent of $u$ is negative by our convention (from \cite{IK93}) that $u^{-1}$ is the positive direction along the fiber. Since $a'b'=v$, the product of the counterclockwise labels on the front and back is
\[
	aa'=av/b'=av/u^{-n}a=vu^n.
\]
%However, our notation is that $a_i$ labels the counterclockwise arrow around the punctured surface. Thus, if the back face is the $i$th edge, the labels on the back face are:
%\[
%	(a',b')=(b_i,a_i).%(u^nb,u^{-n}a).
%\]
In Figure \ref{FigureCA2}, $n=3$ and $(a,b,a',b')=(a_6,b_6,a_5,b_5)$. So $a_6a_5=vu^3$.

The Turaev torsion on $(W|_{\Sigma^0},f)$ is $\tau(f,e,\rho)=\pm(1-v)\in U(R)/\pm 1$ for some $v\in R$. By Lemma \ref{a3=v2 and b3=v}, the product of the $a_i$ labels is $\prod a_i=v^{2p+q}$ if $G$ has $p$ positive and $q$ negative vertices. This product can also be computed over the $k=\frac32(p+q)$ edges of $G$. Over each edge except the cut edge we have two arrows with labels $a_i=b_j$ and $a_j=b_i$ with product $a_ia_j=a_ib_i=v$. This means that $\prod a_i$ is $v^{k-1}$ times the product $aa'$ on the cut edge which we computed in the last paragraph to be $a a'=u^nv$. This gives the equation
\[
	v^{2p+q}=(v^{k-1})u^nv=u^nv^k
\]
or $u^n=v^{2p+q-k}$. But $2p+q-k=(p-q)/2=w(G)$. So, $u^n=v^{w(G)}$ in the ring $R$.

By Proposition \ref{prop: un=vm lemma}, we obtain $|w(G)|=n=e(E)$ and $v=u^{\ve}$ where $\ve$ is the sign of $w(G)$. Passing to $\bC^\times$, with $u$ mapping to $\zeta$, we get $\tau(f,\rho)=\pm(1-\zeta^\ve)$ as claimed.
\end{proof}

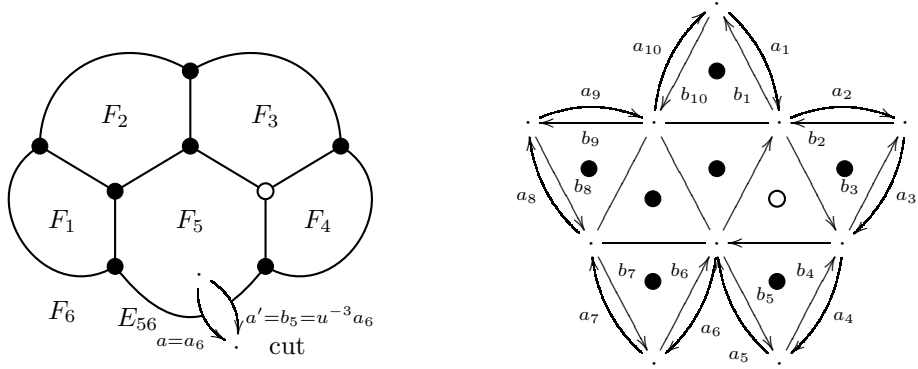
\begin{figure}[htbp]
\begin{center}
\begin{tikzpicture}%[scale=3]
%\draw[help lines=1,thick] (-9,-4) grid (5,3);
%\foreach \x in {-9,...,5}\draw (\x,0) node{\x};\foreach \y in {-4,...,3}\draw (0,\y) node{\y};
\coordinate (A1) at (-7,1.6);
\coordinate (A2) at (-5,1.6);
\coordinate (K1) at (-8,1);
\coordinate (K2) at (-6,1);
\coordinate (K3) at (-4,1);
\coordinate (L1) at (-8,0); % second row of black spots
\coordinate (L2) at (-6,0);
\coordinate (L3) at (-4,0);
\coordinate (M0) at (-9,-.6);
\coordinate (M1) at (-7,-.6);
\coordinate (M2) at (-5,-.6); % white spot
\coordinate (M3) at (-3,-.6);
\coordinate (N1) at (-7,-1.6); % bottom two spots
\coordinate (N2) at (-5,-1.6);
\coordinate (P1) at (-8,-2.2);
\coordinate (P2) at (-6.3,-2.5);
\coordinate (P3) at (-5.7,-2.5);
\coordinate (P4) at (-4,-2.2);
\coordinate (C) at (-6.7,-2.3);
% Ribbon graph:
\draw (C) node{$E_{56}$};
\draw[thick] (K2).. controls (A2) and (K3)..(L3);
\draw[thick] (K2).. controls (A1) and (K1)..(L1);
\draw[thick] (N1).. controls (P1) and (M0)..(L1);
\draw[thick] (N2).. controls (P4) and (M3)..(L3);
\draw[thick] (N1).. controls (P2) and (P3)..(N2);
\foreach\x in {K2,L1,L2,L3,M1,N1,N2}\draw[fill](\x) circle[radius=3pt];
\begin{scope}%[yshift=1cm]
\draw[thick] (L1)--(M1)--(L2)--(K2) (L2)--(M2)--(L3);
\draw[thick]  (M1)--(N1) (M2)--(N2);
\end{scope}
\draw[fill,color=white] (M2) circle[radius=3pt];
\draw[thick] (M2) circle[radius=3pt]; % white spot
\coordinate (X1) at (-7.7,-1);
\coordinate (X2) at (-7,.4);
\coordinate (X3) at (-5,.4);
\coordinate (X4) at (-4.3,-1);
\coordinate (X5) at (-6,-1);
\coordinate (X6) at (-7.7,-2.2);
\draw (X1) node{$F_1$};
\draw (X2) node{$F_2$};
\draw (X3) node{$F_3$};
\draw (X4) node{$F_4$};
\draw (X5) node{$F_5$};
\draw (X6) node{$F_6$};
\begin{scope}%[xshift=7cm]
%\draw[fill,color=white] (2,-.6) circle[radius=3pt];
\draw[thick] (1.8,-.7) circle[radius=3pt];
\draw[fill] (1,1) circle[radius=3pt];
\draw[fill] (.15,-.7) circle[radius=3pt];
\draw[fill] (1,-.3) circle[radius=3pt];
\draw[fill] (2.7,-.3) circle[radius=3pt];
\draw[fill] (-.7,-.3) circle[radius=3pt];
\draw[fill] (.15,-1.8) circle[radius=3pt];
\draw[fill] (1.8,-1.8) circle[radius=3pt];
\end{scope}
\coordinate (cut) at (-5,-2.2);
\coordinate(hole) at (-5.65,-2.17);
\draw[color=white,fill] (hole) circle[radius=.2cm];
\draw (cut) node{
$
\xymatrixrowsep{5pt}\xymatrixcolsep{5pt}
\xymatrix{%begin xy matrix
%11\ar@{=>}[rr]_f& &13\\
%21 \ar@{-->}[urr]^g & 22\ar@{..>}[ul]_f\\
%XX11\ar[d]\ar[drr]^(.6){ch}\ar[r] &
%	XX22\ar[d]\ar[dr]^{ch}\\
\cdot \ar@/_.5pc/[ddr]_(.7){a=a_6}\ar@/^.5pc/[ddr]^(.7){a'=b_5=u^{-3}a_6}\\
\ \\
& \cdot
&\text{cut}	}$};%end xy matrix
%
%\foreach \x/\xtext in {A/a,B/b,C/c}
%\draw (\x) node{$\xtext$};
\coordinate (R) at (1,-.5);
\draw(R) node{
$\xymatrixrowsep{35pt}\xymatrixcolsep{15pt}
\xymatrix{%begin xy matrix
 & & 
	&
	\cdot\ar[ld]^(.7){b_{10}}\ar@/^.5pc/[rd]^{a_1}\\ % 1
\cdot\ar[dr]^(.6){b_8} \ar@/^.5pc/[rr]^{a_9}&& \cdot\ar@/^.5pc/[ru]^{a_{10}}\ar[ll]^{b_9}\ar@{-}[rr] & & \cdot\ar@/^.5pc/[rr]^{a_2}\ar[lu]^(.3){b_1}\ar[rd] &&
	\cdot \ar[ll]^(.7){b_2}\ar@/^.5pc/[dl]^{a_3}\\ %2
& \cdot \ar@/^.5pc/[lu]^{a_8}\ar[dr]^(.3){b_7} \ar@{-}[rr]\ar@{-}[ru]& &
	\cdot\ar@/^.5pc/[ld]^(.6){a_6}\ar[dr]^{b_5}\ar[ru]\ar@{-}[lu]&&\cdot\ar[ll]\ar[ru]^(.4){b_3}\ar@/^.5pc/[dl]^{a_4}\\%3
	&&\cdot\ar@/^.5pc/[lu]^{a_7}\ar[ru]^(.7){b_6} &&\cdot\ar[ru]^(.7){b_4}\ar@/^.5pc/[lu]^(.2){a_5}
	}$};%end xy matrix};
%\coordinate (A) at (0,0);
%\draw[thick,color=blue] (A) ellipse [x radius=2.8cm,y radius=2.1cm];
\end{tikzpicture}
\caption{The ribbon graph (left) shows that $\Sigma_G=S^2$, $\Lambda_G$ has 6 components and $w(G)=\frac12(7-1)=3$. So, $n=3$, $\varepsilon=+$. The handle slide pattern (right) has $p=7,q=1$ giving $\prod a_i=v^{15}$. The edges $E_{16},E_{26},E_{36},E_{46}$ force $a_7=b_8$, $a_9=b_{10}$, $a_1=b_2$, $a_3=b_4$. So, $a_7a_8a_9a_{10}a_1a_2a_3a_4=v^4$. Since there are 7 internal edges the internal $a_i$ multiply to $v^7$. Therefore $a_5a_6=v^{15-11}=v^4$. But $a_6=u^3b_5$. So, $v^4=a_5a_6=u^3a_5b_5=u^3v$. So, $v^3=u^3$.}
\label{FigureCA2}
\end{center}
\end{figure}

\begin{remark}
There is one subtle point about the proof of Theorem \ref{thm: punching hole}. We should cut an edge which is adjacent to two disjoint regions in the complement of $G$ in $\Sigma_G$. Then the components of the critical set of $f|_{\Sigma^0}$ will be simply connected since two spherical components have been punctured. This can be done as long as there is more than one component in the complement of $G$ in $\Sigma_G$. If there is only one component, the surface cannot be a sphere. So, it has a 2-fold connected covering $\widetilde \Sigma_G$. The pull back of $W$ to $\widetilde \Sigma_G$ will have double the Euler number and double the winding number and will have two components in the complement of the graph. So, the theorem holds on $\widetilde\Sigma_G$. So, $|2w(G)|=2e(E)$ and the Turaev torsion is $\pm(1-\zeta^\ve)$ where $\ve=2w(G)/2e(E)=w(G)/e(E)$.
\end{remark}

\begin{remark}\label{proof of JKS}
The claims made in the caption of Figure \ref{FigureCA} come from an earlier version of the proof of Theorem \ref{thm: punching hole}. Take a spanning tree $T$ in the trivalent ribbon graph $G$. Then $T$ has $p+q-1$ internal edges and $m=p+q+2$ leaves. The dual polygon of $T$ has $m$ sides which have arrows $a_i,b_i$ as shown in Figure \ref{FigureCA2}. The product of all $a_i$ is $\prod a_i=v^{2p+q}$. On the internal edges it is $v^{p+q-1}$. So, the product of the $a_i$ on the perimeter is $v^{p+1}$ which equals $v^k$ where $k=p+1$. Let $J$ be any set of $k$ edges of the dual polygon. Let $I$ be the complementary set of $m-k$ edges. Let $B=\prod_{j\in J} b_j$. Then $A=\prod_{i\in I\cup J}a_i=v^k$. So, 
\[
AB=v^kB=\prod_{i\in I} a_i\prod_{j\in J} a_jb_j=v^{k}\prod_{i\in I}a_i.
\]
So, $B=\prod_{j\in J} b_j=\prod_{i\in I}a_i$. In other words, the product of any $k$ of the $b_j$ is equal to the product of the $a_i$ for the other $m-k$ indices $i$. In short: $b^k=a^{m-k}$.
\end{remark}

%\bibliographystyle{alpha}
%\bibliography{DAG_KI_Paper1_190713}{}
%\bibstyle{plain}

\appendix
\section{Overview of torsion}\label{ap:torsion}

In this appendix we give some background on Whitehead, Reidemeister and Turaev torsion, as well as their higher analogues. We also briefly review the different ways in which it makes an appearance in symplectic and contact topology, both related to the present article and otherwise.

\subsection{Reidemeister and Turaev torsion}
 
Reidemeister torsion was first used by Reidemeister, Franz and de Rham, achieving the combinatorial classification of lens spaces \cite{R35}, \cite{F35}, \cite{dR39}. Reidemeister torsion can distinguish lens spaces which are homotopy equivalent but not homeomorphic, hence can access rather subtle topological information. It has since enjoyed many other applications, such as Milnor's disproof of the Hauptvermutung \cite{M61}, where two homeomorphic finite simplicial complexes without a common subdivision were distinguished using Reidemeister torsion.

Generally speaking, the Reidemeister torsion of a manifold contains information about the simple homotopy type of the manifold. It is closely related to the Whitehead torsion $\text{Wh}_1$, but is both more and less general. On the one hand it is sometimes defined even when Whitehead torsion is not, on the other it requires additional input. The Whitehead torsion lives in the Whitehead group $\text{Wh}_1(\pi)=K_1(\bZ[\pi])/\pm \pi$, while Reidemeister torsion is in essence a determinant $K_1(R) \to U(R)$ induced by an acyclic representation $\bZ[\pi] \to R$ for $R$ a commutative ring. We refer the interested reader to Milnor's survey \cite{M66} for further discussion of the Whitehead torsion.

We briefly recall the Morse-theoretic definition of Reidemeister torsion. Let $M$ be a closed orientable manifold and let $\rho:\pi_1M  \to U(1)$ be a rank 1 unitary local system such that the twisted cohomology $H^*(M;\bC^\rho)$ is trivial. One can consider more general representations of $\pi_1M$ but we will restrict to the unitary rank 1 case for concreteness. Consider the Thom-Smale complex $(C_*(f ;  \bZ[\pi_1M]), \partial)$. This chain complex is generated by the critical points of a Morse function $f:M \to \bR$ and its differential $\partial$ counts gradient trajectories between critical points of index difference $1$, after capping these trajectories with a fixed choice of paths to a basepoint. By applying $\rho:\pi_1M \to U(1)$ on the coefficients we obtain $(C_*(f;\bC^\rho), \partial^\rho)$, a chain complex  over $\bC$ which computes $H^*(M; \bC^\rho)$, hence is acyclic. Therefore there exists a chain contraction $\delta^\rho$ for $\partial^\rho$, i.e. a chain homotopy between the identity and zero. It follows that
 \[
 \partial^\rho+ \delta^\rho: C_{\text{odd}}(f;\bC^\rho) \to C_{\text{even}}(f;\bC^\rho)
 \]
 is an isomorphism. The determinant of $\partial^\rho+\delta^\rho$ is a nonzero complex number $r \in \bC^\times$ which is well defined up to multiplication by an element of $\pm \rho( \pi_1M) \subset U(1)$. The Reidemeister torsion is the image of $r$ in $\bC^\times / \pm \rho( \pi_1M)$. Perhaps a more familiar expression is the real number $\log | r |  \in \bR$, which is the additive form of Reidemeister torsion. In either case, Reidemeister torsion only depends on $M$ and $\rho$. See Section \ref{Section: turaev torsion} for a more thorough discussion.
 
Turaev showed that the $\rho(\pi_1M)$ ambiguity in the definition of Reidemeister torsion can be removed by choosing what he called an Euler structure, and moreover the sign ambiguity can also be removed in the presence of a homology orientation. This yields a finer invariant, which we call Turaev torsion, and which exists in various flavors depending on how much of the ambiguity one is able to remove. Turaev torsion has some remarkable applications to knot theory and low-dimensional topology \cite{T86}, perhaps the most striking of which is to give a combinatorial interpretation of the Seiberg-Witten invariants of 3-manifolds \cite{MT96}, \cite{T98}. %In Section \ref{Section: turaev torsion} we will use Turaev torsion to produce an invariant of Legendrian submanifolds in 1-jet spaces. %This is achieved by using the global geometry of a generating family to lift the Reidemeister torsion of the fibre to a Turaev torsion in $\bC^\times / \pm 1$.
 
 % In particular it deepens the connection between the Reidemeister torsion and Alexander polynomials, which was discovered by Milnor. For 3-dimensional manifolds it is also related to the linking form in the torsion of $H_1$, to the Thurston norm on $H_2$ and to Massey products in $H^*$. Moreover, for oriented 3-dimensional manifolds it turns out that a choice of Euler structure is equivalent to a choice of $\text{spin}^c$ structure and Turaev proved in [ref] that the Seiberg-Witten invariant of a $\text{spin}^c$ structure and the Turaev torsion of the corresponding Euler structure are one and the same. This generalizes a previous result of Meng and Taubes [ref] which equates a Reidemeister type torsion with a weaker Seiberg-Witten invariant. These results allow for a purely combinatorial interpretation of the Seiberg-Witten invariant. We defer further discussion of the Turaev torsion until Section \ref{Turaev torsion} and refer the interested reader to Turaev's book [ref] for the mentioned applications. 
 
\subsection{Higher Reidemeister torsion} 
 Consider a fibre bundle of closed, orientable and connected manifolds $F \to W \to B$ and a rank 1 unitary local system $\rho : \pi_1W \to U(1)$. Assume that the cohomology of the fibre $H^*(F;\bC^\rho)$ twisted by the pullback of $\rho$ to $\pi_1F$ is trivial. More generally, it suffices to assume that $\pi_1B$ acts trivially on $H^*(F;\bC^\rho)$. The higher Reidemeister torsion of $W$ with respect to $\rho$ is a collection of characteristic classes $r_{2k} \in H^{2k}(B;\bR)$. The existence of higher Reidemeister torsion was first suggested by Wagoner \cite{W76} and established in Klein's PhD thesis \cite{K89}. In joint work of Klein with the second author other descriptions of higher Reidemeister torsion were developed \cite{IK93a}, \cite{IK93}, which made it more amenable for computation. 
 
 The higher Reidemeister torsion classes contain information about the bundle $F \to W\to B$ which is closely related to pseudo-isotopy theory. For example, consider $\cP(\ast)$, the stable pseudo-isotopy space of a point. Higher Reidemeister torsion detects $\pi_{4k-1}\cP( \ast) \otimes \bQ \simeq \bQ$, which is generated by Hatcher's exotic disk bundles \cite{I02}. %The higher Reidemeister torsion classes $r_{2k}$ generalize the Reidemeister torsion $r$ in that $r_0=r$ when $H_*(F; \bC^\rho)=0$. 
 The relation between higher Reidemeister torsion and higher Whitehead torsion is analogous to that between Reidemeister torsion and Whitehead torsion. For even-dimensional fibres there is also a close relation with the Miller-Morita-Mumford classes \cite{I04}.

In terms of Morse theory, pick a function $f:W \to \bR$ and view it as a family of functions $f_b$ on the fibres, parametrized by $b \in B$. The family of Thom-Smale complexes  $C_*(f_b;\bZ[\pi_1W])$ yields a map from $B$ to $\text{Wh}(\bZ[\pi_1W],\pi_1W)$, the geometric realization of the Whitehead category of based free chain complexes over $\bZ[\pi_1W]$. After applying $\rho:\pi_1 W \to U(1)$ we get a map $B \to \text{Wh}(\bC,U(1))$, which by the condition on $\rho$ lifts to $\text{Wh}^h(\bC,U(1))$, the geometric realization of the full subcategory of $\text{Wh}(\bC,U(1))$ consisting of based free acyclic chain complexes over $\bC$. In $\text{Wh}^h(\bC,U(1))$ there exist classes $r_{2k} \in H^{2k}(\text{Wh}^h(\bC,U(1)); \bR)$, coming from the continuous cohomology of $GL(\bC)$, which we can pull back to $B$ to obtain classes in $H^{2k}(B; \bR)$. 

For all this to make sense we must require that the family $f$ is generalized Morse, which means that each function $f_b$ in the family only has Morse or Morse birth/death singularities. Given such a family, one can define cohomology classes in $H^{2k}(B;\bR)$ as above, but in general these depend on the choice of $f$. To remove the ambiguity and obtain the actual higher Reidemeister torsion classes one performs this construction for $f$ a fibrewise framed function, the space of which is nonempty and contractible \cite{I87}, \cite{EM12}. Fibrewise framed functions are generalized Morse families such that the negative eigenspaces at each critical point of $f_b$ are equipped with framings, which vary continuously with $b \in B$ and are suitably compatible at birth/death points. 

One can understand the significance of the framing condition as follows. Given a generalized Morse family $f:W \to \bR$, the collection of negative eigenspaces at the fibrewise critical points can be assembled into a $\widetilde{KO}$ class which may be nontrivial and contain information about $W$. This information is lost when we pass to the family of Thom-Smale complexes $C_*(f_b;\bZ[\pi_1W])$. However, for a fibrewise framed function this  $\widetilde{KO}$ class is trivial and all the information is concentrated in the family of Thom-Smale complexes $C_*(f_b;\bZ[\pi_1W])$, so the resulting map $B \to \text{Wh}^h(\bC,U(1))$ is the ``correct'' one.

There exist several variations of the definition of higher Reidemeister torsion. For example, one can consider higher rank unitary local systems, weaken the condition on the action of $\pi_1B$ on $H^*(F;\bC^\rho)$ or consider a relative version of the construction. The interested reader is referred to the book \cite{I02}. Moreover, in the presence of an almost complex structure on the fibres one can define a complex torsion which also takes the almost complex structure into account \cite{I05}. Higher Reidemeister torsion is very closely related to the higher analytic torsion of Bismut and Lott \cite{BL95}, just like Reidemeister torsion is very closely related to the analytic torsion of Ray and Singer \cite{RS71}. There is also a very close connection with the smooth torsion of Dwyer, Weiss and Williams \cite{DWW03}. 
 
 %One might guess that given a function $f:W \to \bR$ on an interesting fibre bundle $W$ over $B$, the Legendrian $\Lambda \subset J^1(B)$ generated by $f$ will also be interesting, and perhaps even one may be able to read some properties of $W$ directly from $\Lambda$. The reason for such a guess is that the higher Reidemeister torsion classes $r_{2k}(W,\rho)$ can be computed in terms of the parametrized Morse theory of the family $f$. However, this guess is sometimes wrong. Much of the nontriviality of $W$ may be contained in the stable bundle $\nu_f \to \Lambda$ , which is built out of the negative eigenspaces of $d^2f_b$ at critical points of $f_b$, $b \in B$. When this occurs the Cerf diagram $\pi(\Lambda) \subset J^0(B)$ can be very simple even if $W$ is complicated. As an example one can take the Hatcher disk bundle on $S^n$ corresponding to a nontrivial element of $\pi_n(G/O)$. In the other extreme one can restrict attention to the class of framed functions, for which this bundle data is trivial and trivialized. This is the class of functions for which the higher Reidemeister torsion can be computed just in terms of the family of fibrewise Thom-Smale complexes $C_*(f_b,\bC)$, $b \in B$. We expect that a Legendrian $\Lambda$ generated by a framed function on an interesting fibre bundle will in general be interesting. Our results on mesh Legendrians illustrate this expectation.
 
\subsection{Pictures for $K_3$}\label{pictures for K3}
 
We now restrict our discussion of higher Reidemeister torsion to the case of a circle bundle over the sphere $S^1 \to E \to S^2$. This case is the most relevant to the present article and is simple enough to be understood explicitly. The calculation was carried out in joint work of the second author with Klein \cite{IK93a}, \cite{IK93}. Fix an orientation of the fibre. A function $f:E \to \bR$ is a PGMF (positive generalized Morse function) if whenever it is restricted to a fibre $F \simeq S^1$ it only has Morse (quadratic) or Morse birth/death (cubic) singularities, and moreover if at a cubic singularity $x^3$ the direction in which the function is increasing agrees with the specified orientation of $F$. In particular, PGMFs are fibrewise framed. It was proved in \cite{IK93} that the the space of PGMFs is contractible. 

%†A rough outline of the calculation is as follows. 
A PGMF $f$  on $E$ together with a nontrivial rank 1 unitary local system $\rho : \pi_1E \to U(1)$ produces an element of $K_3(\bC)$. The Borel regulator $b:K_3(\bC) \to \bR$ applied to this element yields a number, which is an invariant of $E$ and $\rho$. This number is secretly a cohomology class in $H^2(S^2; \bR) \simeq \bR$, namely the higher Reidemeister torsion class $r_1$. Explicitly, it was computed in  \cite{IK93} that
\[ r_1 =n \, \text{Im}\Big( \,  \sum_{k=1}^\infty \frac{\zeta^k}{k^2} \, \Big), \]
 where $\zeta^{-1}$ is the image by $\rho$ of the class of the oriented fibre $S^1$ in $\pi_1E$. In particular, if we know $\zeta$ then we can recover the Euler number $n=e(E)$ from $r_1$. However, more relevant to us than the result of the calculation is the method which was used to obtain it, which we now briefly discuss.

The $K_3(\bC)$ element is obtained from the PGMF $f$ by considering its locus of handle slides. These handle slides occur along connecting trajectories between critical points of index difference $0$. The algebraic effect of a handle slide on the Thom-Smale complex is essentially that of an elementary row/column matrix operation. The locus of handle slides has codimension 1 and forms an immersed graph on $S^2$, each edge of which is decorated by an element of $\bC$ determined by $\rho$. The vertices of this graph come in two kinds. One corresponds to Steinberg relations between the handle slides. The other corresponds to exchange points, which are isolated bifurcations of handle-slides corresponding to connecting trajectories of index difference $-1.$ Algebraically, the effect of an exchange point on the Thom-Smale complex is to exchange row operations for column operations. From this decorated graph one can formally extract a picture for $K_3(\bC)$ in the sense of \cite{K}. The computation of the higher Reidemeister torsion is completed using the formulas given in \cite{IK93} for the evaluation of the Borel regulator on a picture for $K_3(\bC)$. The result is independent of the choice of the PGMF.

To explicitly compute the higher Reidemeister torsion of $E$ with respect to $\rho$ it suffices to perform the above calculation for a single well chosen PGMF. For each circle bundle $S^1 \to E \to S^2$, the chosen function $f:E \to \bR$ in \cite{IK93} was such that it generates a mesh Legendrian $\Lambda \subset J^1(S^2)$. In fact $\Lambda=\Lambda_G$ for a certain bicolored trivalent ribbon graph $G$, all of whose vertices have positive labels. This is precisely the kind of $\Lambda_G$ we use to produce our examples of $\Lambda_+$ in Corollary \ref{corollarypairs}.  The picture of handle slides for these $f$ was described in \cite{IK93} and is shown in Figure \ref{FigureR}. Our figure differs slightly from that in \cite{IK93} in that the exchange points $Z_{ij}$ are moved to the corners of the triangles. %To understand the functions $f$ geometrically see Section \ref{systems of disks}. 

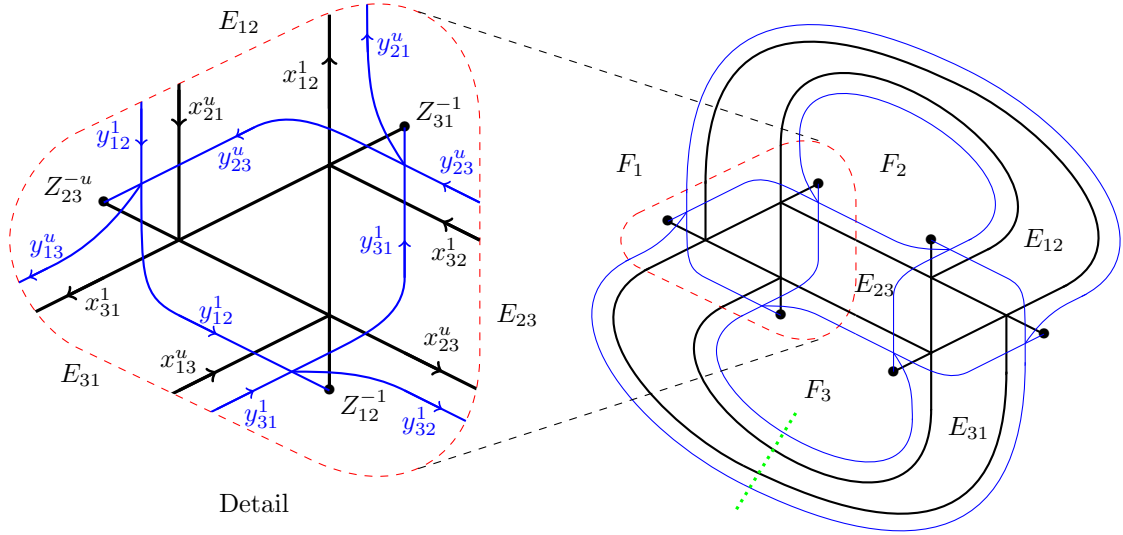
\begin{figure}[htbp]
\begin{center}
\begin{tikzpicture}%[scale=3]
\clip (-9.5,-4) rectangle (6,3.5);
\draw (-6,-3.5) node{Detail};
\begin{scope}[xshift=-7cm]%,scale=1.5]% detail: left triangle
\draw (.8,2.9) node{$E_{12}$};
\draw (4.5,-1) node{$E_{23}$};
\draw (-1.3,-1.8) node{$E_{31}$};
\draw[thick,color=blue,<-] (.75,1.375)--(1,1.5); 
\draw[thick,color=blue] (.75,1.375)node[below]{$y_{23}^u$}; 
\draw[color=blue, thick,->] (4,0.5)--(3.5,.75);
\draw[color=blue](3.7,.75)node[above]{$y_{23}^u$};
\draw[very thick,->] (4,0)--(3.5,.25);
\draw (3.6,.15) node[below]{$x_{32}^1$} ;
\draw[very thick,->] (3,-1.5)--(3.5,-1.75);
\draw (3.5,-1.6) node[above]{$x_{23}^u$};
\draw[very thick,->] (0,-2)--(.5,-1.75) ;
\draw (0,-1.9) node[above]{$x_{13}^u$};
\draw[thick,color=blue,->] (.5,-2.25)--(1,-2);
\draw[color=blue] (1.1,-2) node[below]{$y_{31}^1$};
\draw[thick,color=blue,->] (3,-.5)--(3,0) node[left]{$y_{31}^1$};
\draw[thick,color=blue,->] (-.5,1.75)--(-.5,1.25);\draw[thick,color=blue,->] (3,-2)--(3.5,-2.25) ;
\draw[color=blue] (3.2,-2.1)node[below]{$y_{32}^1$};
\draw[thick,color=blue,->] (0,-1)--(.5,-1.25)node[above]{$y_{12}^1$};
\draw[thick,color=blue,<-] (-2,-.5)--(-1.75,-.375) node[above]{$y_{13}^u$};
\draw[thick,color=blue] (-.5,1.4)node[left]{$y_{12}^1$};
\draw[very thick, ->] (2,1.75)--(2,2.45) ;
\draw (2,2.2) node[left]{$x_{12}^1$};
\draw[very thick, ->] (0,2)--(0,1.5);
\draw (0,1.75) node[right]{$x_{21}^u$};
\draw[thick,color=blue,->] (2.51,2.5)--(2.5,2.75);
\draw[thick,color=blue] (2.5,2.6) node[right]{$y_{21}^u$};
\draw[very thick,<-] (-1.5,-.75)--(-1,-.5)node[below]{$x_{31}^1$};
\draw[dashed, color=red, rounded corners=1cm] 
(-2.25,1){[rounded corners=2.5cm]--(4,4)--(4,-4)}--(-2.25,-1)--cycle;
%(-2.25,.5){[rounded corners=2cm]--(4,3.5)--(4,-3.5)}--(-2.25,-.5)--cycle;
\clip[rounded corners=1cm] 
(-2.25,1){[rounded corners=2.5cm]--(4,4)--(4,-4)}--(-2.25,-1)--cycle;
%(-2.25,.5){[rounded corners=2cm]--(4,3.5)--(4,-3.5)}--(-2.25,-.5)--cycle;
\draw[very thick] 
(4,-2)--(-1,.5)node{$\bullet$} (-1,.7)node[left]{$Z_{23}^{-u}$}
 (-2,-1)--(3,1.5)node{$\bullet$} (3,1.7)node[right]{$Z_{31}^{-1}$}
 (2,3)--(2,-2)node{$\bullet$} (2,-2.2)node[right]{$Z_{12}^{-1}$};
\draw[very thick] (0,0)--(0,3) (2,-1)--(-2,-3) (2,1)--(4,0);
\draw[thick,color=blue] (-1,.5)--(1,1.5)..controls (1.5,1.75) and (2,1.5)..(2.5,1.25)--(4,.5); % blue curve from Z23
\draw[thick,color=blue] (-.5,.75)..controls (-1,0) and (-1.5,-.25)..(-1.7,-.35)--(-2.7,-.85); % branch from Z23
\draw[thick,color=blue] (3,1.5)--(3,-.5)..controls (3,-1) and (2.5,-1.25)..(2,-1.5)--(-1,-3);%(.5,-2.25); % blue curve from Z31
\draw[thick,color=blue] (3,1)..controls (2.5,1.75) and (2.5,2)..(2.5,3.5); % branch from Z31
\draw[thick,color=blue] (2,-2)--(0,-1)..controls (-.5,-.75) and (-.5,-.5)..(-.5,.75)--(-.5,2.5) ;% blue curve from Z12 
\draw[thick,color=blue] (1.5,-1.75)..controls (2,-1.75) and (2.5,-1.75)..(3,-2)--(4,-2.5); % branch from Z12
\end{scope}
\begin{scope}[scale=.5]% right side: whole thing
\draw (9,0) node{$E_{12}$};
\draw (7,-5) node{$E_{31}$};
\draw (4.5,-1.2) node{$E_{23}$};
\draw (-2,2) node{$F_1$};
\draw (5,2) node{$F_2$};
\draw (3,-4) node{$F_3$};
\draw[color=blue] (-2,-.5)..controls (-4.5,-1.75) and (-2,-5)..(1,-6.5) ;
\draw[color=blue] (1,-6.5)..controls (5,-8.5) and (8.5,-8.5)..(8.5,-3.25);
\draw[thick] (-1.5,-.75)..controls (-4,-2) and (-1,-5)..(1,-6);
\draw[thick] (8,-3.5)..controls (8,-8) and (5,-8)..(1,-6);
\draw[color=blue] (1,-2).. controls (-2,-3.5) and (5.5,-8)..(5.5,-4.75);
\draw[thick] (.5,-1.75)..controls (-3,-3.5)and (6,-9.5)..(6,-4.5);
\draw[dashed,thin] (3,2.65)--(-7.7,6.1) (3,-2.65)--(-7.7,-6.1);
\draw[dashed, color=red, rounded corners=.25cm] 
(-2.25,.5){[rounded corners=1.25cm]--(4,3.5)--(4,-3.5)}--(-2.25,-.5)--cycle;
%\clip (-2.25,.5)--(4,3.5)--(4,-3.5)--(-2.25,-.5)--cycle;
\clip (-2.25,.5)--(4,3.5)--(4,-3.5)--(-2.25,-.5)--cycle;
%\clip[rounded corners=.5cm] (-2.25,.5){[rounded corners=.5cm]--(4,3.5)--(4,-3.5)}--(-2.25,-.5)--cycle;
%
\draw[ thick] 
(4,-2)--(-1,.5)node{$\bullet$} 
 (-2,-1)--(3,1.5)node{$\bullet$} 
 (2,3)--(2,-2)node{$\bullet$} (2,-2.2);
\draw[ thick] (0,0)--(0,3) (2,-1)--(-2,-3) (2,1)--(4,0);
\draw[color=blue] (-1,.5)--(1,1.5)..controls (1.5,1.75) and (2,1.5)..(2.5,1.25)--(4,.5); % blue curve from Z23
\draw[color=blue] (-.5,.75)..controls (-1,0) and (-1.5,-.25)..(-1.7,-.35)--(-2.7,-.85); % branch from Z23
\draw[color=blue] (3,1.5)--(3,-.5)..controls (3,-1) and (2.5,-1.25)..(2,-1.5)--(-1,-3);%(.5,-2.25); % blue curve from Z31
\draw[color=blue] (3,1)..controls (2.5,1.75) and (2.5,2)..(2.5,3); % branch from Z31
\draw[color=blue] (2,-2)--(0,-1)..controls (-.5,-.75) and (-.5,-.5)..(-.5,.75)--(-.5,2.5) ;% blue curve from Z12 
\draw[color=blue] (1.5,-1.75)..controls (2,-1.75) and (2.5,-1.75)..(3,-2)--(4,-2.5); % branch from Z12
\end{scope}
\begin{scope}[scale=.5,rotate=180,xshift=-8cm,yshift=2cm]
\draw[color=blue] (-2,-.5)..controls (-4.5,-1.75) and (-2,-5)..(1,-6.5) ;
\draw[color=blue] (1,-6.5)..controls (5,-8.5) and (8.5,-8.5)..(8.5,-3.25);
\draw[thick] (-1.5,-.75)..controls (-4,-2) and (-1,-5)..(1,-6);
\draw[thick] (8,-3.5)..controls (8,-8) and (5,-8)..(1,-6);
\draw[color=blue] (1,-2).. controls (-2,-3.5) and (5.5,-8)..(5.5,-4.75);
\draw[thick] (.5,-1.75)..controls (-3,-3.5)and (6,-9.5)..(6,-4.5);
\clip (-2.25,.5)--(4,3.5)--(4,-3.5)--(-2.25,-.5)--cycle;
\draw[ thick] 
(4,-2)--(-1,.5)node{$\bullet$} 
 (-2,-1)--(3,1.5)node{$\bullet$} 
 (2,3)--(2,-2)node{$\bullet$} (2,-2.2);
\draw[ thick] (0,0)--(0,3) (2,-1)--(-2,-3) (2,1)--(4,0);
\draw[color=blue] (-1,.5)--(1,1.5)..controls (1.5,1.75) and (2,1.5)..(2.5,1.25)--(4,.5); % blue curve from Z23
\draw[color=blue] (-.5,.75)..controls (-1,0) and (-1.5,-.25)..(-1.7,-.35)--(-2.7,-.85); % branch from Z23
\draw[color=blue] (3,1.5)--(3,-.5)..controls (3,-1) and (2.5,-1.25)..(2,-1.5)--(-1,-3);%(.5,-2.25); % blue curve from Z31
\draw[color=blue] (3,1)..controls (2.5,1.75) and (2.5,2)..(2.5,3); % branch from Z31
\draw[color=blue] (2,-2)--(0,-1)..controls (-.5,-.75) and (-.5,-.5)..(-.5,.75)--(-.5,2.5) ;% blue curve from Z12 
\draw[color=blue] (1.5,-1.75)..controls (2,-1.75) and (2.5,-1.75)..(3,-2)--(4,-2.5); % branch from Z12
\end{scope}
\draw[very thick,color=green,dotted] (1.2,-2.3)--(.4,-3.6);
\end{tikzpicture}
\caption{The black lines indicate $k+1/k+1$ handle slides $x_{ij}^a$ giving column operations in the incidence matrices. The blue lines indicate $k/k$ handle slides $y_{ij}^b$ giving row operations. Exchange point, denoted $Z_{ij}^c$, is where the $i$th upper index critical point slides under the $j$th lower index critical point. The green dotted line indicates a discontinuity in the labeling. This is the ``cut'' that appears in the proof of Theorem \ref{thm: punching hole}. The triangles are two black vertices connected by three edges of the ribbon graph labeled $E_{ij}$.}
\label{FigureR}
\end{center}
\end{figure}

For the purposes of the present article the object of interest is the Legendrian $\Lambda$ and the generating family $f$ is only one of many possible, so the viewpoint is flipped. For fixed $E$ and $\rho$, the Legendrian Turaev torsion of $\Lambda$ is a priori a set which could contain multiple elements, corresponding to the Turaev torsions obtained from different generating families $f$ for $\Lambda$ on an even stabilization of $E$. However, for mesh Legendrians this is not the case, the Legendrian Turaev torsion is a one-element set.

For a mesh Legendrian, the Turaev torsion associated to a generating family $f$ can be extracted from a certain monodromy in the handle slides of $f$. This monodromy can be read from the $K_3(\bC)$ picture and turns out to be governed by the exchange points. Our Morse theoretic analysis shows that the $K_3(\bC)$ picture computed for the specific $f$ of \cite{IK93} is essentially the only possible, hence the Turaev torsion is completely determined by $\Lambda$. In fact, in this article we don't need the $K_3$ formalism and so we work directly with Turaev torsion instead. 

%For a mesh Legendrian $\Lambda_G$ the exchanges are concentrated near the vertices of $G$. The sign $\pm$ of a vertex determines the geometry of $\Lambda_G$ near that vertex and restricts which exchanges can occur. The key observation is that a critical point $x_1$ can exchange over a critical point $x_2$ only if $f(x_1)>f(x_2)$. But the value of $f$ is the vertical $z$ coordinate in the front projection, hence depends only on the Legendrian! We will use this simple observation to deduce that the winding number $w(G)$ completely determines the Legendrian Turaev torsion. In fact, we give an explicit formula in terms of $w(G)$.

\begin{remark} Both of the Legendrians $\Lambda_{\pm}$ in each of the pairs we construct for Corollary \ref{corollarypairs} are generated by a generalized Morse family on the same circle bundle $E$. However, $\Lambda_+$ is generated by PGMF on $E$ when the circle bundle is oriented so that its Euler number is positive and $\Lambda_-$ is generated by a PGMF on $E$ when the circle bundle is oriented so that its Euler number is negative. The Legendrian Turaev torsion sees this sign. Hence Corollary \ref{turaev} can be thought of as saying that the mesh Legendrian remembers not only the circle bundle which generates it, but also its orientation. This can be summed up in the equation $e(E)=w(G)$.
\end{remark}

We conclude with a discussion of the special cases excluded in Corollary \ref{turaev}. Let $n=|e(E)|$ for $S^1 \to E \to S^2$ a circle bundle. If $n=1$, then $E=S^3$, so there are no nontrivial rank 1 unitary local systems $\rho: \pi_1E \to U(1)$. If $n=2$, then $E=\bR P^3$ and the unique nontrivial rank 1 unitary local system $\rho : \pi_1E \to U(1)$ is equal to its complex conjugate. Therefore it cannot distinguish between the two possible orientations of the bundle. Hence the limitation $|w(G)| \neq 1,2$ in Corollary \ref{turaev}. However, for $n=2$ the picture of handle slides for the function $f:\bR P^3 \to \bR$ taken with $\bZ$ coefficients produces one of the exotic elements of $K_3(\bZ)=\bZ/48$ (in fact a generator, as shown in \cite{IK93a}, \cite{IK93}). So one may hope to extend some version of our methods to this case. For $n=1$ it is even less clear what to do, although we speculate that there may be a relation with a relative $K_3$ group. 

\subsection{Pseudo-isotopy theory}

We now briefly explain the relation with pseudo-isotopy theory. We begin by recalling Smale's h-cobordism theorem \cite{S61}. Let $X$ be a closed manifold. We consider cobordisms $M$ with boundary divided into two parts $\partial M = \partial_0M \sqcup \partial_1M$, equipped with an identification $X \simeq \partial_0 M$. We say that $M$ is an h-cobordism if each inclusion $\partial_i M \subset M$ is a homotopy equivalence. If $X$ is simply connected and $\dim X \geq 5$, then Smale's theorem says that $M$ is diffeomorphic to the trivial cylinder $X \times [0,1]$ relative to $X=X \times 0 $. When $X$ is not simply connected, the set of diffeomorphism classes of h-cobordisms $M$ relative to $X$ is in bijection with $\text{Wh}_1( \pi_1 X)$, where $\text{Wh}_1(\pi)$ is a certain quotient of $K_1(\bZ[\pi])$ for $\pi$ a group. This is the s-cobordism theorem of Barden, Mazur and Stallings \cite{B64}, \cite{M63}, which also requires the dimensional assumption $\dim X \geq 5$. The group $\text{Wh}_1(\pi)$ was introduced by Whitehead \cite{W50}, and lies at the origin of simple homotopy theory. Interpreted appropriately, we can rephrase the s-cobordism theorem as a bijection
\[ \pi_0 \cH(X) \to \text{Wh}_1( \pi_1 X), \]
where $\cH(X)$ is the space of h-cobordisms on $X$. Pseudo-isotopy theory is concerned with the higher homotopy groups of $\cH(X)$. For $k \geq 1$, instead of $\pi_k\cH(X)$ we can equivalently study $\pi_{k-1}\cC(X)$. Here $\cC(X)$ is the pseudo-isotopy space of $X$, which is homotopy equivalent to the loop space $\Omega \cH(X)$.
 
The pseudo-isotopy space of $X$ is defined to be $\cC(X)=\text{Diff}(X \times [0,1], X \times 0)$. It forms a group under composition. Following Cerf, we observe that the space $\cC(X)$ is homotopy equivalent to the space $\cF(X)$ of functions $f: X \times [0,1] \to [0,1]$ which have no critical points and which agree with the projection to the second factor $\pi:X \times [0,1] \to [0,1]$ near $X \times 0$ and $X \times 1$. Informally, the homotopy equivalence $\cC(X) \to \cF(X)$ is given by the map $\varphi \mapsto \pi \circ \varphi$, though strictly speaking we should first replace $\cC(X)$ with the subspace of pseudo-isotopies which are level-preserving near top and bottom. To see that $\cC(X) \to \cF(X)$ is a homotopy equivalence it suffices to observe that it is a fibration whose fibre is the space $\text{Isot}(X)$ of isotopies of $X$. Being the space of paths in $\text{Diff}(X)$ starting at $\text{id}_X$, the space $\text{Isot}(X)$ is contractible, from which the desired conclusion follows. 
 
We now consider $\pi_1\cH(X) = \pi_0 \cC(X)$. Observe that $\cF(X)$ is a subspace of the space $\cM(X)$ of all functions $f:X \times [0,1] \to [0,1]$ which agree with $\pi$ near $X \times 0$ and $X \times 1$. Since $\cM(X)$ is convex, it is contractible. It follows that $\pi_0 \cC(X) = \pi_0  \cF(X)  = \pi_1 \big( \cM(X), \cF(X) \big)$. This observation was one of the key insights of Cerf. It allowed him to study pseudo-isotopies by means of 1-parametric Morse theory, resulting in his theorem \cite{C70} that $\pi_0 \cC(X) =0$ whenever $X$ is simply connected and $\dim X \geq 5$. 

In \cite{HW73}, Hatcher and Wagoner used this same viewpoint to study the non simply connected case. By considering the word of handle slides produced by such a 1-parametric family of functions, they defined a homomorphism 
\[ \pi_0 \cC(X) \to \text{Wh}_2(\pi_1X ), \]
where $\text{Wh}_2(\pi)$ is a certain quotient of $K_2( \bZ[\pi])$ for $\pi$ a group. They proved its surjectivity when $\dim X \geq 5$. Moreover, the kernel was identified to consist of those elements in $\pi_0 \cC(X) = \pi_1 \big( \cM(X), \cF(X) \big)$ which can be represented by a path 
\[ f: ([0,1],\partial [0,1]) \to \big(\cM(X), \cF(X)\big), \quad  t \mapsto f_t,\]
 whose Cerf diagram $\Sigma_f = \{ (t,z) : \, \text{$z$ is a critical value of $f_t$ } \} \subset \bR^2$ consists of a single `eye', as illustrated in Figure \ref{Fig: eye}. This is the simplest possible Cerf diagram, which has exactly two cusps and no self-intersections. It is also the front of the standard Legendrian unknot in $\bR^3=J^1(\bR)$.

In \cite{EG98}, Eliashberg and Gromov explored a number of connections between Lagrangians (resp. Legendrians) in cotangent bundles (resp. 1-jet spaces) and pseudo-isotopy theory. The closing remark in \cite{EG98} combines the stability of the $\text{Wh}_2$ invariant and the homotopy lifting property for generating families to deduce the following corollary of the Hatcher-Wagoner theorem. 

\begin{theorem}\label{Theorem: wh1} Suppose that $\Lambda$ is a Legendrian link in the standard contact $\bR^3=J^1(\bR)$ generated by a family $f:X \times [0,1] \to \bR$ which represents a pseudo-isotopy of $X$ with nontrivial $\text{Wh}_2$ invariant. Then $\Lambda$ is nontrivial as a Legendrian link, i.e. there is no Legendrian isotopy such that the front projection of $\Lambda$ becomes a disjoint union of `eyes'. \end{theorem}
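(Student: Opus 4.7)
The plan is to argue by contradiction, combining the homotopy lifting property \ref{Persistence Theorem} with the stability of the Hatcher--Wagoner $\text{Wh}_2$ invariant. Suppose $\Lambda$ were Legendrian isotopic to a disjoint union of standard eyes $\Lambda_0 \subset J^1(\bR)$. Applying \ref{Persistence Theorem} to this isotopy and the given generating family $f$, one obtains an integer $k \geq 0$ and a homotopy $\{f_t\}$ of fibrations at infinity with $f_0 = \mathrm{stab}^{2k}(f)$, each $f_t - f_0$ compactly supported, and $f_1$ generating $\Lambda_0$.

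Each $f_t$ can be viewed, with the base parameter playing the role of time and $X \times \bR^{4k}$ as the fibre, as a path in $\cM(X \times \bR^{4k})$ with endpoints in $\cF$, and hence as a class in $\pi_0 \cC(X \times \bR^{4k}) \cong \pi_0 \cC(X)$. The Hatcher--Wagoner map sends this class to an element of $\text{Wh}_2(\pi_1 X)$. Invariance under the compactly supported homotopy $\{f_t\}$ follows because $\text{Wh}_2$ is by construction locally constant on relative path components of $(\cM, \cF)$. Invariance under even stabilization follows because stabilization by $x^2 - y^2$ adds only canceling pairs of quadratic critical points, introducing no new handle slides or birth/death events; the handle-slide word representing the Hatcher--Wagoner invariant is preserved up to an overall parity shift which cancels in pairs (parallel to the observation that even stabilization preserves Turaev torsion in Section \ref{Section: stability of torsion}). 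Combining these facts, $f$ and $f_1$ have equal $\text{Wh}_2$ invariants.

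Finally, since the fronts of distinct eyes in $\Lambda_0$ lie over disjoint open intervals of the base, $f_1$ may be arranged so that its fibrewise critical points split into noninteracting pairs: no handle slides occur across distinct eyes, and inside a single eye a birth is immediately followed by a death with no intervening bifurcations. The corresponding path in $(\cM, \cF)$ is then a concatenation of null-homotopic loops, so its Hatcher--Wagoner invariant vanishes. This contradicts the assumed nontriviality of the $\text{Wh}_2$ invariant of $f$ and establishes the theorem.

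The principal difficulty, absorbed entirely into the phrase ``stability of the $\text{Wh}_2$ invariant'', is to verify that the Hatcher--Wagoner construction yields a well-defined homomorphism $\pi_0 \cC(X) \to \text{Wh}_2(\pi_1 X)$ which factors through every stabilization map $\cC(X) \to \cC(X \times \bR^{4k})$. Granting this, the only care needed is to choose $k$ large enough to simultaneously apply \ref{Persistence Theorem} and to place us in the dimensional range of the Hatcher--Wagoner theorem, which is always possible by enlarging $k$ as needed.
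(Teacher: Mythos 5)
The paper does not actually prove Theorem~\ref{Theorem: wh1}; it cites it as a remark of Eliashberg--Gromov~\cite{EG98}, with the one-sentence sketch that it ``combines the stability of the $\text{Wh}_2$ invariant and the homotopy lifting property for generating families.'' Your proposal is a faithful expansion of exactly that combination, so the approach coincides with what the paper intends.

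Two small points are worth flagging. First, the ``stability of the $\text{Wh}_2$ invariant'' step that you explicitly set aside as the principal difficulty is indeed a genuine theorem of Hatcher--Wagoner theory and is the nontrivial input; acknowledging it as a black box, parallel to the even-stabilization argument for torsion in Section~\ref{Section: stability of torsion}, is the right move. Second, your final step (reading off a trivial invariant from $f_1$) quietly assumes the eyes can be taken to lie over \emph{disjoint intervals} in the base, which guarantees that at each parameter at most two critical points of adjacent index survive and hence no handle slides occur. This is correct once you observe that eyes sitting in disjoint Darboux balls in $J^0(\bR)=\bR^2$ can be further Legendrian isotoped to have disjoint base supports; it would be cleaner to say this, or simply to invoke the Hatcher--Wagoner identification of the kernel of $\pi_0\cC(X)\to\text{Wh}_2(\pi_1X)$ with single-eye pseudo-isotopies (together with the group structure) rather than re-deriving it from the Cerf diagram. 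With those clarifications in mind, the proposal is correct.
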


\begin{figure}[htbp]
\begin{center}
\begin{tikzpicture}[scale=.75]
\draw[very thick] (-3,0)..controls (-2,0) and (-1.5,1.25)..(0,1.25)..controls (1.5,1.25) and (2,0)..(3,0);
\draw[very thick] (-3,0)..controls (-2,0) and (-1.5,-1.25)..(0,-1.25)..controls (1.5,-1.25) and (2,0)..(3,0);
\draw[thick,dotted] (0,-1.8)--(0,1.8);
\end{tikzpicture}
\caption{An ``eye'' (homeomorphic to $S^1$) in $\bR^2=J^0(\bR)$. Spin this around the dotted line to get a ``lens'' (homeomorphic to $S^2$) in $\bR^3=J^0(\bR^2)$. }
\label{Fig: eye}
\end{center}
\end{figure}
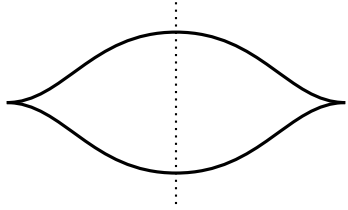

Let us add one more parameter, so we consider $\pi_2 \cH(X) = \pi_1 \cC(X)$. We now have a homomorphism 
\[ \pi_1\cC(X)  \to \text{Wh}_3( \pi_1X ), \]
where $\text{Wh}_3(\pi)$ is a certain quotient of $K_3(\bZ[\pi])$ for $\pi$ a group. This was defined in the second author's PhD thesis \cite{I79}. Moreover, in \cite{I82} it was shown that for $\dim X \geq 7$ there is an exact sequence
\[ \pi_1 \cC(X) \to \text{Wh}_3(\pi_1 X) \to  \text{Wh}^+_1(\pi_1X ; \bZ/2 \oplus \pi_2 X) \to \pi_0 \cC(X)  \to \text{Wh}_2(\pi_1 X) \to 1, \]
the middle term of which is related to nontrivial generating families for the standard Legendrian unknot in $\bR^3=J^1(\bR^1)$. We have $ \pi_1\cC(X)  = \pi_2\big( \cM(X) , \cF(X) \big)$ for the same reason as before. And as in Theorem \ref{Theorem: wh1}, we can deduce a result about the nontriviality of 2-dimensional Legendrian links in the standard contact $\bR^5=J^1(\bR^2)$ generated by a 2-parameter family of functions on $X \times [0,1]$ whose $\text{Wh}_3$ invariant is nontrivial. The conclusion is that there is no Legendrian isotopy such that the front projection becomes a disjoint union of `lenses', see Figure \ref{Fig: eye}.

 The mesh Legendrians considered in this article are also 2-dimensional and are closely related to this circle of ideas. In particular they also produce a nontrivial $K_3$ invariant. However, instead of $X \times [0,1]$ as the fibre we take $S^1$, instead of $[0,1]^2$ as our parameter space we take a closed surface $\Sigma$ and  instead of a trivial bundle we take an arbitrary circle bundle $S^1 \to E \to \Sigma$.

One can keep adding parameters. As one adds more and more parameters one also needs to increase the dimension of $X$, for example in order to make enough room for the parametrized Whitney trick to work. Hence it is better to work stably from the onset, particularly since one can use the stability theorem \cite{I88} to recover unstabilized results in a range. The key result is the stable parametrized h-cobordism theorem of Waldhausen \cite{W82}, which gives a weak homotopy equivalence
\[ A(X) \simeq Q(X_+) \times \text{Wh}^{\text{Diff}}(X). \]

Here $A(X)$ is the algebraic $K$-theory of the space $X$, $Q(X_+)=\Omega^\infty \Sigma^\infty (X_+)$ is the zero space of the suspension spectrum of $X_+$ and $\text{Wh}^{\text{Diff}}(X)$ is the (smooth) Whitehead space of $X$. This space has the property that $\Omega \text{Wh}^{\text{Diff}}(X)$ is the stable space of h-cobordisms of $X$ and $\Omega^2\text{Wh}^{\text{Diff}}(X)$ is the stable pseudo-isotopy space of $X$. It follows from Waldhausen's theorem that one should also be able to construct K-theoretically nontrivial Legendrians in $J^1(\bR^n$) for $n>2$, even though exhibiting explicit Cerf diagrams might be difficult. When $n$ is sufficiently high we expect to obtain interesting examples even when $X$ is a point. For example, $\pi_5 \text{Wh}^{\text{Diff} }( \ast )   \otimes \bQ \neq 0$.

In a different but related direction Kragh has proved that every long exact Lagrangian knot $L \subset T^*\bR^n$ admits a generating family on a trivial vector bundle which is quadratic at infinity \cite{K18}. Using this result he assigns to each such $L$ an element of $\pi_{n-1}(\cM_\infty)$, where $\cM_\infty$ is a functional space related to pseudo-isotopy theory by a fibration sequence $\cM_\infty \to G/O \to \cH_\infty$. Here $G=\lim_n G_n$ for $G_n$ the space of self homotopy equivalences of $S^n$, $O=\lim_nO_n$ is the stable orthogonal group and $\cH_\infty$ is the stable space of h-cobodisms of a point. A single nontrivial example of this construction would disprove the nearby Lagrangian conjecture. In the present paper we exploit the fact that it is much easier to generate embedded Legendrians than it is to generate embedded Lagrangians.

\subsection{Floer-theoretic torsions}
In cotangent bundles, the generating family construction provides a connection between the study of Lagrangian submanifolds and the study of finite dimensional parametrized Morse theory, which in turn is closely related to algebraic K-theory. At least part of the story extends outside of the Weinstein neighborhood of a Lagrangian submanifold of a more general symplectic manifold. Indeed, if Floer theory is the Morse theory of the action functional, then we might hope to extract not just homological information from Floer theory, but also K-theoretic information. Of course the situation is in general rather subtle and one needs to impose serious restrictions in order for the Floer theory to be well behaved, for example exactness or monotonicity. Nevertheless, there has been some progress in this direction, which we now briefly survey.  

In the exact setting, the existence of a Floer-theoretic $\text{Wh}_1$ torsion for the Lagrangian intersection problem had been hinted at by Fukaya in \cite{F95} and was established by Sullivan in \cite{S02}. Moreover, Sullivan defined a Floer-theoretic $\text{Wh}_2$ torsion for the problem of displacing a 1-parametric family of Lagrangians away from a fixed Lagrangian and gave nontrivial examples for both his $\text{Wh}_1$ and $\text{Wh}_2$ torsions. These Floer-theoretic $\text{Wh}_1$ and $\text{Wh}_2$ torsions generalize the corresponding torsions in cotangent bundles considered by Eliashberg and Gromov using generating families \cite{EG98}.  

The Floer-theoretic $\text{Wh}_1$ invariant was used by Abouzaid and Kragh in \cite{AK18} to prove that the projection to the base of any nearby Lagrangian in a cotangent bundle is a simple homotopy equivalence. Another application was found by Su\'arez \cite{S17} in her study of exact Lagrangian cobordisms. Extending Sullivan's Floer-theoretic $\text{Wh}_1$ and $\text{Wh}_2$ to define higher Whitehead torsions of higher parametric families of Lagrangians is a nontrivial open problem, even in the exact case.

Hutchings and Lee gave a definition of Reidemeister torsion in the setting of Morse-Novikov theory \cite{HL99} (which is also related to Turaev torsion, but in a different way). To get an invariant it is necessary to correct the Morse-theoretic definition by a certain zeta function which counts closed orbits. This invariant was then adapted to the Floer theory of symplectomorphisms by Lee \cite{L05a}, \cite{L05b}, where the zeta function now counts perturbed pseudo-holomorphic tori and is related to a genus 1 Gromov-Witten invariant. In a  different direction, Charette has defined a quantum Reidemeister torsion for the Biran-Cornea pearl complex of a monotone Lagrangian \cite{C17}, which bears a relation with a genus 0 Gromov-Witten invariant.

\end{document}